\DeclareFontFamily{OT1}{pzc}{}
\DeclareFontShape{OT1}{pzc}{m}{it}%
             {<-> s * [1,150] pzcmi7t}{}
\DeclareMathAlphabet{\mathpzc}{OT1}{pzc}%
                                 {m}{it}
\theoremstyle{plain} 
\newtheorem{thm}{Theorem}[section]
\newtheorem{prop}[thm]{Proposition}
\newtheorem{lemma}[thm]{Lemma}
\newtheorem{cor}[thm]{Corollary}
\newtheorem{sslem}{}[thm]
\theoremstyle{remark}
\newtheorem{rem}[thm]{Remark}
\newtheorem{example}[thm]{Example}
\theoremstyle{definition}
\newtheorem{definition}[thm]{Definition}
\newtheorem{paragr}[thm]{}
\theoremstyle{plain} 
\numberwithin{equation}{thm}
\renewcommand{\mathcal}{\mathpzc}
\renewcommand{\mathbb}{\mathbbm}
\renewcommand{\leq}{\leqslant}
\renewcommand{\geq}{\geqslant}
\newcommand{\To}{\longrightarrow}
\newcommand{\cats}{\Delta}
\newcommand{\cat}{\mathpzc{Cat}}
\newcommand{\Hom}{\operatorname{\mathrm{Hom}}}
\newcommand{\sHom}{\operatorname{\mathpzc{Hom}}}
\newcommand{\nerf}{N}
\newcommand{\ho}{\operatorname{\mathbf{Ho}}}
\newcommand{\C}{\mathcal{C}}
\newcommand{\I}{\forest}
\newcommand{\T}{\mathcal{T}}
\newcommand{\cornet}{\Lambda}
\newcommand{\bord}{\partial}
\def\TO#1{\mathrel{\hbox to #1pt{\rightarrowfill}}}
\def\OT#1{\mathrel{\hbox to #1pt{\leftarrowfill}}}
\def\limproj{\mathop{\oalign{\rm lim\cr
\hidewidth$\longleftarrow$\hidewidth\cr}}}%
\renewcommand{\varprojlim}{\limproj}%
\newcommand{\Wpr}{\mathsf{W}}
\renewcommand{\to}{\To}
\newcommand{\todouble}{\xymatrixcolsep{1pc}\xymatrix{\ar@<.5ex>[r]\ar@<-.5ex>[r]&}}
\newcommand{\todoubleop}{\xymatrixcolsep{1pc}\xymatrix{\ar@<.5ex>[r]&\ar@<.5ex>[l]}}
\renewcommand{\hookrightarrow}{{\hskip -1.5pt\raise 1.5pt\vbox{\xymatrixcolsep{.9pc}\xymatrix{\ar@{^{(}->}[r]&}}\hskip -3.5pt}}
\newcommand{\intcoin}[5]{\raise 12pt\vbox{\xymatrixcolsep{.9pc}\xymatrixrowsep{.7pc}\xymatrix{%
\scriptstyle #1\ar[r]^{\scriptscriptstyle #5}\ar[d]_{\scriptscriptstyle #4}&\scriptstyle #3\\\scriptstyle #2}}}
\renewcommand{\xrightarrow}[1]{{\hskip -2.5pt\xymatrixcolsep{1.7pc}\xymatrix{\ar[r]^{#1}&}\hskip -2.5pt}}
\newcommand{\sset}{\mathpzc{sSet}}
\newcommand{\dset}{\mathpzc{dSet}}
\newcommand{\trees}{\Omega}
\newcommand{\oper}{\mathpzc{Operad}}
\newcommand{\map}{\mathpzc{hom}}
\newcommand{\join}{\star}
\newcommand{\forest}{\mathcal{T}}
\newcommand{\open}{U}
\title[Dendroidal sets as models for homotopy operads]{Dendroidal sets\\
as models for homotopy operads}
\author[D.-C. Cisinski]{Denis-Charles Cisinski}
\address{LAGA\\
CNRS~(UMR 7539)\\
Universit\'e Paris~13\\
\hbox{Avenue~Jean-Baptiste~Cl\'ement}\\
93430~Villetaneuse\\France}
\email{cisinski@math.univ-paris13.fr}
\urladdr{http://www.math.univ-paris13.fr/~cisinski/}
\author[I. Moerdijk]{Ieke Moerdijk}
\address{Mathematisch Instituut\\
Universiteit Utrecht\\
PO.Box 80.010\\
{3508~TA~Utrecht}\\
The~Netherlands}
\email{moerdijk@math.uu.nl}
\urladdr{http://www.math.uu.nl/people/moerdijk/}
\subjclass[2000]{55P48, 55U10, 55U40, 18D10, 18D50, 18G30}
\keywords{Inner Kan complex, operad, $\infty$-operad, dendroidal set,
$\infty$-category, quasi-category, simplicial set}
\begin{document}
\begin{abstract}
The homotopy theory of $\infty$-operads is defined
by extending Joyal's homotopy theory of $\infty$-categories
to the category of dendroidal sets. We prove that the
category of dendroidal sets is endowed with a model
category structure whose fibrant objects are the
$\infty$-operads (i.e. dendroidal inner Kan complexes).
This extends the theory of $\infty$-categories in the
sense that the Joyal model category structure on simplicial
sets whose fibrant objects are the $\infty$-categories
is recovered from the model category structure on
dendroidal sets by simply slicing over the point.
\end{abstract}
\maketitle
\tableofcontents
\section*{Introduction}

The notion of dendroidal set is an extension of that of simplicial set, suitable for defining and studying nerves of (coloured) operads in the same way as nerves of categories feature in the theory of simplicial sets. It was introduced by one of the authors and I.~Weiss in \cite{dend1}. As explained in that paper, the category $\dset$ of dendroidal sets carries a symmetric monoidal structure, which is closely related to the Boardman-Vogt tensor product for operads \cite{BV}. There is also a corresponding internal Hom of dendroidal sets. The category of dendroidal sets extends the category $\sset$ of simplicial sets, in the precise sense that there are adjoint functors (left adjoint on the left)
\begin{equation*}
i_!:\sset\rightleftarrows\dset:i^*
\end{equation*}
with good properties. In particular,
the functor $i_!$ is strong monoidal and fully faithful, and identifies $\sset$ with the slice category $\dset/\eta$, where $\eta$ is the unit of the monoidal structure on $\dset$. (In fact, this adjunction is an open embedding of toposes.) 

Using these adjoint functors $i_!$ and $i^*$, we can say more precisely how various constructions and results from the theory of simplicial sets extend to that of dendroidal sets. For example, the nerve functor $\nerf:\cat\To\sset$ and its left adjoint, which we denote by $\tau$,  naturally extend to a pair of adjoint functors
\begin{equation*}
\tau_d:\dset\rightleftarrows\oper:\nerf_d
\end{equation*}
which plays a central role in our work. 

The goal of this paper is to lay the foundations for a homotopy theory of  dendroidal sets and ``$\infty$-operads'' (or ``operads-up-to-homotopy'', or ``quasi-operads'') which extends the simplicial theory of $\infty$-categories (or “quasi-categories”) which has recently been developed by Joyal, Lurie and others. Our main  result is the existence of a Quillen closed model structure on the category of dendroidal sets, having the following properties:
\begin{itemize}
\item[1.] This Quillen model structure on $\dset$ is symmetric monoidal\footnote{This
model category structure is not monoidal; see the Erratum at the end of these notes,
where we explain that this does not affect the main results of this paper
nor of its sequels.
We have chosen not to modify the present article to keep it as close as possible to the
published version.}
(in the sense of \cite{Ho}) and left proper;
\item[2.] The fibrant objects of this model structure are precisely the $\infty$-operads.
\item[3.] The induced model structure on the slice category $\dset/\eta$ is precisely the Joyal model structure on simplicial sets \cite{joytier4,lurie}.
\end{itemize}
The existence of such a model structure was suggested in \cite{dend1}. The $\infty$-operads refered to in 2. are the dendroidal analogues of the $\infty$-categories forming the fibrant objects in the Joyal model structure. They are the dendroidal sets satisfying a lifting condition analogous to the weak Kan condition of Boardman-Vogt, and  were introduced in \cite{dend1,dend2} under the name ``(dendroidal) inner Kan complexes''. The dendroidal nerve of every operad is such an
$\infty$-operad; conversely, intuitively speaking, $\infty$-operads are operads in which the composition of operations is only defined up to homotopy, in a way which is associative up to homotopy. For example, the homotopy coherent nerve of a symmetric monoidal topological category is an $\infty$-operad. The theory of $\infty$-operads contains the theory of $\infty$-categories,
as well as the theory of symmetric monoidal $\infty$-categories and of operads
in them. The theory of $\infty$-operads is also likely to be of use in studying the notion
of $\infty$-category enriched in a symmetric monoidal $\infty$-category (e.g. the various notions
of $A_\infty$-categories, dg~categories, weak $n$-categories).

The proof of our main theorem is based on three sources: First of all, we use the general methods of constructing model structures on presheaf categories developed in \cite{Ci3} (we only use the first
chapter and Section 8.1 of that book, which are both elementary).
Secondly, we use some fundamental properties of dendroidal inner Kan complexes proved in \cite{dend2}. And finally, we use some important notions and results from Joyal's seminal paper \cite{joyal}: namely, the theory of join operations and the notions of left or right fibration
of simplicial sets. Apart from these sources, our proof is entirely self-contained. In particular, we do not use the Joyal model structure in our proof, but instead deduce this model structure as a corollary, as expressed in 3. above.

It is known that there are several (Quillen) equivalent models for $\infty$-categories: one is given by a left Bousfield localisation of the Reedy model structure on simplicial spaces and has as its fibrant objects Rezk's complete Segal spaces; another is given by a Dwyer-Kan style model structure on topological categories established by Bergner, in which all objects are fibrant. The equivalence of these approaches is extensively discussed in Lurie's book \cite{lurie}; see also \cite{bergner,joytier4}. It is natural to ask whether analogous models exist for $\infty$-operads.
In two subsequent papers~\cite{dend4,dend5}, we will show that this is indeed the case. We will prove there that the model structure on dendroidal sets described above is equivalent to a model structure on topological operads in which all objects are fibrant, as well as to a model structure on dendroidal spaces whose fibrant objects are ``dendroidal complete Segal spaces''. The models for
$\infty$-categories just mentioned as well as the equivalences between them will again emerge simply by slicing over suitable unit objects of the respective monoidal structures. Together these model categories fit into a row of Quillen equivalences
$$\xymatrix{
\mathpzc{s}\oper\ar[r]^{\sim}&\dset\ar[r]^{\sim}&\mathpzc{dSpaces}\\
\mathpzc{s}\cat\ar[r]^{\sim}\ar[u]&\sset\ar[r]^{\sim}\ar[u]&\mathpzc{sSpaces}\ar[u]
}$$
in which the vertical arrows are (homotopy) full embeddings.

\smallskip

This paper is organized as follows.
In the first section, we recall the basics about dendroidal sets.
In Section \ref{section2}, we state the main results of this paper:
the existence of a model category structure on the category of
dendroidal sets whose fibrant objects are the $\infty$-operads, as
well as its main properties. In Section \ref{section3}, we construct
this model structure through rather formal arguments. At this stage, it is clear,
by construction, that the fibrant objects are $\infty$-operads, but the
converse is not obvious. Sections \ref{section4} and \ref{section5}
provide the tools to prove that any $\infty$-operad is fibrant, following
the arguments which are known to hold in the case of simplicial sets
for the theory of $\infty$-categories.
More precisely, in Section \ref{section4}, we develop a dendroidal
analog of Joyal's join operations, and prove a generalization
of a theorem of Joyal which ensures a right lifting property
for inner Kan fibrations with respect to certain non-inner horns,
under an additional hypothesis of weak invertibility of some $1$-cells.
In Section \ref{section5}, we construct and examine a subdivision of
cylinders of trees in terms of dendroidal horns.
At last, in Section \ref{section6}, we prove that
any $\infty$-operad is fibrant, and study some of
the good properties of fibrations between $\infty$-operads.
This is done by proving an intermediate result which is important
by itself: a morphism of diagrams in an $\infty$-operad
is weakly invertible if and only if it is locally (i.e. objectwise)
weakly invertible (this is where Sections \ref{section4} and \ref{section5}
have their roles to play).

We also added two appendices, which are independent of the rest of this
paper. In Appendix \ref{appA}, we study the join operations on leaves
(while in Section \ref{section4}, we studied join operations on roots),
and in Appendix \ref{appB}, we study another subdivision of cylinders
of trees. In fact, these appendices can be used to provide another proof
of our main results: Section~\ref{section6} might have been written
using Appendices \ref{appA} and \ref{appB} instead of
Sections \ref{section4} and \ref{section5} respectively, without any changes
(except, sometimes, replacing the evaluation
by $1$ by the evaluation by $0$, whenever necessary).
However, these appendices are not formal consequences of
the rest ot these notes, and it will be useful to have this
kind of results available for further work on the subject.

\section{Dendroidal sets}\label{section1}
\begin{paragr}
Recall from \cite{dend1} the category of trees $\trees$.
The objects of $\trees$ are non-empty non-planar trees with
a designated root, and given two trees $T$ and $T'$, a map from
$T$ to $T'$ is a morphism of the corresponding operads which, in these notes, we will
denote by $T$ and $T'$ again.
Hence, by definition, the category of trees is a full subcategory
of the category of operads.
Recall that the category $\dset$ of \emph{dendroidal sets} is defined as the
category of presheaves of sets on the category of trees $\trees$.
Given a tree $T$, we denote by $\Omega[T]$ the dendroidal set
represented by $T$.

Let $0$ be the tree with only one edge, and set $\eta=\Omega[0]$.
Then the category $\trees/\eta$
identifies canonically with the category $\cats$ of simplices, so that
the category $\dset/\eta$ is canonically equivalent to the category $\sset$
of simplicial sets.
The corresponding functor
\begin{equation}
i:\cats\to\trees \ , \quad [n]\longmapsto i[n]=n
\end{equation}
is fully faithful and its image is a sieve in $\trees$.
This functor $i$ induces an adjunction
\begin{equation}
i_!:\sset\rightleftarrows\dset:i^*
\end{equation}
(where $i_!$ is the left Kan extension of $i$).
Under the identification $\sset=\dset/\eta$, the functor $i_!$
is simply the forgetful functor from $\dset/\eta$ to $\dset$.
The functor $i_!$ is fully faithful and makes $\sset$ into an open
subtopos of $\dset$. In other words, if there is a map of dendroidal sets
$X\To Y$ with $Y$ a simplicial set, then $X$ has to be a simplicial set as well.

We also recall the pairs of adjoint functors
\begin{equation}
\tau:\sset\rightleftarrows\cat:\nerf\quad\text{and}\quad
\tau_d:\dset\rightleftarrows\oper:\nerf_d
\end{equation}
where $\nerf$ and $\nerf_d$ denote the nerve functors from the category
of categories to the category of simplicial sets and from the category
of (symmetric coloured) operads to the category of dendroidal sets.

The category of operads is endowed with a closed symmetric monoidal structure:
the tensor product is defined as the Boardman-Vogt tensor product; see \cite[Section 5]{dend1}.
This defines canonically a unique closed symmetric monoidal structure on the category
of dendroidal sets such that the functor $\tau_d$ is symmetric monoidal, and such that,
for two trees $T$ and $S$, we have
$$\Omega[T]\otimes\Omega[S]=\nerf_d(T\otimes^{}_{BV}S)\, ,$$
where $T\otimes^{}_{BV}S$ is the Boardman-Vogt tensor product of operads.
We will denote internal Hom objects by $\sHom(A,X)$ or by $X^A$.

Note that the functor $i_!:\sset\To\dset$ is a symmetric
monoidal functor, if we consider $\sset$ with its closed cartesian
monoidal structure.

The functor $i^*$ turns the category of dendroidal sets into a
simplicial category; given two dendroidal sets $A$ and $X$,
we will write $\map(A,X)$ for $i^*(\sHom(A,X))$, the simplicial set of maps from
$A$ to $X$.
\end{paragr}

\begin{paragr}
We recall here from \cite{dend1} the different kinds of faces
of trees in $\Omega$.

Let $T$ be a tree.

If $e$ is an inner edge of $T$, we will denote
by $T/e$ the tree obtained from $T$ by contracting $e$.
We then have a canonical inclusion
\begin{equation}\label{definnerface}
\partial_e:T/e\To T\, .
\end{equation}
A map of type \eqref{definnerface} is called an \emph{inner face} of $T$.

If $v$ is a vertex of $T$, with the property
that all but one of the edges incident to $v$ are outer,
we will denote by $T/v$ the tree obtained from $T$
by removing the vertex $v$ and all the outer edges incident to it.
We then have a canonical inclusion
\begin{equation}\label{defouterface}
\partial_v:T/v\To T\, .
\end{equation}
A map of type \eqref{defouterface} is called an \emph{outer face} of $T$.

A map of type \eqref{definnerface} or \eqref{defouterface}
will be called an \emph{elementary face} of $T$.

We define $\bord\Omega[T]$ as the union in $\dset$ of all
the images of elementary face maps $\Omega[T/x]\To\Omega[T]$. We thus have, by
definition, an inclusion
\begin{equation}\label{defboundaryoftree}
\bord\Omega[T]\To\Omega[T]\, .
\end{equation}
Maps of shape \eqref{defboundaryoftree} are called \emph{boundary inclusions}.
The image of a face map $\partial_x$ will sometimes
be denoted by $\partial_x(T)$ for short.

We will call \emph{faces} the maps of $\Omega$ which are obtained, up to an isomorphism,
as compositions of elementary faces. It can be checked that faces are exactly the monomorphisms
in $\Omega$; see \cite[Lemma 3.1]{dend1}.
\end{paragr}

\begin{paragr}
A monomorphism of dendroidal sets $X\To Y$ is \emph{normal}
if for any tree $T$, any non degenerate dendrex $y\in Y(T)$
which does not belong to the image of $X(T)$ has a trivial
stabilizer $\mathrm{Aut}(T)_y\subset\mathrm{Aut}(T)$.
A dendroidal set $X$ is \emph{normal} if the map $\emptyset\To X$ is normal.
For instance, for any tree $T$, the dendroidal set
$\Omega[T]$ is normal.
\end{paragr}

\begin{prop}\label{propertiesnormal}
The class of normal monomorphisms is stable by pushouts, transfinite
compositions and retracts. Furthermore, this is the smallest class
of maps in $\dset$ which is closed under pushouts and tranfinite compositions, and
which contains the boundary inclusions $\bord\Omega[T]\To\Omega[T]$, $T\in\Omega$.
\end{prop}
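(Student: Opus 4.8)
The plan is to prove this in two halves, corresponding to the two sentences of the statement. The first half asserts closure of the normal monomorphisms under pushouts, transfinite compositions, and retracts; the second half asserts that this class is the *smallest* such class (closed under pushouts and transfinite compositions) containing the boundary inclusions. The key structural fact I would exploit throughout is the characterization of normality in terms of free actions: a monomorphism $X \to Y$ is normal precisely when every non-degenerate dendrex of $Y$ not coming from $X$ has trivial automorphism stabilizer. This ``freeness'' condition is what makes the closure properties work, since it is a condition on the $\mathrm{Aut}(T)$-action on the dendrices newly added.

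For the first half, I would verify the three closure properties directly from the definition. For **retracts**, if $X \to Y$ is a retract of a normal mono $X' \to Y'$, then any new non-degenerate dendrex $y$ of $Y$ maps to a new non-degenerate dendrex of $Y'$ whose stabilizer is trivial; since the stabilizer of $y$ injects into that of its image (the retraction being equivariant), $y$ has trivial stabilizer too. For **transfinite composition** $X = X_0 \to X_1 \to \cdots \to X_\beta = Y$, a new non-degenerate dendrex of $Y$ first appears at some successor stage, where the step is normal, so its stabilizer is trivial there and hence in $Y$. The crucial case is **pushouts**: given a normal mono $A \to B$ and any map $A \to X$, forming the pushout $X \to X \cup_A B$, I must check that the newly added dendrices (those coming from $B \setminus A$) retain trivial stabilizers. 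This follows because in a pushout of presheaves of sets along a monomorphism, the new non-degenerate dendrices of the pushout are exactly the images of the new non-degenerate dendrices of $B$, with isomorphic stabilizers; the equivariance of the pushout maps transports the freeness condition. I expect this pushout case to require the most care, since one must track how degeneracies and the $\mathrm{Aut}(T)$-actions interact under the colimit.

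For the second half, let $\C$ denote the smallest class closed under pushouts and transfinite compositions containing all boundary inclusions $\bord\Omega[T] \to \Omega[T]$. Since each boundary inclusion is a normal mono and, by the first half, the normal monos are closed under pushouts and transfinite compositions, we get the inclusion $\C \subseteq \{\text{normal monos}\}$ for free. The substance is the reverse inclusion: every normal mono $X \to Y$ lies in $\C$. Here I would use the **skeletal filtration** of $Y$ relative to $X$: define $X = \Sk_{-1} \to \Sk_0 \to \Sk_1 \to \cdots$ where each $\Sk_n$ is obtained from $\Sk_{n-1}$ by attaching the new non-degenerate dendrices indexed by trees $T$ of size $n$. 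The key claim is that each stage $\Sk_{n-1} \to \Sk_n$ is a pushout of a coproduct of boundary inclusions $\bord\Omega[T] \to \Omega[T]$, one for each orbit of new non-degenerate dendrices of that size. This is exactly where normality is used: because the stabilizers of the new dendrices are trivial, each orbit contributes a genuine copy of a single boundary inclusion rather than a quotient thereof, so the attaching map is a coproduct of the generating maps and the pushout lands in $\C$. Taking the transfinite composition of these stages exhibits $X \to Y$ as a member of $\C$.

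The main obstacle will be the skeletal attachment argument in the second half: one must set up the relative skeleta carefully and prove that the attaching square is genuinely a pushout of boundary inclusions. The delicate point is verifying that, modulo the free $\mathrm{Aut}(T)$-action guaranteed by normality, attaching a $\mathrm{Aut}(T)$-orbit of non-degenerate $T$-dendrices along their boundaries is the same as a single pushout along $\coprod \bord\Omega[T] \to \coprod \Omega[T]$ indexed by orbit representatives — this is precisely the step that fails for non-normal monomorphisms, where nontrivial stabilizers force one to attach quotients of $\Omega[T]$ and the class $\C$ no longer suffices. Once this identification is in place, both inclusions are established and the proof is complete.
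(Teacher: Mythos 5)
Your proposal is correct, but it takes a genuinely different route from the paper: the paper's entire proof is a one-line citation of \cite[Proposition 8.1.35]{Ci3}, a general statement about normal monomorphisms of presheaves on a skeletal (Eilenberg--Zilber) category, of which $\Omega$ is an instance. What you have written is, in effect, a direct reconstruction of the proof of that cited result in the dendroidal case: closure under pushouts, transfinite compositions and retracts via the free-action characterization (the paper's Proposition \ref{caracnormal}, which it derives \emph{from} the present proposition rather than the other way around --- your order of deduction is the more natural one), and the converse inclusion via the relative skeletal filtration by number of vertices, attaching one $\mathrm{Aut}(T)$-orbit of new non-degenerate dendrices per copy of $\bord\Omega[T]\To\Omega[T]$, with freeness of the action being exactly what makes the attaching square a pushout. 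The citation buys brevity and generality; your argument buys self-containedness, at the price of two details you should make explicit in a full write-up. First, the Eilenberg--Zilber property of $\Omega$: every dendrex is the image of a non-degenerate dendrex under a unique degeneracy (this, together with the fact that monomorphisms in $\Omega$ are exactly the faces, \cite[Lemma 3.1]{dend1}, and that faces strictly lower the vertex count, is what identifies the new cells both in your pushout analysis and in the skeletal pushout square). Second, two routine reductions: a pushout of a \emph{coproduct} of boundary inclusions is a transfinite composition of pushouts of single ones, so your stages do lie in the generated class; and in the retract case one must check that the image of a new non-degenerate dendrex is again new and non-degenerate --- both follow by transporting a putative degeneracy or membership in $X'$ back along the retraction, exactly as your equivariance remark suggests. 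With those points filled in, your plan is a complete and correct proof.
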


\begin{proof}
This follows from \cite[Proposition 8.1.35]{Ci3}.
\end{proof}

\begin{prop}\label{caracnormal}
A monomorphism of dendroidal sets $X\To Y$ is normal if and only
if for any tree $T$, the action of $\mathrm{Aut}(T)$ on $Y(T)-X(T)$
is free.
\end{prop}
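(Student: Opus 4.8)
The statement packages two conditions: normality asks only that \emph{non-degenerate} dendrices of $Y$ outside $X$ have trivial stabiliser, whereas freeness of the $\mathrm{Aut}(T)$-action on $Y(T)-X(T)$ asks this of \emph{all} dendrices outside $X$. The implication from freeness to normality is therefore immediate. The plan is to prove the converse: assuming $u:X\To Y$ normal, I fix a tree $T$ and a dendrex $y\in Y(T)$ outside the image of $X(T)$, and show $\mathrm{Aut}(T)_y=\{1\}$ without supposing $y$ non-degenerate. The whole point is to reduce the general case to the non-degenerate one, and the tool for this is the Eilenberg--Zilber property of dendroidal sets \cite{dend1}.

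First I would write the (unique up to isomorphism) Eilenberg--Zilber decomposition $y=\sigma^*z$, where $\sigma:T\To T'$ is a degeneracy (a composite of elementary degeneracies, hence a split epimorphism of $\Omega$) and $z\in Y(T')$ is non-degenerate. The first step is to check that $z$ too lies outside $X$: were $z=u(w)$ with $w\in X(T')$, naturality of $u$ would give $y=\sigma^*z=u(\sigma^*w)$, putting $y$ in the image of $X(T)$ against our hypothesis. Normality then applies to the non-degenerate dendrex $z$ and yields $\mathrm{Aut}(T')_z=\{1\}$.

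It remains to transport this triviality back along $\sigma$, which is the crux. Let $g\in\mathrm{Aut}(T)_y$. Then $y=g^*y=(\sigma g)^*z$, and since $\sigma g$ is an epimorphism of $\Omega$ it factors as $\sigma g=\phi\,\tau$ with $\tau:T\To T'$ a degeneracy and $\phi\in\mathrm{Aut}(T')$ (after identifying the intermediate object with $T'$ via the isomorphism produced); thus $(\tau,\phi^*z)$ is a second Eilenberg--Zilber decomposition of $y$. The uniqueness clause now gives an automorphism $\beta\in\mathrm{Aut}(T')$ with $\tau=\beta\sigma$ and $z=\beta^*(\phi^*z)=(\phi\beta)^*z$. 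The second equation exhibits $\phi\beta$ as an element of $\mathrm{Aut}(T')_z=\{1\}$, so $\beta=\phi^{-1}$ and hence $\sigma g=\phi\tau=\phi\beta\sigma=\sigma$.

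Finally I must deduce $g=\mathrm{id}_T$ from the relation $\sigma g=\sigma$, and this is the step I expect to be the main obstacle, since it is \emph{not} formal: one cannot cancel the epimorphism $\sigma$ on the left. Here the combinatorics of $\Omega$ must be used. The degeneracy $\sigma$ collapses each maximal chain of unary vertices of $T$ onto a single edge of $T'$, so the fibres of $\sigma$ on edges are precisely these linearly ordered chains; the relation $\sigma g=\sigma$ forces $g$ to send each fibre to itself, and a root-preserving automorphism of a tree that stabilises such a chain must fix it pointwise, since a finite totally ordered set has no non-trivial order-automorphism. Thus $g$ fixes every edge of $T$, and an automorphism of a tree fixing all edges is the identity. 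This freeness of degeneracies under precomposition by automorphisms is the one genuinely tree-theoretic ingredient (it reflects the fact that $\Omega$ is a generalized Reedy, indeed an Eilenberg--Zilber, category), and with it the proof is complete.
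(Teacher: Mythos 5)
Your proof is correct, but it follows a genuinely different route from the paper's. The paper disposes of the proposition in three lines by cellular induction: by Proposition \ref{propertiesnormal} (quoted from \cite[Proposition 8.1.35]{Ci3}), the normal monomorphisms form the smallest class stable under pushouts and transfinite compositions containing the boundary inclusions $\bord\Omega[T]\To\Omega[T]$, so it suffices to check that the class of monomorphisms with the free-action property is stable under those operations and contains the boundary inclusions; the converse implication is immediate, exactly as you note. You instead argue element by element, reducing an arbitrary dendrex $y\in Y(T)-X(T)$ to its Eilenberg--Zilber decomposition $y=\sigma^*z$, and all your steps check out: $z$ lies outside $X$ by naturality of $X\To Y$; the uniqueness clause of the decomposition correctly produces $\phi\beta\in\mathrm{Aut}(T')_z=\{1\}$ and hence $\sigma g=\sigma$; and your final cancellation is sound, since the edge-fibres of a degeneracy are linear chains through unary vertices, totally ordered by proximity to the root, a root-preserving automorphism with $\sigma g=\sigma$ permutes each fibre order-preservingly and so fixes it pointwise, and an automorphism of a tree fixing every edge is the identity. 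The comparison: the paper's argument is shorter given the imported saturation machinery and is uniform over all skeletal presheaf categories, while yours is self-contained at the level of the combinatorics of $\Omega$ and, in effect, re-proves one level down exactly the two skeletal-category (Eilenberg--Zilber) ingredients --- the EZ lemma and the property that $\sigma g=\sigma$ forces $g=\mathrm{id}$ for a split epimorphism $\sigma$ --- on which \cite[\S 8.1]{Ci3}, and hence the paper's Proposition \ref{propertiesnormal}, ultimately rest. One bibliographical caveat: the Eilenberg--Zilber property you invoke is not stated in this form in \cite{dend1}; the appropriate reference is the skeletal-category framework of \cite{Ci3} (or a direct verification for $\Omega$, which your fibre analysis essentially supplies), so your proof draws on the same source as the paper's, only at a deeper level.
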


\begin{proof}
It is easily seen that
the class of monomorphisms which satisfy the above property
is stable by pushouts and transfinite compositions, and
contains the boundary inclusions $\bord\Omega[T]\To\Omega[T]$.
It thus follows from the preceding proposition that
any normal monomorphism has this property.
But it is also obvious that any monomorphism with this property
is normal.
\end{proof}

\begin{cor}\label{normalfree}
A dendroidal set $X$ is normal if and only if for any
tree $T$, the action of the group $\mathrm{Aut}(T)$ on $X(T)$
is free.
\end{cor}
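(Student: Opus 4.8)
The plan is to recognize this statement as nothing more than the specialization of Proposition~\ref{caracnormal} to the case where the source of the monomorphism is the initial dendroidal set. Thus the entire argument consists in unwinding a definition and substituting it into the previous proposition.

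First I would recall that, by definition, a dendroidal set $X$ is normal precisely when the unique monomorphism $\emptyset\To X$ out of the initial dendroidal set is a normal monomorphism. I would then apply Proposition~\ref{caracnormal} to this particular monomorphism $\emptyset\To X$: it asserts that $\emptyset\To X$ is normal if and only if, for every tree $T$, the action of $\mathrm{Aut}(T)$ on $X(T)-\emptyset(T)$ is free.

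The last step is to identify the set $X(T)-\emptyset(T)$ with $X(T)$. This holds because $\emptyset$ is the initial object of the presheaf category $\dset$, and the initial object, like all colimits in a presheaf category, is computed objectwise; hence $\emptyset(T)=\emptyset$ for every tree $T$, and $X(T)-\emptyset(T)=X(T)$. Combining the three observations gives exactly the claimed equivalence. There is really no obstacle to overcome here: the only point requiring a word of justification is the objectwise computation of the initial object, which is standard for presheaf categories, so the corollary is immediate.
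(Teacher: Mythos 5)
Your proof is correct and matches the paper's intended argument exactly: the corollary is stated without proof precisely because it is the specialization of Proposition~1.4 to the monomorphism $\emptyset\To X$, with $\emptyset(T)=\emptyset$ objectwise so that $X(T)-\emptyset(T)=X(T)$. Nothing further is needed.
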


\begin{cor}\label{relativenormal}
Given any map of dendroidal sets $X\To Y$, if $Y$ is normal, then $X$
is normal.
\end{cor}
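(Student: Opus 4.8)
The plan is to deduce this immediately from the characterization of normality provided by Corollary~\ref{normalfree}: a dendroidal set $Z$ is normal precisely when, for every tree $T$, the group $\mathrm{Aut}(T)$ acts freely on the set $Z(T)$. So the task reduces to showing that freeness of the $\mathrm{Aut}(T)$-action on $Y(T)$ forces freeness of the $\mathrm{Aut}(T)$-action on $X(T)$, for every tree $T$.

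First I would recall that for a presheaf $Z$ on $\trees$ and an automorphism $\s\in\mathrm{Aut}(T)$, the structural map $Z(\s):Z(T)\To Z(T)$ defines the relevant action of $\mathrm{Aut}(T)$ on $Z(T)$. The crucial point is that any morphism of dendroidal sets $f:X\To Y$ is by definition a natural transformation, so for each tree $T$ the component $f_T:X(T)\To Y(T)$ commutes with these structural maps: one has $f_T(\s\cdot x)=\s\cdot f_T(x)$ for all $\s\in\mathrm{Aut}(T)$ and all $x\in X(T)$. In other words, $f_T$ is $\mathrm{Aut}(T)$-equivariant.

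Next I would invoke the elementary fact that freeness of a group action is inherited along any equivariant map of $G$-sets. Concretely, fix a tree $T$ and suppose some $\s\in\mathrm{Aut}(T)$ fixes an element $x\in X(T)$, that is $\s\cdot x=x$. Applying the equivariant map $f_T$ yields $\s\cdot f_T(x)=f_T(\s\cdot x)=f_T(x)$, so $\s$ fixes the element $f_T(x)\in Y(T)$. Since $Y$ is normal, Corollary~\ref{normalfree} tells us that $\mathrm{Aut}(T)$ acts freely on $Y(T)$, whence $\s$ must be the identity. This shows that the only element of $\mathrm{Aut}(T)$ with a fixed point in $X(T)$ is the identity, i.e.\ that the action on $X(T)$ is free as well.

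As this holds for every tree $T$, a second application of Corollary~\ref{normalfree} (now in the direction from freeness to normality) gives that $X$ is normal, completing the argument. I do not expect any genuine obstacle here: once the characterization of normality in terms of free actions is available, the entire content is the trivial but essential observation that equivariant maps pull back free actions to free actions; no hypothesis on the map $f$ beyond its being a morphism of dendroidal sets is needed.
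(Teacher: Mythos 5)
Your proof is correct and is exactly the intended argument: the paper states this corollary without proof, as an immediate consequence of Corollary~\ref{normalfree}, and the implicit reasoning is precisely yours — the components $f_T:X(T)\To Y(T)$ are $\mathrm{Aut}(T)$-equivariant, and free actions pull back along equivariant maps. Nothing to add.
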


\begin{cor}\label{relativenormalmono}
Any monomorphism $i:A\To B$ with $B$ normal is
a normal monomorphism.
\end{cor}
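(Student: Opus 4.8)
The plan is to deduce this directly from the characterisation of normal monomorphisms given in Proposition~\ref{caracnormal}, combined with Corollary~\ref{normalfree}. By Proposition~\ref{caracnormal}, to prove that $i:A\To B$ is a normal monomorphism it suffices to check that for every tree $T$ the group $\mathrm{Aut}(T)$ acts freely on the complement $B(T)-A(T)$. Thus the entire argument reduces to extracting this freeness from the normality of $B$.

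First I would record that, since $\dset$ is a presheaf category, the monomorphism $i$ is levelwise injective: for each tree $T$ the map $i_T:A(T)\To B(T)$ is an injection, and because $i$ is a morphism of presheaves on $\trees$ this injection is $\mathrm{Aut}(T)$-equivariant. Consequently the image $i_T(A(T))$ is an $\mathrm{Aut}(T)$-stable subset of $B(T)$, and hence so is its complement $B(T)-A(T)$; identifying $A(T)$ with its image, this complement is precisely the set appearing in the criterion of Proposition~\ref{caracnormal}.

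Next, since $B$ is normal, Corollary~\ref{normalfree} tells us that $\mathrm{Aut}(T)$ acts freely on the whole of $B(T)$. The restriction of a free action to any stable subset is again free, so $\mathrm{Aut}(T)$ acts freely on $B(T)-A(T)$. By Proposition~\ref{caracnormal} this says exactly that $i$ is a normal monomorphism.

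There is no genuine obstacle here: the single point that must be noticed is the $\mathrm{Aut}(T)$-equivariance of $i_T$, which is what guarantees that $B(T)-A(T)$ is a stable subset and that the free action on $B(T)$ restricts to a free action on it. Everything else is a purely formal application of the two preceding results.
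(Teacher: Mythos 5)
Your proof is correct and matches the paper's intended argument: the paper states Corollary~\ref{relativenormalmono} without proof, as an immediate consequence of Proposition~\ref{caracnormal} and Corollary~\ref{normalfree}, which is exactly the route you take. Your explicit observation that $i_T$ is $\mathrm{Aut}(T)$-equivariant (so that $B(T)-A(T)$ is a stable subset on which the free action restricts) is precisely the small point the paper leaves implicit.
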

%

\begin{prop}\label{BVtensOK}
Let $A\to B$ and $X\To Y$ be two normal monomorphisms.
The induced map
$$A\otimes Y\amalg_{A\otimes X}B\otimes X\To B\otimes Y$$
is a normal monomorphism.
\end{prop}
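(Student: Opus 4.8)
The plan is to reduce the statement to a question about generating cofibrations, using the characterization of normal monomorphisms furnished by Proposition~\ref{propertiesnormal}. That proposition tells us that the normal monomorphisms form the smallest class closed under pushouts and transfinite compositions containing the boundary inclusions $\bord\Omega[T]\To\Omega[T]$. The pushout-product operation $(f,g)\mapsto f\,\square\,g$, sending
\begin{equation*}
f:A\To B,\quad g:X\To Y
\end{equation*}
to $A\otimes Y\amalg_{A\otimes X}B\otimes X\To B\otimes Y$, is compatible with colimits in each variable. The first step would therefore be to recall the standard fact that if $\mathsf{I}$ is a set of maps and $\mathrm{cell}(\mathsf{I})$ (or rather the closure of $\mathsf{I}$ under pushouts and transfinite composition) is a saturated-type class, then the pushout-product of two such closures is controlled by the pushout-products of the generators. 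Concretely, $f\,\square\,g$ lies in the saturation of $\{f_0\,\square\,g_0\}$ as $f_0,g_0$ range over generators, provided the ambient monoidal structure is closed (which $\otimes$ on $\dset$ is, by the internal Hom $\sHom$ recalled in the excerpt).

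First I would make this reduction precise: since $f$ and $g$ are both normal monomorphisms, Proposition~\ref{propertiesnormal} lets us build each as an iterated pushout/transfinite composition of boundary inclusions. A routine bifunctoriality argument (the pushout-product preserves pushouts and transfinite compositions separately in each variable, and normal monomorphisms are closed under these by the same proposition) then shows that it suffices to treat the case where $f=\bord\Omega[S]\To\Omega[S]$ and $g=\bord\Omega[T]\To\Omega[T]$ are boundary inclusions of representables. So the whole proposition collapses to the single assertion: for all trees $S,T\in\trees$, the map
\begin{equation*}
\Omega[S]\otimes\bord\Omega[T]\amalg_{\bord\Omega[S]\otimes\bord\Omega[T]}\bord\Omega[S]\otimes\Omega[T]\To\Omega[S]\otimes\Omega[T]
\end{equation*}
is a normal monomorphism.

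To finish the reduced case, I would invoke the criterion of Proposition~\ref{caracnormal}: a monomorphism $X\To Y$ is normal precisely when, for every tree $R$, the group $\mathrm{Aut}(R)$ acts freely on $Y(R)-X(R)$. Thus I would compute, for each tree $R$, the set of dendrices of $\Omega[S]\otimes\Omega[T]$ over $R$ not lying in the pushout-product source, and check freeness of the $\mathrm{Aut}(R)$-action on them. The Boardman--Vogt tensor product is defined via $\Omega[S]\otimes\Omega[T]=\nerf_d(S\otimes_{BV}T)$, so the dendrices are maps $R\To S\otimes_{BV}T$ of operads; I would analyze which of these fail to factor through either boundary. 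Since $\Omega[S]$ and $\Omega[T]$ are themselves normal (every representable is), one expects the relevant automorphisms of $R$ to act by permuting the ``new'' tensor cells without fixed points, mirroring the classical simplicial computation.

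\textbf{The main obstacle} I expect is precisely this last freeness verification inside the Boardman--Vogt tensor product. Unlike the cartesian product of simplicial sets, where the cells of $\Delta[m]\times\Delta[n]$ are governed by the transparent combinatorics of shuffles, the tensor product of trees produces a genuinely intricate family of dendrices (shuffles of trees), and tracking how $\mathrm{Aut}(R)$ permutes the non-degenerate tensor dendrices that lie outside the pushout-product source is delicate. The key point to nail down is that any automorphism of $R$ fixing such a dendrex would have to fix its image components in $S$ and $T$ separately, and since those components live in normal dendroidal sets the automorphism must be trivial on the parts detecting $R$; assembling this into genuine freeness of the action on the complement $\big(\Omega[S]\otimes\Omega[T]\big)(R)$ minus the source is where the real work lies.
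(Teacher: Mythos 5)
Your reduction to the case of two boundary inclusions $\bord\Omega[S]\To\Omega[S]$ and $\bord\Omega[T]\To\Omega[T]$ is correct and is exactly how the paper begins (via the standard generating-cofibrations argument, \cite[Lemma 4.2.4]{Ho}, using that $\otimes$ is biclosed). But the endgame has a genuine gap, in two respects. First, your plan to apply the freeness criterion of Proposition \ref{caracnormal} presupposes that the pushout-product map
$$\Omega[S]\otimes\bord\Omega[T]\amalg_{\bord\Omega[S]\otimes\bord\Omega[T]}\bord\Omega[S]\otimes\Omega[T]\To\Omega[S]\otimes\Omega[T]$$
is a monomorphism, and you never prove this. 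That is not automatic: a pushout of monomorphisms along a common subobject injects into the ambient object only if the defining square is a pullback, i.e.\ only if $\bord\Omega[S]\otimes\bord\Omega[T]$ is genuinely the intersection of $\bord\Omega[S]\otimes\Omega[T]$ and $\Omega[S]\otimes\bord\Omega[T]$ inside $\Omega[S]\otimes\Omega[T]$. This is in fact the substantive content of the paper's proof: since $\nerf_d$ preserves pullbacks, it reduces to checking that for elementary faces $S/x\To S$ and $T/y\To T$ the square of Boardman--Vogt tensor products $S/x\otimes^{}_{BV}T/y$, $S/x\otimes^{}_{BV}T$, $S\otimes^{}_{BV}T/y$, $S\otimes^{}_{BV}T$ is a pullback of monomorphisms of operads, which one verifies directly from the definitions.

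Second, the step you flag as ``where the real work lies'' --- freeness of the $\mathrm{Aut}(R)$-action on the complementary dendrices --- is left as a heuristic, and moreover it is attacked at the wrong level: there is no need to analyze shuffles at all. The operad $S\otimes^{}_{BV}T$ is $\Sigma$-free, so $\Omega[S]\otimes\Omega[T]=\nerf_d(S\otimes^{}_{BV}T)$ is a \emph{normal} dendroidal set; then Corollary \ref{relativenormalmono} (any monomorphism with normal codomain is normal), or equivalently Corollary \ref{normalfree} (the automorphism groups act freely on \emph{all} dendrices of a normal dendroidal set, a fortiori on your complement), disposes of the equivariance question in one line. This is the paper's second key observation, and it inverts your division of labour: once normality of the target is noted, the only thing left to prove is the monomorphism property --- precisely the point your proposal takes for granted.
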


\begin{proof}
As the class of normal monomorhisms is generated by the
boundary inclusions, it is sufficient to check this property
in this case; see e.g. \cite[Lemma 4.2.4]{Ho}.

Consider now two trees $S$ and $T$. We have to show that the map
$$\bord\Omega[S]\otimes\Omega[T]\amalg_{\bord\Omega[S]\otimes\bord\Omega[T]}
\Omega[S]\otimes\bord\Omega[T]\To\Omega[S]\otimes\Omega[T]$$
is a normal monomorphism. But as $\Omega[S]\otimes\Omega[T]$
is the dendroidal nerve of the Boardman-Vogt tensor product of $S$ and $T$, which
is $\Sigma$-free, it is a normal dendroidal set.
Hence we are reduced to prove that the
above map is a monomorphism. This latter property is equivalent
to the fact that the commutative square
$$\xymatrix{
\bord\Omega[S]\otimes\bord\Omega[T]\ar[r]\ar[d]&\bord\Omega[S]\otimes\Omega[T]\ar[d]\\
\Omega[S]\otimes\bord\Omega[T]\ar[r]&\Omega[S]\otimes\Omega[T]}$$
is a pullback square in which any map is a monomorphism.
As the nerve functor preserves pullbacks, this reduces
to the following property: for any elementary faces
$S/x\To S$ and $T/y\To T$ the commutative square
$$\xymatrix{
S/x\otimes^{}_{BV}T/y\ar[r]\ar[d]&S/x\otimes^{}_{BV}T\ar[d]\\
S\otimes^{}_{BV}T/y\ar[r]&S\otimes^{}_{BV}T
}$$
is a pullback square of monomorphisms in the category of operads.
This is an elementary consequence of the definitions involved.
\end{proof}

\begin{paragr}
Under the assumtions of Proposition \ref{BVtensOK}, we shall write
$A\otimes Y\cup B\otimes X$ instead of
$A\otimes Y\amalg_{A\otimes X}B\otimes X$.
\end{paragr}

\section{Statement of main results}\label{section2}

In this section, we state the main
results of this paper.

\begin{paragr}\label{definnerextan}
Recall from \cite[Section 5]{dend2} the notion of \emph{inner horn}.
Given an inner edge $e$ in a tree $T$, we get
an inclusion
\begin{equation}\label{innerhorns}
\cornet^e[T]\To\Omega[T]\ ,
\end{equation}
where $\cornet^e[T]$ is obtained as the union of all the images
of elementary face maps which are distinct from the face
$\partial_e : T/e\To T$.
The maps of shape \eqref{innerhorns} are called \emph{inner horn inclusions.}

A map of dendroidal sets is called an \emph{inner anodyne extension}
if it belongs to the smallest class of maps which is stable by pushouts,
transfinite composition and retracts, and which contains the inner horn
inclusions.

A map of dendroidal sets is called an \emph{inner Kan fibration}
if it has the right lifting property with respect to the class
of inner anodyne extensions (or, equivalently, to the set of inner horn
inclusions).

A dendroidal set $X$ is an \emph{inner Kan complex} if the map from $X$ to the terminal
dendroidal set is an inner Kan fibration.
We will also call inner Kan complexes \emph{$\infty$-operads}.
For example, for any operad $\mathcal{P}$,
the dendroidal set $\nerf_d(\mathcal{P})$ is an $\infty$-operad; see \cite[Proposition 5.3]{dend2}.
In particular, for any tree $T$, the dendroidal set $\Omega[T]$ is an
$\infty$-operad. For a simplicial set $K$, its image by $i_!$ is
an $\infty$-operad if and only if $K$ is an \emph{$\infty$-category}
(i.e. $K$ is a quasi-category in the sense of \cite{joyal}).

A map of dendroidal sets will be called a \emph{trivial fibration} if it has the right
lifting property with respect to normal monomorphisms.

Note that the small object argument implies that we can factor any map of
dendroidal sets into a normal monomorphism followed by a trivial fibration
(resp. into an inner anodyne extension followed by an inner Kan fibration).
\end{paragr}

\begin{rem}\label{trivnormal}
A morphism between normal dendroidal sets is a trivial fibration if and only
if it has the right lifting property with respect to monomorphisms: this
follows immediately from Corollaries \ref{relativenormal}
and \ref{relativenormalmono}.
\end{rem}

\begin{paragr}
Recall the naive model structure on the category of operads~\cite{weissthesis}:
the weak equivalences are the equivalences of operads, i.e.
the maps $f:\mathcal{P}\To \mathcal{Q}$ which are fully faithful and
essentially surjective: for any $n+1$-uple of objects
$(a_1,\ldots,a_n,a)$ in $\mathcal{P}$, $f$ induces a bijection
$$\mathcal{P}(a_1,\ldots,a_n;a)\To \mathcal{Q}(f(a_1),\ldots,f(a_n);f(a))\, ,$$
and any object of $\mathcal{Q}$ is isomorphic to the image of some
object in $\mathcal{P}$. The fibrations are \emph{operadic fibrations}, i.e.
the maps $f:\mathcal{P}\To \mathcal{Q}$ such that, given any
isomorphism $\beta:b_0\To b_1$ in $\mathcal{Q}$, and any
object $a_1$ in $\mathcal{P}$ such that $f(a_1)=b_1$, there exists
an isomorphism $\alpha:a_0\To a_1$ in $\mathcal{P}$, such that
$f(\alpha)=\beta$.

This model structure is closely related with the naive model structure
on $\cat$ (for which the weak equivalences are the equivalences of categories).
In fact, the latter can be recovered from the one on operads by slicing over
the unit operad (which is also the terminal category).
The fibrations of the naive model structure on $\cat$ will be called the
\emph{categorical fibrations}.
\end{paragr}

\begin{thm}\label{mainthm}
The category of dendroidal sets is endowed with
a model category structure for which the cofibrations
are the normal monomorphisms, the fibrant objects are the
$\infty$-operads, and the fibrations between fibrant objects
are the inner Kan fibrations between $\infty$-operads whose image
by $\tau_d$ is an operadic fibration.
The class of weak equivalences is the smallest
class of maps of dendroidal sets $\Wpr$ which satisfies the
following three properties.
\begin{itemize}
\item[(a)] (`$2$ out $3$ property')
In any commutative triangle, if two maps are in $\Wpr$, then
so is the third.
\item[(b)] Any inner anodyne extension is in $\Wpr$.
\item[(c)] Any trivial fibration between $\infty$-operads
is in $\Wpr$.
\end{itemize}
\end{thm}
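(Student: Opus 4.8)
The plan is to obtain the model structure by the general machinery of \cite{Ci3} and then do the genuinely operadic work of identifying the fibrant objects and the weak equivalences. First I would produce the structure formally. Take the cofibrations to be the normal monomorphisms; by Proposition \ref{propertiesnormal} these form a cofibrantly generated class (generated by the boundary inclusions $\bord\Omega[T]\to\Omega[T]$), which is the input required by Chapter~1 and Section~8.1 of \cite{Ci3}. As exact cylinder I would take $\mathbf J\otimes(-)$, where $\mathbf J=i_!(J)$ is the image under $i_!$ of the nerve $J$ of the contractible groupoid on two objects; Proposition \ref{BVtensOK} guarantees compatibility with normal monomorphisms, so the construction is monoidal. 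I would let the anodyne extensions be the smallest saturated class containing the inner horn inclusions $\cornet^e[T]\to\Omega[T]$ together with the pushout--products $\{e\}\otimes\Omega[T]\cup\mathbf J\otimes\bord\Omega[T]\to\mathbf J\otimes\Omega[T]$, $e\in\{0,1\}$. Cisinski's theory then yields a cofibrantly generated, left proper, symmetric monoidal model structure whose cofibrations are the normal monomorphisms and whose fibrant objects are those with the right lifting property against this class. Since the inner horns are anodyne, every fibrant object is an $\infty$-operad, and the weak equivalences $\W$ of this structure satisfy (a), (b) (inner anodyne extensions are trivial cofibrations) and (c) (trivial fibrations are weak equivalences); hence $\Wpr\subseteq\W$.

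The crux of the theorem, and its only non-formal input, is the converse for fibrant objects: \emph{every $\infty$-operad is fibrant}. This is the purpose of Sections \ref{section4}--\ref{section6}. One must show that an $\infty$-operad $X$ has the right lifting property not only against inner horns but also against the cylinder maps above; by the enriched structure this amounts to lifting $\mathbf J$-homotopies, i.e. to lifting certain non-inner (outer) horns in the presence of weakly invertible $1$-cells. I would follow the simplicial blueprint: develop dendroidal join operations (Section \ref{section4}) to obtain the lifting property of inner Kan fibrations against outer horns under a weak-invertibility hypothesis, construct a subdivision of the cylinder of a tree into dendroidal horns (Section \ref{section5}), and combine these to prove that a locally (objectwise) weakly invertible morphism of diagrams in an $\infty$-operad is weakly invertible. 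I expect this to be the hard part of the whole argument.

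Granting that $\infty$-operads are fibrant, I would finish the identification $\W=\Wpr$ in two reductions using only (a), (b), (c). For $f\colon A\to B$ in $\W$, the small object argument on inner horns produces inner anodyne $a\colon A\to A'$ and $b\colon B\to B'$ with $A',B'$ $\infty$-operads, hence fibrant; since $a$ is a trivial cofibration and $B'$ is fibrant, $f$ lifts to $f'\colon A'\to B'$ with $f'a=bf$, and $f'\in\W$. As $a,b\in\Wpr$ by (b) and $f'a=bf$, the $2$-out-of-$3$ property gives $f\in\Wpr$ once $f'\in\Wpr$. This reduces matters to a weak equivalence $w\colon A\to B$ between $\infty$-operads. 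For such $w$ I would use the mapping path space: $B^{\mathbf J}=\sHom(\mathbf J,B)$ is again an $\infty$-operad, and the endpoint evaluations $B^{\mathbf J}\to B$ are trivial fibrations (the endpoint inclusion $\eta\to\mathbf J$ is a trivial cofibration, being a section of the trivial fibration $\mathbf J\to\eta=i_!(J\to\Delta^0)$, and $B$ is fibrant). Forming the pullback $N_w=A\times_B B^{\mathbf J}$ along $\mathrm{ev}_0$, the object $N_w$ is an $\infty$-operad and the projection $\mathrm{pr}_1\colon N_w\to A$ is a trivial fibration; moreover $w$ factors as $A\To N_w\To B$, say $w=\tilde p\circ\tilde\jmath$, with $\mathrm{pr}_1\circ\tilde\jmath=\mathrm{id}_A$ and $\tilde p$ a fibration. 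Since $w$ and $\tilde\jmath$ are weak equivalences, $\tilde p$ is a trivial fibration. Now $\mathrm{pr}_1$ and $\tilde p$ are trivial fibrations between $\infty$-operads, so both lie in $\Wpr$ by (c); from $\mathrm{pr}_1\circ\tilde\jmath=\mathrm{id}_A\in\Wpr$ the $2$-out-of-$3$ property gives $\tilde\jmath\in\Wpr$, whence $w=\tilde p\circ\tilde\jmath\in\Wpr$. This avoids any appeal to a $2$-out-of-$6$ property, and combining the two reductions yields $\W\subseteq\Wpr$, hence $\W=\Wpr$.

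Finally I would identify the fibrations between fibrant objects. A fibration $p\colon X\to Y$ between $\infty$-operads has the right lifting property against all trivial cofibrations; restricting to inner horns shows $p$ is an inner Kan fibration, while restricting to the cylinder maps yields lifting of weakly invertible $1$-cells, which through $\tau_d$ is exactly the statement that $\tau_d(p)$ is an operadic fibration. For the converse I would show that an inner Kan fibration $p$ with $\tau_d(p)$ an operadic fibration lifts against every trivial cofibration: it suffices to treat the generators, the inner horns being handled by the inner Kan condition and the cylinder maps by the iso-lifting encoded in the operadic-fibration hypothesis, using the outer-horn lifting results of Section \ref{section4} to reduce the invertibility data to $\tau_d$. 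This is the dendroidal analogue of Joyal's description of the fibrations between quasi-categories as the inner fibrations that are also isofibrations.
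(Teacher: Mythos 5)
Your overall architecture coincides with the paper's: a formal construction of the model structure from Cisinski's machinery, the fibrancy of $\infty$-operads deferred to the join operations and cylinder subdivisions (the paper's Theorem \ref{dendjoyal} and Theorem \ref{thmsubcyl}, assembled in Theorem \ref{characfibinftyopermain}), the characterization of fibrations between fibrant objects via lifting of weakly invertible $1$-cells, and the identification of $\Wpr$ by first replacing an arbitrary weak equivalence by one between $\infty$-operads using the small object argument on inner horns and then invoking Ken Brown's lemma --- your mapping path space argument with $N_w=A\times_B B^{J_d}$ is exactly Ken Brown's lemma unrolled, which is what the proof of Corollary \ref{charoperweakequiv} cites. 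However, your first paragraph contains a genuine gap: Chapter 1 of \cite{Ci3} cannot be applied directly to $\dset$ with the normal monomorphisms as cofibrations. That theory is written for presheaf categories whose cofibrations are \emph{all} monomorphisms, so in particular every object is cofibrant; in $\dset$ this fails for non-normal objects, and the failure is not cosmetic. Already the exact cylinder axiom breaks down: for a non-normal $X$ (say a quotient of $\Omega[T]$ by a nontrivial automorphism group), the map $X\sqcup X\To J_d\otimes X$ need not be a normal monomorphism, since Proposition \ref{BVtensOK} only applies when both inputs are normal. So ``Cisinski's theory then yields\dots'' is not licensed by the cited reference.

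The device your proposal is missing is the paper's normalization trick: fix a normal dendroidal set $E_\infty$ with a trivial fibration to the terminal object, apply Cisinski's Chapter 1 to $\dset/E_\infty$ (equivalent to presheaves on $\Omega/E_\infty$, where every monomorphism is normal by Corollary \ref{relativenormalmono}) to obtain Proposition \ref{cmfovernormal1}, and then transfer along $p_!:\dset/E_\infty\rightleftarrows\dset:p^*$ with $p^*(X)=E_\infty\times X$. Here Lemma \ref{cmfovernormal3} (the unit $X\To E_\infty\times X$ is $J$-anodyne) is what makes the transfer legitimate, and Lemma \ref{cmfovernormal4} is what identifies the fibrant objects and the fibrations between fibrant objects of the transferred structure with the $J$-fibrant objects and the $J$-fibrations --- a point your direct approach would also have to address. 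The same gap propagates to your claims that left properness and monoidality come out of the construction for free: in the paper, left properness is deduced from the slice model structure (where all objects are cofibrant) via the properties of $p^*$, and monoidality requires Proposition \ref{redtrivcofJanext} to reduce trivial cofibrations to $J$-anodyne extensions between normal objects, precisely because no explicit generating set of trivial cofibrations of $\dset$ is available. With this repair made (or with a genuinely more general construction substituted for \cite{Ci3}), the rest of your outline is the paper's own route.
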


\begin{proof}
This follows from Proposition \ref{cmfdendbasic},
Theorem \ref{characfibinftyopermain},
and Corollary \ref{charoperweakequiv}.
\end{proof}

\begin{cor}\label{quillendendopermainthm}
The adjunction $\tau_d:\dset\rightleftarrows\oper:\nerf_d$
is a Quillen pair. Moreover, the two functors $\tau_d$
and $\nerf_d$ both preserve weak equivalences.
In particular, a morphism of operads is an equivalence of
operads if and only if its dendroidal nerve is
a weak equivalence.
\end{cor}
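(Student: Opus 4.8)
The plan is to deduce everything from Theorem~\ref{mainthm}, using throughout that $\nerf_d$ is fully faithful, so that the counit $\tau_d\nerf_d\To\mathrm{id}_{\oper}$ is an isomorphism (in particular $\tau_d\nerf_d(f)\cong f$ for every morphism of operads $f$). I first establish that $(\tau_d,\nerf_d)$ is a Quillen pair. For this it suffices to check that $\tau_d$ preserves cofibrations and that $\nerf_d$ preserves fibrations: indeed, the first property is equivalent by adjunction to the statement that $\nerf_d$ preserves trivial fibrations, so the two together amount to $\nerf_d$ being right Quillen. That $\tau_d$ preserves cofibrations is immediate, since a normal monomorphism is a monomorphism, hence induces an injection on $\eta$-sections; as the colours of $\tau_d(X)$ are the elements of $X(\eta)$, the morphism $\tau_d(A)\To\tau_d(B)$ is then injective on colours, and the morphisms injective on colours are precisely the cofibrations of the naive model structure on operads (the operadic counterpart of the fact that the cofibrations of the canonical model structure on $\cat$ are the functors injective on objects). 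For the other half, let $f\colon\mathcal{P}\To\mathcal{Q}$ be an operadic fibration; since $\nerf_d$ sends every operad to an $\infty$-operad, $\nerf_d(f)$ is a morphism between fibrant objects, so by Theorem~\ref{mainthm} it is a fibration as soon as it is an inner Kan fibration whose image under $\tau_d$ is an operadic fibration. The latter holds because $\tau_d\nerf_d(f)\cong f$, and the former because the nerve of an operad admits unique fillers for inner horns, so every inner-horn lifting problem against $\nerf_d(f)$ has a (unique) solution.

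I next show both functors preserve weak equivalences. Since every operad is fibrant for the naive model structure, Ken Brown's lemma applied to the right Quillen functor $\nerf_d$ shows that $\nerf_d$ sends all equivalences of operads to weak equivalences. For $\tau_d$ I exploit the minimality of $\Wpr$: let $\Wpr'$ be the class of maps $g$ of dendroidal sets with $\tau_d(g)$ an equivalence of operads, and verify that $\Wpr'$ satisfies~(a),~(b) and~(c) of Theorem~\ref{mainthm}, whence $\Wpr\subseteq\Wpr'$. Property~(a) is clear, as $\tau_d$ is a functor and equivalences of operads satisfy $2$-out-of-$3$. For~(b), the class of maps inverted by $\tau_d$ is stable under pushouts, transfinite compositions and retracts (because $\tau_d$ is a left adjoint and isomorphisms are stable under these), and it contains the inner horn inclusions, since $\tau_d(\cornet^e[T])\To\tau_d(\Omega[T])=T$ is an isomorphism (the single contracted inner face is forced by the remaining faces, exactly as $\tau(\cornet^{1}[2])\cong[2]$ in the simplicial case); hence $\tau_d$ inverts every inner anodyne extension.

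The only substantial point is property~(c): that $\tau_d$ sends a trivial fibration $p\colon X\To Y$ between $\infty$-operads to an equivalence of operads. I verify directly that $\tau_d(p)$ is essentially surjective and fully faithful. Essential surjectivity is immediate, since $p$ has the right lifting property against $\bord\Omega[\eta]=\emptyset\To\eta$, hence is surjective on $\eta$-sections, i.e.\ surjective on colours. Full faithfulness comes from the right lifting property of $p$ against the boundary inclusions $\bord\Omega[T]\To\Omega[T]$: surjectivity on operation sets is obtained by lifting a dendrex of $Y$ whose boundary has already been lifted to $X$, and injectivity by lifting, against boundary inclusions of the relevant higher trees, the dendrices of $X$ witnessing a relation in $\tau_d(Y)$. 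This is the operadic analogue of the fact that a trivial fibration of simplicial sets induces an equivalence on fundamental categories; its verification is the main obstacle of the proof, and it is genuinely more delicate here because an $\infty$-operad need not be normal, so one cannot simply invoke Ken Brown (as one can in the simplicial case, where every object is cofibrant). Granting~(c) we obtain $\Wpr\subseteq\Wpr'$, so $\tau_d$ preserves weak equivalences.

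Finally, the last assertion is then formal: if $\nerf_d(f)$ is a weak equivalence, then $f\cong\tau_d\nerf_d(f)$ is an equivalence of operads because $\tau_d$ preserves weak equivalences; conversely, $\nerf_d(f)$ is a weak equivalence whenever $f$ is an equivalence of operads because $\nerf_d$ preserves them.
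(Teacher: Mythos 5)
Your proposal is correct, and it reaches the statement by a route that genuinely differs from the paper's in two places. The paper (proof of Corollary \ref{quillendendoper}) obtains the Quillen pair from the facts that $\tau_d$ preserves cofibrations and sends \emph{all} weak operadic equivalences to equivalences of operads (Proposition \ref{realdendoper}), hence preserves trivial cofibrations; you instead verify directly that $\nerf_d$ preserves fibrations, using the unique inner-horn fillers of dendroidal nerves together with the characterization in Theorem \ref{mainthm} of fibrations between fibrant objects, plus $\tau_d\nerf_d\cong\mathrm{id}$. This is valid and non-circular (Theorem \ref{mainthm} does not depend on the present corollary), though it invokes the full strength of Theorem \ref{characfibinftyopermain}, where the paper's route needs only the preservation of cofibrations once Proposition \ref{realdendoper} is in hand. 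For the preservation of weak equivalences by $\tau_d$ you follow the same skeleton as Proposition \ref{realdendoper} (minimality of $\Wpr$ via Corollary \ref{charoperweakequiv}, $\tau_d$ inverting inner anodyne extensions), but at the crucial step (c) you replace the paper's pair Proposition \ref{pi0Hominftyoperads} and Lemma \ref{trivfibinftyoperadsHom} by a direct lifting argument: against $\bord\Omega[C_n]\To\Omega[C_n]$ for surjectivity on operation sets, and against the boundary inclusion of the two-vertex homotopy tree for injectivity. This is essentially an elementary unwinding of the same content; it buys you independence from the Kan-fibration machinery (Proposition \ref{dendanalogofk3}) that underlies the paper's proof of Proposition \ref{pi0Hominftyoperads}, but it does rely---and you should say so explicitly---on the description of $\tau_d(X)$ for an $\infty$-operad $X$ from \cite[Lemma 6.4 and Proposition 6.6]{dend2}: operations are corolla dendrices modulo the homotopy relation, and for inner Kan complexes this relation is already an equivalence relation, so that any identification in $\tau_d(Y)$ is witnessed by a \emph{single} dendrex of $Y$ (note the slip: you wrote ``dendrices of $X$ witnessing a relation in $\tau_d(Y)$'' where you mean dendrices of $Y$, to be lifted along $p$ to $X$). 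Your observation that Ken Brown's lemma cannot be applied to $\tau_d$ because $\infty$-operads need not be normal is exactly right, and explains why the paper, too, treats trivial fibrations between $\infty$-operads by a lifting argument (Lemma \ref{trivfibinftyoperadsHom}); your treatment of $\nerf_d$ (Ken Brown plus fibrancy of every operad) and of the final assertion (via full faithfulness of $\nerf_d$) coincides with the paper's.
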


\begin{proof}
See \ref{quillendendoper}.
\end{proof}

\begin{prop}\label{addpropertiescmf}
The model category structure of Theorem \ref{mainthm} has the following
additional properties:
\begin{itemize}
\item[(a)] it is left proper;
\item[(b)] it is cofibrantly generated (it is even combinatorial);
\item[(c)] it is symmetric monoidal.
\end{itemize}
\end{prop}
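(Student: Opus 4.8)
The plan is to treat the three assertions separately: (a) and (b) follow from the general theory underlying the construction of Section~\ref{section3}, while the genuine work lies in (c). I would begin with (b). Since $\dset$ is a category of presheaves on the small category $\trees$, it is locally presentable. By Proposition~\ref{propertiesnormal} the cofibrations are generated, under pushouts, transfinite compositions and retracts, by the \emph{set} $\{\bord\Omega[T]\to\Omega[T]\}_{T\in\trees}$ of boundary inclusions, while the construction of the model structure via \cite[Chapter~1 and Section~8.1]{Ci3} simultaneously produces a set of generating trivial cofibrations. A cofibrantly generated model structure carried by a locally presentable category is combinatorial, which gives~(b).

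For (a), I would invoke the same general framework: the model structure is an instance of the Cisinski construction of \cite{Ci3}, whose cofibrations form a class of monomorphisms (here the normal monomorphisms). For model structures of this kind the pushout of a weak equivalence along a cofibration is again a weak equivalence, so left properness holds by the general theory, giving~(a).

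The heart of the matter is (c). By Hovey's criterion \cite{Ho} it suffices to verify the pushout-product axiom on generators, together with the unit axiom. The unit axiom is immediate: the unit $\eta=\Omega[0]$ is a normal dendroidal set, hence cofibrant, so no cofibrant replacement of the unit is needed. For normal monomorphisms $A\to B$ and $X\to Y$, Proposition~\ref{BVtensOK} already shows that $A\otimes Y\cup B\otimes X\to B\otimes Y$ is a normal monomorphism, settling the cofibration part of the axiom. It therefore remains to prove that this pushout-product is, in addition, a weak equivalence as soon as one of the two factors is a trivial cofibration.

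This final point is where I expect the main obstacle to lie, since it amounts to showing that the Boardman--Vogt tensor product is compatible with the weak equivalences singled out by the minimal-class description of Theorem~\ref{mainthm}. Using \cite[Lemma~4.2.4]{Ho} and the stability of trivial cofibrations under pushouts, transfinite compositions and retracts, I would reduce to the case in which $X\to Y$ is a boundary inclusion and $A\to B$ runs over a generating set of trivial cofibrations. The first key step is that the pushout-product of an inner anodyne extension with a normal monomorphism is again inner anodyne --- the dendroidal counterpart of Joyal's analogous result for simplicial sets --- and hence lies in $\Wpr$ by clause~(b) of Theorem~\ref{mainthm}. The delicate part is the remaining generating trivial cofibrations, which arise from the cylinder of the Cisinski construction rather than from inner horns: one must show that these too are carried to weak equivalences upon forming the pushout-product with a boundary inclusion, and this is precisely the place where the homotopical structure is genuinely used. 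Equivalently, via the two-variable adjunction with the internal Hom, the whole statement is the assertion that for every fibration $p\colon X\to Y$ and every normal monomorphism $A\to B$ the induced map
\[
\sHom(B,X)\to\sHom(A,X)\times_{\sHom(A,Y)}\sHom(B,Y)
\]
is a fibration, and a trivial one whenever $p$ is; once the inner-anodyne case is secured, the characterization of $\Wpr$ together with the $2$-out-of-$3$ property propagates the conclusion to all trivial cofibrations.
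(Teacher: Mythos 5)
Your treatment of (a) and (b) matches the paper's, which simply cites Proposition \ref{cmfdendbasic}, though with one caveat on (a): the model structure on $\dset$ is \emph{not} itself a Cisinski-type structure in which every object is cofibrant --- its cofibrations are the normal monomorphisms, not all monomorphisms --- so ``left properness by the general theory'' is not immediate. The paper obtains it by transfer along $p^*\colon\dset\To\dset/E_\infty$, using that every object of $\dset/E_\infty$ \emph{is} cofibrant there, and that $p^*$ preserves colimits and cofibrations while detecting weak equivalences. Your unit axiom and the cofibration half of (c) are exactly as in the paper (Proposition \ref{Jcmfmonoidal} via Proposition \ref{BVtensOK}).

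The genuine gap is the final step of (c). You reduce to generating trivial cofibrations and then assert that ``once the inner-anodyne case is secured, the characterization of $\Wpr$ together with the $2$-out-of-$3$ property propagates the conclusion to all trivial cofibrations.'' This does not work, for two reasons. First, there is no explicit set of generating trivial cofibrations to check: Remark \ref{remcmfdendbasic3} states precisely that none is known. In particular the generating trivial cofibrations are \emph{not} ``inner anodynes plus the cylinder maps''; those generate only the $J$-anodyne extensions, and $J$-anodyne extensions need not exhaust the trivial cofibrations (fibrations are characterized by the lifting property against $J$-anodynes only between fibrant objects). Second, the minimal-class description of $\Wpr$ in Theorem \ref{mainthm} cannot be pushed through $A\otimes(-)$: its clause (c) involves trivial fibrations between $\infty$-operads, and tensoring, being a left adjoint in each variable, preserves neither fibrations nor $\infty$-operads, so the class of maps $f$ with $A\otimes f$ a weak equivalence is not readily seen to satisfy conditions (a)--(c) of that characterization. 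The paper's actual mechanism, absent from your sketch, is Proposition \ref{redtrivcofJanext}: the trivial cofibrations between \emph{normal} dendroidal sets form the smallest class of monomorphisms between normal objects containing the $J$-anodyne extensions and closed under left cancellation (if $j$ and $ji$ lie in the class, so does $i$). Since the generating trivial cofibrations may be chosen between normal objects (Remark \ref{remcmfdendbasic3}), one reduces to showing that $A\otimes(-)$ preserves trivial cofibrations between normal objects; the class of monomorphisms it sends to trivial cofibrations contains the $J$-anodynes by Proposition \ref{tensinneranod1bis} --- itself formal from the inner-anodyne case, because the extra generators $\bord\Omega[T]\otimes J_d\cup\Omega[T]\otimes\{e\}\To\Omega[T]\otimes J_d$ are themselves pushout-products --- and is closed under left cancellation by $2$-out-of-$3$. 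Without this cancellation characterization (or an equivalent argument in the spirit of \cite[Corollary 1.3.35]{Ci3}), your proposal names the decisive step but does not prove it.
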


\begin{proof}
See Propositions \ref{cmfdendbasic} and \ref{Jcmfmonoidal}.
\end{proof}

\begin{cor}\label{cormainthm1}
For any normal dendroidal set $A$ and any $\infty$-operad $X$,
the set of maps $[A,X]=\Hom_{\ho(\dset)}(A,X)$ is canonically
identified with the set of isomorphism classes
of objects in the category $\tau\map(A,X)$.
\end{cor}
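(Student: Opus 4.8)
The plan is to reduce the corollary to the standard description of morphisms in a homotopy category, and then to translate the resulting homotopy relation into a statement about the mapping simplicial set. Since $A$ is normal it is cofibrant for the model structure of Theorem~\ref{mainthm}, and $X$, being an $\infty$-operad, is fibrant. By general model category theory the set $\Hom_{\ho(\dset)}(A,X)$ is therefore canonically the set $[A,X]$ of homotopy classes of maps $A\To X$, the left and right homotopy relations agreeing and being computed by any good path object. On the other hand, the objects of $\tau\map(A,X)$ are the vertices of $\map(A,X)$, and the adjunctions give
$$\map(A,X)_0=\Hom_{\sset}(\Delta[0],i^*\sHom(A,X))\cong\Hom_{\dset}(i_!\Delta[0]\otimes A,X)\cong\Hom_{\dset}(A,X)$$
(using $i_!\Delta[0]=\eta$ and that $\eta$ is the unit of $\otimes$). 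Thus an object of $\tau\map(A,X)$ is exactly a map $A\To X$, and the corollary amounts to matching the homotopy relation on $\Hom_{\dset}(A,X)$ with the relation ``isomorphic in $\tau\map(A,X)$''.

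First I would check that $\map(A,X)$ is an $\infty$-category. As the model structure is symmetric monoidal (Proposition~\ref{addpropertiescmf}) and $A$ is cofibrant while $X$ is fibrant, the adjoint form of the pushout-product axiom shows that the internal Hom $\sHom(A,X)$ is fibrant, i.e. an $\infty$-operad; and since $i_!$ carries inner horn inclusions of simplicial sets to inner horn inclusions of dendroidal sets, its right adjoint $i^*$ preserves inner Kan complexes, so $\map(A,X)=i^*\sHom(A,X)$ is a quasi-category. Next I would exhibit a convenient path object. Let $J$ denote the nerve of the groupoid with two objects and a single isomorphism between them; then $\partial\Delta[1]\To J$ is a monomorphism and $J\To\Delta[0]$ is a weak equivalence in the Joyal structure. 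Using that $i_!\dashv i^*$ is a Quillen adjunction, $i_!$ sends these to a normal monomorphism $i_!\partial\Delta[1]\To i_!J$ and a weak equivalence $i_!J\To\eta$ between cofibrant objects; applying $\sHom(-,X)$ (right Quillen in the first variable, as $X$ is fibrant) then factors the diagonal as
$$X\To\sHom(i_!J,X)\To\sHom(i_!\partial\Delta[1],X)=X\times X,$$
a weak equivalence followed by a fibration. Hence $\sHom(i_!J,X)$ is a good path object for $X$. Unwinding the internal-Hom adjunction, the symmetry of $\otimes$, and $i_!\dashv i^*$, a right homotopy $A\To\sHom(i_!J,X)$ with endpoints $f,g$ is precisely a map of simplicial sets $J\To\map(A,X)$ carrying the two vertices of $J$ to $f$ and $g$.

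It remains to interpret such maps. By Joyal's characterisation of equivalences in a quasi-category, an edge of $\map(A,X)$ is invertible in $\tau\map(A,X)$ if and only if it extends along $\Delta[1]\To J$ to a map $J\To\map(A,X)$; and two vertices are isomorphic in $\tau\map(A,X)$ exactly when joined by such an equivalence. Therefore the right homotopies $f\simeq g$ correspond bijectively to the isomorphisms $f\cong g$ in $\tau\map(A,X)$, so that the homotopy classes of maps $A\To X$ are exactly the isomorphism classes of objects of $\tau\map(A,X)$. Combined with $\Hom_{\ho(\dset)}(A,X)\cong[A,X]$ from the first paragraph, this yields the asserted canonical identification. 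I expect the main obstacle to be precisely this final identification of the abstract homotopy relation with isomorphism in $\tau\map(A,X)$: one must work with the groupoidal interval $J$ rather than $\Delta[1]$ --- reflecting that the ambient theory is quasi-categorical rather than Kan --- and invoke Joyal's theorem on invertible $1$-cells, the verifications that $\map(A,X)$ is a quasi-category and that $\sHom(i_!J,X)$ is a genuine path object being the supporting technical points.
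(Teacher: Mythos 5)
Your proof is correct, but it assembles the identification by a genuinely different route than the paper. The paper (in \eqref{htpyclassesmaps0} and Proposition \ref{htpyclassesmaps}) gets $\Hom_{\ho(\dset)}(A,X)=[A,X]$ for free, because the model structure of Proposition \ref{cmfdendbasic} was \emph{constructed} with $J_d\otimes(-)$ as its cylinder; it then observes that morphisms $J_d\otimes A\To X$ are exactly morphisms $J\To k(\map(A,X))$, so that $[A,X]\simeq\pi_0\,k(\map(A,X))$, and finally identifies this $\pi_0$ with the set of isomorphism classes of $\tau\map(A,X)$ by appealing to its own result that $k(A,X)=k(\map(A,X))$ is a Kan complex (Proposition \ref{dendanalogofk3}, Corollaries \ref{dendanalogofk3etdemi} and \ref{dendanalogofk4}) together with the explicit description of $\tau_d\sHom(A,X)$ from \cite[Proposition 6.6]{dend2}. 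You instead re-derive $\Hom_{\ho(\dset)}(A,X)=[A,X]$ abstractly and compute the homotopy relation with the path object $\sHom(J_d,X)$, whose verification rests on the monoidal axiom (Proposition \ref{Jcmfmonoidal}); your pivot is the same adjunction identity $\Hom_{\dset}(A,\sHom(J_d,X))\simeq\Hom_{\sset}(J,\map(A,X))$ that drives the paper's argument, but you close with Joyal's simplicial criterion that an edge of an $\infty$-category is weakly invertible if and only if it extends along $\Delta[1]\To J$, rather than with the Kan-ness of $k(\map(A,X))$. What each buys: the paper's route recycles the heavy machinery it has already built in Sections \ref{section4}--\ref{section6}, whereas yours trades Corollary \ref{dendanalogofk4} for a quotable theorem of Joyal --- and note that you really use both directions of it, the easy one (a $J$-map yields an isomorphism in $\tau\map(A,X)$) and the hard one (an isomorphism lifts to a right homotopy). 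One stylistic caution: to see that $J_d\To\eta$ is a weak equivalence you invoke the Joyal model structure and the Quillen-ness of $i_!$, which is legitimate at this point since Corollary \ref{corjoyalmodcat} is already available, but it is lighter and more in the paper's spirit to observe that $\{e\}\To J_d$ is one of the generating $J$-anodyne maps (the case $T=\eta$ of the defining set), hence a trivial cofibration, and conclude by two-out-of-three; this keeps the whole argument inside the dendroidal theory without any detour through the simplicial model structure.
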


\begin{proof}
See Proposition \ref{htpyclassesmaps}.
\end{proof}

\begin{cor}\label{charequiinftyoper1}
Let $f:X\To Y$ be a morphism of $\infty$-operads.
The following conditions are equivalent.
\begin{itemize}
\item[(a)] The map $f:X\To Y$ is a weak equivalence.
\item[(b)] For any normal dendroidal set $A$, the map
$$\tau_d\sHom(A,X)\To\tau_d\sHom(A,Y)$$
is an equivalence of operads.
\item[(c)] For any normal dendroidal set $A$, the map
$$\tau\map(A,X)\To\tau\map(A,Y)$$
is an equivalence of categories.
\end{itemize}
\end{cor}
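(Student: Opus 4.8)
The plan is to establish the cyclic chain of implications $(a)\Rightarrow(b)\Rightarrow(c)\Rightarrow(a)$. The recurring tool is that, since the model structure of Theorem~\ref{mainthm} is symmetric monoidal (Proposition~\ref{addpropertiescmf}), for any normal (hence cofibrant) dendroidal set $A$ the functor $-\otimes A$ is left Quillen, so its right adjoint $\sHom(A,-)$ is right Quillen. In particular $\sHom(A,-)$ preserves fibrant objects, so that $\sHom(A,X)$ and $\sHom(A,Y)$ are again $\infty$-operads whenever $X$ and $Y$ are, and $\map(A,X)=i^*\sHom(A,X)$ is an $\infty$-category. I will freely use that $X$ and $Y$ are fibrant, being $\infty$-operads.

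For $(a)\Rightarrow(b)$, I would assume $f$ is a weak equivalence. Since $\sHom(A,-)$ is right Quillen and $X,Y$ are fibrant, Ken Brown's lemma gives that $\sHom(A,f)\colon\sHom(A,X)\To\sHom(A,Y)$ is a weak equivalence between $\infty$-operads. Applying $\tau_d$ and invoking Corollary~\ref{quillendendopermainthm}, which asserts that $\tau_d$ carries weak equivalences to equivalences of operads, I conclude that $\tau_d\sHom(A,f)$ is an equivalence of operads, i.e. $(b)$ holds.

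For $(b)\Rightarrow(c)$, the key point is that for any $\infty$-operad $Z$ the category $\tau(i^*Z)$ is canonically the underlying category of the operad $\tau_dZ$ (its operations of arity one). This is a consequence of the compatibility of $\tau$ and $\tau_d$ through the embedding $i\colon\cats\To\trees$, which identifies $\tau\circ i^*$ with the composite of $\tau_d$ and the underlying-category functor $\oper\To\cat$. That functor sends an equivalence of operads to an equivalence of categories: full faithfulness on all arities entails full faithfulness in arity one, while essential surjectivity is literally the same condition, since the invertible unary operations are exactly the isomorphisms of the underlying category. Applying it to the operad equivalence of $(b)$ yields that $\tau\map(A,f)\colon\tau\map(A,X)\To\tau\map(A,Y)$ is an equivalence of categories, which is $(c)$.

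Finally, for $(c)\Rightarrow(a)$, I would use that an equivalence of categories induces a bijection on isomorphism classes of objects; combined with Corollary~\ref{cormainthm1}, this shows that for every normal $A$ the map $\Hom_{\ho(\dset)}(A,X)\To\Hom_{\ho(\dset)}(A,Y)$ induced by $f$ is a bijection. Because every object of $\dset$ admits a normal replacement and the $\Hom$-sets of $\ho(\dset)$ are computed on cofibrant-fibrant replacements, this bijection then holds for all objects $A$ of $\ho(\dset)$, so by the Yoneda lemma in $\ho(\dset)$ the image of $f$ is an isomorphism and $f$ is a weak equivalence. Given the weight carried by Corollaries~\ref{quillendendopermainthm} and~\ref{cormainthm1}, the argument is essentially formal; the two points demanding genuine care are the identification of $\tau\map(A,-)$ with the underlying category of $\tau_d\sHom(A,-)$ used in $(b)\Rightarrow(c)$, and the naturality of Corollary~\ref{cormainthm1} in the variable $X$ needed to run the Yoneda argument in $(c)\Rightarrow(a)$.
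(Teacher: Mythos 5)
Your proof is correct and follows essentially the same route as the paper, whose (much more compressed) argument invokes exactly the same three ingredients: the monoidality of the model structure (making $\sHom(A,-)$ right Quillen on the fibrant $\infty$-operads), Corollary~\ref{quillendendopermainthm} for $\tau_d$, and Corollary~\ref{cormainthm1} together with the criterion that a map between fibrant objects is a weak equivalence if and only if $[A,X]\To[A,Y]$ is bijective for all normal $A$. Your Yoneda detour in $(c)\Rightarrow(a)$ and the explicit identification of $\tau\map(A,-)$ with the underlying category of $\tau_d\sHom(A,-)$ (stated in the paper in Section~\ref{section4} and in Proposition~\ref{htpyclassesmaps}) merely spell out what the paper leaves implicit.
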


\begin{proof}
Remember that, by definition (and any $\infty$-operad
being fibrant), the map $f$ is
a weak equivalence if and only if, for any
normal dendroidal set $A$, the induced map
$$[A,X]\To [A,Y]$$
is bijective. This corollary is thus a direct consequence of
Corollaries \ref{quillendendopermainthm} and \ref{cormainthm1}
and of the fact the model category structure on $\dset$ is monoidal.
\end{proof}

\begin{cor}\label{charequiinftyoper2}
Let $u:A\To B$ be a morphism of normal dendroidal sets.
The following conditions are equivalent.
\begin{itemize}
\item[(a)] The map $u:A\To B$ is a weak equivalence.
\item[(b)] For any $\infty$-operad $X$, the map
$$\tau_d\sHom(B,X)\To\tau_d\sHom(A,X)$$
is an equivalence of operads.
\item[(c)] For any $\infty$-operad $X$, the map
$$\tau\map(B,X)\To\tau\map(A,X)$$
is an equivalence of categories.
\end{itemize}
\end{cor}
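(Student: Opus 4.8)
The plan is to prove this as a dual of Corollary \ref{charequiinftyoper1}, running the cycle (a)$\Rightarrow$(b)$\Rightarrow$(c)$\Rightarrow$(a). Throughout I would use that, since the cofibrations are the normal monomorphisms, the normal dendroidal sets $A$ and $B$ are exactly the cofibrant objects, while the $\infty$-operads are the fibrant objects. Hence, by the usual Yoneda argument in the homotopy category (every dendroidal set being weakly equivalent to an $\infty$-operad by fibrant replacement), $u$ is a weak equivalence if and only if $u^*\colon\Hom_{\ho(\dset)}(B,X)\To\Hom_{\ho(\dset)}(A,X)$ is bijective for every $\infty$-operad $X$. I would also record two compatibilities to be used repeatedly: for normal $A$ and an $\infty$-operad $X$, the internal hom $\sHom(A,X)$ is again an $\infty$-operad (internal hom from a cofibrant object into a fibrant one in a monoidal model category), and the category $\tau\map(A,X)$ is canonically the underlying category of the operad $\tau_d\sHom(A,X)$.

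For (a)$\Rightarrow$(b) I would invoke the monoidal structure (Proposition \ref{addpropertiescmf}(c)): the internal hom is a right Quillen bifunctor, so for a fixed $\infty$-operad $X$ the functor $\sHom(-,X)\colon\op{\dset}\To\dset$ is right Quillen (where $\op{\dset}$ carries the opposite model structure). By Ken Brown's lemma it preserves weak equivalences between fibrant objects of $\op{\dset}$, that is, between normal dendroidal sets; hence $u^*\colon\sHom(B,X)\To\sHom(A,X)$ is a weak equivalence of $\infty$-operads. Since $\tau_d$ preserves weak equivalences (Corollary \ref{quillendendopermainthm}) and the weak equivalences of operads are exactly the equivalences of operads, $\tau_d(u^*)$ is an equivalence of operads, which is (b).

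For (b)$\Rightarrow$(c) I would apply the underlying-category functor $\oper\To\cat$ (slicing over the unit): it sends equivalences of operads to equivalences of categories, and by the second compatibility above it carries $\tau_d\sHom(-,X)$ to $\tau\map(-,X)$; so (c) follows. For (c)$\Rightarrow$(a), an equivalence of categories is in particular a bijection on isomorphism classes of objects, so by Corollary \ref{cormainthm1} condition (c) says exactly that $\Hom_{\ho(\dset)}(B,X)\To\Hom_{\ho(\dset)}(A,X)$ is bijective for every $\infty$-operad $X$; by the characterization of weak equivalences recalled above, $u$ is a weak equivalence. This closes the cycle.

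Since the two substantial inputs, Corollaries \ref{cormainthm1} and \ref{quillendendopermainthm}, are already available, the only point needing genuine verification is the compatibility $\tau\,i^*\cong j^*\tau_d$, i.e. that the underlying category of the homotopy operad $\tau_d(Y)$ is the homotopy category $\tau(i^*Y)$ of the underlying quasi-category; this holds because the unary operations of $\tau_d(Y)$ are generated by the dendrices on linear trees, with relations matching the simplicial ones. The one subtlety to watch is the logical direction: an equivalence of operads restricts to an equivalence of underlying categories but not conversely, so I must run the argument as the cycle above rather than attempt (c)$\Rightarrow$(b) directly. The rest is formal assembly, exactly parallel to the proof of Corollary \ref{charequiinftyoper1}.
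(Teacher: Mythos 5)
Your proposal is correct and takes essentially the same route as the paper: the paper's proof reduces (a) to bijectivity of $[B,X]\To[A,X]$ for all $\infty$-operads $X$ (cofibrant source, fibrant target) and then invokes precisely your ingredients --- the monoidal model structure for (a)$\Rightarrow$(b), Corollary \ref{quillendendopermainthm} for applying $\tau_d$, the identification of $\tau\map(A,X)$ with the underlying category of $\tau_d\sHom(A,X)$, and Corollary \ref{cormainthm1} for (c)$\Rightarrow$(a). You merely spell out, via the cycle (a)$\Rightarrow$(b)$\Rightarrow$(c)$\Rightarrow$(a) and Ken Brown's lemma, the details the paper leaves implicit by referring back to the proof of Corollary \ref{charequiinftyoper1}.
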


\begin{proof}
The fibrant objects of $\dset$ are exactly the $\infty$-operads.
Hence, the map $u:A\To B$ is a weak equivalence if and only
if, for any $\infty$-operad $X$, the map
$$[B,X]\To[A,X]$$
is bijective. We conclude the proof using the same arguments as in the proof
of Corollary \ref{charequiinftyoper1}.
\end{proof}

\begin{cor}[Joyal]\label{corjoyalmodcat}
The category of simplicial sets is endowed with
a left proper, cofibrantly generated, symmetric monoidal
model category structure for which the cofibrations
are the monomorphisms, the fibrant objects are the
$\infty$-ca\-te\-go\-ries, and the fibrations between fibrant objects
are the inner Kan fibrations between $\infty$-categories whose image
by $\tau$ is a categorical fibration.
\end{cor}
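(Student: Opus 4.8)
The plan is to obtain the desired model structure on $\sset$ by slicing the dendroidal model structure of Theorem~\ref{mainthm} over $\eta$ and transporting it across the canonical equivalence $\dset/\eta\simeq\sset$. For any model category $\M$ and any object $X$, the slice category $\M/X$ carries a model structure whose cofibrations, fibrations and weak equivalences are exactly the maps that become such after applying the forgetful functor $\M/X\To\M$; this is completely general. Applying it to $\M=\dset$ and $X=\eta$, and using that $i_!$ identifies $\sset$ with $\dset/\eta$, I obtain a model structure on $\sset$ in which cofibrations, fibrations and weak equivalences are created by $i_!$.

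It then remains to unwind the three data in the statement. For the cofibrations: since $\eta=\Omega[0]$ is representable it is normal, so by Corollary~\ref{relativenormal} every simplicial set (viewed over $\eta$) is normal, and by Corollary~\ref{relativenormalmono} every monomorphism with normal target is a normal monomorphism; hence the cofibrations of the slice structure are precisely the monomorphisms of simplicial sets. For the fibrant objects: an object $K\To\eta$ is fibrant if and only if $i_!(K)$ is fibrant in $\dset$, i.e. if and only if $i_!(K)$ is an $\infty$-operad, which by the recollection in \ref{definnerextan} holds exactly when $K$ is an $\infty$-category.

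The delicate point is the description of the fibrations between fibrant objects. By Theorem~\ref{mainthm} these are the maps $i_!(K)\To i_!(L)$ between $\infty$-operads that are inner Kan fibrations and whose image under $\tau_d$ is an operadic fibration. First I would use that the image of $i$ is a sieve in $\Omega$: a simplicial set, viewed as a dendroidal set, has an empty set of dendrices at every nonlinear tree, so lifting against the inner horns of nonlinear trees is vacuous, while lifting against the inner horns of linear trees corresponds under the adjunction $i_!\dashv i^*$ to lifting against the inner horn inclusions of simplices. Thus $i_!(K)\To i_!(L)$ is an inner Kan fibration if and only if $K\To L$ is one. Next I would compare $\tau_d\circ i_!$ with the composite of $\tau:\sset\To\cat$ and the inclusion $\cat\To\oper$; these agree, so the condition ``$\tau_d$ of the map is an operadic fibration'' becomes ``$\tau$ of the map, regarded as a map of operads, is an operadic fibration''. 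Finally, since the naive model structure on $\cat$ is recovered from the one on operads by slicing over the unit operad (as recalled before Theorem~\ref{mainthm}), operadic fibrations between categories are exactly the categorical fibrations, which yields the asserted description.

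The remaining properties transfer formally. Left properness and cofibrant generation (indeed combinatoriality) of the slice model structure follow from the corresponding properties of $\dset$ established in Proposition~\ref{addpropertiescmf}, since colimits, cofibrations and weak equivalences are all created by the forgetful functor. For the symmetric monoidal structure, I would use that $i_!$ is a strong symmetric monoidal equivalence onto $\dset/\eta$ carrying the cartesian product of simplicial sets to the Boardman--Vogt tensor product, with $\eta$ as monoidal unit; the pushout-product axiom for monomorphisms of $\sset$ then follows from that for normal monomorphisms of $\dset$, because $i_!$ preserves pushout-products and reflects both cofibrations and trivial cofibrations, while the unit axiom is automatic as every simplicial set is cofibrant. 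I expect the identification of the fibrations between fibrant objects to be the only step requiring genuine care, the rest being a routine unwinding of the slice construction.
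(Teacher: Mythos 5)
Your proposal is correct and takes essentially the same route as the paper, whose entire proof is the one-line observation that the model structure on $\dset$ induces one on $\dset/\eta\simeq\sset$ (citing Remark \ref{remcmfdendbasic2} for $B=\eta$); your unwinding of the cofibrations, fibrant objects, fibrations between fibrant objects, and the properness/monoidality properties simply makes explicit what the paper leaves implicit. The only point to state with more care is that in a slice an object is fibrant when its \emph{structural map} $i_!(K)\To\eta$ is a fibration, which is not in general equivalent to $i_!(K)$ being fibrant in $\dset$; here the two do coincide, since $\eta$ is fibrant (it is an $\infty$-operad) and your sieve argument, applied with $L=\Delta[0]$ and noting that the $\tau_d$-condition is vacuous over the unit operad, shows that the structural map of $i_!(K)$ is automatically a fibration whenever $K$ is an $\infty$-category.
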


\begin{proof}
The model category structure on $\dset$ induces
a model category structure on $\dset/\eta\simeq\sset$;
see also Remark \ref{remcmfdendbasic2} for $B=\eta$.
\end{proof}

\begin{rem}
Note that, the functor $i_!:\sset\To\dset$ is
fully faithful and symmetric monoidal. Moreover,
for any simplicial sets $A$ and $X$, we have
$\map(i_!(A),i_!(X))=X^A$. We deduce from this that
the induced map
$$\Hom_{\ho(\sset)}(A,X)\To\Hom_{\ho(\dset)}(i_!(A),i_!(X))$$
is bijective (where $\ho(\sset)$ denotes the homotopy category of
the Joyal model structure, given by Corollary \ref{corjoyalmodcat}).
As a consequence,
we also have formally the simplicial analogs of Corollaries \ref{cormainthm1},
\ref{charequiinftyoper1} and \ref{charequiinftyoper2}.
\end{rem}

\section{Construction of an abstract model category for $\infty$-operads}\label{section3}

This section is devoted to the construction of a model
category structure on $\dset$. The construction is relatively formal
and uses very little of the theory of dendroidal sets.
By definition, we will have that any fibrant object of
this model category is an $\infty$-operad. The proof
of the converse (any $\infty$-operad is fibrant)
is the `raison d'\^etre' of the next sections.

\begin{prop}\label{tensinneranod}
Let $A\To B$ and $X\To Y$ be an inner anodyne extension and a normal
monomorphism respectively.
The induced map
$$A\otimes Y\cup B\otimes X\To B\otimes Y$$
is an inner anodyne extension.
\end{prop}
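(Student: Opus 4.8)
The plan is to reduce the statement to generators on both sides and then to a single explicit combinatorial pushout-product of an inner horn against a tree boundary. The standard pushout-product formalism (as in \cite[Lemma 4.2.4]{Ho}) tells us that the class of maps $u$ such that the pushout-product $u\,\hat\otimes\,v$ is inner anodyne for \emph{every} normal monomorphism $v$ is itself stable under pushouts, transfinite composition and retracts; since the inner horn inclusions generate the inner anodyne extensions under exactly these operations, it suffices to treat the case where $A\To B$ is an inner horn inclusion $\cornet^e[T]\To\Omega[T]$. Symmetrically, fixing this inner horn and varying $v$, the class of normal monomorphisms $v$ for which the pushout-product is inner anodyne is again closed under pushouts and transfinite compositions, and by Proposition \ref{propertiesnormal} the boundary inclusions generate the normal monomorphisms under these operations. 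So we are reduced to proving that for any two trees $T$ (with a chosen inner edge $e$) and $S$, the map
$$\cornet^e[T]\otimes\Omega[S]\cup\Omega[T]\otimes\bord\Omega[S]\To\Omega[T]\otimes\Omega[S]$$
is an inner anodyne extension.

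First I would record that all the dendroidal sets in sight are normal: by Proposition \ref{BVtensOK} the relevant pushout-product of normal monomorphisms is again a normal monomorphism, so the map we must analyze is at least a normal monomorphism, and in particular a monomorphism. This is the same bookkeeping that was already used in the proof of Proposition \ref{BVtensOK}, and it lets us work filtration-by-filtration without worrying about automorphisms creating non-free cells.

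The heart of the argument is the explicit combinatorial analysis of $\Omega[T]\otimes\Omega[S]$, which by definition is the dendroidal nerve of the Boardman--Vogt tensor product $T\otimes_{BV}S$. The nondegenerate dendrices of this tensor product are described by ``shuffles'' of the two trees, and one must exhibit a filtration of $\Omega[T]\otimes\Omega[S]$ starting from the subobject $\cornet^e[T]\otimes\Omega[S]\cup\Omega[T]\otimes\bord\Omega[S]$, adding the missing shuffles one at a time, and checking that each attaching map is the pushout of an inner horn inclusion. The key point is that the chosen inner edge $e$ of $T$ persists as an inner edge in every shuffle that does not already lie in $\cornet^e[T]\otimes\Omega[S]$, so each cell is attached along the horn missing the face that contracts $e$; this is precisely where one needs $e$ to be \emph{inner}, and it is what guarantees the horns produced are inner rather than outer. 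The main obstacle will be setting up the ordering on shuffles (or on the cells to be attached) carefully enough that the intersection of each new cell with the union of the previously attached cells is exactly the inner horn of that cell, so that each step is genuinely a pushout of an inner horn inclusion; this is the standard but delicate ``one cell at a time'' argument, and getting the combinatorial indexing right — rather than any conceptual difficulty — is the crux.
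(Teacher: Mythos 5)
Your reduction to the generating case is correct and is exactly the paper's first step: the paper invokes \cite[Corollary 1.1.8]{Ci3} where you invoke \cite[Lemma 4.2.4]{Ho}, both of which do the same job, and your appeal to Proposition \ref{propertiesnormal} to generate the normal monomorphisms by boundary inclusions is right, as is the normality bookkeeping via Proposition \ref{BVtensOK}. But the generator case is where the entire content of the proposition lives, and your proposal does not prove it. That case is precisely \cite[Proposition 9.2]{dend2}, which the paper simply cites and which occupies most of Section 9 of that paper; since you do not cite it, you would have to carry out the combinatorics yourself, and the plan you sketch fails as stated.

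Concretely, you propose to filter $\Omega[T]\otimes\Omega[S]$ by adjoining the shuffles (percolation schemes) one at a time, claiming that each new shuffle meets the union of the previous stages in a single inner horn, namely the horn at ``the'' face contracting $e$. Neither half of this is right. First, an edge $e$ of $T$ has many copies $(e,s)$ in a given shuffle, one for each $S$-colour $s$ it gets paired with, so there is no single $e$-contracting face to speak of. Second, and more seriously, the intersection of a new shuffle $\Omega[T_{k+1}]$ with the earlier stages is a large union of faces --- all faces missing an $S$-colour (these lie in $\Omega[T]\otimes\bord\Omega[S]$), together with faces identified with faces of earlier shuffles via the Boardman--Vogt relation --- and this union is in general not a horn. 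This is exactly why the proof of \cite[Proposition 9.2]{dend2} must refine each step $A_k\To A_{k+1}$ into a long sequence of inner horn pushouts indexed by \emph{initial segments} containing \emph{spines}, with \emph{characteristic edges} guaranteeing that each sub-step attaches along a genuine inner horn; the same machinery is replayed at length in Section \ref{section5} (Theorem \ref{thmsubcyl}) and Appendix \ref{appB} of the present paper for the special case $S=\Delta[1]$, and even that case takes several pages. So the ``crux'' you flag --- choosing an ordering of shuffles so that each attaches along one inner horn --- is not a matter of careful indexing: the one-horn-per-shuffle structure is simply unavailable, and the missing idea is the finer spine and characteristic-edge filtration (or, alternatively, the citation of \cite[Proposition 9.2]{dend2}, which is all the paper's own proof consists of after the formal reduction).
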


\begin{proof}
Using \cite[Corollary 1.1.8]{Ci3}, we see that
it is sufficient to check this property when $A\to B$
is an inner horn inclusion and when $X\To Y$ is a boundary inclusion.
This proposition thus follows from \cite[Proposition 9.2]{dend2}.
\end{proof}

\begin{paragr}
We denote by $J$ the nerve of the contractible groupoid with two objects $0$
and $1$ (i.e. $J$ is the nerve of the fundamental groupoid of $\Delta[1]$).
We will write $J_d=i_!(J)$ for the corresponding dendroidal set.

A morphism of dendroidal sets is a \emph{$J$-anodyne extension}
if it belongs to the smallest class of maps which contains
the inner anodyne extensions and the maps
$$\bord\Omega[T]\otimes J_d\cup\Omega[T]\otimes\{e\}\To\Omega[T]\otimes J_d\quad
\text{$T\in\Omega$, $e=0,1$\, ,}$$
and which is closed under pushouts, transfinite compositions and retracts.

A morphism of dendroidal sets will be called a \emph{$J$-fibration}
if it has the right lifting property with respect to
$J$-anodyne extensions.

A dendroidal set $X$ is $J$-fibrant if the map from $X$ to the terminal
dendroidal set is a $J$-fibration.
\end{paragr}

\begin{prop}\label{tensinneranod1bis}
Let $A\To B$ and $X\To Y$ be a $J$-anodyne extension and a normal
monomorphism respectively.
The induced map
$$A\otimes Y\cup B\otimes X\To B\otimes Y$$
is a $J$-anodyne extension.
\end{prop}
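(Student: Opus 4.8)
The plan is to argue exactly as in the proof of Proposition~\ref{tensinneranod}, using a two--variable saturation reduction together with the associativity of the pushout--product. For maps $f:A\To B$ and $g:X\To Y$ of normal dendroidal sets I write $f\mathbin{\widehat{\otimes}}g$ for the pushout--product $A\otimes Y\cup B\otimes X\To B\otimes Y$; by Proposition~\ref{BVtensOK} this is again a normal monomorphism when $f$ and $g$ are, so the construction is everywhere defined. The class of $J$-anodyne extensions is saturated and generated by the inner horn inclusions $\cornet^e[T]\To\Omega[T]$ together with the maps $j_e\mathbin{\widehat{\otimes}}\iota_S$, where $j_e:\{e\}\To J_d$ ($e=0,1$) and $\iota_S:\bord\Omega[S]\To\Omega[S]$ is a boundary inclusion; the class of normal monomorphisms is saturated and generated by the boundary inclusions (Proposition~\ref{propertiesnormal}). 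First I would invoke \cite[Corollary~1.1.8]{Ci3} to reduce the statement to the case where $A\To B$ is one of these two kinds of generators and $X\To Y$ is a boundary inclusion $\iota_T:\bord\Omega[T]\To\Omega[T]$.

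When the generator $A\To B$ is an inner horn inclusion, the map $(A\To B)\mathbin{\widehat{\otimes}}\iota_T$ is an inner anodyne extension by Proposition~\ref{tensinneranod}, hence a $J$-anodyne extension by definition; so this case is immediate.

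The interesting case is a generator of the second kind, $f=j_e\mathbin{\widehat{\otimes}}\iota_S$. Here I would use that, since the Boardman--Vogt tensor product is associative (and symmetric), the pushout--product is associative, so that
$$f\mathbin{\widehat{\otimes}}\iota_T=(j_e\mathbin{\widehat{\otimes}}\iota_S)\mathbin{\widehat{\otimes}}\iota_T\simeq j_e\mathbin{\widehat{\otimes}}(\iota_S\mathbin{\widehat{\otimes}}\iota_T)\, .$$
By Proposition~\ref{BVtensOK} the map $m=\iota_S\mathbin{\widehat{\otimes}}\iota_T$ is a normal monomorphism, so it is enough to see that $j_e\mathbin{\widehat{\otimes}}m$ is $J$-anodyne for every normal monomorphism $m$. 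For fixed $e$, the class of monomorphisms $g$ with $j_e\mathbin{\widehat{\otimes}}g$ a $J$-anodyne extension is saturated, because $g\mapsto j_e\mathbin{\widehat{\otimes}}g$ preserves pushouts, transfinite compositions and retracts and the $J$-anodyne extensions form a saturated class; and this class contains all boundary inclusions $\iota_S$ by the very definition of the generators. By Proposition~\ref{propertiesnormal} it therefore contains all normal monomorphisms, finishing the argument.

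I expect the only genuine obstacle to be the handling of the second family of generators. The key idea there is to recognise such a generator as itself a pushout--product and to regroup the resulting triple pushout--product by associativity; Proposition~\ref{BVtensOK} is exactly what is needed to guarantee that $\iota_S\mathbin{\widehat{\otimes}}\iota_T$ is again a normal monomorphism, so that it lies in the saturated class generated by the boundary inclusions and the one--variable saturation argument applies.
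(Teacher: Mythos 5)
Your proof is correct and is essentially the paper's own argument: the paper's proof consists precisely of invoking \cite[Corollary 1.1.8]{Ci3} to reduce to generators and then appealing to Proposition \ref{tensinneranod}, saying the rest ``follows formally from the definition.'' What you have done is spell out that formal step — the inner-horn generators handled by Proposition \ref{tensinneranod}, and the generators $j_e\mathbin{\widehat{\otimes}}\iota_S$ handled by regrouping the triple pushout--product via associativity and a one-variable saturation argument using Propositions \ref{BVtensOK} and \ref{propertiesnormal} — which is exactly the argument the paper leaves implicit.
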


\begin{proof}
Using \cite[Corollary 1.1.8]{Ci3},
this follows formally from the definition and from Proposition \ref{tensinneranod}.
\end{proof}

\begin{paragr}\label{cmfovernormal}
Let $B$ be a dendroidal set. Denote by $\mathsf{An}_B$ the class of maps
of $\dset/B$ whose image in $\dset$ is $J$-anodyne.
For each dendroidal set $X$ over $B$, with structural map $a:X\To B$,
we define a cylinder of $X$ over $B$
\begin{equation}\label{cmfovernormal0}
\begin{split}
\xymatrix{
X\amalg X\ar[r]^{(\partial^0_X,\partial^1_X)}\ar[dr]_{(a,a)}
&J_d\otimes X\ar[r]^{\sigma_X}\ar[d]^{a'}&X\ar[ld]^a\\
&B&
}\end{split}
\end{equation}
in which $\partial^e_X$ is the tensor product of $\{e\}\To J_d$ with $1_X$,
while $\sigma_X$ is the tensor product of $J_d\To \eta$ with $1_X$, and $a'$
is the composition of $1_{J_d}\otimes a$ with the map $\sigma_B$.

These cylinders over $B$ define the notion \emph{$J$-homotopy over $B$}
(or \emph{fiberwise $J$-homotopy})
between maps in $\dset/B$. Given two dendroidal sets $A$ and $X$ over $B$,
we define $[A,X]_B$ as the quotient of the set $\Hom_{\dset/B}(A,X)$
by the equivalence relation generated by the relation of $J$-homotopy
over $B$. A morphism $A\To A'$ of dendroidal sets over $B$ is
a \emph{$B$-equivalence} if, for any dendroidal set $X$ over $B$
such that the structural map $X\To B$ is a $J$-fibration, the map
$$[A',X]_B\To [A,X]_B$$
is bijective.

In the case $B$ is normal, any monomorphism over $B$ is normal;
see Corollaries \ref{relativenormal} and \ref{relativenormalmono}.
We see from Proposition \ref{tensinneranod1bis}
and from \cite[Lemma 1.3.52]{Ci3} that the class $\mathsf{An}_B$ is a class of anodyne extensions
with respect to the functorial cylinder \eqref{cmfovernormal0}
in the sense of \cite[Definition 1.3.10]{Ci3}.
In other words, the functorial cylinder \eqref{cmfovernormal0}
and the class $\mathsf{An}_B$ form a \emph{homotopical structure}
on the category $\dset/B$ in the sense of \cite[Definition 1.3.14]{Ci3}.
As a consequence, a direct application of \cite[Theorem 1.3.22,
Proposition 1.3.36 and Lemma 1.3.52]{Ci3}
leads to the following statement\footnote{The results of \cite{Ci3}
are stated for presheaves categories, so that, strictly speaking, to
apply them, we implicitely use the canonical equivalence of
categories between $\dset/B$ and the category of presheaves on $\Omega/B$.}.
\end{paragr}

\begin{prop}\label{cmfovernormal1}
For any normal dendroidal set $B$, the category $\dset/B$
of dendroidal sets over $B$ is endowed with a left proper
cofibrantly generated model category structure for which the
weak equivalences are the $B$-equivalences, the cofibrations are the
monomorphisms, and the fibrant objects are the dendroidal sets $X$ over $B$
such that the structural map is a $J$-fibration. Moreover,
a morphism between fibrant objects is a fibration in $\dset/B$
if and only if its image in $\dset$ is a $J$-fibration.
\end{prop}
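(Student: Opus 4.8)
The plan is to verify that the hypotheses of the machinery from \cite{Ci3} are met, so that the conclusion follows essentially by citation, exactly as the preamble in \ref{cmfovernormal} indicates. The key observation is that everything has been arranged so that $\dset/B$, equipped with the functorial cylinder \eqref{cmfovernormal0} and the class $\mathsf{An}_B$ of maps which become $J$-anodyne in $\dset$, forms a \emph{homotopical structure} in the sense of \cite[Definition 1.3.14]{Ci3}. First I would confirm the identification: since $B$ is normal, by Corollaries \ref{relativenormal} and \ref{relativenormalmono} every monomorphism in $\dset/B$ is a normal monomorphism, so the cofibrations of the putative model structure (the monomorphisms) coincide with the normal monomorphisms, which by Proposition \ref{propertiesnormal} are generated by boundary inclusions and hence form a suitable class for Cisinski's theory. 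The functorial cylinder is the one coming from tensoring with $J_d$ over $B$, and $\mathsf{An}_B$ is verified to be a class of anodyne extensions relative to this cylinder precisely by combining Proposition \ref{tensinneranod1bis} (stability of $J$-anodyne extensions under pushout-products with normal monomorphisms) with \cite[Lemma 1.3.52]{Ci3}.

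Granting that this is a homotopical structure, I would then simply invoke the main existence theorem \cite[Theorem 1.3.22]{Ci3}, which produces a cofibrantly generated model category structure on the presheaf category in which the cofibrations are the monomorphisms, the weak equivalences are the maps inverted by the homotopy relation (these are, by \cite[Proposition 1.3.36]{Ci3}, exactly the $B$-equivalences as defined via the sets $[A,X]_B$), and the fibrant objects are those $X$ whose structural map has the right lifting property against $\mathsf{An}_B$, i.e.\ against the $J$-anodyne maps in $\dset$ — which is to say that $X\To B$ is a $J$-fibration. Left properness is part of the output of \cite[Theorem 1.3.22]{Ci3} for such structures, since all objects are cofibrant. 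The characterization of fibrations between fibrant objects (a map is a fibration iff its image in $\dset$ is a $J$-fibration) is likewise one of the standard consequences recorded in that same block of results in \cite{Ci3}.

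The one genuinely non-formal point, flagged in the footnote of \ref{cmfovernormal}, is that the results of \cite{Ci3} are stated for categories of presheaves on a small category, whereas $\dset/B$ is a slice. I would resolve this by using the canonical equivalence $\dset/B\simeq\pref{\trees/B}$ between the slice of a presheaf topos and the presheaf category on the category of elements $\trees/B$; under this equivalence monomorphisms, cylinders, and the class $\mathsf{An}_B$ all transport to the corresponding notions, so the cited theorems apply verbatim. The main (and only real) obstacle is therefore bookkeeping: checking that the axioms of a homotopical structure in the precise sense of \cite[Definition 1.3.14]{Ci3} are satisfied — stability of $\mathsf{An}_B$ under the relevant operations and compatibility with the cylinder — rather than proving any substantive homotopy-theoretic fact. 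Since the heavy lifting (the pushout-product stability) is already done in Proposition \ref{tensinneranod1bis}, what remains is routine verification of the hypotheses of Cisinski's general theorem.
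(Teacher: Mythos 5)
Your proposal is correct and takes essentially the same route as the paper: the paper's proof is exactly the verification in \ref{cmfovernormal} that, since every monomorphism over a normal $B$ is normal (Corollaries \ref{relativenormal} and \ref{relativenormalmono}), Proposition \ref{tensinneranod1bis} together with \cite[Lemma 1.3.52]{Ci3} makes the cylinder \eqref{cmfovernormal0} and the class $\mathsf{An}_B$ a homotopical structure, after which \cite[Theorem 1.3.22, Proposition 1.3.36 and Lemma 1.3.52]{Ci3} yield all the stated conclusions. Your treatment of the slice-versus-presheaf issue via the equivalence $\dset/B\simeq\pref{\trees/B}$ is precisely the paper's footnote.
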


\begin{rem}
Any $J$-anodyne extension over $B$ is a trivial cofibration
in the model structure of the preceding proposition; see
\cite[Proposition 1.3.31]{Ci3}.
\end{rem}

\begin{lemma}\label{cmfovernormal2}
Let $p:X\To Y$ be a trivial fibration between normal dendroidal sets.
Any section $s:Y\To X$ is a $J$-anodyne extension.
\end{lemma}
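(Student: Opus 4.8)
The plan is to place everything inside the model category $\dset/Y$ furnished by Proposition \ref{cmfovernormal1} and to realize $s$ as a retract of a genuine $J$-anodyne extension. First I would record that $s$ is a monomorphism: from $p\circ s=1_Y$ it is a split monomorphism, and since its target $X$ is normal, Corollary \ref{relativenormalmono} shows that $s$ is a normal monomorphism, hence a cofibration of $\dset/Y$.

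Next I would show that $s$ is a $Y$-equivalence. Since $p$ is a trivial fibration between normal dendroidal sets, Remark \ref{trivnormal} tells us that $p$ has the right lifting property with respect to all monomorphisms, that is, with respect to all cofibrations of $\dset/Y$. Thus, regarded as the structural morphism of $X$ over $Y$ (with terminal object $Y$), $p$ is a trivial fibration of the model category $\dset/Y$, and in particular a $Y$-equivalence. The relation $p\circ s=1_Y$ together with the two-out-of-three property then forces $s$ to be a $Y$-equivalence as well.

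Now I would factor $s$, using the small object argument applied to the generating set of $J$-anodyne extensions (the inner horn inclusions together with the cylinder boundary maps $\bord\Omega[T]\otimes J_d\cup\Omega[T]\otimes\{e\}\To\Omega[T]\otimes J_d$), as $s=q\circ j$ with $j\colon Y\To Z$ a $J$-anodyne extension and $q\colon Z\To X$ a $J$-fibration. The structural map of $Z$ over $Y$ is $p\circ q$, a composite of $J$-fibrations (note that $p$ is itself a $J$-fibration, since a trivial fibration lifts against all monomorphisms and every $J$-anodyne extension is a monomorphism), so $Z$ is fibrant over $Y$; and $X$ is fibrant over $Y$ because $p$ is a $J$-fibration. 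Hence $q$ is a $J$-fibration between fibrant objects, so by the last assertion of Proposition \ref{cmfovernormal1} it is a fibration of $\dset/Y$. As $j$ is a $Y$-equivalence (a $J$-anodyne extension is in particular a trivial cofibration) and $s$ is a $Y$-equivalence, two-out-of-three makes $q$ a $Y$-equivalence, hence a trivial fibration of $\dset/Y$; in particular $q$ has the right lifting property with respect to monomorphisms.

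Finally I would run the retract argument. The square with $s$ on the left, $q$ on the right, top edge $j$ and bottom edge $1_X$ commutes, because $q\circ j=s$; since $s$ is a monomorphism while $q$ is a trivial fibration, this square admits a lift $r\colon X\To Z$ satisfying $r\circ s=j$ and $q\circ r=1_X$. These identities exhibit the arrow $s$ as a retract of the arrow $j$, and as the class of $J$-anodyne extensions is by definition closed under retracts, it follows that $s$ is a $J$-anodyne extension. The only delicate point is the bookkeeping of the third paragraph: one must verify that both $Z$ and $X$ are fibrant over $Y$, so that the abstractly produced $J$-fibration $q$ is genuinely a trivial fibration of $\dset/Y$ — this is precisely where normality, through Remark \ref{trivnormal}, is indispensable.
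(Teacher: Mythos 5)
Your proof is correct, and it takes a genuinely different (more self-contained) route than the paper: the paper's entire proof is a citation of \cite[Corollary 1.3.35]{Ci3}, applied to the homotopical structure of \ref{cmfovernormal} on $\dset/Y$, whereas you in effect reprove that corollary in this special case by a factorization-plus-retract argument inside the model category of Proposition \ref{cmfovernormal1} with $B=Y$. All the steps check out: $s$ is a split monomorphism with normal target, hence a normal monomorphism (Corollary \ref{relativenormalmono}), i.e.\ a cofibration of $\dset/Y$; since every monomorphism over the normal object $Y$ is normal, $p$ has the right lifting property against all cofibrations of $\dset/Y$, so it is a trivial fibration there and two-out-of-three makes $s$ a $Y$-equivalence; the factorization $s=qj$ produces a $J$-fibration $q$ between objects fibrant over $Y$ (your observation that $p$ is itself a $J$-fibration is needed here, and is justified because $J$-anodyne extensions are normal monomorphisms --- the generating maps are pushout-products of normal monomorphisms, Proposition \ref{BVtensOK}, so $p$ lifts against them by the very definition of trivial fibration, without even invoking Remark \ref{trivnormal}); the last assertion of Proposition \ref{cmfovernormal1} then upgrades $q$ to a fibration of $\dset/Y$, two-out-of-three makes it trivial, and the lift in the square $(j,1_X)$ exhibits $s$ as a retract of $j$, a class closed under retracts by definition. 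What your approach buys is transparency: the reader sees exactly where normality of $Y$ enters (monomorphisms over $Y$ are cofibrations; $p$ is a $J$-fibration), at the modest cost of repeating an argument the paper outsources --- and note the paper reuses the same citation \cite[Corollary 1.3.35]{Ci3} again in Proposition \ref{redtrivcofJanext}, which is why the citation route is the economical one there.
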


\begin{proof}
This is a particular case of \cite[Corollary 1.3.35]{Ci3}
applied to the homotopical structure defined in \ref{cmfovernormal}
on $\dset/Y$.
\end{proof}

\begin{paragr}
We fix once and for all a normalization $E_\infty$ of the terminal
dendroidal set: i.e., we choose a normal dendroidal set $E_\infty$
such that the map from $E_\infty$ to the terminal dendroidal set is a trivial
fibration.
\end{paragr}

\begin{lemma}\label{cmfovernormal3}
For any normal dendroidal set $X$, and any map $a:X\To E_\infty$,
the map $(a,1_X):X\To E_\infty\times X$ is a $J$-anodyne extension.
\end{lemma}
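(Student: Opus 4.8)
The plan is to recognize $(a,1_X)$ as a section of a trivial fibration between normal dendroidal sets, and then to apply Lemma \ref{cmfovernormal2} directly.

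First I would record that the second projection $\pi\colon E_\infty\times X\To X$ is a trivial fibration. Writing $\ast$ for the terminal dendroidal set, the square
\[
\xymatrix{
E_\infty\times X\ar[r]\ar[d]_\pi & E_\infty\ar[d]\\
X\ar[r] & \ast
}
\]
exhibits $\pi$ as the base change of the structural map $E_\infty\To\ast$ along $X\To\ast$. By the choice of $E_\infty$, the map $E_\infty\To\ast$ is a trivial fibration; and since trivial fibrations are by definition those maps having the right lifting property with respect to normal monomorphisms — a property which is manifestly stable under base change — it follows that $\pi$ is a trivial fibration.

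Next I would check that $E_\infty\times X$ is normal. The first projection $E_\infty\times X\To E_\infty$ has normal target, so Corollary \ref{relativenormal} forces its source to be normal as well. As $X$ is normal by hypothesis, $\pi$ is therefore a trivial fibration \emph{between normal dendroidal sets}.

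Finally, the defining identity $\pi\circ(a,1_X)=1_X$ shows that $(a,1_X)\colon X\To E_\infty\times X$ is a section of $\pi$. Lemma \ref{cmfovernormal2}, applied to the trivial fibration $\pi$ between normal dendroidal sets, then asserts that every such section is a $J$-anodyne extension; in particular $(a,1_X)$ is one. I do not expect any genuine obstacle here: the argument is entirely formal once the two preliminary observations are in place, and the only points deserving care are precisely those verifications — that $\pi$ is a trivial fibration (stability of right lifting properties under pullback) and that $E_\infty\times X$ is normal (Corollary \ref{relativenormal}).
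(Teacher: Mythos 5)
Your proof is correct and follows essentially the same route as the paper: the paper also observes that $(a,1_X)$ is a section of the projection $E_\infty\times X\To X$, which is a trivial fibration by the defining property of $E_\infty$, and then invokes Lemma \ref{cmfovernormal2}. The only difference is that you spell out the two verifications the paper leaves implicit — stability of trivial fibrations under base change and normality of $E_\infty\times X$ via Corollary \ref{relativenormal} — and both are carried out correctly.
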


\begin{proof}
This follows immediately from Lemma \ref{cmfovernormal2} because
$(a,1_X)$ is a section of the projection $X\times E_\infty\To X$, which
is a trivial fibration by definition of $E_\infty$.
\end{proof}

\begin{lemma}\label{cmfovernormal4}
Let $i:A\To B$ be a morphism of normal dendroidal sets,
and $p:X\To Y$ a morphism of dendroidal sets. The map $p$
has the right lifting property with respect to $i$ in $\dset$
if and only if, for any morphism $B\To E_\infty$, the map
$1_{E_\infty}\times p$ has the right lifting property
with respect to $i$ in $\dset/E_\infty$.
\end{lemma}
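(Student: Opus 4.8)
The plan is to translate lifting problems in $\dset/E_\infty$ against $1_{E_\infty}\times p$ into ordinary lifting problems in $\dset$ against $p$, using the universal property of the product $E_\infty\times(-)$. Fix a structural map $b:B\To E_\infty$, and view $A$ over $E_\infty$ via $b\circ i$. The key observation is that, since both $E_\infty\times X$ and $E_\infty\times Y$ are objects over $E_\infty$ through their first projection, a morphism $A\To E_\infty\times X$ in $\dset/E_\infty$ is the same datum as its second component $A\To X$, the first component being forced to equal $b\circ i$; and likewise for morphisms into $E_\infty\times Y$. Under this dictionary, a commutative lifting square for $i$ against $1_{E_\infty}\times p$ in $\dset/E_\infty$ corresponds, by passage to second components, exactly to a commutative lifting square for $i$ against $p$ in $\dset$, and a solution of the former — which is necessarily of the form $(b,\ell)$, its first component being $b$ by compatibility with the projection — corresponds exactly to a solution $\ell$ of the latter. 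This correspondence is essentially the whole content of the lemma; it remains only to feed it in both directions.

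For the direct implication, I would assume $p$ has the right lifting property with respect to $i$ in $\dset$ and fix an arbitrary $b:B\To E_\infty$. Given a lifting square for $i$ against $1_{E_\infty}\times p$ in $\dset/E_\infty$, its second components form a lifting square for $i$ against $p$ in $\dset$, which by hypothesis admits a solution $\ell:B\To X$. Then $(b,\ell):B\To E_\infty\times X$ is a morphism over $E_\infty$ which solves the original square, as one checks componentwise. Since $b$ was arbitrary, this establishes the right lifting property of $1_{E_\infty}\times p$ against $i$ in $\dset/E_\infty$ for every structural map $B\To E_\infty$.

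For the converse I first need to produce at least one map $b:B\To E_\infty$ in order to invoke the hypothesis, and this is the only nonformal step — it is precisely where the normality of $B$ and the choice of $E_\infty$ enter. Since $B$ is normal, the map $\emptyset\To B$ is a normal monomorphism, and since the structural map $E_\infty\To\ast$ to the terminal dendroidal set is a trivial fibration by the very choice of $E_\infty$, it has the right lifting property with respect to normal monomorphisms; lifting $B\To\ast$ through $E_\infty\To\ast$ yields the desired $b:B\To E_\infty$. Now, given any lifting square for $i$ against $p$ in $\dset$, I would equip $A$ and $B$ with the structural maps $b\circ i$ and $b$, promoting the square to a lifting square for $i$ against $1_{E_\infty}\times p$ in $\dset/E_\infty$. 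Its solution, necessarily of the form $(b,\ell)$, has second component $\ell:B\To X$ the sought lift in $\dset$. The main (and really the only) obstacle is the existence of $b$; everything else is a routine unwinding of the universal property of the product inside a slice category.
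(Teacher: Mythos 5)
Your proof is correct and takes essentially the same approach as the paper's: both translate the lifting problem in $\dset/E_\infty$ against $1_{E_\infty}\times p$ into the lifting problem in $\dset$ against $p$ via the universal property of the product, and both obtain the needed structural map $b:B\To E_\infty$ from the normality of $B$ together with the fact that $E_\infty\To\ast$ is a trivial fibration. The only difference is cosmetic: you spell out the existence of $b$ and treat the two implications separately, where the paper compresses both into the observed equivalence of the two lifting squares.
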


\begin{proof}
Suppose that $1_{E_\infty}\times p$ has the right lifting property
with respect to $i$, and consider the lifting problem below.
$$\xymatrix{
A\ar[r]^a\ar[d]_i&X\ar[d]^p\\
B\ar[r]_b\ar@{..>}[ru]_\ell&Y}$$
As $B$ is normal, there exists a map $\beta:B\To E_\infty$.
If we write $\alpha=\beta \, i$, we see immediately
that the lifting problem
above is now equivalent to the lifting problem
$$\xymatrix{
A\ar[r]^(.4){(\alpha,a)}\ar[d]_i&E_\infty\times X\ar[d]^{1_{E_\infty}\times p}\\
B\ar[r]_(.4){(\beta,b)}\ar@{..>}[ru]^{(\beta,\ell)}&E_\infty\times Y}$$
and this proves the lemma.
\end{proof}

\begin{paragr}
Given a normal dendroidal set $A$ and a $J$-fibrant
dendroidal set $X$, we denote by $[A,X]$ the quotient
of $\Hom_{\dset}(A,X)$ by the equivalence relation
generated by the $J$-homotopy relation (i.e.,
with the notations of \ref{cmfovernormal},
$[A,X]=[A,X]_e$, where $e$ denotes the terminal dendroidal
set).
\end{paragr}

\begin{prop}\label{cmfdendbasic}
The category of dendroidal sets is endowed with a left proper
cofibrantly generated model category in which the cofibrations
are the normal mono\-mor\-phisms, the fibrant objects are the $J$-fibrant
dendroidal sets, and the fibrations between fibrant objects
are the $J$-fibrations. Furthermore, given a normal
dendroidal set $A$ and a $J$-fibrant dendroidal set $X$, we have
a canonical identification
$$[A,X]=\Hom_{\ho(\dset)}(A,X)\, .$$
\end{prop}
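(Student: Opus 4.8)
The plan is to transport the model structure of Proposition \ref{cmfovernormal1}, applied to the normal base $B=E_\infty$, to the whole category $\dset$ along the adjunction
$$U:\dset/E_\infty\rightleftarrows\dset:(-\times E_\infty),$$
where $U$ forgets the structural map and its right adjoint sends $Y$ to the second projection $Y\times E_\infty\To E_\infty$. Accordingly, I would keep the cofibrations equal to the normal monomorphisms and the trivial fibrations equal to the maps with the right lifting property against normal monomorphisms (as already fixed in \ref{definnerextan}), and declare a map $f$ of $\dset$ to be a weak equivalence when $f\times 1_{E_\infty}$ is a weak equivalence in $\dset/E_\infty$. The two-out-of-three property and stability under retracts are then immediate from the corresponding facts in $\dset/E_\infty$ together with the functoriality of $-\times E_\infty$.

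The essential bridge is Lemma \ref{cmfovernormal4}, which shows that a map $p$ has the right lifting property against a normal monomorphism $i$ in $\dset$ if and only if $1_{E_\infty}\times p$ has it against $i$ in $\dset/E_\infty$. Applying this to the generating boundary inclusions $\bord\Omega[T]\To\Omega[T]$ identifies the trivial fibrations of $\dset$ with the maps $p$ such that $1_{E_\infty}\times p$ is a trivial fibration over $E_\infty$, and the analogous argument for the generating $J$-anodyne extensions identifies the fibrations. Taking the codomain of $p$ to be the terminal dendroidal set specialises this to the statement that $X$ is $J$-fibrant if and only if $X\times E_\infty\To E_\infty$ is a $J$-fibration, i.e.\ fibrant over $E_\infty$; by transport this is exactly fibrancy of $X$ in $\dset$. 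This gives simultaneously that the fibrant objects are the $J$-fibrant dendroidal sets and that the fibrations between them are the $J$-fibrations.

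The required factorizations of any map, as a normal monomorphism followed by a trivial fibration and as a trivial cofibration followed by a fibration, are produced by the small object argument exactly as anticipated in \ref{definnerextan}, the domains of the generators being small in the presheaf category $\dset$; using the identifications of the previous paragraph one checks that each factorization has the announced type. The delicate input is the acyclicity of the transport: one must verify that every relative cell complex built from images under $U$ of the generating trivial cofibrations — that is, every $J$-anodyne extension in $\dset$ — becomes a weak equivalence after applying $-\times E_\infty$. Here Lemmas \ref{cmfovernormal2} and \ref{cmfovernormal3}, showing that sections of trivial fibrations between normal objects and the maps $X\To E_\infty\times X$ are $J$-anodyne, together with the stability result of Proposition \ref{tensinneranod1bis}, provide the comparison that makes $J$-anodyne maps into $E_\infty$-equivalences; left properness is then inherited from $\dset/E_\infty$ since pushouts commute with $-\times E_\infty$. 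I expect this passage between the relative world $\dset/E_\infty$, where Cisinski's machinery applies because every monomorphism over a normal base is normal, and the absolute world $\dset$, where the terminal object is not cofibrant, to be the main obstacle.

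Finally, for the identification $[A,X]=\Hom_{\ho(\dset)}(A,X)$ with $A$ normal and $X$ a $J$-fibrant dendroidal set, I would note that $A$ is cofibrant and $X$ is fibrant, and that the functorial cylinder $J_d\otimes A$ of \eqref{cmfovernormal0}, taken over the terminal object, is a cylinder object for $A$ in this model structure. By construction, two maps $A\To X$ are left homotopic for this cylinder precisely when they are $J$-homotopic. Since maps in the homotopy category from a cofibrant source to a fibrant target are computed as left-homotopy classes, this yields $\Hom_{\ho(\dset)}(A,X)=[A,X]$.
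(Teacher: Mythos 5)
Your overall route is the paper's own: transfer the model structure of Proposition \ref{cmfovernormal1}, taken at $B=E_\infty$, along the adjunction $p_!:\dset/E_\infty\rightleftarrows\dset:p^*$ with $p^*=E_\infty\times(-)$, use Lemma \ref{cmfovernormal4} to recognize the fibrant objects and the fibrations between them, Proposition \ref{propertiesnormal} for the cofibrations, and the generic description of maps from a cofibrant to a fibrant object for $[A,X]=\Hom_{\ho(\dset)}(A,X)$. But there is one genuine slip, exactly at the point you yourself flag as delicate. You assert that the relative cell complexes built from the $U$-images of the generating trivial cofibrations of $\dset/E_\infty$ are ``every $J$-anodyne extension in $\dset$'', and you then only verify that $J$-anodyne maps become weak equivalences after applying $E_\infty\times(-)$. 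This identification is false: the Cisinski-style model structure of Proposition \ref{cmfovernormal1} comes with an abstract generating set of trivial cofibrations that is not known explicitly (cf.\ Remark \ref{remcmfdendbasic3}), and the class of trivial cofibrations is in general strictly larger than the saturation of the $J$-anodyne maps --- which is precisely why $J$-fibrations are only identified with fibrations \emph{between fibrant objects}. Hence proving that $J$-anodyne maps are $E_\infty$-equivalences does not discharge the acyclicity hypothesis of the transfer, and the argument as written does not close.

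The repair is immediate and uses only the lemma you already cite. By Lemma \ref{cmfovernormal3}, the unit $X\To E_\infty\times X$ is $J$-anodyne, hence (by the remark following Proposition \ref{cmfovernormal1}) a trivial cofibration in $\dset/E_\infty$, for every object $X$ over $E_\infty$ (all of which are normal). By naturality of the unit and two-out-of-three, the endofunctor $p^*p_!$ of $\dset/E_\infty$ therefore sends \emph{every} trivial cofibration --- in particular every member of the unknown generating set $\mathcal{J}$ --- to a trivial cofibration, since $p^*$ preserves monomorphisms. As $p^*$ preserves colimits, $p^*$ of any relative $p_!(\mathcal{J})$-cell complex is a relative $p^*p_!(\mathcal{J})$-cell complex, hence a trivial cofibration; this is the acyclicity condition, and it is exactly how the paper argues (``$p^*p_!$ is a left Quillen equivalence''). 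Two smaller remarks: your appeal to Proposition \ref{tensinneranod1bis} in this step is misplaced, since that proposition controls the Boardman--Vogt tensor $\otimes$, not the cartesian product $E_\infty\times(-)$ (its role is upstream, in establishing the homotopical structure behind Proposition \ref{cmfovernormal1} and Lemmas \ref{cmfovernormal2} and \ref{cmfovernormal3}); and your identification of fibrations between fibrant objects needs, besides Lemma \ref{cmfovernormal4}, the final clause of Proposition \ref{cmfovernormal1}, which you use only implicitly. The remainder of your argument --- left properness via cofibrancy of all objects over $E_\infty$ together with $p^*$ preserving colimits and detecting weak equivalences, and the cylinder $J_d\otimes A$ computing $[A,X]$ --- matches the paper and is sound.
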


\begin{proof}
Proposition \ref{cmfovernormal1} applied to $B=E_\infty$
gives us a model category structure on $\dset/E_\infty$.
Consider the adjunction
$$p_!:\dset/E_\infty\rightleftarrows\dset:p^*\, ,$$
where $p^*$ is the functor $X\longmapsto E_\infty\times X$.
It follows obviously from Lemma \ref{cmfovernormal3}
that the functor $p^*p_!$ is a left Quillen
equivalence from the category $\dset/E_\infty$ to itself.
This implies immediately that the adjunction $(p_!,p^*)$
satisfies all the necessary hypothesises to define a model
structure on $\dset$ by transfer; see e.g.
\cite{Cr} or \cite[Proposition 1.4.23]{Ci3}.
In other words, the category of dendroidal sets is endowed with
a cofibrantly generated model category structure for which the
weak equivalences (resp. the fibrations) are the maps whose
image by $p^*$ is a weak equivalence (resp. a fibration) in
$\dset/E_\infty$. The description of cofibrations follows from
Proposition \ref{propertiesnormal}.
We know that the fibrations between fibrant objects in $\dset/E_\infty$
are the maps whose image in $\dset$ is a $J$-fibration;
see Proposition \ref{cmfovernormal1}.
The description of fibrant objects and of fibrations between fibrant objects
in $\dset$ as $J$-fibrant objects and $J$-fibrations
is thus a direct consequence of Lemma \ref{cmfovernormal4}. The identification
$[A,X]=\Hom_{\ho(\dset)}(A,X)$ is obtained from the general
description of the set of maps from a cofibrant object to a fibrant
object in an abstract model category. It remains to prove left 
properness: this follows from the left properness of
the model category structure of Proposition \ref{cmfovernormal1}
for $B=E_\infty$ (which is obvious, as any object over $E_\infty$
is cofibrant), and from the fact that $p^*$ preserves cofibrations
as well as colimits, while it preserves and detects weak equivalences.
\end{proof}

\begin{paragr}\label{remcmfdendbasic1}
The weak equivalences of the model structure defined in Proposition
\ref{cmfdendbasic} will be called the \emph{weak operadic
equivalences}.

Given a dendroidal set $A$, a \emph{normalization} of $A$
is a trivial fibration $A'\To A$ with $A'$ normal.
For instance, the projection $E_\infty\times A\To A$ is
a normalization of $A$ (as $E_\infty$ is normal, it follows from
Corollary \ref{relativenormal} that $E_\infty\times A$ is normal).
For a morphism of dendroidal sets $f:A\To B$, the following
conditions are equivalent.
\begin{itemize}
\item[(a)] The map $f$ is a weak operadic equivalence.
\item[(b)] For any commutative square
$$\xymatrix{
A'\ar[r]\ar[d]&A\ar[d]\\
B'\ar[r]&B
}$$
in which the horizontal maps are normalizations, and for any
$J$-fibrant dendroidal set $X$, the map $[B',X]\To [A',X]$
is bijective.
\item[(c)] There exists a commutative square
$$\xymatrix{
A'\ar[r]\ar[d]&A\ar[d]\\
B'\ar[r]&B
}$$
in which the horizontal maps are normalizations
such that, for any $J$-fibrant dendroidal set $X$, the map $[B',X]\To [A',X]$
is bijective.
\end{itemize}
\end{paragr}

\begin{rem}\label{remcmfdendbasic2}
Given a normal $J$-fibrant dendroidal set $B$, the model structure
induced on $\dset/B$ by the model structure of Proposition \ref{cmfdendbasic}
coincide with the model structure of Proposition \ref{cmfovernormal1}
(this follows, for instance, from the fact these model structures
have the same cofibrations and fibrations between fibrant objects).
\end{rem}

\begin{rem}\label{remcmfdendbasic3}
The model category structure of Proposition \ref{cmfdendbasic}
is cofibrantly generated. The generating cofibrations are the
inclusions of shape $\bord\Omega[T]\To\Omega[T]$ for any tree $T$.
We don't know any explicit set of generating trivial cofibrations.
However, we know (from the proof of Proposition \ref{cmfdendbasic})
that there exists a generating set of trivial cofibrations
$\mathcal{J}$ for the model structure on $\dset/E_\infty$, such that
$p_!(\mathcal{J})$ is a generating set of trivial cofibrations of $\dset$.
In particular, there exists a generating set of trivial cofibrations
of $\dset$ which consists of trivial cofibrations between
normal dendroidal sets. Statements about trivial cofibrations
will often be reduced to statements about $J$-anodyne extensions
using the following argument.
\end{rem}

\begin{prop}\label{redtrivcofJanext}
The class of trivial cofibrations between normal dendroidal sets is the
smallest class $\C$ of monomorphisms between normal dendroidal sets which
contains $J$-anodyne extensions, and such that, given any
monomorphisms between normal dendroidal sets
$$\xymatrix{A\ar[r]^i&B\ar[r]^j&C\, ,}$$
if $j$ and $ji$ are in $\C$, so is $i$.
\end{prop}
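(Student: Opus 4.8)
The plan is to prove the two inclusions between $\C$ and the class $\T$ of trivial cofibrations between normal dendroidal sets. First I would check that $\T$ itself is one of the classes competing in the definition of $\C$, so that $\C\subseteq\T$. Indeed, every $J$-anodyne extension is a normal monomorphism and a weak operadic equivalence (it induces a bijection $[B,X]\To[A,X]$ for every $J$-fibrant $X$, so it is a weak operadic equivalence by the criterion of \ref{remcmfdendbasic1} applied with identity normalizations), hence lies in $\T$. Moreover $\T$ enjoys the required left-cancellation property: given monomorphisms $A\xrightarrow{\,i\,}B\xrightarrow{\,j\,}C$ between normal dendroidal sets with $j,ji\in\T$, the map $i$ is a cofibration by Corollary \ref{relativenormalmono} and a weak operadic equivalence by the $2$-out-of-$3$ property, hence $i\in\T$. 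Since $\C$ is the smallest such class, $\C\subseteq\T$.

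For the reverse inclusion I would first record the elementary stability fact that the codomain of a normal monomorphism with normal domain is again normal: if $A$ is normal and $A\To A'$ is a normal monomorphism, then $\mathrm{Aut}(T)$ acts freely on $A(T)$ by Corollary \ref{normalfree} and freely on $A'(T)-A(T)$ by Proposition \ref{caracnormal}, hence freely on $A'(T)$. In particular $J$-anodyne extensions and fibrant replacements by $J$-anodyne extensions preserve normality, which lets me stay inside the world of normal objects throughout.

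The heart of the argument is the special case where the codomain is fibrant: \emph{any trivial cofibration $w\colon A\To C$ between normal dendroidal sets with $C$ a $J$-fibrant object lies in $\C$}. To prove it I would factor $w=p'j'$ with $j'\colon A\To A''$ a $J$-anodyne extension and $p'\colon A''\To C$ a $J$-fibration. Then $A''$ is normal (by the stability fact) and $J$-fibrant (being a $J$-fibration over a $J$-fibrant object), and $p'$ is a weak operadic equivalence by $2$-out-of-$3$; by Proposition \ref{cmfdendbasic} a $J$-fibration between $J$-fibrant objects is a fibration, so $p'$ is a trivial fibration between normal dendroidal sets. Now solve the lifting problem
\[
\xymatrix{
A\ar[r]^{j'}\ar[d]_{w}&A''\ar[d]^{p'}\\
C\ar[r]_{1_C}\ar@{..>}[ru]^{s}&C
}
\]
which has a solution because $w$ is a cofibration and $p'$ a trivial fibration. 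The diagonal $s$ is a section of $p'$, hence a $J$-anodyne extension by Lemma \ref{cmfovernormal2}, so $s\in\C$; and $sw=j'$ is $J$-anodyne, so $sw\in\C$. Applying left-cancellation to $A\xrightarrow{\,w\,}C\xrightarrow{\,s\,}A''$ yields $w\in\C$.

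Finally, for an arbitrary trivial cofibration $u\colon A\To B$ between normal dendroidal sets, I would choose a $J$-anodyne extension $j\colon B\To C$ with $C$ a $J$-fibrant object (using the small object argument on the generating $J$-anodyne extensions; $C$ is normal by the stability fact). Then $j\in\C$, and $ju\colon A\To C$ is a trivial cofibration into a $J$-fibrant object, hence $ju\in\C$ by the special case just proved. Left-cancellation applied to $A\xrightarrow{\,u\,}B\xrightarrow{\,j\,}C$ then gives $u\in\C$, completing the inclusion $\T\subseteq\C$. I expect the main obstacle to be the fibrant-codomain case: the decisive point is arranging, via the lifting problem above, a section $s$ of $p'$ that simultaneously recovers the $J$-anodyne map $j'$ as $sw$, so that the abstract left-cancellation hypothesis can be fed exactly the two $J$-anodyne maps it needs.
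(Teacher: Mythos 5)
Your proof is correct, and it takes a genuinely different route from the paper's. The paper picks a $J$-fibrant resolution functor over $E_\infty$ that preserves monomorphisms (via \cite[Proposition 1.2.35]{Ci3}) to embed the given monomorphism $i\colon A\To B$ in a commutative square with $J$-anodyne horizontal maps $a,b$ and a monomorphism $i'\colon A'\To B'$ between fibrant objects of $\dset/E_\infty$; it then invokes \cite[Corollary 1.3.35]{Ci3} to conclude that $i$ is a trivial cofibration if and only if $i'$ is $J$-anodyne, after which the cancellation property applied to $b$ and $bi=i'a$ (a composite of two $J$-anodyne maps, hence $J$-anodyne) yields the proposition. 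You instead replace only the codomain by a fibrant object and handle the fibrant-codomain case by hand, via the factorization $w=p'j'$ and the section $s$ produced by the lifting problem, with Lemma \ref{cmfovernormal2} guaranteeing that $s$ is $J$-anodyne --- in effect you re-derive, from lemmas already stated in the paper together with the model structure of Proposition \ref{cmfdendbasic}, precisely the consequence of \cite[Corollary 1.3.35]{Ci3} that the paper cites wholesale (Lemma \ref{cmfovernormal2} is itself a special case of that corollary, so the ultimate input is the same, but your use of it is more elementary). What your route buys: it is self-contained relative to the paper's stated lemmas and dispenses entirely with the monomorphism-preserving resolution functor, since the maps you feed to the cancellation hypothesis ($s$ and $sw=j'$, respectively $j$ and $ju$) are $J$-anodyne or trivial cofibrations and hence automatically monomorphisms between normal objects --- your normality-stability observation (free action on $A(T)$ and on $A'(T)-A(T)$ implies free action on $A'(T)$) is genuinely needed here and is correct. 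What the paper's route buys: brevity, by staying inside the slice machinery of \ref{cmfovernormal} where Cisinski's corollary applies verbatim. Two points you pass over quickly do hold but deserve a word: injectivity of $[B,X]\To[A,X]$ for a $J$-anodyne $u$ and $J$-fibrant $X$ uses that the pushout-product of $u$ with $\eta\amalg\eta\To J_d$ is again $J$-anodyne (Proposition \ref{tensinneranod1bis}); and the identity-normalization criterion of \ref{remcmfdendbasic1} requires normal source and target, which every $J$-anodyne map occurring in your argument has, again by the stability fact.
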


\begin{proof}
Let $i:A\To B$ be a monomorphism between normal dendroidal sets.
As $B$ is normal, we can choose a map from $B$ to $E_\infty$.
We can then choose a commutative diagram over $E_\infty$
$$\xymatrix{
A\ar[r]^a\ar[d]_i&A'\ar[d]^{i'}\\
B\ar[r]_b&B'
}$$
in which $a$ and $b$ are $J$-anodyne extensions, $A'$
and $B'$ are fibrant in $\dset/E_\infty$, and $i'$
is a monomorphism: this follows, for instance, from the fact that
any $J$-fibrant resolution functor constructed with the
small object argument applied to the generating set of
$J$-anodyne extensions preserves monomorphisms; see \cite[Proposition 1.2.35]{Ci3}.
Applying \cite[Corollary 1.3.35]{Ci3}
to the model structure of Proposition \ref{cmfovernormal1} for $B=E_\infty$,
we see that $i$ is a trivial cofibration if and only
if $i'$ is a $J$-anodyne extension. This proves the proposition.
\end{proof}

\begin{prop}\label{Jcmfmonoidal}
The model category structure on $\dset$ is symmetric monoidal.
\end{prop}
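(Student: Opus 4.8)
The plan is to verify the two conditions defining a symmetric monoidal model category in the sense of \cite{Ho}: the pushout-product axiom and the unit axiom. The unit axiom is immediate here, since the monoidal unit $\eta=\Omega[0]$ is a representable dendroidal set, hence normal, hence cofibrant; one may therefore take the identity of $\eta$ as a cofibrant resolution of the unit, and nothing remains to check. It thus remains to establish the pushout-product axiom. Given two normal monomorphisms $f\colon A\To B$ and $g\colon X\To Y$, the induced map $A\otimes Y\cup B\otimes X\To B\otimes Y$ is a normal monomorphism by Proposition \ref{BVtensOK}, so the whole problem is to show that this associated map is moreover a weak operadic equivalence whenever $f$ or $g$ is a trivial cofibration. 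By the symmetry of the tensor product and the usual reduction to generators in a cofibrantly generated model category (see \cite[Lemma 4.2.4 and Corollary 4.2.5]{Ho} and \cite[Corollary 1.1.8]{Ci3}), together with the fact that the generating trivial cofibrations may be chosen between normal dendroidal sets (Remark \ref{remcmfdendbasic3}), it is enough to treat the case in which $f$ is a trivial cofibration between normal dendroidal sets and $g$ is an arbitrary normal monomorphism.

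So fix such an $f\colon A\To B$ and $g\colon X\To Y$. First I would replace $f$ by a $J$-anodyne extension between fibrant objects, exactly as in the proof of Proposition \ref{redtrivcofJanext}: choose a commutative square with horizontal $J$-anodyne extensions $a\colon A\To A'$ and $b\colon B\To B'$, with $A'$ and $B'$ fibrant, and with right-hand vertical map $i'\colon A'\To B'$; since $f$ is a trivial cofibration, Proposition \ref{redtrivcofJanext} guarantees that $i'$ is itself a $J$-anodyne extension. Applying the pushout-product with $g$ functorially to this square yields a commutative square whose left vertical map is the associated map $A\otimes Y\cup B\otimes X\To B\otimes Y$, whose right vertical map is the associated map $A'\otimes Y\cup B'\otimes X\To B'\otimes Y$, whose bottom map is $b\otimes 1_Y$, and whose top map is the comparison $A\otimes Y\cup B\otimes X\To A'\otimes Y\cup B'\otimes X$ induced by $a$ and $b$.

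The point is that three of the four maps in this square are already known to be weak equivalences. Indeed, by Proposition \ref{tensinneranod1bis} the pushout-product of a $J$-anodyne extension with a normal monomorphism is again $J$-anodyne, hence a weak operadic equivalence; this applies to the right vertical map (as $i'$ is $J$-anodyne) and to the bottom map $b\otimes 1_Y$ (the pushout-product of $b$ with $\emptyset\To Y$). For the top map, I would argue by the gluing lemma: it is the map induced on pushouts by the three $J$-anodyne maps $a\otimes 1_Y$, $a\otimes 1_X$ and $b\otimes 1_X$, and the relevant structural maps $A\otimes X\To A\otimes Y$ and $A\otimes X\To B\otimes X$ are normal monomorphisms by Proposition \ref{BVtensOK}; since the model structure is left proper (Proposition \ref{cmfdendbasic}), this comparison of pushouts is a weak equivalence. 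The two-out-of-three property for weak operadic equivalences then forces the remaining map $A\otimes Y\cup B\otimes X\To B\otimes Y$ to be a weak equivalence, which is exactly what we want.

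The main obstacle, and the reason the result is not a one-line consequence of Proposition \ref{tensinneranod1bis}, is precisely that not every trivial cofibration is a $J$-anodyne extension, so one cannot directly feed an arbitrary trivial cofibration into the monoidality of $J$-anodyne extensions. The device that circumvents this is Proposition \ref{redtrivcofJanext}, which lets us compare any trivial cofibration between normal objects with a genuinely $J$-anodyne map between fibrant objects; left properness, through the gluing lemma, is then exactly what is needed to transport the known equivalence along the fibrant-replacement square. I would therefore expect the only delicate point to be the verification that the comparison map on sources is a weak equivalence, that is, the correct application of the gluing lemma, the remaining verifications being formal consequences of Propositions \ref{BVtensOK} and \ref{tensinneranod1bis}.
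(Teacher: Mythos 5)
Your argument has one genuine (though easily repaired) gap: you reduce only $f$ to generators and keep $g\colon X\To Y$ an \emph{arbitrary} normal monomorphism, but several of your subsequent applications of Proposition~\ref{tensinneranod1bis} require more. The maps $a\otimes 1_X$, $a\otimes 1_Y$, $b\otimes 1_X$ and $b\otimes 1_Y$ are the pushout-products of the $J$-anodyne maps $a,b$ with $\emptyset\To X$ and $\emptyset\To Y$, so Proposition~\ref{tensinneranod1bis} applies only when $\emptyset\To X$ and $\emptyset\To Y$ are normal monomorphisms, i.e.\ when $X$ and $Y$ are themselves normal dendroidal sets. A normal monomorphism $X\To Y$ does not imply this: the identity of any non-normal dendroidal set is a normal monomorphism, and for non-normal $Y$ there is no reason, at this stage of the paper, why tensoring with $Y$ should preserve weak equivalences --- that is essentially what the proposition under proof is meant to deliver. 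The cofibration legs you need for the gluing lemma, $A\otimes X\To B\otimes X$ and $A'\otimes X\To B'\otimes X$, likewise require $X$ normal via Proposition~\ref{BVtensOK}. The repair is immediate and already contained in the reduction you invoke: \cite[Lemma 4.2.4]{Ho} lets you also take $g$ to be a generating cofibration $\bord\Omega[T]\To\Omega[T]$ (this is exactly what the paper does), whence $X$ and $Y$ are normal and every step goes through; note too that $A'$ and $B'$ are normal because they live over $E_\infty$, which your gluing argument tacitly uses.

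With that fix your proof is correct, but it is organized differently from the paper's. The paper first reduces the pushout-product axiom to the assertion that $X\longmapsto A\otimes X$ preserves trivial cofibrations between normal dendroidal sets (the corner-map bookkeeping being the standard pushout plus two-out-of-three argument), and then applies the \emph{statement} of Proposition~\ref{redtrivcofJanext} as a recognition principle: the class of monomorphisms $i$ between normal objects with $A\otimes i$ a trivial cofibration contains the $J$-anodyne extensions by Proposition~\ref{tensinneranod1bis}, and is closed under the cancellation property by two-out-of-three together with Proposition~\ref{BVtensOK}; no left properness or gluing lemma is needed. You instead re-run the fibrant-replacement construction from the \emph{proof} of Proposition~\ref{redtrivcofJanext} (extracting that $i'$ is $J$-anodyne, which indeed lives in the proof rather than the statement) and transport the known equivalence across the replacement square using left properness and the gluing lemma. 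Both routes are valid --- left properness is established in Proposition~\ref{cmfdendbasic} before this point, so there is no circularity --- and your explicit treatment of the unit axiom (the unit $\eta$ being normal, hence cofibrant) is a point the paper leaves implicit; the paper's minimal-class argument is simply leaner, avoiding properness and the normality bookkeeping that tripped you up.
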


\begin{proof}
As we already know that normal monomorphisms
are well behaved with respect to the tensor product
(Proposition \ref{BVtensOK}) it just remains
to prove that, given a normal monomorphism
$i:A\to B$ and a trivial cofibration
$j:C\To D$, the induced map
$$A\otimes D\cup B\otimes C\To B\otimes D$$
is a trivial cofibration. According to
\cite[Lemma 4.2.4]{Ho}, we can assume that
$i$ is a generating cofibration, and $j$
a generating trivial cofibration. In particular, we can
assume that $i$ and $j$ are monomorphisms between normal
dendroidal sets; see Remark \ref{remcmfdendbasic2}.
It is thus sufficient to prove that, given a normal
dendroidal set $A$, the functor $X\longmapsto A\otimes X$
preserves trivial cofibrations between normal dendroidal
sets. By Proposition \ref{redtrivcofJanext}, it is even
sufficient to prove that tensor product by $A$
preserves $J$-anodyne extensions, which
follows from Proposition \ref{tensinneranod1bis}.
\end{proof}

\section{The join operation on trees}\label{section4}

The aim of this section is to study a dendroidal analog of the
join operations on simplicial sets introduced by Joyal in \cite{joyal}.
We shall prove a generalization of \cite[Theorem 2.2]{joyal};
see Theorem \ref{dendjoyal}.

\begin{paragr}
Let $X$ be a $\infty$-operad. A $1$-simplex of $X$
(i.e. a map $\Delta[1]\To i^*(X)$) will be called
\emph{weakly invertible} if the corresponding morphism
in the category $\tau(i^*(X))$ is an isomorphism.

Note that, for any $\infty$-operad $X$, the category
$\tau(i^*(X))$ is canonically isomorphic to the
category underlying the operad $\tau_d(X)$: this comes
from the explicit description of $\tau(i^*(X)$
given by Boardman and Vogt (see \cite[Proposition 1.2]{joyal})
and from its dendroidal generalization, which
describes $\tau_d(X)$ explicitely; see \cite[Proposition 6.10]{dend2}.
As a consequence, weakly invertible $1$-cells in $X$ can be described
as the maps $i_!\Delta[1]=\Omega[1]\To X$ which induce invertible
morphisms in the underlying category of the operad $\tau_d(X)$.
\end{paragr}

\begin{thm}\label{dendjoyal}
Let $T$ be a tree with at least two vertices as well as a unary vertex $r$ at the root,
and let $p:X\To Y$ be an inner Kan fibration between $\infty$-operads.
Then any solid commutative square of the form
$$\xymatrix{
\Lambda^r[T]\ar[r]^f\ar[d]&X\ar[d]^p\\
\Omega[T]\ar[r]^g\ar@{..>}[ur]^h&Y
}$$
in which $f(r)$ is weakly invertible in $X$
has a diagonal filling $h$.
\end{thm}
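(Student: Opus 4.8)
The plan is to prove this dendroidal version of Joyal's theorem by reducing it, as much as possible, to Joyal's original result for simplicial sets (Corollary of \cite[Theorem 2.2]{joyal}), exploiting the hypothesis that $r$ is a \emph{unary} vertex at the root. The key structural observation is that a tree $T$ with a unary root vertex $r$ decomposes naturally: removing $r$ (via the outer face $\partial_r$) leaves a tree $T/r = T'$, and the edge attached to $r$ together with the root gives a copy of the $1$-simplex. The horn $\Lambda^r[T]$ is built from all faces except the root face $\partial_r$, so the missing data is precisely the composite involving the weakly invertible $1$-cell $f(r)$. I would therefore try to exhibit $\Lambda^r[T] \To \Omega[T]$ as obtained from a simplicial left- (or right-) horn inclusion by the functor $i_!$ combined with a join-type construction, so that the weak invertibility of $f(r)$ plays the role of the invertibility hypothesis in Joyal's simplicial statement.

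\textbf{First} I would develop enough of the dendroidal join operation (the ``join on roots'' announced at the start of Section~\ref{section4}) to describe $\Omega[T]$ for such a $T$ as a join of $\Omega[1]$ (the unary root edge-plus-vertex) with the smaller tree $\Omega[T/r]$, and to identify $\Lambda^r[T]$ as the corresponding partial join. \textbf{Next}, I would establish a compatibility between this dendroidal join and Joyal's simplicial join under $i_!$ and $i^*$: concretely, that applying $i^*$ to the lifting problem yields a genuine simplicial lifting problem against the inner Kan fibration $i^*(p)$ between the underlying $\infty$-categories $i^*(X) \To i^*(Y)$, where the relevant horn becomes an outer (say $\Lambda^0$) horn inclusion of simplicial sets, and $f(r)$ becomes a weakly invertible $1$-simplex in $i^*(X)$. \textbf{Then} Joyal's theorem supplies a diagonal filler at the simplicial level, and the task becomes promoting this simplicial filler back up to a dendroidal filler $h$.

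\textbf{The main obstacle} will be exactly this last promotion step: a filler for the simplicial shadow $i^*$ of the problem is in general far weaker than a dendroidal filler, because $\Omega[T]$ carries operadic data (the $n$-ary vertices of $T'$) that is invisible to $i^*$. I expect to handle this by an inductive ``one face at a time'' argument: order the faces of $\Omega[T]$ not already present in $\Lambda^r[T]$, and fill them successively, at each stage solving an \emph{inner} horn lifting problem against $p$ (which is available since $p$ is an inner Kan fibration) except at the single critical step corresponding to the root face, where the non-inner nature is compensated for precisely by the weak invertibility of $f(r)$ via the simplicial reduction above. Making this induction precise amounts to checking that the inclusion $\Lambda^r[T] \To \Omega[T]$ factors as a composite in which every intermediate step is either inner anodyne or is the designated simplicial-join step; the combinatorics of which faces of $T$ involve the root vertex $r$, and verifying that all the ``other'' missing faces assemble into inner anodyne extensions, is the genuinely technical heart of the argument.
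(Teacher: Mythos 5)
Your opening move is the right one, and it matches the paper's: a tree with at least two vertices and a unary root vertex $r$ is exactly a join $T=\forest\join 1$ for a forest $\forest$, and the proof does run through the join-on-roots adjunction $\forest\join(-)\dashv\forest\backslash(-)$ together with Joyal's theory of left fibrations. But the reduction mechanism you propose has a genuine gap, in two places. First, applying $i^*$ to the lifting problem yields nothing: for a non-linear $T$, every simplex of $\Omega[T]$ is a linear subtree, which misses some edge of $T$ and hence factors through an elementary face other than $\partial_r$; consequently $i^*(\Lambda^r[T])=i^*(\Omega[T])$ and the ``simplicial shadow'' problem is vacuous --- the weak invertibility of $f(r)$ cannot enter through $i^*$. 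What the paper does instead is transpose the \emph{entire} dendroidal problem, with no loss of operadic data, along the join adjunction: a square $(\Lambda^r[T],\Omega[T])\To(X,Y)$ corresponds exactly to a square $(\{1\},\Delta[1])\To(P,Q)$, where $P=\forest\backslash X$ and $Q=U\times_W V$ is built from limits of slices $\partial_{\bar B}(\forest)\backslash X$ and $\partial_{\bar B}(\forest)\backslash Y$ over the admissible subsets of edges. Since this correspondence is an equivalence of lifting problems, the ``promotion'' step you identify as the main obstacle simply does not arise.

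Second, the face-by-face induction you propose to handle that promotion cannot be carried out: the inclusion $\Lambda^r[T]\To\Omega[T]$ adjoins exactly \emph{two} nondegenerate dendrices, namely the root face $\partial_r(T)$ and the top cell $\mathrm{id}_T$. The only intermediate subobject is $\partial\Omega[T]$, and the two resulting steps are pushouts of boundary-type inclusions (e.g.\ $\partial\Omega[T]\To\Omega[T]$), not of horns; neither is inner anodyne, and neither can be filled using weak invertibility of a single $1$-cell. So the asserted factorization of $\Lambda^r[T]\To\Omega[T]$ into ``inner anodynes plus one critical simplicial-join step'' does not exist. The inner anodyne combinatorics you correctly anticipate as the technical heart do appear in the paper, but in a different location: Lemma \ref{suffinneranod} and Proposition \ref{adjjoinleftfib0} show that inclusions of the form $(\forest\join\Lambda^i[n])\cup\Omega[n]\To\forest\join\Delta[n]$, and their unions over admissible subsets, are inner anodyne \emph{for all} $n$, which is precisely the horn-lifting condition making the transposed maps $P\To Q$ and $U\To i^*(X)$ into left fibrations. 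Only then do conservativity of left fibrations (to see that the transposed edge over $f(r)$ is weakly invertible in the $\infty$-category $Q$) and Joyal's extension results \cite[Propositions 2.4 and 2.7]{joyal} produce the filler, exactly as in the base case treated in Lemma \ref{joyaldendemptyforest}.
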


\begin{paragr}\label{defjoin}
In order to prove this theorem, we will introduce
join operations on forests.

A \emph{forest} is a finite set of trees (i.e. of objects of $\Omega$).
Given a forest $\forest=(T_1,\ldots,T_k)$, $k\geq 0$,
we write $\forest/\dset$ for the category of dendroidal sets
under the coproduct $\Omega[\forest]=\amalg^k_{i=1} \Omega[T_i]$. The objects of $\forest/\dset$
are thus of shape $(X,x_i)=(X,x_1,\ldots,x_k)$, where $X$ is a dendroidal
set, and $x_i\in X(T_i)$, for $1\leq i\leq k$.
Morphisms $(X,x_i)\To (Y,y_i)$ are maps $f:X\To Y$
such that $f(x_i)=y_i$ for all $i$, $1\leq i\leq k$.

Given an integer $n\geq 0$, we construct the tree $\forest\join n$ by
joining the trees $T_1,\cdots,T_k$ together over a new vertex $v$, and then
grafting the result onto $i[n]$ (i.e. onto $[n]$ viewed as a tree).
\begin{equation}\label{picturejoin}
\begin{split}\xymatrix@R=10pt@C=12pt{
&&&&&&&&\\
&&*=0{\bullet}\ar@{-}[ul]\ar@{-}[ur]\ar@{}[u]|{T_1}
&&*=0{\bullet}\ar@{-}[ul]\ar@{-}[ur]\ar@{}[u]|{T_2}\ar@{}[rrr]|{\cdots\cdots}
&&&*=0{\bullet}\ar@{-}[ul]^{}\ar@{-}[ur]\ar@{}[u]|{T_k}&&\\
&&&&*=0{\bullet}\ar@{-}[u]_{a_2}\ar@{-}[ull]^{a_1}\ar@{-}[urrr]_{a_k}&&&&\\
(T_1,\ldots,T_k)\join n=&&&&*=0{\bullet}\ar@{-}[u]_0^(.8){v}\ar@{-}[d]^1&&&&\\
&&&&*=0{}&&&&\\
&&&&*=0{\bullet}\ar@{-}[d]^n\ar@{}[u]|(.75){\vdots}&&&&\\
&&&&*=0{}&&&&
}\end{split}\end{equation}
We insist that the forest $\forest$ might be empty: for $k=0$, we have
\begin{equation}\label{picturejoinempty}
\begin{split}\xymatrix@R=10pt@C=12pt{
&&*=0{\bullet}&&&&\\
&&*=0{\bullet}\ar@{-}[u]_0^(1.2){v}\ar@{-}[d]^1&&&&\\
(\ )\join n=&&*=0{}&&&&\\
&&*=0{\bullet}\ar@{-}[d]^n\ar@{}[u]|(.75){\vdots}&&&&\\
&&*=0{}&&&&
}\end{split}\end{equation}
As each $T_i$, $1\leq i\leq k$, embeds canonically into $\forest\join n$, we can view
$\Omega[\forest\join n]$ as an object of $\forest/\dset$. One checks that there is
a unique functor
$$\cats\To\Omega\ , \quad [n]\longmapsto\forest\join n$$
such that the inclusions $T_i\To\forest\join n$ are functorial in $T_i$
and such that the canonical inclusion $i[n]\To\forest\join n$ is functorial
in $[n]$. This defines a functor
\begin{equation}\label{defjoin1}
\forest\join(-):\cats\To\I/\dset\, .
\end{equation}
By Kan extension, we obtain a colimit preserving functor
which extends \eqref{defjoin1}:
\begin{equation}\label{defjoin2}
\forest\join(-):\sset\To\I/\dset\, .
\end{equation}
We have $\forest\star\Delta[n]=\Omega[\forest\star n]$.
The functor \eqref{defjoin2} has a right adjoint
\begin{equation}\label{defjoin3}
\forest\backslash(-):\forest/\dset\To\sset\, .
\end{equation}
For a one tree forest $\forest=(T)$, we will simply write
$\forest\star K=T\star K$ and $\forest\backslash X=T\backslash X$
for any simplicial set $K$ and any dendroidal set $X$ under $\Omega[T]$.
Under these conventions, these operations extend the join operations introduced
by Joyal in \cite{joyal} in the sense that we have
the following formulas.
$$\begin{aligned}
i[n]\join i_!(K)&=i_!(\Delta[n]\join K)\\
i[n]\backslash i_!(L)&=i_!(\Delta[n]\backslash L)
\end{aligned}$$
Note that the inclusions $\Omega[n]\To\forest\join\Delta[n]$ in $\dset$
induce a natural projection map
\begin{equation}\label{defjoin4}
\pi_X : \forest\backslash X\To i^*(X)
\end{equation}
for any dendroidal set $X$ under $\forest$.
%
\end{paragr}

\begin{rem}
Note that any tree with at least one vertex
$T$ is obtained by joining a forest with an ordinal,
i.e. as $T=\forest\join n$ for some forest $\forest$ and some integer $n\geq 0$.
A tree $T$ has at least two vertices and a unary vertex at the root (as in the
statement of Theorem \ref{dendjoyal}) if and only if
there exists a forest $\forest$ such that $T=\forest\join 1$.
\end{rem}

\begin{paragr}\label{defadmedges}
In order to prove Theorem \ref{dendjoyal}, we will have also
to consider some specific maps of forests. For this purpose,
we introduce the following terminology.

Let $T$ be a tree. A set $A$ of edges in $T$ is
is called \emph{admissible} if, for any input edge $e$ of $T$, and
any vertex $v$ in $T$, if $A$ contains a path (branch) from $e$ to $v$,
then $A$ contains all the edges above $v$.

If $A$ is an admissible set of edges in $T$, we will define a forest $\partial_A(T)$, and
for each tree $S$ in $\partial_A(T)$, a face map $S\To T$ in the category $\Omega$.
Roughly speaking, one deletes from $T$ all edges in $A$, and defines $\partial_A(T)$
as the resulting connected components. A formal definition is by induction on the
cardinality of $A$.
\begin{itemize}
\item[(i)] If $A$ is empty, then $\partial_A(T)=T$.
\item[(ii)] If $A$ contains the root edge $e$ of $T$, let
$T_1,\ldots,T_k$ be the trees obtained from $T$ by deleting
$e$ and the vertex immediately above it, let $A_i=T_i\cap A$,
and define $\partial_A(T)$ as the union of the forests $\partial_{A_i}(T_i)$, $1\leq i\leq k$.
\item[(iii)] If $A$ contains an input edge $a$ of $T$, it must contain all the edges
above the vertex $v$ just below $a$. Let $T_{(v)}$ be the tree obtained from $T$
by pruning away $v$ and all the edges above it. Let $A_{(v)}=T_{(v)}\cap A$, and
define $\partial_A(T)=\partial_{A_{(v)}}(T_{(v)})$.
\item[(iv)] If $A$ contains an inner edge $a$ of $T$, let $T/a$ be the tree obtained
by contracting $a$, and define $\partial_A(T)$ to be
$\partial_{A-\{a\}}(T/a)$.
\end{itemize}
One can check that the steps (i)--(iv) can be performed
in any order, so that the forest $\partial_A(T)$ is well defined.
Each tree $S$ in this forest $\partial_A(T)$ is a face of $T$, hence comes
with a canonical map $S\To T$.
\end{paragr}

\begin{example}
The tree
$$
\xymatrix@R=10pt@C=12pt{
&&&&&\\
&&*=0{\bullet}\ar@{-}[ul]^{c}\ar@{-}[ur]_{d}&&*=0{\bullet}&\\
T=&&&*=0{\bullet}\ar@{-}[ul]^{b}\ar@{-}[ur]_{e}&&\\
&&&*=0{}\ar@{-}[u]^a_(.85)v&&
}
$$
has two input edges $c$ and $d$. The edges $b$ and $c$ form
a path from $c$ down to $v$. So any admissible set $A$
which contains $b$ and $c$, for example, must contain $d$ and $e$
as well.
\end{example}

\begin{paragr}
This construction extends to forests in the following way.
Let $\forest=(T_1,\ldots,T_k)$ be a forest. An \emph{admissible subset of edges} $A$ in $\forest$
is a $k$-tuple $A=(A_1,\ldots,A_k)$, where $A_i$ is an admissible set
of edges of $T_i$ for $1\leq i\leq k$.
We can then define the forest $\partial_A(T)$ as the union of the
forests $\partial_{A_i}(T_i)$.
Given any integer $n\geq 0$, we have a canonical map
\begin{equation}\label{defadmedges0}
\partial_A(\forest)\join n\To \forest\join n
\end{equation}
which is characterized by the fact that, given any tree $S$ in
some $\partial_{A_i}(T_i)$, for $1\leq i\leq k$, the diagram
\begin{equation}\label{defadmedges00}
\begin{split}\xymatrix{
S\join n\ar[d]\ar[r]&T_i\join n\ar[d]\\
\partial_A(\forest)\join n\ar[r]&\forest\join n
}\end{split}\end{equation}
commutes. The map \eqref{defadmedges0}
is a monomorphism of trees in $\Omega$ and is
natural in $[n]$ (as an object of $\cats$).
More generally, given an inclusion $A\subset B$ between admissible subsets of edges in $\forest$,
we have canonical monomorphisms of trees
\begin{equation}\label{defadmedges1}
\partial_B(\forest)\join n\To \partial_A(\forest)\join n
\end{equation}
(which is just another instance of \eqref{defadmedges0}
for the forest $\partial_A(\forest)$ with admissible subset of edges given by the
sets $B_i\cap\partial_{A_i}(T_i)$).
The maps \eqref{defadmedges1} define a contravariant functor from
the set of admissible subsets of edges in $\forest$ (partially
ordered by inclusion)
to $\Omega$. Given an inclusion $A\subset B$ of admissible subsets of edges in $\forest$,
there exists a unique morphism
\begin{equation}\label{defadmedges2}
\Omega[\partial_B(\forest)]\To\Omega[\partial_A(\forest)]
\end{equation}
such that the following diagram commutes for any
simplicial set $K$.
\begin{equation}\label{defadmedges3}\begin{split}\xymatrix{
\Omega[\partial_B(\forest)]\ar[r]\ar[d]&\Omega[\partial_A(\forest)]\ar[d]\\
\partial_B(\forest)\join K\ar[r]&\partial_A(\forest)\join K
}\end{split}\end{equation}
By adjunction, we also have natural morphisms
\begin{equation}\label{defadmedges5}
\partial_B(\forest)\backslash X\To\partial_A(\forest)\backslash X
\end{equation}
for all dendroidal sets $X$ under $\Omega[\partial_A(\forest)]$.
\end{paragr}

\begin{example}
If $a$ is the root of $T$, and if $T$ is obtained by grafting
trees $T_i$ with root edges $a_i$ onto a corolla, then the map
of type \eqref{defadmedges1} for $A=\varnothing$ and $B=\{a\}$ is the map
$\partial_a$ given by contracting $a$:
\begin{equation*}
\begin{split}\xymatrix@R=10pt@C=12pt{
&&&&&&&\\
&*=0{\bullet}\ar@{-}[ul]\ar@{-}[ur]\ar@{}[u]|{T_1}
&&*=0{\bullet}\ar@{-}[ul]\ar@{-}[ur]\ar@{}[u]|{T_2}\ar@{}[rrr]|{\cdots\cdots}
&&&*=0{\bullet}\ar@{-}[ul]^{}\ar@{-}[ur]\ar@{}[u]|{T_k}&&\\
&&&*=0{\bullet}\ar@{-}[u]_{a_2}\ar@{-}[ull]^{a_1}\ar@{-}[urrr]_{a_k}&&&&\\
&&&*=0{\bullet}\ar@{-}[u]_0\ar@{-}[d]^1&&&&\\
&&&*=0{}&&&&\\
&&&*=0{\bullet}\ar@{-}[d]^n\ar@{}[u]|(.75){\vdots}&&&&\\
&&&*=0{}&&&&
}\end{split}\xrightarrow{\quad \partial_a\quad}
\begin{split}\xymatrix@R=10pt@C=12pt{
&&&&&&&\\
&*=0{\bullet}\ar@{-}[ul]\ar@{-}[ur]\ar@{}[u]|{T_1}
&&*=0{\bullet}\ar@{-}[ul]\ar@{-}[ur]\ar@{}[u]|{T_2}\ar@{}[rrr]|{\cdots\cdots}
&&&*=0{\bullet}\ar@{-}[ul]^{}\ar@{-}[ur]\ar@{}[u]|{T_k}&&\\
&&&*=0{\bullet}\ar@{-}[u]_{a_2}\ar@{-}[ull]^{a_1}\ar@{-}[urrr]_{a_k}&&&&\\
&&&*=0{\bullet}\ar@{-}[u]_a\ar@{-}[d]^0&&&&\\
&&&*=0{}&&&&\\
&&&*=0{\bullet}\ar@{-}[d]^n\ar@{}[u]|(.75){\vdots}&&&&\\
&&&*=0{}&&&&
}\end{split}
\end{equation*}
\end{example}

\begin{paragr}\label{suffinneranod00}
We will now study an elementary combinatorial situation which we will
have to consider twice to prove Theorem \ref{dendjoyal}:
in the proof of Proposition \ref{adjjoinleftfib0}
and in the proof of \ref{proofofiib}.

Consider a tree $T$. Assume that $T=\forest\join n$
for a non-empty forest $\forest=(T_1,\ldots,T_k)$ and an ordinal $[n]$, $n>0$.

Let $i$, $0\leq i<n$, be an integer, and $\{A_1,\ldots,A_s\}$, $s\geq 1$,
a finite family of admissible subsets of edges in $\T$. Define
$$C\subset D\subset\Omega[T]$$
by
$$C=
\big(\bigcup^s_{r=1}\partial_{A_r}(\forest)\join\Lambda^i[n]\big)\cup\Omega[n]
\quad\text{and}\quad
D=\bigcup^s_{r=1}\partial_{A_r}(\forest)\join\Delta[n]\, ,$$
where $\Omega[n]$ is considered as a subcomplex of $\Omega[T]$
through the canonical map.
\end{paragr}

\begin{lemma}\label{suffinneranod}
Under the assumptions of \ref{suffinneranod00},
the map $C\To D$ is an inner anodyne extension.
\end{lemma}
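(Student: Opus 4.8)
The plan is to realise $C\To D$ as a transfinite composite of pushouts of inner horn inclusions. Both $C$ and $D$ are subobjects of $\Omega[T]$ that are closed under faces, so it is enough to attach, in a suitable order, the non-degenerate dendrices of $D$ that do not lie in $C$. To identify these, note that each non-degenerate dendrex of $\partial_{A_r}(\forest)\join\Delta[n]$ has a well-defined associated face of $\Delta[n]$ (its ``shadow'', coming from the join structure), and that it lies in $\partial_{A_r}(\forest)\join\Lambda^i[n]$ exactly when this shadow lies in $\Lambda^i[n]$. Since the only faces of $\Delta[n]$ outside $\Lambda^i[n]$ are the face $d_i$ and the top simplex, and since the dendrices not meeting the vertex $v$ are precisely those already contained in the spine $\Omega[n]$, the cells still to be added are exactly the dendrices which meet $v$ and whose shadow is $d_i$ or the top simplex. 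These occur in pairs $(\sigma,\sigma')$: here $\sigma'$ has the top simplex as its shadow and underlying tree $R$, while $\sigma$ is the inner face of $\sigma'$ obtained by contracting the edge $e_i$ of $R$ corresponding to the object $i$ of $[n]$ (so that $\sigma$ has shadow $d_i$).

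The filling mechanism is then to attach each pair $(\sigma,\sigma')$ by a single pushout along the inner horn inclusion $\Lambda^{e_i}[R]\To\Omega[R]$; such a pushout adjoins $\sigma'$ together with its $e_i$-face $\sigma$, leaving every other face of $\sigma'$ untouched. The decisive point --- and this is exactly what the hypotheses $\forest\neq\varnothing$ and $i<n$ provide --- is that $e_i$ is always an \emph{inner} edge of $R$. The vertex $v$ sits immediately above the edge attached to the object $0$, so even in the boundary case $i=0$, where $d_0$ is an \emph{outer} face of the simplex $\Delta[n]$, contracting $e_i$ is an inner face of the tree $R$. Thus the outer simplicial horn $\Lambda^0[n]$ gives rise only to genuinely inner dendroidal horns, and the spine $\Omega[n]$ is built into $C$ precisely so that the dendrices not meeting $v$ are present from the outset.

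The main obstacle is the combinatorial bookkeeping needed to order the pairs $(\sigma,\sigma')$ so that, at each stage, the inner horn $\Lambda^{e_i}[R]$ --- all the faces of $\sigma'$ other than the $e_i$-contraction --- is already contained in $C$ together with the previously attached cells. One must verify that every proper face of $\sigma'$ except $\sigma$ either reduces the retained part of the forest, and has therefore been attached at an earlier stage, or else lies in $C$; the latter happens both through the join with $\Lambda^i[n]$ and, in the delicate case where the forest part has been exhausted and the face collapses onto the linear tree, through the spine $\Omega[n]$ (this last possibility, which is the one that can occur for $i=0$, is exactly why $\Omega[n]$ must belong to $C$). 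This dictates the order of the filtration, first by the size of the retained forest and then by dimension. Checking that $\sigma\leftrightarrow\sigma'$ is a bijection on the new non-degenerate dendrices, keeping track of the degeneracies, and making sure that the union over $r$ never destroys the inner-horn shape form the bulk of the argument; the conclusion that $C\To D$ is inner anodyne then follows formally from the stability of inner anodyne extensions under pushouts and transfinite composition. The whole argument is a dendroidal adaptation of Joyal's analysis of horns of joins of simplicial sets, arranged so that every horn that arises is inner.
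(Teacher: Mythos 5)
Your proof is essentially the paper's own argument: the paper filters $C\subset D$ by the number of edges of the retained forest, attaching at stage $p$ each face $F=\Omega[\partial_A(\forest)\join n]$ with $\partial_A(\forest)$ having $p$ edges by a pushout along the inner horn $\Lambda^i[\partial_A(\forest)\join n]\To\Omega[\partial_A(\forest)\join n]$ --- which is precisely your pushout along $\Lambda^{e_i}[R]\To\Omega[R]$ adjoining the pair $(\sigma,\sigma')$, with the same ordering by size of the retained forest, the same verification that distinct faces at a given stage intersect in earlier stages, and the same use of the vertex $v$ to make the edge $i$ inner even when $i=0$. Two cosmetic inaccuracies do not affect the argument: dendrices lying entirely in the forest part also miss $v$ without being contained in $\Omega[n]$ (but their shadow lies in $\Lambda^i[n]$, so they belong to $C$ anyway), and the face collapsing onto the linear tree (chopping off $v$ when the retained forest consists of bare edges) occurs in the horn for every $i$, not only for $i=0$, so $\Omega[n]\subset C$ is needed in all cases.
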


\begin{proof}
If $\forest$ is the empty forest, we must have $s=1$ and
$A_1=\varnothing$, so that $D=\Omega[T]$, and
$C=\Lambda^i[T]$ is an inner horn. From now on, we will assume
that $\forest$ is non-empty.

Given a forest $\forest'$, the number of edges in $\forest'$
is simply defined as the sum of the number of edges in each of the trees
which occur in $\forest'$.
For each integer $p\geq 0$, write $\mathcal{F}_p$
for the set of faces $F$ which belong to $D$ but not to $C$,
and which are of shape
$F=\Omega[\partial_A(\forest)\join n]$ for an admissible subset of edges $A$
in $\forest$, such that $\partial_A(\forest)$ has exactly $p$ edges.

Define a filtration
$$C=C_0\subset C_1\subset\ldots\subset C_p\subset\ldots\subset D$$
by
$$C_p=C_{p-1}\cup\bigcup_{F\in\mathcal{F}_p} F\ , \quad p\geq 1\, .$$
We have $D=C_p$ for $p$ big enough, and it is sufficient to prove
that the inclusions $C_{p-1}\To C_p$ are inner anodyne for $p\geq 1$.
If $F$ and $F'$ are in $\mathcal{F}_p$, then $F\cap F'$
is in $C_{p-1}$. Moreover, if $F=\Omega[\partial_A(\forest)\join n]$
for an admissible subset of edges $A$, then we have
$$F\cap C_{p-1}=\Lambda^i[\partial_A(\forest)\join n]\, ,$$
which is an inner horn.
Hence we can describe the inclusion $C_{p-1}\To C_p$
as a finite composition of pushouts by inner horn inclusions
of shape $F\cap C_{p-1}\To F$ for $F\in\mathcal{F}_p$.
\end{proof}

\begin{prop}\label{adjjoinleftfib0}
Let $\forest=(T_1,\ldots,T_k)$ be a forest,
and $n\geq 1$, $0\leq i<n$, be integers.
The inclusion $(\forest\join\Lambda^i[n])\cup\Omega[n]\To\forest\join\Delta[n]$
is an inner anodyne extension.
\end{prop}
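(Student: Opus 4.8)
The plan is to read the proposition off from Lemma \ref{suffinneranod} by specializing to a single, trivial admissible subset of edges. Concretely, I would invoke the setup of \ref{suffinneranod00} with $s=1$ and $A_1=\varnothing$. By clause (i) of the construction in \ref{defadmedges}, the empty set of edges satisfies $\partial_\varnothing(\forest)=\forest$, so the two subcomplexes $C\subset D\subset\Omega[T]$ appearing in \ref{suffinneranod00} specialize to
$$C=(\forest\join\Lambda^i[n])\cup\Omega[n]\quad\text{and}\quad D=\forest\join\Delta[n].$$
These are exactly the source and target of the inclusion in the statement, and Lemma \ref{suffinneranod} asserts precisely that $C\To D$ is an inner anodyne extension. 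So for a non-empty forest there is nothing further to do.

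The only gap to fill is that \ref{suffinneranod00} assumes $\forest$ non-empty, whereas the proposition allows the empty forest $\forest=(\,)$ (the case $k=0$). In that degenerate case I would argue directly: by \eqref{picturejoinempty} the object $(\,)\join n$ is a tree, one has $D=(\,)\join\Delta[n]=\Omega[(\,)\join n]$, and the subcomplex $C=((\,)\join\Lambda^i[n])\cup\Omega[n]$ is the inner horn $\Lambda^i[(\,)\join n]$ for $0\leq i<n$ (the grafting of a unary vertex on top of $i[n]$ turns the relevant omitted face into an inner one). Thus the inclusion is a single inner horn inclusion, hence inner anodyne by definition. This is exactly the degenerate situation already disposed of at the very start of the proof of Lemma \ref{suffinneranod}.

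I do not anticipate any real obstacle. All the substantive combinatorics — filtering $D$ by the number of edges of the faces $\partial_A(\forest)\join n$, checking that for distinct faces $F,F'$ in a given stratum one has $F\cap F'\subset C_{p-1}$, identifying each intersection $F\cap C_{p-1}$ with an inner horn $\Lambda^i[\partial_A(\forest)\join n]$, and assembling $C\To D$ as a finite composite of pushouts of inner horn inclusions — is already carried out in Lemma \ref{suffinneranod} at a level of generality that strictly contains the present statement. The proposition is therefore an instantiation of that lemma rather than an independent result, and its whole force is borrowed from it.
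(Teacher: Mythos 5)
Your proof is correct and is essentially the paper's own argument: the paper proves Proposition \ref{adjjoinleftfib0} precisely by declaring it a particular case of Lemma \ref{suffinneranod} (with $s=1$ and $A_1=\varnothing$), and your separate treatment of the empty forest reproduces verbatim the first paragraph of the lemma's proof, where $C=\Lambda^i[T]$ is identified as an inner horn. The only thing you add is the explicit observation that the setup of \ref{suffinneranod00} nominally assumes $\forest$ non-empty, a mild inconsistency in the paper that its own proof of the lemma already resolves exactly as you do.
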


\begin{proof}
This is a particular case of the preceding lemma.
\end{proof}

\begin{paragr}
Remember from \cite{joyal} that a morphism of
simplicial sets is called a \emph{left} (resp. \emph{right})
\emph{fibration} if it has the right lifting property with respect
to inclusions of shape $\Lambda^i[n]\To\Delta[n]$ for $n\geq 1$
and $0\leq i<n$ (resp. $0<i\leq n$).

A morphism between $\infty$-categories $X\To Y$ is \emph{conservative}
if the induced functor $\tau(X)\To\tau(Y)$ is conservative
(which can be reformulated by saying that a $1$-simplex of $X$
is weakly invertible if and only if its image in $Y$ is
weakly invertible). For instance, by virtue of
\cite[Proposition 2.7]{joyal}, any left (resp. right) fibration
between $\infty$-categories is conservative.
\end{paragr}

\begin{prop}\label{adjjoinleftfib}
Let $p:X\To Y$ be an inner Kan fibration of
dendroidal sets under a forest $\forest$. The map
$$\forest\backslash X\To\I\backslash Y\times_{i^*(Y)}i^*(X)$$
is a left fibration.

In particular, for any $\infty$-operad $X$
under a forest $\forest$, the map $\forest\backslash X\To i^*(X)$ is
a left fibration.
\end{prop}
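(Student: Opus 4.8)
The plan is to prove the left lifting property of $\forest\backslash X\To\I\backslash Y\times_{i^*(Y)}i^*(X)$ against horn inclusions $\Lambda^i[n]\To\Delta[n]$ for $n\geq 1$, $0\leq i<n$, by transposing the lifting problem across the join-slash adjunction established in \ref{defjoin}. A lifting problem
$$\xymatrix{
\Lambda^i[n]\ar[r]\ar[d]&\forest\backslash X\ar[d]\\
\Delta[n]\ar[r]&\I\backslash Y\times_{i^*(Y)}i^*(X)
}$$
unfolds, using the adjunction \eqref{defjoin3} together with the universal property of the fibre product and the projection \eqref{defjoin4}, into a lifting problem for $p:X\To Y$ against a map of dendroidal sets built out of $\forest\join\Lambda^i[n]$, $\forest\join\Delta[n]$, and $\Omega[n]$. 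Concretely, the data of the top and bottom maps, compatible over $i^*(Y)$ and $i^*(X)$, assemble to a commutative square whose left-hand vertical arrow is precisely the inclusion $(\forest\join\Lambda^i[n])\cup\Omega[n]\To\forest\join\Delta[n]$ from Proposition \ref{adjjoinleftfib0}.

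The key observation is then that this inclusion is an inner anodyne extension by Proposition \ref{adjjoinleftfib0}. Since $p:X\To Y$ is an inner Kan fibration, it has the right lifting property with respect to all inner anodyne extensions by definition (see \ref{definnerextan}), so the transposed lifting problem admits a solution. Transposing the solution back along the adjunction yields the desired diagonal filler for the original square, which proves that $\forest\backslash X\To\I\backslash Y\times_{i^*(Y)}i^*(X)$ is a left fibration.

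The \emph{in particular} clause follows by specializing to the case where $Y$ is the terminal dendroidal set: for an $\infty$-operad $X$ the structural map $X\To\ast$ is an inner Kan fibration, and the fibre product $\I\backslash\ast\times_{i^*(\ast)}i^*(X)$ collapses to $i^*(X)$, so the general statement gives that $\forest\backslash X\To i^*(X)$ is a left fibration.

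The main obstacle will be verifying, carefully and unambiguously, that the adjunction transpose of the fibre-product lifting problem really produces the left-hand map $(\forest\join\Lambda^i[n])\cup\Omega[n]\To\forest\join\Delta[n]$ rather than some other combination of the pieces. In particular one must track how the two components of the source over the pullback — the map to $\I\backslash Y$ and the map to $i^*(X)$ — translate under \eqref{defjoin2} and \eqref{defjoin4} into the joined data on $\forest\join\Lambda^i[n]$ and the ``root'' data on $\Omega[n]$, and check that they glue along $\forest\join\Lambda^i[n]\cap\Omega[n]$ to give exactly the union $(\forest\join\Lambda^i[n])\cup\Omega[n]$. Once this bookkeeping is done, the inner anodyne property supplied by Proposition \ref{adjjoinleftfib0} makes the conclusion immediate.
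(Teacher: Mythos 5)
Your proof is correct and is exactly the ``standard adjunction argument'' the paper invokes: the paper's entire proof consists of citing Proposition \ref{adjjoinleftfib0} and transposing, which is precisely what you spell out, including the correct identification of the transposed left-hand map as $(\forest\join\Lambda^i[n])\cup\Omega[n]\To\forest\join\Delta[n]$. Your handling of the \emph{in particular} clause (specializing to $Y$ terminal, so that $\forest\backslash Y$ and $i^*(Y)$ are both terminal and the fibre product collapses to $i^*(X)$) is likewise the intended reading.
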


\begin{proof}
This follows immediately from Proposition \ref{adjjoinleftfib0}
by a standard adjunction argument.
\end{proof}

\begin{cor}\label{adjjoininftycat}
For any $\infty$-operad $X$ and any forest $\forest$ over $X$,
the simplicial set $\forest\backslash X$ is an $\infty$-category.
Similarly, for any inner Kan fibration between
$\infty$-operads $X\To Y$ and any forest $\forest$ over $X$,
the simplicial set $\I\backslash Y\times_{i^*(Y)}i^*(X)$
is an $\infty$-category.
\end{cor}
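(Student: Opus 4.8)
The plan is to deduce Corollary~\ref{adjjoininftycat} from Proposition~\ref{adjjoinleftfib} by using the standard fact that a simplicial set admitting a left fibration to an $\infty$-category is itself an $\infty$-category. First I would recall that if $q:E\To B$ is a left fibration (hence in particular an inner Kan fibration, since left anodyne maps include the inner horn inclusions $\cornet^i[n]\To\Delta[n]$ with $0<i<n$) and $B$ is an $\infty$-category, then $E$ is an $\infty$-category: an inner horn $\cornet^i[n]\To E$ ($0<i<n$) composes with $q$ to give a map $\Delta[n]\To B$ (using that $B$ is fibrant), and then the left lifting property of $q$ against $\cornet^i[n]\To\Delta[n]$ produces the required filler $\Delta[n]\To E$.

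For the first assertion, I would apply Proposition~\ref{adjjoinleftfib} to the $\infty$-operad $X$ over the forest $\I$: the map $\I\backslash X\To i^*(X)$ is a left fibration. Since $X$ is an $\infty$-operad, $i^*(X)$ is an $\infty$-category (as noted in \ref{definnerextan}, $i_!(K)$ is an $\infty$-operad exactly when $K$ is an $\infty$-category, and $i^*$ is right adjoint to $i_!$; concretely, $i^*(X)$ inherits the inner horn filling from $X$). The pullback/lifting argument above then shows $\I\backslash X$ is an $\infty$-category.

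For the second assertion, I would first observe that the target $i^*(X)$ is again an $\infty$-category, and then use that the projection
$$\I\backslash Y\times_{i^*(Y)}i^*(X)\To i^*(X)$$
is a left fibration. This last map is obtained as the base change, along $i^*(X)\To i^*(Y)$, of the left fibration $\I\backslash Y\To i^*(Y)$ coming from Proposition~\ref{adjjoinleftfib} applied to the $\infty$-operad $Y$; since left fibrations are stable under pullback (the defining lifting property is preserved by base change), the projection is a left fibration onto the $\infty$-category $i^*(X)$. Applying the first paragraph's lemma once more yields that $\I\backslash Y\times_{i^*(Y)}i^*(X)$ is an $\infty$-category.

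The only point requiring a little care is the identification of the relevant map as a left fibration in the second case: Proposition~\ref{adjjoinleftfib} directly gives that $\I\backslash X\To\I\backslash Y\times_{i^*(Y)}i^*(X)$ is a left fibration, but to conclude I want a left fibration \emph{from} the fibre product \emph{to} an $\infty$-category, which I get from the pullback stability of left fibrations applied to $\I\backslash Y\To i^*(Y)$ rather than to the map in the proposition. I expect this bookkeeping of which left fibration to use — together with verifying that $i^*$ sends $\infty$-operads to $\infty$-categories — to be the main (though entirely routine) obstacle; everything else is a formal consequence of left fibrations being inner Kan fibrations and stable under base change.
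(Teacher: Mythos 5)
Your proof is correct and is essentially the paper's own argument: the paper likewise observes that $i^*(X)$ is an $\infty$-category and invokes Proposition~\ref{adjjoinleftfib}, with the left fibration $\I\backslash Y\To i^*(Y)$ pulled back along $i^*(X)\To i^*(Y)$ handling the second assertion exactly as in your bookkeeping paragraph (the paper merely leaves this implicit). The only blemish is terminological: in your lemma you should say $q$ has the \emph{right} lifting property against $\cornet^i[n]\To\Delta[n]$, not the left lifting property.
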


\begin{proof}
If $X$ is an $\infty$-operad, then $i^*(X)$ is
clearly an $\infty$-category. Since the projection
$\forest\backslash X\To i^*(X)$ is
a left fibration, this implies this corollary.
\end{proof}

As a warm up to prove Theorem \ref{dendjoyal},
we shall consider a particular case.

\begin{lemma}\label{joyaldendemptyforest}
Theorem \ref{dendjoyal} is true if $T=(\ )\join 1$
(where $(\ )$ denotes the empty forest).
\end{lemma}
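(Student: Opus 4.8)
The plan is to make the tree $T=(\ )\join 1$ and its horn completely explicit and then push the whole square across the join adjunction \eqref{defjoin2}--\eqref{defjoin3}, turning it into a lifting problem for the left fibration furnished by Proposition \ref{adjjoinleftfib}. Joining the empty forest over a nullary vertex $w$ and grafting onto $i[1]$ produces the tree with exactly two vertices: a nullary vertex $w$ at the top and a unary vertex $r$ at the root, linked by a single inner edge whose only other edge is the outer root edge. Thus $T$ has three elementary faces: the inner face contracting the inner edge and the outer face $\partial_r$ deleting $r$, both isomorphic to the nullary corolla, together with the outer face $\partial_w$ deleting $w$, which is the linear tree $i[1]$ and carries the $1$-cell $f(r)$. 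Since $\Lambda^r[T]$ omits only $\partial_r$, it is the union of the inner face with $\partial_w(T)\cong\Omega[i[1]]$ along the root edge $\eta$.

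I would then transpose. Using $\Omega[T]=(\ )\join\Delta[1]$ and the adjunction $\bigl((\ )\join(-)\bigr)\dashv\bigl((\ )\backslash(-)\bigr)$, a map $\Omega[T]\To X$ is the same thing as an edge $\sigma\colon\Delta[1]\To(\ )\backslash X$, whose projection $\pi_X(\sigma)$ of \eqref{defjoin4} is precisely its restriction to $\partial_w(T)\cong i[1]$, i.e. the cell $f(r)$, and whose two endpoints are the two nullary-corolla faces. A direct check of orientations shows that $\pi_X$ carries the endpoints of $\sigma$ to the endpoints of $f(r)$, with the omitted face $\partial_r$ sitting over the source of $f(r)$; hence prescribing $f$ on $\Lambda^r[T]$ and $g$ on $\Omega[T]$ amounts, after transposition, to giving an edge $\tau$ of $B=(\ )\backslash Y\times_{i^*(Y)}i^*(X)$ (with $i^*(X)$-component $f(r)$ and $(\ )\backslash Y$-component the transpose of $g$) together with a lift $\alpha$ of the endpoint of $\tau$ over the target of $f(r)$. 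Writing $q\colon(\ )\backslash X\To B$ for the left fibration of Proposition \ref{adjjoinleftfib}, whose source and target are $\infty$-categories by Corollary \ref{adjjoininftycat}, producing $h$ is exactly solving the lifting problem for $q$ against the endpoint inclusion $\{1\}\To\Delta[1]$ determined by $\tau$ and $\alpha$. This is the endpoint a left fibration does not fill for free, since a left fibration only lifts the source $\{0\}\To\Delta[1]$; this is where weak invertibility must be used.

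The weak invertibility of $f(r)$ supplies exactly this. The second projection $B\To i^*(X)$ is the pullback of the left fibration $(\ )\backslash Y\To i^*(Y)$ along $i^*(p)$, hence is itself a left fibration between $\infty$-categories, so by conservativity of left fibrations (\cite[Proposition 2.7]{joyal}) the edge $\tau$ is invertible in $B$ as soon as its image $f(r)$ is weakly invertible in $i^*(X)$. For a left fibration an invertible base edge can be lifted from either endpoint, so the prescribed endpoint $\alpha$ extends to the sought edge $\sigma$, and transposing back yields the filler $h$; this last lifting is the simplicial input of \cite[Theorem 2.2]{joyal}. The step I expect to demand the most care is the orientation bookkeeping in the middle paragraph: identifying the three faces of $T$ and the map $g$ with the endpoints, the projection $\pi_X$, and the base edge of $q$, and confirming that the retained data pin down the endpoint over the target of $f(r)$ while leaving free the endpoint over its source, so that the lift is genuinely the non-automatic one that invertibility resolves.
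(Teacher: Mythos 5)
Your proof is correct and follows essentially the same route as the paper's: you transpose the square along the adjunction $(\ )\join(-)\dashv(\ )\backslash(-)$ into a lifting problem for $\{1\}\To\Delta[1]$ against the left fibration $(\ )\backslash X\To(\ )\backslash Y\times_{i^*(Y)}i^*(X)$ of Proposition \ref{adjjoinleftfib}, use conservativity of the pulled-back left fibration over $i^*(X)$ to see that the base edge is invertible, and then invoke Joyal's lifting of invertible edges at the target endpoint. Your orientation bookkeeping (the omitted face $\partial_r$ over the source of $f(r)$, the retained inner face over its target) is exactly right and matches the paper's identification $\Lambda^r[T]=(\ )\join\Lambda^1[1]\cup\Omega[1]$; the only cosmetic difference is that the paper cites \cite[Propositions 2.4 and 2.7]{joyal} where you cite \cite[Theorem 2.2]{joyal} for the final step.
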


\begin{proof}
In this case, $T$ is a tree of shape
\[
\xy<0cm,0.5cm>:
(0,0)*=0{}="1"+(0.55,-0.3)*{\scriptstyle 1};
(1,0)*=0{\bullet}="2"+(0,0.5)*{\scriptstyle r}+(0.55,-0.8)*{\scriptstyle 0};
(2,0)*=0{\bullet}="3"-(0.2,0.3)+(0.7,0.5)*{\scriptstyle v};
"1";"2" **\dir{-};
"2";"3" **\dir{-};
\endxy
\]
and $\Lambda^r[T]$ is the union of the two faces
\begin{equation*}\begin{split}
\xy<0cm,0.5cm>:
(1,0)*=0{}="1"+(0,0.5)*{}+(0.55,-0.8)*{\scriptstyle 1};
(2,0)*=0{\bullet}="2"-(0.2,0.3)+(0.7,0.5)*{\scriptstyle v};
"1";"2" **\dir{-};
\endxy
\end{split}\quad\text{and}\quad\begin{split}
\xy<0cm,0.5cm>:
(0,0)*=0{}="1"+(0.55,-0.3)*{\scriptstyle 1};
(1,0)*=0{\bullet}="2"+(0,0.5)*{\scriptstyle r}+(0.55,-0.8)*{\scriptstyle 0};
(2,0)*=0{}="3"-(0.2,0.3)+(0.7,0.5)*{};
"1";"2" **\dir{-};
"2";"3" **\dir{-};
\endxy
\end{split}\end{equation*}
In other words, we get $\Lambda^r[T]=(\ )\join\Lambda^1[1]\cup\Omega[1]$.
Thus, a lifting problem of shape
$$\xymatrix{
\Lambda^r[T]\ar[r]^f\ar[d]&X\ar[d]^p\\
\Omega[T]\ar[r]^g\ar@{..>}[ur]^h&Y
}$$
is equivalent to a lifting problem of shape
$$\xymatrix{
{\{1\}}\ar[r]^{\tilde f}\ar[d]_{}&(\ )\backslash X\ar[d]^\varphi\\
\Delta[1]\ar[r]^(.35){\tilde g}\ar@{..>}[ur]^{\tilde h}&(\ )\backslash Y\times_{i^*(Y)}i^*(X)
}$$
By virtue of Proposition \ref{adjjoinleftfib}, the map
$\varphi$ is a left fibration, and, as left fibrations are stable
by pullback and by composition, so is the projection
of $(\ )\backslash Y\times_{i^*(Y)}i^*(X)$ to
$i^*(X)$. The image of $\tilde g$ by the latter
is nothing but $f(r)$, and, as we know that left fibrations
between $\infty$-categories
are conservative (see \cite[Proposition 2.7]{joyal}),
the $1$-cell $\tilde g$ is quasi-invertible in $(\ )\backslash Y\times_{i^*(Y)}i^*(X)$.
We conclude the proof using \cite[Propositions 2.4 and 2.7]{joyal}.
\end{proof}

\begin{proof}[Proof of Theorem \ref{dendjoyal}]
Let $T$ be a tree with at least two vertices and a unary vertex $r$ at the root.
There is a forest $\forest=(T_1,\cdots,T_k)$, $k\geq 0$, such that
$T=\forest\join 1$. By virtue of Lemma \ref{joyaldendemptyforest},
we may assume that $\forest$ is not the empty forest, or, equivalently, that
$k\geq 1$. We will write $T'=\forest\join 0$. The trees $T$ and $T'$
can be represented as follows.

\begin{equation*}
\begin{split}\phantom{a}\end{split}
T=
\begin{split}\xymatrix@R=10pt@C=12pt{
&&&&&&&\\
&*=0{\bullet}\ar@{-}[ul]\ar@{-}[ur]\ar@{}[u]|{T_1}
&&*=0{\bullet}\ar@{-}[ul]\ar@{-}[ur]\ar@{}[u]|{T_2}\ar@{}[rrr]|{\cdots\cdots}
&&&*=0{\bullet}\ar@{-}[ul]^{}\ar@{-}[ur]\ar@{}[u]|{T_k}&&\\
&&&*=0{\bullet}\ar@{-}[u]_{a_2}\ar@{-}[ull]^{a_1}\ar@{-}[urrr]_{a_k}&&&&\\
&&&*=0{\bullet}\ar@{-}[u]_0\ar@{-}[d]^1_(-.15){r\, }&&&&\\
&&&*=0{}&&&&
}\end{split}
T'=
\begin{split}\xymatrix@R=10pt@C=12pt{
&&&&&&&\\
&*=0{\bullet}\ar@{-}[ul]\ar@{-}[ur]\ar@{}[u]|{T_1}
&&*=0{\bullet}\ar@{-}[ul]\ar@{-}[ur]\ar@{}[u]|{T_2}\ar@{}[rrr]|{\cdots\cdots}
&&&*=0{\bullet}\ar@{-}[ul]^{}\ar@{-}[ur]\ar@{}[u]|{T_k}&&\\
&&&*=0{\bullet}\ar@{-}[u]_{a_2}\ar@{-}[ull]^{a_1}\ar@{-}[urrr]_{a_k}&&&&\\
&&&*=0{}\ar@{-}[u]_0
}\end{split}
\end{equation*}

Given a dendroidal set $X$, a map
$\Lambda^r[T]\To X$ corresponds to a compatible
family of maps of simplicial sets
$$\{1\}=\Delta[0]\To\partial_A(T')\backslash X\, ,$$
indexed by the \emph{non-empty} admissible subset of edges $A$ in $T'$.
This family corresponds to a map
$$\Delta[0]\To\varprojlim_A\partial_A(T')\backslash X\, .$$
By separating the case $A=\{0\}$ (the root edge of $T'$) from the others,
the map $\Lambda^r[T]\To X$ corresponds to a commutative
square of shape
$$\xymatrix{
\Delta[0]\ar[r]\ar[d]_{{\partial_0}}&\forest\backslash X\ar[d]\\
\Delta[1]\ar[r]&{\varprojlim\partial_{\bar B}(\forest)\backslash X}
}$$
in which the limit $\varprojlim\partial_{\bar B}(\forest)\backslash X$ is over
the non-empty admissible subsets of edges $B$ in $T'$ with $0\notin B$,
and $\bar B=(B\cap T_1,\ldots,B\cap T_k)$.

Consider from now on an inner Kan fibration between $\infty$-operads $p:X\To Y$.
Lifting problems of shape
$$\xymatrix{
\Lambda^r[T]\ar[r]^f\ar[d]&X\ar[d]^p\\
\Omega[T]\ar[r]^g\ar@{..>}[ur]^h&Y
}$$
correspond to lifting problems
$$\xymatrix{
\Delta[0]\ar[r]^{\tilde f}\ar[d]_{{\partial_0}}&P\ar[d]^\varphi\\
\Delta[1]\ar[r]^{\tilde g}\ar@{..>}[ur]^{\tilde h}&Q
}$$
where $P=\forest\backslash X$ and $Q=U\times_WV$, with
$$U={\varprojlim\partial_{\bar B}(\forest)\backslash X}\, ,
\quad V=\forest\backslash Y\, ,\quad
W={\varprojlim\partial_{\bar B}(\forest)\backslash Y}\, .$$
Exactly like in the proof of Proposition \ref{joyaldendemptyforest},
it now suffices to prove the following three statements:
\begin{itemize}
\item[(i)] the map $\varphi:P\To Q$ is a left fibration;
\item[(ii)] the simplicial set $Q$ is an $\infty$-category;
\item[(iii)] if $f(r)$ is weakly invertible in $X$, then
the $1$-cell $\tilde g$ is weakly invertible in $i^*(Q)$.
\end{itemize}
Note that, as left fibrations are conservative and are stable
by pullback and composition, statements (ii) and (iii)
will follow from the following two assertions:
\begin{itemize}
\item[(iv)] the map $V\To W$ is a left fibration;
\item[(v)] the map $U\To i^*(X)$ is a left fibration.
\end{itemize}
But (iv) is a particular case of (i): just replace $p$
by the map from $Y$ to the terminal dendroidal set.
It thus remains to prove (i) and (v).

\begin{sslem} Proof of \emph{(i)}.\end{sslem}
For $0\leq i<n$, a lifting problem of the form
$$\xymatrix{
\Lambda^i[n]\ar[r]^{}\ar[d]_{}&P\ar[d]^\varphi\\
\Delta[n]\ar[r]^{}\ar@{..>}[ur]^{}&Q
}$$
correspond to a lifting problem of the form
$$\xymatrix{
\Lambda^i[\forest\join n]\ar[r]\ar[d]&X\ar[d]^p\\
\Omega[\forest\join n]\ar[r]\ar@{..>}[ur]&Y
}$$
As $\Lambda^i[\forest\join n]$ is an inner horn, (i) thus follows
from the fact $p$ is an inner Kan fibration.
\begin{sslem}\label{proofofiib} Proof of \emph{(v)}.\end{sslem}
For $0\leq i<n$, a lifting problem of the form
$$\xymatrix{
\Lambda^i[n]\ar[r]^{}\ar[d]_{}&U\ar[d]\\
\Delta[n]\ar[r]^{}\ar@{..>}[ur]^{}&i^*(X)
}$$
corresponds to a lifting problem of the form
$$\xymatrix{
C\ar[r]\ar[d]&X\\
D\ar@{..>}[ur]&
}$$
where the inclusion $C\To D$ can be described as follows.
The dendroidal set $D$ is the union of all the faces $\partial_x(\forest\join n)$
given by contracting an inner edge or a root edge
in one of the trees $T_i$, or by deleting a top
vertex in the tree $\forest\join n$. The dendroidal set $C$
is the union of the image of $\Omega[n]\To\Omega[\forest\join n]$
and all the `codimension $2$' faces of $\Omega[\forest\join n]$
of shape $\partial_j\partial_x(\forest\join n)$, where $\partial_x$
is as above, and $0\leq j\leq n$ is distinct from $i$.
It is now sufficient to check that the inclusion $C\To D$
is an inner anodyne extension, which follows from a
straightforward application of Lemma \ref{suffinneranod}.
\end{proof}

\section{Subdivision of cylinders}\label{section5}

\begin{paragr}\label{prelsubcyl}
Let $S$ be a tree with at least one vertex, and consider the
tensor product $\Omega[S]\otimes\Delta[1]$. It has a subobject
$$A_0=\bord\Omega[S]\otimes\Delta[1]\cup\Omega[S]\otimes\{1\}$$
where $\{1\}\To\Delta[1]$ is $\partial_0:\Delta[0]\To\Delta[1]$.
In this section, we will prove the following result.
\end{paragr}

\begin{thm}\label{thmsubcyl}
There exists a filtration of $\Omega[S]\otimes\Delta[1]$ of the
form
$$A_0\subset A_1\subset\cdots\subset A_{N-1}\subset A_N=\Omega[S]\otimes\Delta[1]\, ,$$
where:
\begin{itemize}
\item[(i)] the inclusion $A_i\To A_{i+1}$ is inner anodyne
for $0\leq i<N-1$;
\item[(ii)] the inclusion $A_{N-1}\To A_N$ fits into a
pushout of the form
$$\xymatrix{
\Lambda^r[T]\ar[r]\ar[d]&A_{N-1}\ar[d]\\
\Omega[T]\ar[r]&A_N
}$$
for a tree $T$ with at least two vertices and a unary vertex $r$ at the root;
\item[(iii)] the map
$$\Delta[1]\To\Lambda^r[T]\To A_{N-1}\subset\Omega[S]\otimes\Delta[1]$$
corresponding to the vertex $r$ in \emph{(ii)} coincides with the inclusion
$$\{e_S\}\otimes\Delta[1]\To\Omega[S]\otimes\Delta[1]$$
where $e_S$ is the edge at the root of the tree $S$.
\end{itemize}
\end{thm}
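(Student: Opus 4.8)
The plan is to read the filtration off the nondegenerate dendrices of the cylinder $\Omega[S]\otimes\Delta[1]=\nerf_d(S\otimes^{}_{BV}1)$. Its maximal dendrices, the \emph{shuffles}, admit a transparent combinatorial description: a shuffle is obtained by choosing a ``percolation front'' in $S$, i.e. a set of edges meeting each root-to-leaf branch exactly once; above the front all colours sit at level $0$, below it at level $1$, and a unary \emph{transition} vertex is inserted on each front edge. Equivalently, shuffles are indexed by the down-closed sets $V$ of vertices of $S$ — those placed at level $1$. The set of all vertices yields the shuffle whose transitions sit at the leaves, just above $\Omega[S]\otimes\{1\}$, while $V=\varnothing$ yields the shuffle $T=T_\varnothing$ consisting of $S$ (entirely at level $0$) grafted onto a single unary transition at the root. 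If $\forest$ denotes the family of maximal subtrees of $S$ lying above its root vertex $w$, then $T=\forest\star 1$; its unary root vertex $r$ is precisely the transition over the root edge $e_S$, so that the $1$-simplex of $r$ is tautologically the cylinder $\{e_S\}\otimes\Delta[1]$. This already produces the tree demanded in \emph{(ii)} and the identification \emph{(iii)}.

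I then set $A_{N-1}=A_0\cup\bigcup_{V\neq\varnothing}\Omega[T_V]$, the union of $A_0$ with every shuffle except $T$, and claim that $A_{N-1}\To A_N=\Omega[S]\otimes\Delta[1]$ is the pushout of the root horn $\Lambda^r[T]\To\Omega[T]$; equivalently, that $\Omega[T]\cap A_{N-1}=\Lambda^r[T]$. The missing face $\partial_r(T)$ is exactly the level-$0$ copy $\Omega[S]\otimes\{0\}$: it is not in $A_0$, and it cannot be a face of any $T_V$ with $V\neq\varnothing$, since faces never move a vertex from level $1$ to level $0$ and the root vertex $w$ already sits at level $1$ in such a $T_V$. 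Every other elementary face of $T$ does lie in $A_{N-1}$: the outer faces (removing a top vertex of $S$) and the inner faces contracting an inner edge of $S$ have underlying tree a proper face of $S$, hence factor through $\bord\Omega[S]\otimes\Delta[1]\subset A_0$, whereas the single inner face contracting the edge above $r$ percolates $w$ to level $1$ and is therefore a face of the shuffle $T_{\{w\}}$. Granting this, \emph{(ii)} holds, and \emph{(iii)} follows because $\{e_S\}\otimes\Delta[1]\subset\bord\Omega[S]\otimes\Delta[1]\subset A_{N-1}$.

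It remains to refine $A_0\To A_{N-1}$ into a chain whose steps are inner anodyne, which gives \emph{(i)}. I would adjoin the shuffles $T_V$ ($V\neq\varnothing$) according to a total order refining reverse inclusion of the sets $V$ (those nearest $\Omega[S]\otimes\{1\}$ first), and, within the stage attaching a fixed $T_V$, run a secondary filtration over its faces organised by the number of edges of the relevant subforest — exactly in the spirit of the filtration used in Lemma \ref{suffinneranod} and Proposition \ref{adjjoinleftfib0}. The assertion to verify at each elementary sub-step is that the dendrex being attached meets the part already built in a horn missing a single \emph{inner} face; each such sub-step is then a pushout of an inner horn inclusion, hence inner anodyne, and the composite is inner anodyne by the closure properties of \ref{definnerextan}.

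The hard part is precisely this bookkeeping. One must choose the order on dendrices so sharply that at every sub-step but the last the unique missing face is inner; in particular the potential root-faces of the intermediate shuffles — which do occur when the root vertex of $S$ is unary, as already in the simplicial prism $\Omega[2]\otimes\Delta[1]$ — must have been supplied by an earlier shuffle or by $A_0$, so that only $T_\varnothing$ contributes a genuine root horn. Unlike the simplicial prism, the shuffles of $\Omega[S]\otimes\Delta[1]$ are honest trees whose faces are shared along several branches, and the non-planarity of $S$ (hence the symmetries of its subtrees, together with the need to discard degenerate dendrices) is what turns the verification into real combinatorial work rather than a formal manipulation. Setting up a single global order under which every intersection is forced to be the expected horn is the crux of the argument; once it is in place, properties \emph{(i)}--\emph{(iii)} follow as above.
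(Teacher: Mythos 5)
Your setup is the same as the paper's: the shuffles you index by down-closed vertex sets $V$ are exactly the percolation schemes of \cite[Section 9]{dend2}, your reverse-inclusion order is the natural partial order on them, and your filtration $A_i=A_0\cup\Omega[T_1]\cup\cdots\cup\Omega[T_i]$ is the paper's. Your treatment of (ii) and (iii) is correct and essentially identical to the paper's: the missing face of $T=T_\varnothing$ is $\Omega[S]\otimes\{0\}$, which cannot occur earlier since no $T_V$ with $V\neq\varnothing$ carries the colour $(e_S,0)$; the face contracting $(e_S,0)$ lies in $T_{\{w\}}$ (the paper's unique predecessor $T_{N-1}$, via the Boardman--Vogt relation); all remaining faces miss an $S$-colour.

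The genuine gap is part (i), and you have named it yourself without filling it: you assert that one can order the pieces so that every sub-step attaches along a horn missing a single inner face, and defer "setting up a single global order" as the crux. That is precisely the content the paper supplies, and it does \emph{not} take the form you sketch. The situation is not analogous to Lemma \ref{suffinneranod}, where the join structure forces each intersection $F\cap C_{p-1}$ to be literally an inner horn $\Lambda^i[\partial_A(\forest)\join n]$. For the cylinder, the intersection $\Omega[T_{k+1}]\cap A_k$ is a complicated union that is not of horn type: a face of $T_{k+1}$ may lie in $A_k$ either because it misses an $S$-colour or because it is shared with an \emph{earlier} percolation scheme, and controlling the latter already requires the intersection lemma (Lemma \ref{thmsubcyl10}: $\Omega[T_i]\cap\Omega[T_j]$ lies in the union of common predecessors), which your proposal never addresses. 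The paper then cannot adjoin faces one horn at a time; instead it adjoins whole \emph{initial segments} built around \emph{spines}, using the notion of \emph{characteristic edge} (Definition \ref{thmsubcyl11}): the edge $(s,1)$ below the black vertex of a spine is characteristic (Lemma \ref{thmsubcyl14}), Lemma \ref{thmsubcyl13} (= \cite[Lemma 9.7]{dend2}) shows that adjoining an initial segment with a characteristic edge is inner anodyne (itself a composite of pushouts, not a single horn), and Lemma \ref{thmsubcyl15} (= \cite[Lemma 9.9]{dend2}) runs a double induction on the number of spines to exhaust $T_{k+1}$. Without this machinery, or a proof that your sharper single-horn ordering exists, the inner-anodyne claim for $A_k\To A_{k+1}$, $k+1<N$ --- which is the bulk of the theorem --- remains unproved.
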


\begin{paragr}\label{recallsubdivision}
The proof of Theorem \ref{thmsubcyl} is in fact very similar
to that of \cite[Proposition 9.2]{dend2}, stated here
as Proposition \ref{tensinneranod}. We recall from \emph{loc. cit.}
that, for any two trees $S$ and $T$, one can write
$$\Omega[S]\otimes\Omega[T]=\bigcup^N_{i=1}\Omega[T_i]\, ,$$
where $\Omega[T_i]\To\Omega[S]\otimes\Omega[T]$ are `percolation schemes'.
Drawing vertices of $S$ as white, and those of $T$ as black,
these percolation schemes are partially ordered in a natural way, starting
with the tree obtained by stacking a copy of the black tree $T$
on top of each input edge of the white tree $S$, and ending
with the tree obtained by stacking copies of $S$ on top of $T$.
The intermediate trees are obtained by letting the black vertices of $T$
percolate through the white tree $S$, by successive `moves' of the form
\begin{equation*}
\begin{split}\xymatrix@R=10pt@C=12pt{
&&&&\\
&*=0{\bullet}\ar@{-}[ul]\ar@{-}[ur]\ar@{}[u]|{\cdots}
\ar@{}[rr]|{\cdots}&&*=0{\bullet}\ar@{-}[ul]\ar@{-}[ur]\ar@{}[u]|{\cdots}
\ar@{-}[ur]&\\
&&*=0{\circ}\ar@{-}[ur]_(1.4){s_n\otimes w}\ar@{-}[ul]^(1.4){s_1\otimes w}&&\\
&&*=0{}\ar@{-}[u]_(.9){v\otimes t}&&
}\end{split}\Longrightarrow
\begin{split}\xymatrix@R=10pt@C=12pt{
&&&&&&&\\
&*=0{\circ}\ar@{-}[ul]\ar@{-}[ur]\ar@{}[u]|{\cdots}\ar@{}[rr]|{\cdots}
&&*=0{\circ}\ar@{-}[ul]\ar@{-}[ur]\ar@{}[u]|{\cdots}\ar@{}[rr]|{\cdots}
&&*=0{\circ}\ar@{-}[ul]^{}\ar@{-}[ur]\ar@{}[u]|{\cdots}&&&\\
&&&*=0{\bullet}\ar@{-}[u]_{}\ar@{-}[ull]^(1.4){v\otimes t_1}\ar@{-}[urr]_(1.4){v\otimes t_n}&&&&\\
&&&*=0{}\ar@{-}[u]_(.9){s\otimes w}&&&&
}\end{split}
\end{equation*}
\begin{equation*}
\begin{split}
\phantom{a}
\end{split}
\text{with}
\begin{split}
\xymatrix@R=10pt@C=12pt{
&&\\
&*=0{\circ}\ar@{-}[ur]_(0.6){s_n}\ar@{-}[ul]^(0.6){s_1}
\ar@{}[u]|{\cdots}&\\
&*=0{}\ar@{-}[u]_(.6){s}&
}
\end{split}\text{in $S$, and}
\begin{split}
\xymatrix@R=10pt@C=12pt{
&&\\
&*=0{\bullet}\ar@{-}[ur]_(0.6){t_m}\ar@{-}[ul]^(0.6){t_1}
\ar@{}[u]|{\cdots}&\\
&*=0{}\ar@{-}[u]_(.6){t}&
}
\end{split}\text{in $T$.}
\end{equation*}
In the special case where $T=[1]$, the filtration referred to in Theorem \ref{thmsubcyl}
is given by
$$A_i=A_0\cup\Omega[T_1]\cup\cdots\cup\Omega[T_i]\, ,$$
where $T_1,\ldots,T_N$ is any linear order on the percolation schemes
extending the natural partial order.
\end{paragr}

\begin{rem}\label{remendfiltSotimes1}
For any tree $S$ with at least one vertex, and root edge named $e_S$ ($e$ for
`exit'), the last tree $T_N$ in the partial order of percolation schemes
for $\Omega[S]\otimes\Delta[1]$ is of shape
\begin{equation*}
\begin{split}\xymatrix@R=10pt@C=12pt{
&&&\\
&&*=0{\circ}\ar@{-}[ur]\ar@{-}[ul]\ar@{}[u]|{S}&\\
T_N=&&*=0{\bullet}\ar@{-}[u]^(.005){r\ }_{(e_S,0)}\ar@{-}[d]^{(e_S,1)}&\\
&&*=0{}&
}\end{split}
\end{equation*}
It always has a unique predecessor $T_{N-1}$ of the form
\begin{equation*}
\begin{split}\xymatrix@R=10pt@C=12pt{
&&&&&\\
&&*=0{\circ}\ar@{-}[ul]\ar@{-}[ur]\ar@{}[u]|{S_1}
  \ar@{}[rr]|{\cdots}&&*=0{\circ}\ar@{-}[ul]\ar@{-}[ur]\ar@{}[u]|{S_n}
  \ar@{-}[ur]&\\
T_{N-1}=&&*=0{\bullet}\ar@{-}[u]^{(s_1,0)}\ar@{}[rr]|{\cdots}&&*=0{\bullet}\ar@{-}[u]_{(s_n,0)}&\\
&&&*=0{\circ}\ar@{-}[ur]_(.6){(s_n,1)}\ar@{-}[ul]^(.6){(s_1,1)}&&\\
&&&*=0{}\ar@{-}[u]_{(e_S,1)}&&
}\end{split}
\end{equation*}
where $S$ is of the form $(S_1,\cdots,S_n)\join[0]$.
\begin{equation*}
\begin{split}\xymatrix@R=10pt@C=12pt{
&&&&&\\
&&*=0{\circ}\ar@{-}[ul]\ar@{-}[ur]\ar@{}[u]|{S_1}
\ar@{}[rr]|{\cdots}&&*=0{\circ}\ar@{-}[ul]\ar@{-}[ur]\ar@{}[u]|{S_n}
\ar@{-}[ur]&\\
S=&&&*=0{\circ}\ar@{-}[ur]_{s_n}\ar@{-}[ul]^{s_1}&&\\
&&&*=0{}\ar@{-}[u]_{e_S}&&
}\end{split}
\end{equation*}
This observation already enables us to get
\end{rem}

\begin{proof}[Proof of parts (ii) and (iii) of Theorem \ref{thmsubcyl}]
Consider all the faces of $T_N$. For such a face $F\To T_N$, there are three
possibilities;
\begin{itemize}
\item[(a)] it misses an $S$-colour entirely (i.e. there is
an edge $s$ in $S$ so that neither $(s,0)$ nor $(s,1)$ are in $F$, so that
$\Omega[F]$ factors through $\bord\Omega[S]\otimes\Delta[1]$;
\item[(b)] $F$ is given by contracting the edge $(e_S,0)$,
in which case $\Omega[F]$ factors through $\Omega[T_{N-1}]$
(since the face $F$ then coincides with the face of $T_{N-1}$
obtained by contracting $(s_1,1),\ldots,(s_n,1)$);
\item[(c)] $F$ is given by chopping off the edge $(e_S,1)$
and the black vertex above it, i.e. $\Omega[F]=\Omega[S]\otimes\{0\}$.
This face cannot factor through $A_0$, nor through any of
the earlier percolation schemes since none of these has an edge
coloured $(e_S,0)$.
\end{itemize}
Thus, $\Omega[T_N]\cap A_{N-1}=\Lambda^r[T_N]$,
where $r$ denotes the black vertex as pictured above. This shows that
$$\xymatrix{
\Lambda^r[T]\ar[r]\ar[d]&A_{N-1}\ar[d]\\
\Omega[T]\ar[r]&A_N
}$$
is a pushout, exactly as stated in part (ii) of Theorem \ref{thmsubcyl}.
Moreover, the statement of part (iii) of Theorem \ref{thmsubcyl}
is obvious from the construction.
\end{proof}

\begin{paragr}
The proof of part (i) of Theorem \ref{thmsubcyl} is more involved, but
it is completely analogous to the proof of \cite[Proposition 9.2]{dend2}.
The difference with the situation in \emph{loc. cit.} is that, now, we are
dealing with an inclusion of the form
$$\bord\Omega[S]\otimes\Omega[T]\cup\Omega[S]\otimes\Lambda^e[T]\To\Omega[S]\otimes\Omega[T]\, ,$$
where $e$ is an outer edge of $T=i[1]$, whereas in \emph{loc. cit.}, we dealt
with
$$\Omega[S]\otimes\bord\Omega[T]\cup\Lambda^e[S]\otimes\Omega[T]\To\Omega[S]\otimes\Omega[T]\, ,$$
where $e$ is an inner edge of $S$. This forces us to look at different `spines' and
`characteristic edges' compared to the ones in \emph{loc. cit.} (notice also in this
connection that although the tensor product is symmetric, the partial order on the
percolation schemes is reversed).

The following lemma was also used (implicitly) in \cite{dend2}.
\end{paragr}

\begin{lemma}\label{thmsubcyl10}
Let $T_i$ and $T_j$ be two distinct percolation
schemes for $\Omega[S]\otimes\Delta[1]$. Then
$$\Omega[T_i]\cap\Omega[T_j]\subset\cup_k\Omega[T_k]$$
as subobjects of $\Omega[S]\otimes\Delta[1]$,
where the union ranges over all the percolation schemes $T_k$
which precede both $T_i$ and $T_j$ in the partial order.
\end{lemma}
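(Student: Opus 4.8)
The plan is to transcribe the method of \cite[Proposition 9.2]{dend2} (recalled in \ref{recallsubdivision}) and to reduce the asserted inclusion of subobjects to a purely combinatorial statement about faces of trees. First I would record that, since $\Omega[T_k]$ is representable, its non-degenerate dendrices are exactly the faces of the tree $T_k$, i.e. the monomorphisms $F\To T_k$ in $\Omega$ (recall from \cite[Lemma 3.1]{dend1} that faces are precisely the monomorphisms), composed with the percolation map $T_k\To\Omega[S]\otimes\Delta[1]$. Consequently $\Omega[T_i]\cap\Omega[T_j]$, as a subobject of $\Omega[S]\otimes\Delta[1]$, is generated by those faces $F$ which occur simultaneously as a face of $T_i$ and of $T_j$ with the same image; and since a degeneracy of such a common face is again a degeneracy of a common face, it is enough to prove that every such common face $F$ already factors through some percolation scheme $T_k$ preceding both $T_i$ and $T_j$. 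In other words, the whole lemma is reduced to the claim that a common face of two distinct schemes is a face of their greatest common predecessor $T_i\wedge T_j$ for the (reversed) percolation order.

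For the combinatorial core I would work with the bicolouring of the edges of $\Omega[S]\otimes\Delta[1]$ by pairs $(s,\e)$, where $s$ is an edge of $S$ and $\e\in\{0,1\}$, and with the spine and the characteristic edge of each percolation scheme, as in \emph{loc. cit.} --- bearing in mind the warning signalled just before the lemma, that here one must use the spines and characteristic edges appropriate to $\Omega[S]\otimes\Lambda^e[T]$ rather than to $\Lambda^e[S]\otimes\Omega[T]$, and that the relevant partial order is the reversed one. Since $T_i\neq T_j$, the two schemes differ by the position of at least one percolating black vertex relative to some white vertex $v$ of $S$; concretely there is an edge $s$ of $S$ for which the colours $(s,0)$ and $(s,1)$ are arranged differently across the percolation move at $v$ in $T_i$ and in $T_j$. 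The decisive point to establish is that a face $F$ common to both cannot record this difference: being a face of both forces $F$ to omit, at each such $v$, one of the two distinguishing edges (equivalently, the spine of $F$ does not pass through the characteristic edge separating $T_i$ from $T_j$). Granting this, $F$ survives the percolation moves carrying $T_i\wedge T_j$ up to $T_i$ and up to $T_j$, so that $F$ is already a face of $T_k=T_i\wedge T_j$; since $T_k$ precedes both, this gives the desired inclusion.

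The main obstacle is exactly this last claim: the verification that a common face omits the distinguishing bicoloured edges and therefore descends to the meet scheme. This is the step requiring genuine bookkeeping, carried out along the three ways an elementary face of such a tree arises (contracting an inner edge, contracting a root edge, or chopping off a top vertex, exactly as in the analysis of parts (ii)--(iii) above), checking in each case that an elementary face of $T_i$ which also occurs in $T_j$ is obtained by deleting or contracting one of the bicoloured edges on which $T_i$ and $T_j$ already agree, and hence is visible in $T_i\wedge T_j$. All of this is a direct, if tedious, transcription of the argument of \cite[Proposition 9.2]{dend2} to the present slightly asymmetric situation, and I expect it to involve no new idea beyond the careful identification of the relevant spines and characteristic edges.
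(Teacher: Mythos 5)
Your reduction to common faces is fine and matches the paper's implicit starting point, but the proposal then stalls exactly where the lemma lives. You reformulate the goal as: every common face $F$ of $T_i$ and $T_j$ descends to \emph{the} greatest common predecessor $T_i\wedge T_j$, and you defer the ``decisive point'' (that $F$ omits the distinguishing bicoloured edges and hence survives the moves from $T_i\wedge T_j$ up to $T_i$ and $T_j$) to an unexecuted ``tedious transcription'' of \cite[Proposition 9.2]{dend2}. That deferred step \emph{is} the content of the lemma; asserting that it will involve ``no new idea'' is not a proof of it. Worse, your claimed dichotomy --- ``being a face of both forces $F$ to omit, at each such $v$, one of the two distinguishing edges'' --- assumes what must be argued, and it is not even the correct case analysis: in the paper's proof the face $F$ is explicitly allowed to contain the colour $(s,0)$ attached to a percolation move; the point is then that $(s,0)\in F$ forces $(s,0)$ to occur in $T_i$ as well as in $T_j$, so that this site is not an obstruction to comparability and no move is needed there. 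The paper's argument is an algorithm on $T_j$ alone: write $T_j$ as obtained from an earlier scheme $T_{j'}$ by one percolation move; if $F$ is a face of $T_{j'}$, replace $T_j$ by $T_{j'}$ (``push up the black vertices''), and otherwise conclude $(s,0)\in F\subset T_i$ and leave that vertex alone; iterating terminates at some scheme $T_{j_n}\leq T_j$ with $T_{j_n}\leq T_i$ still having $F$ as a face.

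A second, structural gap: your formulation presupposes that the percolation order on the schemes for $\Omega[S]\otimes\Delta[1]$ admits a meet $T_i\wedge T_j$, which you never justify. (For $T=\Delta[1]$ the schemes correspond to cuts of $S$ and one can in fact check the poset is a lattice, but nothing in your write-up does so, and the claim that a common face descends all the way to the meet is strictly stronger than what is needed.) The statement only asks that $F$ factor through \emph{some} $T_k$ preceding both $T_i$ and $T_j$ --- the union in the lemma ranges over all common predecessors --- and the paper's algorithm produces such a $T_k$ directly, bypassing any lattice-theoretic input. So even if your stronger claim is true, your route carries an extra unproved hypothesis and an unproved descent claim, while the elementary step-by-step push-up argument that actually closes the proof is missing from your proposal.
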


\begin{proof}
Let $F$ be a common face of $\Omega[T_i]$ and $\Omega[T_j]$.
If $T_j\leq T_i$ in the partial order, there is nothing to prove.
Otherwise, we will give an algorithm for replacing $T_j$
by successively earlier percolation schemes,
$$T_j=T_{j_0}\geq T_{j_1}\geq T_{j_2}\geq\cdots$$
each having $F$ as a face, and eventually preceding $T_i$ in the partial
order. As a first step, $T_j$ is obtained from an earlier percolation scheme
$T_{j'}$ by changing
\begin{equation*}
\begin{split}\xymatrix@R=10pt@C=6pt{
&&&&\\
&*=0{\bullet}\ar@{}[rr]|{\cdots}\ar@{-}[u]^(.6){(s_1,0)}&&*=0{\bullet}\ar@{-}[u]_(.6){(s_n,0)}&\\
&&*=0{\circ}\ar@{-}[ur]_(.6){(s_n,1)}\ar@{-}[ul]^(.6){(s_1,1)}&&\\
&&*=0{}\ar@{-}[u]_{(s,1)}&&\\
&&\text{in $T_{j'}$}&&
}\end{split}\qquad \text{into}\quad
\begin{split}\xymatrix@R=10pt@C=6pt{
&&\\
&*=0{\circ}\ar@{-}[ur]\ar@{-}[ul]\ar@{}[u]|{\cdots}&\\
&*=0{\bullet}\ar@{-}[u]_{(s,0)}\ar@{-}[d]^{(s,1)}&\\
&*=0{}&\\
&\text{in $T_{j}$}&
}\end{split}
\end{equation*}
If $F$ is also a face of $T_{j'}$, we `push up the black vertices'
by replacing $T_j$ by $T_{j_1}=T_{j'}$. If not, then the colour $(s,0)$
must occur in $F$, hence in $T_j$
as well as in $T_i$. So the occurrence of $(s,0)$ in $T_j$
is not the reason that $T_j\nleqslant T_i$, and we put
$T_{j_1}=T_j$. Treating all black vertices in this way,
we can push them up if they occur below black vertices in $T_i$,
until we eventually reach a percolation scheme $T_{j_n}\leq T_j$,
still having $F$ as a face, for which $T_{j_n}\leq T_i$.
\end{proof}

\begin{paragr}
We return to the proof of Theorem \ref{thmsubcyl}.
Consider the inclusion
\begin{equation}\label{picspine0}
A_k\To A_{k+1}=A_k\cup\Omega[T_{k+1}]\, ,
\end{equation}
for $k+1<N$. The percolation scheme $T_{k+1}$
will have at least one black vertex. Consider
all the black vertices in $T_{k+1}$, and the corresponding
faces of $T_{k+1}$ which are formed by paths from these black
vertices to the root of $T_{k+1}$:
\begin{equation}\label{picspine}
\begin{split}\xymatrix@R=10pt@C=12pt{
&&&&\\
&&*=0{\bullet}\ar@{-}[u]^{(s,0)}&&\\
&&*=0{\circ}\ar@{-}[u]^{(s,1)}\ar@{}[ur]|(.65){\dots}\ar@{-}[urr]&&\\
\beta=&&*=0{\circ}\ar@{-}[u]\ar@{-}[lu]\ar@{}[ur]|(.65){\dots}\ar@{-}[urr]&&\\
&&*=0{\circ}\ar@{}[u]|(.65){\vdots}\ar@{-}[lu]\ar@{}[ur]|(.65){\dots}\ar@{-}[urr]&&\\
&&*=0{}\ar@{-}[u]^{(e_S,1)}&&
}\end{split}
\end{equation}
The face $\beta$ is the minimal external face which contains the given
black vertex as well as the root edge.
We call a face $\beta$ of $T_{k+1}$ of this form a \emph{spine} in
$T_{k+1}$. Notice that the vertex just above $(e_S,1)$ is indeed white, as in
the picture, because $k+1<N$. Notice also that the outer face of $\beta$ given
by chopping off this vertex misses the colour $e_S$, hence belongs to
$\bord\Omega[S]\otimes\Delta[1]\subset A_0$. Furthermore, the outer
face of $\beta$ given by chopping off its black top vertex belongs
to $\Omega[S]\otimes\{1\}\subset A_0$. Finally, all the inner faces of $\beta$
miss an $S$-color, hence factor through $\partial\Omega[S]\otimes\Delta[1]$,
except possibly the one given by contracting
the edge $(s,1)$ near the top.
However, if this last face $\partial_{(s,1)}(\beta)$ of $\beta$ belongs
to $A_k$, then some earlier $T_i$, $1\leq i\leq k$, contains the edge
$(s,0)$, hence all of $\beta$.
Thus, either $\Omega[\beta]$ is contained in
$A_k$, or we can adjoin it by an inner anodyne extension
\begin{equation}\label{picspine2}
\begin{split}
\xymatrix{
\Lambda^{(s,1)}[\beta]\ar[r]\ar[d]&A_k\ar[d]\\
\Omega[\beta]\ar[r]&A_k\cup\Omega[\beta]\, .
}\end{split}
\end{equation}
Such a spine $\beta$ is an example of an \emph{initial segment} of
$T_{k+1}$. Recall from \cite{dend2} that a face $R\To T_{k+1}$
is called an \emph{initial segment}
if it is obtained by successively chopping off top vertices. Our
strategy will be to adjoin more initial segments of $T_{k+1}$ to $A_k$,
starting with the spines. To this end, we need the following definition
and lemma from \cite{dend2}, in which we use the notation $m(R)\subset\Omega[T_{k+1}]$
for the image of the map $\Omega[R]\To\Omega[T_{k+1}]$ given by an initial
segment $R$.
\end{paragr}

\begin{definition}[{\cite{dend2}}]\label{thmsubcyl11}
Let $R$, $Q_1,\ldots,Q_p$ be initial segments of $T_{k+1}$, and
let $B=m(Q_1)\cup\cdots\cup m(Q_p)$. Suppose that, for every top
face $F$ of $R$, we have $m(F)\subset A_k\cup B$.
In this situation, an inner edge $\xi$ of $R$ is called \emph{characteristic}
with respect to $Q_1,\ldots,Q_p$ if, for any inner face $F$ of $R$,
if $m(F/\xi)$ is contained in $A_k\cup B$, then so is $m(F)$
(where $F/\xi\To F$ is the face obtained by contracting $\xi$).
\end{definition}

\begin{example}
In any spine $\beta$ as in picture \eqref{picspine}, the edge $\xi=(s,1)$
is characteristic with respect to any family of initial segments.
\end{example}

\begin{example}\label{thmsubcyl12}
More generally, suppose $R$ is an initial segment of
$T_{k+1}$ given by a spine $\beta$ expanded by one (or more)
white vertices, say
\begin{equation*}
\begin{split}\xymatrix@R=10pt@C=12pt{
&&&&&\\
&&*=0{\bullet}\ar@{-}[u]^{(s,0)}&&&\\
&&*=0{\circ}\ar@{-}[u]^{(s,1)}\ar@{-}[ur]&&*=0{\circ}\ar@{-}[u]\ar@{-}[ur]&\\
R=&&*=0{\circ}\ar@{-}[u]^{(s',1)}\ar@{-}[urr]&&&\\
&&*=0{\circ}\ar@{}[u]|(.65){\vdots}\ar@{-}[lu]&&&\\
&&*=0{}\ar@{-}[u]^{(e_S,1)}&&&
}\end{split}
\end{equation*}
Then $\xi=(s,1)$ is again characteristic with respect to any
family $Q_1,\ldots,Q_p$. Indeed, if $R/\xi$ is a face
of an initial segment $Q_i$, then so is $R$ itself; see \cite[Remark 9.6~(iv)]{dend2}.
And if $R/\xi$ is a face of $T_j$ for a percolation scheme $T_j$, then
$T_j$ either contains $R$, or looks like
\begin{equation*}
\begin{split}\xymatrix@R=10pt@C=12pt{
&&&&&\\
&&*=0{\circ}\ar@{-}[u]^{(s,0)}\ar@{-}[ur]&&&\\
&&*=0{\bullet}\ar@{-}[u]^{(s',0)}&&*=0{\circ}\ar@{-}[u]\ar@{-}[ur]&\\
&&*=0{\circ}\ar@{-}[u]^{(s',1)}\ar@{-}[urr]&&&\\
&&*=0{\circ}\ar@{}[u]|(.68){\vdots}\ar@{-}[lu]&&&\\
&&*=0{}\ar@{-}[u]^{(e_S,1)}&&&
}\end{split}
\end{equation*}
But, by Lemma \ref{thmsubcyl10}, we can assume $T_j$ comes
before $T_{k+1}$ in the partial order, so this is impossible.
Finally, if $\Omega[R/\xi]\To\Omega[S]\otimes\Delta[1]$
factors through $A_0$, then $R/\xi$ misses an $S$-colour, and hence so
does $R$.
\end{example}

\begin{paragr}
The proof of Theorem \ref{thmsubcyl}~(i) is based on a repeated
use of arguments like the preceding one in Example \ref{thmsubcyl12}.
We quote the following lemma on characteristic edges from \cite{dend2}.
\end{paragr}

\begin{lemma}[{\cite[Lemma 9.7]{dend2}}]\label{thmsubcyl13}
Let $R$, $Q_1,\ldots,Q_p$ be initial segments of $T_{k+1}$. Let
$B=m(Q_1)\cup\cdots\cup m(Q_p)$, and suppose each top face of
$R$ has the property that $m(F)$ is contained in $A_k\cup B$.
If $R$ possesses a characteristic edge with respect to $Q_1,\ldots,Q_p$,
then the inclusion
$$A_k\cup B \To A_k\cup B\cup m(R)$$
of subobjects of $\Omega[S]\otimes\Delta[1]$ is inner anodyne.
\end{lemma}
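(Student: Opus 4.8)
The plan is to realize the inclusion $A_k\cup B\To A_k\cup B\cup m(R)$ as a pushout of an inner horn inclusion. Since $\xi$ is by definition an \emph{inner} edge of $R$, the inclusion $\Lambda^\xi[R]\To\Omega[R]$ is an inner horn inclusion in the sense of \ref{definnerextan}, hence inner anodyne; and inner anodyne extensions are stable under pushout. Thus the whole lemma reduces to the single identity of subobjects of $\Omega[S]\otimes\Delta[1]$
$$m(R)\cap(A_k\cup B)=\Lambda^\xi[R]\, ,$$
where I identify $m(R)$ with $\Omega[R]$ (the face $R\To T_{k+1}$ is a monomorphism, so $\Omega[R]\To\Omega[T_{k+1}]$ is one too) and read $\Lambda^\xi[R]$ as the corresponding subobject. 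We may assume $m(R)\not\subseteq A_k\cup B$, since otherwise the map in question is the identity. As the lattice of subobjects of $\Omega[R]$ above $\Lambda^\xi[R]$ is the chain $\Lambda^\xi[R]\subset\bord\Omega[R]\subset\Omega[R]$, the identity above amounts to two assertions: that $\Lambda^\xi[R]\subseteq A_k\cup B$, and that the face $R/\xi=\bord_\xi(R)$ is \emph{not} in $A_k\cup B$.

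First I would prove $\Lambda^\xi[R]\subseteq A_k\cup B$, i.e. that every elementary face of $R$ other than $\bord_\xi$ has its image in $A_k\cup B$. I would organise the faces of $R$ by whether they contract $\xi$ or retain it. The top (outer) faces lie in $A_k\cup B$ by the standing hypothesis of the lemma; the remaining outer faces miss an $S$-colour and so factor through $\bord\Omega[S]\otimes\Delta[1]\cup\Omega[S]\otimes\{1\}=A_0\subseteq A_k$. The inner faces that retain $\xi$ are then handled by the defining property of a characteristic edge: applied to such a face $F$, it upgrades ``$m(F/\xi)\subseteq A_k\cup B$'' to ``$m(F)\subseteq A_k\cup B$'', which is exactly the inductive step once the faces contracting $\xi$ (the faces of $R/\xi$) have already been placed in $A_k\cup B$. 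Those latter faces form the base of the induction and are disposed of, just as in Example \ref{thmsubcyl12}, by the missing-colour argument together with Lemma \ref{thmsubcyl10} (a common face of two distinct percolation schemes factors through a strictly earlier one).

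It then remains to see that $R/\xi\notin A_k\cup B$ (which also excludes the top cell, since any subobject containing it contains all faces). This is precisely the characteristic-edge condition in its instance $F=R$: were $m(R/\xi)$ contained in $A_k\cup B$, then so would be $m(R)$, against our standing assumption; equivalently, one argues directly that contracting $\xi$ produces a colouring realised neither by the $Q_i$ nor by any earlier $T_j$, exactly as in the concluding case analysis of Example \ref{thmsubcyl12}. Together with the previous paragraph this gives the intersection identity, whence the square
$$\xymatrix{
\Lambda^\xi[R]\ar[r]\ar[d]&A_k\cup B\ar[d]\\
\Omega[R]\ar[r]&A_k\cup B\cup m(R)
}$$
is a pushout and the bottom map is inner anodyne.

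The main obstacle is the combinatorial bookkeeping compressed into the second and third paragraphs: matching the abstract characteristic-edge condition of Definition \ref{thmsubcyl11} to the concrete geometry of the percolation schemes $T_1,\ldots,T_N$ for $\Omega[S]\otimes\Delta[1]$, and choosing a well-founded ordering of the faces of $R$ under which each hypothesis ``$m(F/\xi)\subseteq A_k\cup B$'' is already available when the characteristic property is invoked. This step is modelled on, and essentially identical to, the corresponding argument in \cite[Proposition 9.2]{dend2}; the only genuine change is that here the relevant characteristic edges are of the form $(s,1)$ lying just below a black top vertex, reflecting that we now percolate through an outer edge of the cylinder rather than an inner edge, with the partial order on percolation schemes correspondingly reversed.
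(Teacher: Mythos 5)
You should first note that the paper does not actually prove this lemma: it quotes it verbatim from \cite[Lemma 9.7]{dend2} (``We quote the following lemma on characteristic edges from \cite{dend2}''), so your attempt has to be measured against that proof, and it has a genuine gap. Your reduction to a \emph{single} pushout of the horn $\Lambda^\xi[R]\To\Omega[R]$ is incorrect in general, because the intersection identity $m(R)\cap(A_k\cup B)=\Lambda^\xi[R]$ fails: it requires \emph{every} elementary face of $R$ other than $\partial_\xi(R)$ to lie in $A_k\cup B$, and your argument for this rests on the claim that all proper faces of $R/\xi$ already lie in $A_k\cup B$ ``by the missing-colour argument together with Lemma \ref{thmsubcyl10}''. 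That claim is false as soon as $R$ contains a second black vertex, say at an edge $s'$ of $S$ incomparable with $s$ (such $R$ occur unavoidably, e.g. $R=T_{k+1}$ itself in the terminal application of Lemma \ref{thmsubcyl15}). The inner face $\partial_{(s',1)}(R)$ --- and likewise $\partial_{(s',1)}(R/\xi)$ --- retains the colour $s'$ through the edge $(s',0)$, so it does not factor through $\bord\Omega[S]\otimes\Delta[1]$; and since black vertices percolate \emph{downwards}, contracting the edge immediately below a black vertex produces a configuration occurring only in percolation schemes where that vertex has percolated strictly further down, i.e.\ in \emph{later} schemes, so Lemma \ref{thmsubcyl10} gives nothing and this face is in general not in $A_k\cup B$ either. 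The characteristic-edge property is only the conditional implication ``$m(F/\xi)\subseteq A_k\cup B$ implies $m(F)\subseteq A_k\cup B$''; with $m(\partial_{(s',1)}(R/\xi))\not\subseteq A_k\cup B$ it yields nothing for $F=\partial_{(s',1)}(R)$, and indeed $m(\partial_{(s',1)}(R))\not\subseteq A_k\cup B$, so your square is not cocartesian.

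The actual proof in \cite{dend2} exploits precisely the conditional nature of the characteristic edge to run a \emph{filtration} rather than a single pushout: one enumerates the inner faces $F$ of $R$ which contain $\xi$ and satisfy $m(F)\not\subseteq A_k\cup B$, by increasing size, ending with $R$ itself, and adjoins each in turn by a pushout of its own horn $\Lambda^\xi[F]\To\Omega[F]$. At each stage the outer faces of $F$ factor through top faces of $R$ (hence lie in $A_k\cup B$ by the standing hypothesis) or miss a colour; the inner faces $\partial_a(F)$ with $a\neq\xi$ are either in $A_k\cup B$ or are strictly smaller members of the family, already adjoined at an earlier stage; and the contrapositive of the characteristic property guarantees that $F/\xi$ is genuinely absent, so the intersection with the previous stage is exactly $\Lambda^\xi[F]$ and each step is inner anodyne. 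In this scheme the problematic face $\partial_{(s',1)}(R)$ above contains $\xi$ and is exactly one of the intermediate $F$'s adjoined before $R$; your proposal skips all of these intermediate adjunctions, and that is the missing content of the lemma.
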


\begin{lemma}\label{thmsubcyl14}
Let $R$, $Q_1,\ldots,Q_p$ be initial segments of $T_{k+1}$, satisfying
condition (i) in Definition \ref{thmsubcyl11}, and let $\beta$
be a spine in $R$. Then the edge $\xi=(s,1)$ immediately
below the black vertex on the spine is a characteristic edge for $R$.
\end{lemma}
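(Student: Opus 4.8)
The plan is to check the definition of a characteristic edge (Definition~\ref{thmsubcyl11}) head on, generalising the three-case analysis already carried out for a very special $R$ in Example~\ref{thmsubcyl12}. Denote by $b$ the black top vertex of the spine $\beta$, so that $(s,0)$ and $\xi=(s,1)$ are its two edges and $w$ is the white vertex sitting immediately below $\xi$. Since both endpoints $b$ and $w$ of $\xi$ already lie on $\beta\subseteq R$, the edge $\xi$ is inner in $R$, and the requirement that every top face of $R$ lie in $A_k\cup B$ is exactly the standing hypothesis. It therefore remains to fix an arbitrary inner face $F$ of $R$ in which $\xi$ is still inner, to assume $m(F/\xi)\subset A_k\cup B$, and to deduce $m(F)\subset A_k\cup B$.

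The one observation that drives everything is that passing from $F$ to $F/\xi$ removes only the single edge $\xi=(s,1)$: thus $F$ and $F/\xi$ carry exactly the same level-$0$ edges, and they carry exactly the same $S$-colours except that $F/\xi$ loses the colour $s$ precisely when $(s,0)$ is not present in $F$. Now, as a subcomplex of $\Omega[S]\otimes\Delta[1]$, the object $A_k\cup B$ is the union of $\bord\Omega[S]\otimes\Delta[1]$, of $\Omega[S]\otimes\{1\}$, of the earlier percolation schemes $\Omega[T_1],\dots,\Omega[T_k]$, and of the $m(Q_i)$; since $F/\xi$ is a single face of $T_{k+1}$, the hypothesis $m(F/\xi)\subset A_k\cup B$ forces $F/\xi$ to be a face of one of these generators, and I would distinguish the corresponding cases.

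In the principal configuration, where $b$ is a top vertex of $R$ so that $(s,0)$ is a leaf and hence survives in $F$, the argument is quick. If $F/\xi$ factors through $A_0$, it cannot factor through $\Omega[S]\otimes\{1\}$ (it still carries the level-$0$ edge $(s,0)$), so it misses some $S$-colour; as $F$ has the same $S$-colours it misses the same one, and $m(F)\subset\bord\Omega[S]\otimes\Delta[1]$. If $F/\xi$ is a face of some $Q_i$, then re-inserting $\xi$ below $b$ keeps $F$ a face of $Q_i$ by the closure property of initial segments recorded in \cite[Remark 9.6~(iv)]{dend2}, so $m(F)\subset m(Q_i)\subset B$. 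The remaining, and genuinely substantial, case is that $F/\xi$ is a face of a percolation scheme $T_j$. Here I would first use Lemma~\ref{thmsubcyl10} to replace $T_j$ by a scheme preceding $T_{k+1}$ in the partial order on the percolation schemes, i.e.\ one whose black vertices sit at least as high as those of $T_{k+1}$. Under this constraint the black vertex of $T_j$ still lies above $w$, so that re-inserting $\xi$ exhibits $F$ itself as a face of $T_j$ and $m(F)\subset\Omega[T_j]\subset A_k$; the only alternative, that the black vertex of $T_j$ has percolated below $w$ (the picture in Example~\ref{thmsubcyl12}), would place $T_j$ after $T_{k+1}$ and is thus excluded by the reduction.

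I expect the main obstacle to be this percolation-scheme case together with the book-keeping needed to pass from the special $R$ of Example~\ref{thmsubcyl12} to a general initial segment. For the concrete $R$ of that example one simply reads the two possibilities for $T_j$ off the picture, whereas in general one must argue monotonicity of the position of $b$ relative to $w$ under the partial order without a picture, and must separately treat faces $F$ in which $(s,0)$ has itself been contracted — a possibility that arises exactly when $R$ carries white vertices above $b$, and in which $F/\xi$ loses the colour $s$ while $F$ retains it. For such faces the clean colour argument of the third paragraph no longer applies, and one has to show directly, again via the partial order, that $F$ is already a face of an earlier scheme. Verifying that no configuration escapes this analysis is the crux of the proof.
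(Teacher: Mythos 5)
Your proposal follows the same route as the paper, which in fact offers no detailed proof at all: the printed argument is a one-line pointer (``proved exactly as Example \ref{thmsubcyl12}; cf. \cite[Lemma 9.8]{dend2}''), and your three cases --- $F/\xi$ in $A_0$ by a colour count, $F/\xi$ a face of some $Q_i$ via \cite[Remark 9.6~(iv)]{dend2}, $F/\xi$ a face of a percolation scheme $T_j$ reduced via Lemma \ref{thmsubcyl10} to one preceding $T_{k+1}$ --- are precisely the three cases of Example \ref{thmsubcyl12}, correctly promoted from $R$ itself to an arbitrary inner face $F$, which is what Definition \ref{thmsubcyl11} actually demands. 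One small repair: the right dichotomy is not whether the black vertex $b$ is a top vertex of $R$, but whether the edge $(s,0)$ survives in $F$; since $F$ is an \emph{inner} face, $(s,0)$ can only disappear by contraction, and whenever it survives your colour argument goes through regardless of the position of $b$ in $R$. You effectively acknowledge this in your last paragraph.

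The case you flag as the crux and leave open --- $F$ an inner face in which $(s,0)$ has been contracted, so that $F/\xi$ misses the colour $s$ while $F$ retains it, making the hypothesis on $F/\xi$ automatic and the colour argument useless --- does close, and by a mechanism already in the paper's toolkit rather than by new book-keeping. Such an $F$ factors through the inner face $\partial_{(s,0)}(T_{k+1})$, and the Boardman--Vogt relation (used implicitly in the proof of Lemma \ref{thmsubcyl10} and spelled out in Appendix \ref{appB}) identifies $\partial_{(s,0)}(T_{k+1})$ with the face of the percolation scheme $T'$ obtained from $T_{k+1}$ by moving the black vertex up through the white vertex above $(s,0)$, namely the face of $T'$ given by contracting the edges $(s'_i,1)$ over the inputs $s'_i$ of that white vertex. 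Since $T'$ strictly precedes $T_{k+1}$ in the partial order and the linear order extends it, one gets $m(F)\subset\Omega[T']\subset A_k$ unconditionally, which is exactly the unconditional conclusion this case requires; note also that the case only arises when $R$ contains the white vertex above $(s,0)$, as you observed, since otherwise $(s,0)$ is a leaf of $R$ and cannot be contracted. With that paragraph added, no configuration escapes and your proof is complete.
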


\begin{proof}[Hint for a proof]
This is proved exactly as Example \ref{thmsubcyl12}; cf. also
\cite[Lemma 9.8]{dend2}.
\end{proof}

\begin{paragr}
Using the characteristic edges from Lemma \ref{thmsubcyl14}, one can now
copy the proof of \cite[Lemma 9.9]{dend2}, repeated below as Lemma \ref{thmsubcyl15},
verbatim. This proof is by induction on $l$, and describes a precise
strategy for adjoining more and/or larger initial segments of $T_{k+1}$ to $A_k$.
\end{paragr}

\begin{lemma}\label{thmsubcyl15}
Fix $l\leq 0$, and let $Q_1,\ldots,Q_p$ be a family of initial
segments in $T_{k+1}$, each containing at least one spine, and at most
$l$ spines (so, necessarily, $p=0$ if $l=0$). Let
$R_1,\ldots,R_q$ be initial segments which each contain exactly $l+1$
spines. Then the inclusion $A_k\To A_k\cup B\cup C$ is inner anodyne,
where $B=m(Q_1)\cup\cdots\cup m(Q_p)$ and
$C=m(R_1)\cup\cdots\cup m(R_q)$.
\end{lemma}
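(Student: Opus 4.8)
The plan is to argue by induction on $l$ (the hypothesis is surely meant to be $l\geq 0$, the base case being $l=0$, where necessarily $p=0$ and $B=\emptyset$). Throughout, I would apply the statement with the \emph{complete} collections at each spine level: $Q_1,\dots,Q_p$ all initial segments of $T_{k+1}$ having between $1$ and $l$ spines, and $R_1,\dots,R_q$ all those having exactly $l+1$ spines; this is what the outer induction produces, and this completeness is what will make the face computations below come out right. The mechanism is to adjoin the pieces $m(R_1),\dots,m(R_q)$ to $A_k\cup B$ one at a time, each step being an inner anodyne extension by Lemma \ref{thmsubcyl13}. To invoke that lemma for a given $R_i$, two things are needed: that every top face $F$ of $R_i$ already satisfies $m(F)\subset A_k\cup B\cup m(R_1)\cup\cdots\cup m(R_{i-1})$ (the standing hypothesis of Definition \ref{thmsubcyl11}), and that $R_i$ has a characteristic edge relative to the segments already adjoined. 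The latter is handed to us by Lemma \ref{thmsubcyl14}: since $R_i$ contains a spine, the edge $\xi=(s,1)$ just below its top black vertex is characteristic, \emph{provided} the former holds. So the whole argument reduces to verifying the face condition and fixing the order of adjunction.

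First I would set up the dictionary between faces and spines: each black vertex of $T_{k+1}$ determines a unique spine, so the number of spines of an initial segment equals its number of black vertices, while its top white vertices (the `expansions' beyond the union of its spines) lie on no spine. The top faces of an $R_i$ with $l+1$ spines then come in two kinds. Chopping a top \emph{black} vertex deletes one spine, giving an initial segment with exactly $l$ spines; such a face either factors through $A_0\subset A_k$ (when it misses an $S$-colour or lies in a single level of $\Delta[1]$) or has between $1$ and $l$ spines and therefore, by completeness of the $Q$-family, lies in $B$. Chopping a top \emph{white} vertex keeps the spine count at $l+1$ but yields a strictly smaller initial segment.

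To guarantee that the white-vertex faces are already present, I would adjoin the $R_i$ along a linear order refining inclusion, smallest first. Then any strictly smaller $(l+1)$-spine face of $R_i$ has been adjoined earlier, so every top face of $R_i$ lands in $A_k\cup B\cup m(R_1)\cup\cdots\cup m(R_{i-1})$, verifying the hypothesis of Definition \ref{thmsubcyl11}. Lemma \ref{thmsubcyl14} then yields the characteristic edge and Lemma \ref{thmsubcyl13} makes $A_k\cup B\cup m(R_1)\cup\cdots\cup m(R_{i-1})\to A_k\cup B\cup m(R_1)\cup\cdots\cup m(R_i)$ inner anodyne. Composing these finitely many maps shows that $A_k\to A_k\cup B\cup C$ is inner anodyne, using the inductive hypothesis on $l$ to know that all segments with at most $l$ spines have indeed been collected into $B$. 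In the base case $l=0$ the black-vertex faces carry no spine, so they factor through $A_0$ automatically, and the same scheme runs with $B=\emptyset$.

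The main obstacle is precisely the combinatorial bookkeeping of spines under chopping of top vertices, and its compatibility with the size-ordering: one must be sure that deleting a top black vertex always lowers the spine count to exactly $l$ (never leaving more), that deleting a top white vertex never lowers it, and that the top vertices of a minimal $(l+1)$-spine segment are exactly its black vertices, so that the induction has a genuine starting point. These in turn rest on structural facts about the percolation schemes $T_j$ recalled in \ref{recallsubdivision} — notably that black vertices are never stacked directly one above another — together with the definition of spine. Once these points are secured, the proof is the verbatim transcription of \cite[Lemma 9.9]{dend2} promised above, with Lemmas \ref{thmsubcyl13} and \ref{thmsubcyl14} carrying all of the homotopy-theoretic content.
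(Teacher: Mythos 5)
Your overall mechanism is exactly the one the paper intends (it explicitly defers to the proof of \cite[Lemma 9.9]{dend2}, to be copied verbatim with the new spines): induct on $l$ (the hypothesis $l\leq 0$ is indeed a typo for $l\geq 0$), adjoin the $(l+1)$-spine segments one at a time along a linear order refining inclusion, verify the top-face condition of Definition \ref{thmsubcyl11} via the dictionary between spines and black vertices (chopping a top black vertex drops the spine count by exactly one, chopping a top white vertex preserves it, a minimal $(l+1)$-spine segment has only black top vertices, and in the case $l=0$ a segment with no black vertex lies in $\Omega[S]\otimes\{1\}\subset A_0$), and then conclude with Lemmas \ref{thmsubcyl14} and \ref{thmsubcyl13}. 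All of these combinatorial verifications are correct, and they are precisely the points where the present setting differs from \emph{loc.\ cit.}

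The genuine gap is your opening move of replacing the given families by the \emph{complete} collections of all initial segments with the relevant numbers of spines. The lemma is stated, and used, for \emph{arbitrary} families: in the concluding paragraph of Section \ref{section5} it is invoked with $p=0$, $q=1$, $R_1=T_{k+1}$. Passing to the complete collections changes the target of the inclusion: the union $A_k\cup B\cup C$ may grow strictly, since an initial segment with at most $l+1$ spines need not be contained in any of the given $R_j$, and an inner anodyne map onto a larger subobject does not restrict to one onto a smaller subobject. So your induction proves a different inclusion from the one asserted. (It happens to suffice for Theorem \ref{thmsubcyl}~(i), because every initial segment is a face of $T_{k+1}$, so in the terminal application both unions equal $A_{k+1}$; but it does not prove the lemma as stated.) The small but essential observation you are missing is that one should close the \emph{given} families under iterated top faces rather than complete them: adjoin to the $Q$-family all faces of the $Q_i$ and $R_j$ having between $1$ and $l$ spines, and to the $R$-family all faces of the $R_j$ having exactly $l+1$ spines. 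Since $m(F)\subseteq m(R)$ whenever $F$ is a face of an initial segment $R$, this closure leaves $A_k\cup B\cup C$ unchanged, and your induction then runs word for word on the closed families, now yielding the stated conclusion for arbitrary input families.
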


\begin{paragr}
This strategy terminates when one arrives at the number $l$ of \emph{all}
spines in $T_{k+1}$. Indeed, for this $l$ and $p=0$, $q=1$, Lemma
\ref{thmsubcyl15} states for $R_1=T_{k+1}$ that $A_k\To A_{k+1}$
is inner anodyne, as asserted in Theorem \ref{thmsubcyl}~(i). This
completes the proof of Theorem \ref{thmsubcyl}.
\end{paragr}

\section{$\infty$-operads as fibrant objects}\label{section6}

\begin{paragr}\label{dendanalogofk}
The aim of this section is to characterize $\infty$-operads
as the fibrant objects of the model category structure on the
the category of dendroidal sets given by Proposition \ref{cmfdendbasic}.
This characterization is stated in Theorem \ref{characfibinftyopermain}
below.

Given an $\infty$-category $X$, we denote by $k(X)$ the maximal
Kan complex contained in $X$; see \cite[Corollary 1.5]{joyal}.

Recall that, given two dendroidal sets $A$ and $X$, we write
$$\map(A,X)=i^*\sHom(A,X)\, .$$
Note that, by virtue of Proposition
\ref{tensinneranod}, if $X$ is an $\infty$-operad, and if $A$
is normal, then $\sHom(A,X)$ is an $\infty$-operad, so that $\map(A,X)$ is
an $\infty$-category.

For an $\infty$-operad $X$ and a simplicial set $K$, we will write
$X^{(K)}$ for the subcomplex of $\sHom(i_!(K),X)$ which consists
of dendrices
$$a:\Omega[T]\times i_!(K)\To X$$
such that, for any $0$-cell $u$ in $T$, the induced map
$$a_u:K\To i^*(X)$$
factors through $k(i^*(X))$ (i.e. all the $1$-cells in the image
of $a_u$ are weakly invertible in $i^*(X)$).

For an $\infty$-operad $X$ and a normal dendroidal set $A$, we will
write $k(A,X)$ for the subcomplex of $\map(A,X)$ which consists
of maps
$$u: A\otimes i_!(\Delta[n])\To X$$
such that, for all vertices $a$ of $A$ (i.e. maps $a:\eta\To A$), the induced map
$$u_a:\Delta[n]\To i^*(X)$$
factors through $k(i^*(X))$.
So, by definition, for any normal dendroidal set $A$, any simplicial set $K$,
and any $\infty$-operad $X$, there is a natural bijection:
\begin{equation}\label{dendanalogofk1}
\Hom_{\sset}(K,k(A,X))\simeq\Hom_{\dset}(A,X^{(K)})\, .
\end{equation}
\end{paragr}

\begin{rem}\label{remtermwiseweakequiv}
The simplicial set $k(A,X)$ is by definition the
$\infty$-category of objectwise weakly invertible $1$-cells
in $\map(A,X)$. We can reformulate the definition
of $k(A,X)$ as follows (still with $A$ normal and $X$ an
$\infty$-operad). Define
\begin{equation}
\mathrm{Ob}\, A=\coprod_{A_0}\eta\, .
\end{equation}
We have a unique monomorphism $i:\mathrm{Ob}\, A\To A$
which is the identity on $0$-cells. As $A$ is normal,
$i$ is a normal monomorphism. We also have
\begin{equation}
k(\mathrm{Ob}\, A,X)=k(\map(\mathrm{Ob}\, A,X))=\prod_{A_0}k(i^*X)\, ,
\end{equation}
and $k(A,X)$ fits by definition in the following pullback square.
\begin{equation}\begin{split}\xymatrix{
k(A,X)\ar[r]\ar[d]&\map(A,X)\ar[d]\\
k(\map(\mathrm{Ob}\, A,X))\ar[r]&\map(\mathrm{Ob}\, A,X)
}\end{split}\end{equation}
In particular, the projection of $k(A,X)$ on
$k(\mathrm{Ob}\, A,X)$ is an inner Kan fibration, and
as the latter is a Kan complex, this shows that $k(A,X)$
is an $\infty$-category. One of the key results
of this section asserts that $k(A,X)$ is a Kan complex
as well, which can be reformulated by saying that
the inclusion $k(\map(A,X))\subset k(A,X)$ is in fact an
equality. In other words, a map in the $\infty$-category
$\map(A,X)$ is weakly invertible if and only it is objectwise weakly invertible;
see Corollary \ref{dendanalogofk3etdemi}.
\end{rem}

\begin{paragr}\label{taucatfibraequivev1fibtriv}
Before stating the next theorem, we recall
that, for a morphism between $\infty$-categories
$f:X\To Y$, the induced map $\tau(f):\tau(X)\To\tau(Y)$ is
a categorical fibration if and only if the map
$$\mathit{ev}_1:X^{(\Delta[1])}\To Y^{(\Delta[1])}\times_Y X$$
induced by evaluating at $1$ (i.e. by the inclusion $\{1\}\To\Delta[1]$)
has the right lifting property with respect to $\bord\Delta[0]\To\Delta[0]$;
see \cite[Proposition 2.4]{joyal}.
\end{paragr}

\begin{thm}\label{dendanalogofk2}
Let $p:X\To Y$ be an inner Kan fibration between $\infty$-operads.
The map $\mathit{ev}_1:X^{(\Delta[1])}\To Y^{(\Delta[1])}\times_Y X$
has the right lifting property with respect to inclusions
$\bord\Omega[S]\To\Omega[S]$ for any tree $S$ with at least
one vertex. Consequently, the functor $\tau i^*(p)$ is a
categorical fibration if and only if
the evaluation at $1$ map $X^{(\Delta[1])}\To Y^{(\Delta[1])}\times_Y X$ is
a trivial fibration of dendroidal sets.
\end{thm}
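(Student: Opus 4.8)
The plan is to treat the two assertions in turn, the first being the substantial one. For the right lifting property against a boundary inclusion $\bord\Omega[S]\To\Omega[S]$ with $S$ having at least one vertex, I would first unwind the definitions of $X^{(\Delta[1])}$ and $Y^{(\Delta[1])}\times_Y X$ through the internal-hom adjunction $(-\otimes\Delta[1])\dashv\sHom(\Delta[1],-)$. A solid square
$$\xymatrix{
\bord\Omega[S]\ar[r]\ar[d]&X^{(\Delta[1])}\ar[d]^{\mathit{ev}_1}\\
\Omega[S]\ar[r]&Y^{(\Delta[1])}\times_Y X
}$$
then amounts to a map $\phi_0$ out of $A_0=\bord\Omega[S]\otimes\Delta[1]\cup\Omega[S]\otimes\{1\}$ into $X$ together with a map $h:\Omega[S]\otimes\Delta[1]\To Y$ satisfying $p\,\phi_0=h|_{A_0}$, and a diagonal filling is exactly a lift $\phi:\Omega[S]\otimes\Delta[1]\To X$ of $h$ extending $\phi_0$. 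The key point in this translation is that the invertibility condition defining the subcomplex $X^{(\Delta[1])}$ is \emph{automatically} met by any such $\phi$: each edge $u$ of $S$ is a proper face of $S$ (here I use that $S$ has a vertex), so $\{u\}\otimes\Delta[1]\subset\bord\Omega[S]\otimes\Delta[1]\subset A_0$, whence the $1$-cell $\{u\}\otimes\Delta[1]\To X$ is already prescribed by $\phi_0$ and is weakly invertible because the top horizontal map lands in $X^{(\Delta[1])}$.

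This is where Theorem \ref{thmsubcyl} enters, since $A_0$ is precisely the bottom of the cylinder filtration $A_0\subset\cdots\subset A_N=\Omega[S]\otimes\Delta[1]$. I would extend $\phi_0$ along this filtration. For $0\le i<N-1$ the inclusion $A_i\To A_{i+1}$ is inner anodyne by part (i), so since $p$ is an inner Kan fibration the lift extends step by step, producing $\phi_{N-1}:A_{N-1}\To X$ over $Y$. By part (ii) the final inclusion $A_{N-1}\To A_N$ is a pushout of $\Lambda^r[T]\To\Omega[T]$ for a tree $T$ with at least two vertices and a unary vertex $r$ at the root, so filling it reduces to solving the lifting problem for $p$ against $\Lambda^r[T]\To\Omega[T]$.

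The crux — and the reason Sections \ref{section4} and \ref{section5} were arranged as they are — is to verify the weak-invertibility hypothesis of Theorem \ref{dendjoyal} at this last step. By part (iii) of Theorem \ref{thmsubcyl}, the $1$-cell at $r$ is the inclusion $\{e_S\}\otimes\Delta[1]$ of the cylinder over the root edge $e_S$ of $S$; as $e_S$ is an edge of $S$ this cylinder lies in $A_0$, so the image $f(r)$ of $r$ under $\phi_{N-1}$ equals $\phi_0(\{e_S\}\otimes\Delta[1])$, which is weakly invertible. Theorem \ref{dendjoyal} then supplies the filler, completing $\phi$ and hence the first assertion. I expect this invertibility bookkeeping to be the genuine obstacle: everything else is formal, but it is exactly the design of the filtration (placing the root-edge cylinder at $r$) together with the invertibility built into $X^{(\Delta[1])}$ that makes the hypothesis of Theorem \ref{dendjoyal} available.

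For the second assertion I would use that a map of dendroidal sets is a trivial fibration if and only if it has the right lifting property against every boundary inclusion $\bord\Omega[T]\To\Omega[T]$ (Proposition \ref{propertiesnormal}). The first assertion already grants this for all $T$ with at least one vertex, so the only remaining test is the tree $0$ with no vertex, i.e. the inclusion $\emptyset=\bord\Omega[0]\To\Omega[0]=\eta$. Since $i_!$ is strong monoidal one has $i_!\Delta[n]\otimes i_!\Delta[1]=i_!(\Delta[n]\times\Delta[1])$, giving $i^*(X^{(\Delta[1])})=(i^*X)^{(\Delta[1])}$, and as $i^*$ is a right adjoint it preserves the pullback, so $i^*$ carries the dendroidal $\mathit{ev}_1$ to the simplicial one for $i^*(p)$. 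By the adjunction $i_!\dashv i^*$ and the identity $\emptyset\To\eta=i_!(\bord\Delta[0]\To\Delta[0])$, the right lifting property of $\mathit{ev}_1$ against $\emptyset\To\eta$ is equivalent to that of the simplicial $\mathit{ev}_1$ against $\bord\Delta[0]\To\Delta[0]$, which by \ref{taucatfibraequivev1fibtriv} holds exactly when $\tau i^*(p)$ is a categorical fibration. Combining these equivalences yields the stated characterization, the second assertion being thus a routine reduction over the generating boundary inclusions.
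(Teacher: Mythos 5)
Your proof is correct and takes essentially the same route as the paper's: the adjoint translation of the lifting problem to one along $A_0\subset\Omega[S]\otimes\Delta[1]$ (with the observation that the invertibility condition on a filler is automatic because $\bord\Omega[S]\To\Omega[S]$ is bijective on objects), the step-by-step extension along the filtration of Theorem \ref{thmsubcyl} using that $p$ is an inner Kan fibration, and the final filler supplied by Theorem \ref{dendjoyal} with the weak-invertibility hypothesis secured by part (iii). Your treatment of the second assertion merely spells out the reduction over the one remaining boundary inclusion $\emptyset\To\eta$ via the adjunction $i_!\dashv i^*$, which the paper compresses into its citation of \ref{taucatfibraequivev1fibtriv}.
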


\begin{proof}
Consider a tree $S$ with at least one vertex and a solid
commutative square
$$\xymatrix{
\bord\Omega[S]\ar[r]^f\ar[d]&X^{(\Delta[1])}\ar[d]\\
\Omega[S]\ar[r]^(.4)g\ar@{..>}[ur]^h&Y^{(\Delta[1])}\times_Y X
}$$
We want to prove the existence of a diagonal filling $h$.
This corresponds by adjunction to a filling $\tilde h$ in the
following commutative square
$$\xymatrix{
\bord\Omega[S]\otimes\Delta[1]\cup\Omega[S]\otimes\{1\}\ar[r]^(.75){\tilde f}\ar[d]&X\ar[d]\\
\Omega[S]\otimes\Delta[1]\ar[r]^(.6){\tilde g}\ar@{..>}[ur]^{\tilde h}&Y
}$$
(as the inclusion of $\bord\Omega[S]$ in $\Omega[S]$ is
bijective on objects, and as the restriction of $\tilde h$
to $\bord\Omega[S]\otimes\Delta[1]\cup\Omega[S]\otimes\{1\}$
coincides with $\tilde f$, the map $\Omega[S]\To X^{\Delta[1]}$
corresponding to a filling $\tilde h$ will automatically
factor through $X^{(\Delta[1])}$).

Consider the filtration
$$\bord\Omega[S]\otimes\Delta[1]\cup\Omega[S]\otimes\{1\}=
A_0\subset A_1\subset\cdots\subset A_{N-1}\subset A_N=\Omega[S]\otimes\Delta[1]$$
given by Theorem \ref{thmsubcyl}. As the map $X\To Y$ is an inner Kan fibration,
using Theorem \ref{thmsubcyl}~(i), it is sufficient to find a filling
in a solid commutative diagram of shape
$$\xymatrix{
A_{N-1}\ar[r]^{f'}\ar[d]&X\ar[d]\\
\Omega[S]\otimes\Delta[1]\ar[r]^(.6){\tilde g}\ar@{..>}[ur]^{\tilde h}&Y
}$$
in which the restriction of $f'$ to $\bord\Omega[S]\otimes\Delta[1]\cup\Omega[S]\otimes\{1\}$
coincides with $\tilde f$. By virtue of Theorem \ref{thmsubcyl}~(ii),
it is even sufficient to find a filling $k$ in a solid commutative diagram
of shape
$$\xymatrix{
\Lambda^r[T]\ar[r]^{a}\ar[d]&X\ar[d]\\
\Omega[T]\ar[r]^(.6){b}\ar@{..>}[ur]^{k}&Y
}$$
in which $T$ is a tree with unary vertex $r$ at the root, and 
$a$ is the restriction of $f'$ to $\Lambda^r[T]\subset A_{N-1}$.
Furthermore, by Theorem \ref{thmsubcyl}~(iii), we may assume
that $a(r)$ is weakly invertible in $i^*(X)$.
Thus, the existence of the filling $k$
is ensured by Theorem \ref{dendjoyal}.

The last assertion of the theorem follows from \ref{taucatfibraequivev1fibtriv}.
\end{proof}

\begin{lemma}\label{predendanalogofk3}
Any left fibration between Kan complexes is
a Kan fibration.
\end{lemma}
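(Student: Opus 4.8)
The plan is to exploit the elementary fact that a map of simplicial sets is a Kan fibration if and only if it is simultaneously a left fibration and a right fibration: the left horn inclusions $\Lambda^i[n]\to\Delta[n]$ ($0\le i<n$) together with the right ones ($0<i\le n$) account for every horn inclusion $\Lambda^i[n]\to\Delta[n]$ with $0\le i\le n$. Since $p\colon X\to Y$ is assumed to be a left fibration, it therefore suffices to prove that $p$ is a right fibration, that is, that $p$ has the right lifting property with respect to the inclusions $\Lambda^n[n]\to\Delta[n]$ for all $n\ge 1$.

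The key observation is that every $1$-cell in sight is weakly invertible. As $Y$ is a Kan complex, the category $\tau(Y)$ is a groupoid, so every edge of $Y$ is invertible; since a left fibration between $\infty$-categories is conservative by \cite[Proposition~2.7]{joyal}, every edge of $X$ is invertible as well (so that $X$ is automatically a Kan complex). I would then dispose of the outer horns $\Lambda^n[n]$ for $n\ge 2$ at once: the map $p$ is in particular an inner fibration, and the edge joining the last two vertices $n-1$ and $n$ of any horn $\Lambda^n[n]\to X$ is sent to a weakly invertible edge of $X$, so the desired diagonal filling exists by the right-handed form of \cite[Theorem~2.2]{joyal}. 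This is precisely the simplicial prototype of Theorem~\ref{dendjoyal}, applied to the inner Kan fibration $p$.

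It remains to treat the case $n=1$, to which \cite[Theorem~2.2]{joyal} does not apply, and this is the one step requiring a hands-on argument; it is also where the hypothesis that $Y$ be a Kan complex is genuinely used (the final-vertex inclusion $\{1\}\to\Delta[1]$ is a left fibration which is not a right fibration). Given an edge $y\colon y_0\to y_1$ of $Y$ and a vertex $x_1$ of $X$ over $y_1$, I would first use that $Y$ is a Kan complex to fill the outer horn $\Lambda^2[2]\to Y$ with faces $y$ and $\mathrm{id}_{y_1}$, obtaining a $2$-simplex $\sigma\colon\Delta[2]\to Y$ whose remaining face is an edge $\bar y\colon y_1\to y_0$ (a right inverse of $y$). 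Lifting $\bar y$ along the left fibration $p$ starting from $x_1$ produces an edge $x_1\to x_0$ over $\bar y$; together with the degeneracy $\mathrm{id}_{x_1}$ this assembles into a left horn $\Lambda^0[2]\to X$ lying over $\sigma$, which $p$ fills since $\Lambda^0[2]\to\Delta[2]$ is a left horn inclusion. The face of this filler opposite the initial vertex is then an edge of $X$ over $y$ ending at $x_1$, as required.

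I expect the only real bookkeeping to be in this $n=1$ step, namely checking that the vertices, edges and the chosen $2$-simplex $\sigma$ match up so that the constructed horn truly lies over $\sigma$; the cases $n\ge 2$ are immediate once \cite[Theorem~2.2]{joyal} is invoked. A more conceptual, if less self-contained, alternative would avoid the case distinction entirely: $p$ is an inner fibration in which every edge of $X$ is an equivalence, hence every edge of $X$ is $p$-cartesian, and an inner fibration all of whose total-space edges are cartesian is a right fibration.
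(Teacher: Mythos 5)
Your proof is correct and takes essentially the same route as the paper, whose entire proof is the citation of \cite[Theorem 2.2 and Proposition 2.7]{joyal}: you reduce to the right horns $\Lambda^n[n]$, handle $n\geq 2$ via (the dual form of) Joyal's Theorem 2.2, and supply by hand the $n=1$ lifting that the paper leaves to Joyal, and your explicit construction there checks out. The only redundancy is the appeal to conservativity (Proposition 2.7): since $X$ is itself assumed to be a Kan complex, all of its edges are already weakly invertible, so that detour is unnecessary though harmless.
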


\begin{proof}
This follows from \cite[Theorem 2.2 and Proposition 2.7]{joyal}.
\end{proof}

\begin{lemma}\label{predendanalogofk4}
A morphism of simplicial sets $X\To Y$ is a left (resp. right)
fibration if and only it has the right lifting property with
respect to maps of shape
$$\bord\Delta[n]\times\Delta[1]\cup\Delta[n]\times\{e\}\To\Delta[n]\times\Delta[1]$$
for $e=1$ (resp. for $e=0$) and $n\geq 0$.
\end{lemma}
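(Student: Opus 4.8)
The plan is to prove the equivalence by comparing \emph{weakly saturated classes}, that is, classes of monomorphisms closed under pushout, transfinite composition and retract. Since the class of maps having the right lifting property with respect to a given set of maps depends only on the weakly saturated class generated by that set, it suffices to show that the two families
$$\{\Lambda^i[n]\To\Delta[n]\ :\ n\geq 1,\ 0\leq i<n\}$$
and
$$\{\bord\Delta[n]\times\Delta[1]\cup\Delta[n]\times\{e\}\To\Delta[n]\times\Delta[1]\ :\ n\geq 0\}$$
generate one and the same weakly saturated class, namely the class of left anodyne maps (the right case being obtained by the evident dualization, exchanging the two endpoints of $\Delta[1]$ and left horns for right horns). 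I would therefore establish the two inclusions of each family into the saturation of the other.

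For the inclusion of the pushout-product family into the saturation of the horns, I would use the shuffle decomposition of the prism. Writing $A=\bord\Delta[n]\times\Delta[1]\cup\Delta[n]\times\{e\}$, the non-degenerate top-dimensional simplices of $\Delta[n]\times\Delta[1]$ are the $n+1$ shuffles $\sigma_0,\dots,\sigma_n$ (the maximal chains in the product poset). Ordering them suitably and setting $A=A^{(0)}\subset A^{(1)}\subset\cdots\subset A^{(n+1)}=\Delta[n]\times\Delta[1]$ with $A^{(r)}=A^{(r-1)}\cup\sigma_r$, one checks that each step $A^{(r-1)}\To A^{(r)}$ is a pushout of a single horn inclusion $\Lambda^{j_r}[n+1]\To\Delta[n+1]$, the horn being determined by the unique face of $\sigma_r$ not already present in $A^{(r-1)}$. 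The bookkeeping shows that, for the endpoint relevant to the left case, every such horn is a left horn ($j_r<n+1$), so that $A\To\Delta[n]\times\Delta[1]$ is left anodyne. This is entirely parallel to the percolation/shuffle filtrations already used in Proposition \ref{tensinneranod} and Theorem \ref{thmsubcyl}.

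For the reverse inclusion I would show that each left horn inclusion $\Lambda^i[n]\To\Delta[n]$ lies in the weakly saturated class generated by the pushout-product family. The classical way to see this (Gabriel--Zisman, Joyal) is to exhibit $\Lambda^i[n]\To\Delta[n]$ as a retract, in the arrow category of $\sset$, of a pushout-product inclusion $A\To\Delta[m]\times\Delta[1]$, by means of explicit order-preserving maps between $\Delta[n]$ and $\Delta[m]\times\Delta[1]$ adapted to the index $i$ and to the endpoint $\{e\}$; since weakly saturated classes are closed under retracts, this places every left horn inclusion in the saturation of the pushout-product family. Combining the two containments yields the equality of the generated saturated classes, and hence the asserted equivalence of lifting properties.

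The main obstacle is the combinatorial bookkeeping of the second step: one must fix the correct linear ordering of the shuffles and, for each newly attached shuffle $\sigma_r$, identify precisely which of its faces is the one not yet present in $A^{(r-1)}$, and then verify that the resulting horn is of the correct (left, resp.\ right) type compatible with the chosen endpoint $\{e\}$. Pinning down this endpoint/horn-index correspondence exactly is the only delicate point; once it is settled, both the filtration and the retract construction are routine.
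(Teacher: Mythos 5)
Your overall strategy coincides with the paper's own proof: the paper disposes of your first step by citing (the dual version of) \cite[Chapter~IV, 2.1.1]{GZ} for the shuffle decomposition of the prism into horn pushouts, and of your second step by citing \cite[Chapter~IV, 2.1.3]{GZ} for the retract exhibiting $\Lambda^k[n]\To\Delta[n]$, $0\le k<n$, as a retract of $\Lambda^k[n]\times\Delta[1]\cup\Delta[n]\times\{0\}\To\Delta[n]\times\Delta[1]$; you propose to reprove these two facts directly, and your shuffle bookkeeping (one missing face per shuffle, in a suitable linear order) is indeed how they are proved. The only step both you and the paper leave implicit is the routine closure remark that pushout-products of arbitrary monomorphisms with $\{e\}\To\Delta[1]$ lie in the weakly saturated class generated by the boundary pushout-products, which is needed before the retract argument applies.

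There is, however, one point you must not defer, namely the ``endpoint/horn-index correspondence'' you flag as the only delicate step: it resolves to the pairing \emph{opposite} to the one in the statement as printed. Running your filtration for $e=0$ (adjoin the shuffles starting with the one containing $\Delta[n]\times\{0\}$; the $k$-th shuffle then has exactly one missing face, its $k$-th) produces precisely the horns $\Lambda^{j}[n+1]$ with $0\le j<n+1$ --- consistent with your parenthetical $j_r<n+1$ --- while $e=1$ produces $0<j\le n+1$. Hence, with the paper's convention that a left fibration has the right lifting property against $\Lambda^i[n]$ for $0\le i<n$, left fibrations are characterized by the $e=0$ family and right fibrations by the $e=1$ family, not the other way around. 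A direct check: at $n=0$ the $e=0$ map is $\{0\}\To\Delta[1]=\Lambda^0[1]$, a left horn, whereas the left fibration given by the inclusion of the vertex $\{1\}$ into $\Delta[1]$ fails the lifting property against the $e=1$ map $\{1\}\To\Delta[1]$ (test against the identity of $\Delta[1]$ on the bottom). The paper's own proof corroborates this: both Gabriel--Zisman citations there are stated for the endpoint $0$ together with the index ranges $0\le k<n+1$ and $0\le k<n$ (the assignment of $e$ in the printed statement, and the word ``right'' in the last sentence of the printed proof, appear to be typos; the downstream uses in Section~\ref{section6} are unaffected because conservativity and Lemma~\ref{predendanalogofk3} hold for left and right fibrations alike). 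So if you carry out the bookkeeping as you propose, you will have proved the lemma with the two values of $e$ interchanged relative to its printed statement; a complete write-up must commit explicitly to $e=0\leftrightarrow\text{left}$ and $e=1\leftrightarrow\text{right}$, since the ``left case'' with $e=1$, taken literally, is false.
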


\begin{proof}
The map $\bord\Delta[n]\times\Delta[1]\cup\Delta[n]\times\{0\}\To\Delta[n]\times\Delta[1]$
is obtained as a finite composition of pushouts of horns of shape
$\Lambda^k[n+1]\To\Delta[n+1]$ with $0\leq k<n+1$;
see (the dual version of) \cite[Chapter IV, 2.1.1]{GZ}.

Conversely, the inclusion map $\Lambda^k[n]\To\Delta[n]$, $0\leq k< n$, is a retract
of the map $\Lambda^k[n]\times\Delta[1]\cup\Delta[n]\times\{0\}\To\Delta[n]\times\Delta[1]$;
see \cite[Chapter IV, 2.1.3]{GZ}.

We deduce easily from this that a morphism of simplicial sets $X\To Y$
is a right fibration
if and only if the evaluation at $0$ map $X^{\Delta[1]}\To Y^{\Delta[1]}\times_Y X$
is a trivial fibration (i.e. has the right lifting property with respect
to monomorphisms). The case of left fibrations follows by duality.
\end{proof}

\begin{prop}\label{dendanalogofk3}
Let $p:X\To Y$ be an inner Kan fibration
between $\infty$-operads. If $\tau i^*(p)$ is
a categorical fibration, then, for any
monomorphism between normal dendroidal sets
$A\To B$, the map
$$k(B,X)\To k(B,Y)\times_{k(A,Y)}k(A,X)$$
is a Kan fibration between Kan complexes.
\end{prop}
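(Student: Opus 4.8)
The plan is to show that $q\colon k(B,X)\To k(B,Y)\times_{k(A,Y)}k(A,X)$ is a \emph{left fibration} between Kan complexes, and then to invoke Lemma~\ref{predendanalogofk3}. I would first isolate the following assertion $(\star)$: for every inner Kan fibration $p\colon X\To Y$ between $\infty$-operads such that $\tau i^*(p)$ is a categorical fibration, and every monomorphism $A\To B$ between normal dendroidal sets, the map $q$ is a left fibration. Granting $(\star)$, the Kan complexes appear by specialization. Applying $(\star)$ to the map from $X$ to the terminal dendroidal set (which is an inner Kan fibration whose image under $\tau i^*$ is a functor to the terminal category, hence an isofibration, hence a categorical fibration) and to $\emptyset\To B$ shows that $k(B,X)\To\Delta[0]$ is a left fibration; since a simplicial set is a Kan complex exactly when its map to $\Delta[0]$ is a left fibration (a standard fact, cf.\ \cite{joyal}), $k(B,X)$ is a Kan complex. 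The same argument applied to $Y$ shows $k(B,Y)\To k(A,Y)$ and $k(A,X)\To\Delta[0]$ are left fibrations, so $Q=k(B,Y)\times_{k(A,Y)}k(A,X)$, being left-fibered over $k(A,X)$ (as a pullback) and thence over $\Delta[0]$ (as a composite), is a Kan complex as well. Then $q$ is a left fibration between Kan complexes and Lemma~\ref{predendanalogofk3} finishes the proof.

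To prove $(\star)$ I would use the characterization of left fibrations in Lemma~\ref{predendanalogofk4}: it suffices to check that $q$ has the right lifting property with respect to the maps $j_n\colon \bord\Delta[n]\times\Delta[1]\cup\Delta[n]\times\{1\}\To\Delta[n]\times\Delta[1]$. For a monomorphism $u\colon U\To V$ of simplicial sets, write $\Phi_u\colon X^{(V)}\To Y^{(V)}\times_{Y^{(U)}}X^{(U)}$ for the induced map of dendroidal sets; note that $\Phi_{\{1\}\To\Delta[1]}$ is exactly the map $\mathit{ev}_1\colon X^{(\Delta[1])}\To Y^{(\Delta[1])}\times_Y X$ of Theorem~\ref{dendanalogofk2}. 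Unwinding the adjunction \eqref{dendanalogofk1}, a lifting problem for $q$ against $u$ transposes to a lifting problem for $\Phi_u$ against $A\To B$. Since $A\To B$ is a normal monomorphism (Corollary~\ref{relativenormalmono}), it therefore suffices to show that each $\Phi_{j_n}$ is a trivial fibration of dendroidal sets.

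The key step is to reduce $\Phi_{j_n}$ to the case $n=0$, i.e.\ to $\mathit{ev}_1$. Since $j_n$ is the pushout--product of $\bord\Delta[n]\To\Delta[n]$ with $\{1\}\To\Delta[1]$, and $i_!$ is strong monoidal and preserves monomorphisms, the two--variable adjunction relating the tensor product, $\sHom$, and the construction $X\mapsto X^{(-)}$ identifies a lifting of $\Phi_{j_n}$ against a normal monomorphism $\kappa$ with a lifting of $\mathit{ev}_1=\Phi_{\{1\}\To\Delta[1]}$ against the pushout--product $\kappa\mathbin{\hat\otimes}(i_!\bord\Delta[n]\To\Omega[n])$ (here I use $i_!\Delta[n]=\Omega[n]$). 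The latter is again a normal monomorphism: it is a pushout--product of normal monomorphisms (Proposition~\ref{BVtensOK}), and $i_!\bord\Delta[n]\To\Omega[n]$ is normal because its target $\Omega[n]$ is normal (Corollary~\ref{relativenormalmono}). As $\mathit{ev}_1$ is a trivial fibration by Theorem~\ref{dendanalogofk2}, the required lift exists, so $\Phi_{j_n}$ is a trivial fibration and $(\star)$ follows.

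The hard part is precisely this last reduction: one must verify that the two--variable adjunction is compatible with the defining \emph{objectwise weak-invertibility} ($k$-)conditions cutting out $X^{(V)}\subset\sHom(i_!V,X)$. Tracking these conditions through the reassociation of pushout--products --- checking that the invertibility constraints attached to $V=\Delta[n]\times\Delta[1]$ correspond, under the adjunction, exactly to invertibility in the $\Delta[1]$-direction once $\bord\Delta[n]\To\Delta[n]$ has been absorbed into the cofibration --- is where the genuine content lies, and it is this content that Theorem~\ref{dendanalogofk2} (through Theorems~\ref{dendjoyal} and~\ref{thmsubcyl}) supplies. Once this bookkeeping is in place, the rest of the argument is formal.
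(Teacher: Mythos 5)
Your overall architecture coincides with the paper's: reduce to showing the map is a left fibration via Lemma \ref{predendanalogofk4}, obtain the Kan complex claims from left fibrations over a point (the paper phrases this as conservativity of left fibrations, \cite[Proposition 2.7]{joyal}, plus \cite[Corollary 1.4]{joyal}), and conclude with Lemma \ref{predendanalogofk3}; your treatment of the case $n=0$ is also exactly the paper's, namely transposition along \eqref{dendanalogofk1} followed by Theorem \ref{dendanalogofk2}. The gap is in the case $n>0$, at the step you yourself flag as the ``hard part'': the asserted identification of a lifting problem for $\Phi_{j_n}$ against $\kappa$ with a lifting problem for $\mathit{ev}_1$ against $\kappa\mathbin{\hat\otimes}(i_!\bord\Delta[n]\To\Omega[n])$ is \emph{false} as an exact correspondence, and Theorem \ref{dendanalogofk2} does not supply it. Concretely, a map $B\To X^{(\Delta[n]\times\Delta[1])}$ requires, at each $0$-cell $b$ of $B$, that \emph{every} $1$-simplex of $\Delta[n]\times\Delta[1]$ --- including those in the $\Delta[n]$-direction and the diagonal ones --- be sent to a weakly invertible $1$-cell of $i^*(X)$; whereas, after absorbing $\bord\Delta[n]\To\Delta[n]$ into the dendroidal variable, a map $\Omega[n]\otimes B\To X^{(\Delta[1])}$ only records invertibility of the edges $\{(j,b)\}\times\Delta[1]$, i.e.\ in the $\Delta[1]$-direction at the $0$-cells of $\Omega[n]\otimes B$. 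The latter condition is strictly weaker, so the lift that Theorem \ref{dendanalogofk2} provides for the $\mathit{ev}_1$-problem is only a \emph{candidate} solution of the original problem \eqref{dendanalogofk31}: nothing in your argument shows that its transpose actually lands in $k(B,X)$.

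This missing verification is precisely where the paper's proof does its non-formal work, in the paragraph following \eqref{dendanalogofk32}: writing $l:i_!(\Delta[n]\times\Delta[1])\otimes B\To X$ for the transpose of the lift $h$, one checks that for every nondegenerate $\delta:\Delta[m]\To\Delta[n]\times\Delta[1]$ and every object $b:\eta\To B$ the map $(l\, i_!(\delta))_b$ factors through $k(i^*(X))$. This is done by reducing, via the $2$-out-of-$3$ property for weakly invertible $1$-cells, to $m=1$, and then --- using that $n>0$ --- to edges factoring through $\bord\Delta[n]\times\Delta[1]$, where $l$ restricts to the prescribed map $u$ of \eqref{dendanalogofk31}, which lands in $k(B,X)$ by hypothesis. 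In other words, the invertibility constraints in the $\Delta[n]$-direction are not preserved by the adjunction bookkeeping; they are recovered \emph{a posteriori} from the boundary data together with $2$-out-of-$3$. (Your claim that each $\Phi_{j_n}$ is a trivial fibration is true, but it is essentially a restatement of the proposition itself, so deducing it from the purported exponential compatibility is circular.) Your proof becomes complete, and then essentially identical to the paper's, once this verification paragraph is inserted; as written, attributing that content to Theorem \ref{dendanalogofk2} is where it breaks.
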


\begin{proof}
The functor $i_!:\sset\to\dset$ being
symmetric monoidal and preserving inner anodyne
extensions, Proposition \ref{tensinneranod} implies
that the map
$$\map(B,X)\To \map(B,Y)\times_{\map(A,Y)}\map(A,X)$$
is an inner Kan fibration between $\infty$-categories.
This implies the map
$$k(B,X)\To k(B,Y)\times_{k(A,Y)}k(A,X)$$
is an inner Kan fibration between $\infty$-categories.

We claim that this map has the right lifting property with respect to
the inclusion $\{1\}\To\Delta[1]$.
Using the identification \eqref{dendanalogofk1}, we see that
lifting problems of shape
\begin{equation}\label{dendanalogofk3000}\begin{split}
\xymatrix{
\{1\}\ar[r]\ar[d]&k(B,X)\ar[d]\\
\Delta[1]\ar[r]\ar@{..>}[ur]&k(B,Y)\times_{k(A,Y)}k(A,X)
}\end{split}\end{equation}
correspond to lifting problems of shape
\begin{equation}\label{dendanalogofk30000}\begin{split}
\xymatrix{
A\ar[r]\ar[d]& X^{(\Delta[1])}\ar[d]\\
B\ar[r]\ar@{..>}[ur]&Y^{(\Delta[1])}\times_Y X
}\end{split}\end{equation}
so that our claim follows from Theorem \ref{dendanalogofk2}.

More generally, the map $k(B,X)\To k(B,Y)\times_{k(A,Y)}k(A,X)$
has the right lifting property with respect to maps of shape
\begin{equation}\label{dendanalogofk30}\begin{split}
\bord\Delta[n]\times\Delta[1]\cup\Delta[n]\times\{1\}\To\Delta[n]\times\Delta[1]\ ,
\quad n\geq 0\, .
\end{split}\end{equation}

We have just checked it above in the case where $n=0$, so that it remains
to prove the case where $n>0$. Consider a lifting problem of shape
\begin{equation}\label{dendanalogofk31}\begin{split}
\xymatrix{
\bord\Delta[n]\times\Delta[1]\cup\Delta[n]\times\{1\}\ar[r]^(.67)u\ar[d]&k(B,X)\ar[d]\\
\Delta[n]\times\Delta[1]\ar[r]_v\ar@{..>}[ur]_g&k(B,Y)\times_{k(A,Y)}k(A,X)
}\end{split}\end{equation}
This lifting problem gives rise to a lifting problem of shape
\begin{equation}\label{dendanalogofk32}\begin{split}
\xymatrix{
\bord\Omega[n]\otimes B\cup \Omega[n]\otimes A\ar[r]\ar[d]& X^{(\Delta[1])}\ar[d]\\
\Omega[n]\otimes B\ar[r]\ar@{..>}[ur]_h& Y^{(\Delta[1])}\times_Y X\, .
}\end{split}\end{equation}
The existence of the lifting $h$ is provided again by
Theorem \ref{dendanalogofk2}. The map $h$ defines a map
$$l:i_!(\Delta[n]\times\Delta[1])\otimes B\To X\, .$$
As a consequence,
it is sufficient to check that, for every non-degenerate $m$-simplex
$\delta:\Delta[m]\To\Delta[n]\times\Delta[1]$, $m\geq 1$, and for any object
$b:\eta\To B$, the map $(l i_!(\delta))_b:\Delta[m]\To i^*(X)$
factors through $k(i^*(X))$. Using the `$2$ out of $3$ property'
for weakly invertible $1$-cells in $i^*(X)$, we can assume that $m=1$.
But then, as $n>0$, using again the `$2$ out of $3$ property' for
weakly invertible $1$-cells,
we may assume that $\delta$ factors through $\bord\Delta[n]\times\Delta[1]$,
which implies then that $(l\circ i_!(\delta))\otimes 1_B$ factors through the subcomplex
$i_!(\bord\Delta[n]\times\Delta[1])\otimes B$: 
the required property thus follows from the fact that the restriction
of the transpose of $h$ to the object
$i_!(\Delta[n]\times\Delta[1]\cup\bord\Delta[n]\times\{1\})\otimes B$
corresponds to the map $u$ in \eqref{dendanalogofk31}.

By virtue of Lemma \ref{predendanalogofk4}, the map
$k(B,X)\To k(B,Y)\times_{k(A,Y)}k(A,X)$ is a left fibration, hence,
by \cite[Proposition 2.7]{joyal}, is conservative.
By applying \cite[Corollary 1.4]{joyal}, we deduce, from
the case where $A=\varnothing$ and $Y$ is the terminal dendroidal set,
that $k(B,X)$ is a Kan
complex for any normal dendroidal set $B$ and any $\infty$-operad $X$. As
any left fibration between Kan complexes is a Kan fibration
(Lemma \ref{predendanalogofk3}),
the maps $k(B,X)\To k(A,X)$ are thus Kan fibrations between Kan
complexes for any monomorphisms between normal dendroidal sets
$A\To B$ and any $\infty$-operad $X$. As a consequence, Kan
fibrations being stable by pullback, we see that the fiber product
$k(B,Y)\times_{k(A,Y)}k(A,X)$ is a Kan complex.
Using again Lemma \ref{predendanalogofk3}, we conclude that
$k(B,X)\To k(B,Y)\times_{k(A,Y)}k(A,X)$ is a Kan fibration
between Kan complexes.
\end{proof}

\begin{cor}\label{dendanalogofk3etdemi}
For any normal dendroidal set $A$ and any $\infty$-operad $X$,
we have
$$k(\map(A,X))=k(A,X)\, .$$
For any inner Kan fibration
between $\infty$-operads $p:X\To Y$
such that $\tau i^*(p)$ is a categorical fibration,
and for any monomorphism between normal dendroidal sets
$A\To B$, we have
$$k(\map(B,Y)\times_{\map(A,Y)}\map(A,X))=k(B,Y)\times_{k(A,Y)}k(A,X) \, .$$
\end{cor}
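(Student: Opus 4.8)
The plan is to treat both equalities as identifications of subcomplexes of a fixed $\infty$-category, for which one inclusion is purely formal and the other is delivered by Proposition \ref{dendanalogofk3}. The organizing principle is the following elementary fact about the maximal Kan complex functor: if $Z$ is an $\infty$-category, then an $n$-simplex of $Z$ lies in $k(Z)$ exactly when each of its edges is weakly invertible in $Z$. Consequently, any Kan subcomplex $K\subseteq Z$ satisfies $K\subseteq k(Z)$, since every edge of a Kan complex is weakly invertible and these edges remain weakly invertible in $Z$; moreover any morphism of $\infty$-categories preserves weak invertibility of $1$-cells, as it induces a functor on the homotopy categories $\tau(-)$.

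For the first equality, recall that $\map(A,X)$ is an $\infty$-category, as $A$ is normal and $X$ is an $\infty$-operad. First I would check the inclusion $k(\map(A,X))\subseteq k(A,X)$: for each vertex $a:\eta\To A$, precomposition yields a morphism of $\infty$-categories $\map(A,X)\To\map(\eta,X)=i^*(X)$, so a weakly invertible edge of $\map(A,X)$ restricts to a weakly invertible $1$-cell of $i^*(X)$; hence a simplex all of whose edges are weakly invertible is in particular objectwise weakly invertible. For the reverse inclusion I would invoke Proposition \ref{dendanalogofk3} in the special case $A=\varnothing$ and $Y$ the terminal dendroidal set, which yields that $k(A,X)$ is a Kan complex. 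Being a Kan subcomplex of the $\infty$-category $\map(A,X)$, it is contained in $k(\map(A,X))$ by the principle above, giving $k(A,X)\subseteq k(\map(A,X))$ and hence equality.

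For the second equality, write $Z=\map(B,Y)\times_{\map(A,Y)}\map(A,X)$ and $W=k(B,Y)\times_{k(A,Y)}k(A,X)$. Here $Z$ is an $\infty$-category: the map $\map(B,Y)\To\map(A,Y)$ is an inner Kan fibration by Proposition \ref{tensinneranod} (using that $i_!$ is symmetric monoidal and preserves inner anodyne extensions), so its pullback $Z\To\map(A,X)$ is an inner Kan fibration; since $\map(A,X)$ is an $\infty$-category, so is $Z$, and $k(Z)$ is defined. Because each of $k(B,Y)$, $k(A,Y)$, $k(A,X)$ is a subcomplex of the corresponding $\map(B,Y)$, $\map(A,Y)$, $\map(A,X)$ compatibly with the structure maps, the fibre product $W$ is a subcomplex of $Z$. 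Proposition \ref{dendanalogofk3} tells us $W$ is a Kan complex, whence $W\subseteq k(Z)$. Conversely, the two projections $Z\To\map(B,Y)$ and $Z\To\map(A,X)$ are morphisms of $\infty$-categories, so they carry $k(Z)$ into $k(\map(B,Y))$ and $k(\map(A,X))$ respectively; by the first equality these targets are $k(B,Y)$ and $k(A,X)$, and the common image in $\map(A,Y)$ lands in $k(A,Y)$, so $k(Z)\subseteq W$. Combining the two inclusions gives the claim.

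The only genuine input is Proposition \ref{dendanalogofk3}, which already encapsulates the substantive statement that objectwise weak invertibility implies weak invertibility, and whose proof is where Theorem \ref{dendanalogofk2}, and through it the join and subdivision results of Sections \ref{section4} and \ref{section5}, are used. Given that proposition, the remaining work here is formal: identifying precisely which simplicial sets it guarantees to be Kan complexes, and combining the maximality of $k(Z)$ with the functoriality of weak invertibility. I expect no real obstacle beyond bookkeeping the various fibre products and confirming that $Z$ is an $\infty$-category, so that the symbol $k(Z)$ is meaningful.
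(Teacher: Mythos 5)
Your proposal is correct and follows essentially the same route as the paper: Proposition \ref{dendanalogofk3} (in the case $A=\varnothing$, $Y$ terminal, respectively in general) shows that $k(A,X)$ and $k(B,Y)\times_{k(A,Y)}k(A,X)$ are Kan complexes, and maximality of the Kan subcomplex of an $\infty$-category then forces the stated equalities. The only difference is that you spell out the reverse containments (via restriction to vertices and functoriality of weak invertibility) and the second assertion in full, where the paper simply asserts the containment and says ``proved similarly''; these details are accurate.
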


\begin{proof}
If $A$ is normal, then, for any operad $X$, $k(A,X)$
is a Kan complex which contains $k(\map(A,X))$.
As $k(\map(A,X))$ is the maximal sub Kan complex
contained in the $\infty$-category $\map(A,X)$, this
proves the first assertion. The second
assertion is proved similarly.
\end{proof}

\begin{cor}\label{dendanalogofk4}
Let $p:X\To Y$ be an inner Kan fibration
between $\infty$-operads. If $\tau i^*(p)$
is a categorical fibration,
then, for any anodyne extension of simplicial sets $K\To L$,
the map
$$X^{(L)}\To Y^{(L)}\times_{Y^{(K)}}X^{(K)}$$
is a trivial fibration of dendroidal sets.
\end{cor}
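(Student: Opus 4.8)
The plan is to show that
$$q\colon X^{(L)}\To Y^{(L)}\times_{Y^{(K)}}X^{(K)}$$
has the right lifting property with respect to every normal monomorphism, which is precisely the definition of a trivial fibration of dendroidal sets. Since the class of maps having the left lifting property with respect to a fixed map is closed under pushouts, transfinite compositions and retracts, Proposition \ref{propertiesnormal} lets me reduce to the generating normal monomorphisms, that is, to the boundary inclusions $\bord\Omega[T]\To\Omega[T]$, $T\in\Omega$. This reduction is essential rather than cosmetic: for $A=\bord\Omega[T]$ and $B=\Omega[T]$ both source and target are normal dendroidal sets (the target by the normality of $\Omega[T]$, the source by Corollary \ref{relativenormal}), so that both the identification \eqref{dendanalogofk1} and Proposition \ref{dendanalogofk3} become applicable.

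Next I would transpose the resulting lifting problem
$$\xymatrix{
\bord\Omega[T]\ar[r]\ar[d]&X^{(L)}\ar[d]^q\\
\Omega[T]\ar[r]&Y^{(L)}\times_{Y^{(K)}}X^{(K)}
}$$
through the adjunction \eqref{dendanalogofk1}, transposing in the simplicial variable $K\To L$. This is exactly the manipulation already performed in the proof of Proposition \ref{dendanalogofk3}, where lifting problems of shape \eqref{dendanalogofk3000} were matched with lifting problems of shape \eqref{dendanalogofk30000}; the present case is the very same computation with the anodyne extension $K\To L$ replacing the inclusion $\{1\}\To\Delta[1]$. The upshot is that a diagonal filling of the square above corresponds bijectively to one of
$$\xymatrix{
K\ar[r]\ar[d]&k(\Omega[T],X)\ar[d]^c\\
L\ar[r]&k(\Omega[T],Y)\times_{k(\bord\Omega[T],Y)}k(\bord\Omega[T],X)
}$$
where $c$ is the corner map attached to the monomorphism $\bord\Omega[T]\To\Omega[T]$ between normal dendroidal sets and to $p$.

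At this point I would invoke Proposition \ref{dendanalogofk3}: under the standing hypotheses, namely that $p$ is an inner Kan fibration between $\infty$-operads with $\tau i^*(p)$ a categorical fibration, the map $c$ is a Kan fibration between Kan complexes. Since $K\To L$ is an anodyne extension of simplicial sets, and Kan fibrations have the right lifting property with respect to anodyne extensions, the transposed square admits a filling; transposing back produces the desired lift in the original square. Hence $q$ lifts against every boundary inclusion, and therefore against every normal monomorphism, which is to say that $q$ is a trivial fibration.

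Essentially all of the substance is carried by Proposition \ref{dendanalogofk3}, so I do not anticipate a genuine difficulty. The one point demanding care is the bookkeeping of the transposition: one must check that the two-variable adjunction \eqref{dendanalogofk1} carries the corner map in the variables $(K\To L,\,p)$ to the corner map $c$ in the variables $(\bord\Omega[T]\To\Omega[T],\,p)$, identifying the two fibre-product targets and the compatibility conditions inherited from the original commutative square. As this is exactly the correspondence already detailed in the proof of Proposition \ref{dendanalogofk3}, now read off for a general anodyne $K\To L$, no new computation is needed and the corollary follows formally.
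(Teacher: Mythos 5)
Your proof is correct and follows essentially the same route as the paper, whose own proof is just the one-line observation that the statement follows from Proposition \ref{dendanalogofk3} together with the adjunction \eqref{dendanalogofk1}; your version merely makes explicit the reduction to the generating boundary inclusions $\bord\Omega[T]\To\Omega[T]$ (needed so that Proposition \ref{dendanalogofk3}, stated for monomorphisms between \emph{normal} dendroidal sets, applies) and the transposition of the corner maps. These bookkeeping steps are exactly what the paper leaves implicit, so no genuinely new argument is involved.
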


\begin{proof}
This follows from Proposition \ref{dendanalogofk3}
and from the natural identification \eqref{dendanalogofk1}.
\end{proof}

\begin{thm}\label{characfibinftyopermain}
A dendroidal set is $J$-fibrant if and only if it is
an $\infty$-operad. An inner Kan fibration between
$\infty$-operads $p:X\To Y$ is a $J$-fibration
(i.e. a fibration for the model category structure
of Proposition \ref{cmfdendbasic}) if and only if
$\tau i^*(p)$ is a categorical fibration.
\end{thm}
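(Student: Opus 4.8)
The plan is to prove the two biconditionals together, obtaining the hard half of the first assertion as the special case $Y=\ast$ of the second. The two easy implications come first. If $X$ is $J$-fibrant then it lifts against all $J$-anodyne extensions, in particular against inner horn inclusions (which are among the $J$-anodyne generators), so $X$ is an $\infty$-operad. Conversely, if $p$ is a $J$-fibration, note that the inclusion $\{1\}\To J_d$ is itself a $J$-anodyne generator of the second kind: it is the map $\bord\Omega[T]\otimes J_d\cup\Omega[T]\otimes\{1\}\To\Omega[T]\otimes J_d$ for $T=0$, since $\bord\Omega[0]=\varnothing$ and $\eta$ is the unit of $\otimes$. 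A map $J_d\To Y$ is the same as a map $J\To i^*(Y)$, hence an isomorphism of the category $\tau i^*(Y)$, and by \cite{joyal} every isomorphism of $\tau i^*(Y)$ arises in this way because $i^*(Y)$ is an $\infty$-category. The lifting property of $p$ against $\{1\}\To J_d$ thus says exactly that for any isomorphism $\beta$ of $\tau i^*(Y)$ and any object $a_1$ of $X$ with $p(a_1)=\beta(1)$ there is an isomorphism $\alpha$ of $\tau i^*(X)$ with target $a_1$ lying over $\beta$; that is, $\tau i^*(p)$ is a categorical fibration.

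For the substantial direction, let $p:X\To Y$ be an inner Kan fibration between $\infty$-operads with $\tau i^*(p)$ a categorical fibration. The key bookkeeping identity is
$$X^{(J)}=\sHom(J_d,X)=X^{J_d}\, .$$
Indeed, the defining constraint on a dendrex of $\sHom(J_d,X)$ is that each restriction $J\To i^*(X)$ factor through $k(i^*(X))$, and this is automatic: $J$ is the nerve of a groupoid, so every edge of $J$ is invertible in $\tau(J)$ and therefore has weakly invertible image in $i^*(X)$. In the same way $X^{(\{e\})}=\sHom(\eta,X)=X$, the condition being vacuous at a point. Since $J$ is a contractible Kan complex, the inclusion $\{e\}\To J$ is an anodyne extension of simplicial sets, so Corollary \ref{dendanalogofk4} applies with $K=\{e\}$ and $L=J$ and gives that the evaluation map
$$ev_e:X^{J_d}\To Y^{J_d}\times_Y X$$
is a trivial fibration of dendroidal sets, for $e=0,1$.

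To convert this into a $J$-fibration I use the pushout-product adjunction (cf. \cite[Lemma 4.2.4]{Ho}): the map $ev_e$ is exactly the pullback-hom of $\{e\}\To J_d$ against $p$, so for any monomorphism $g$, the map $ev_e$ lifts against $g$ if and only if $p$ lifts against the pushout-product $g\mathbin{\hat\otimes}(\{e\}\To J_d)$. Since $ev_e$ is a trivial fibration it lifts against every boundary inclusion $g=(\bord\Omega[T]\To\Omega[T])$, whence $p$ lifts against every $J$-anodyne generator of the second kind $\bord\Omega[T]\otimes J_d\cup\Omega[T]\otimes\{e\}\To\Omega[T]\otimes J_d$. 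As $p$ is an inner Kan fibration it also lifts against inner horn inclusions, so $p$ lifts against a generating family for the $J$-anodyne extensions, and, the right lifting property being stable under pushouts, transfinite compositions and retracts, $p$ is a $J$-fibration. Taking $Y=\ast$ (so that $\tau i^*$ of $X\To\ast$ is a functor to the terminal category, automatically an isofibration) shows that every $\infty$-operad is $J$-fibrant, which finishes the first assertion.

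The routine work is the two adjunction arguments; the one point deserving care is the identity $X^{(J)}=X^{J_d}$, which is precisely what bridges Corollary \ref{dendanalogofk4}, whose hypothesis concerns the constrained complexes $X^{(K)}$, and the internal hom $X^{J_d}$ that governs $J$-anodyne extensions. Beyond this I anticipate no obstacle, since all the geometric content---via Theorem \ref{dendanalogofk2} and Corollary \ref{dendanalogofk4}, resting on Theorems \ref{dendjoyal} and \ref{thmsubcyl}---is already in place.
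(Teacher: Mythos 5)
Your proof is correct and follows essentially the same route as the paper's: the identifications $Z^{(J)}=Z^{J_d}$ and $Z^{(\{e\})}=Z$, followed by Corollary \ref{dendanalogofk4} applied to the anodyne extension $\{e\}\To J$, with the adjunction between pushout-products and pullback-homs (left implicit in the paper) converting the trivial fibration $X^{J_d}\To Y^{J_d}\times_Y X$ into the right lifting property against the $J$-anodyne generators. Your converse argument, via lifting along $\{1\}\To J_d$ and the representability of isomorphisms in $\tau i^*(Y)$ by maps from $J$, is exactly the content of the paper's citation of \cite[Corollary 1.6]{joyal}, just written out in full.
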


\begin{proof}
Let $p:X\To Y$ be inner Kan fibration between
$\infty$-operads. We have to prove that,
for $e=0,1$, the anodyne extension $\{e\}\To J$ induces
a trivial fibration of
dendroidal sets
$$X^{J_d}\To Y^{J_d}\times_Y X$$
if and only if $\tau i^*(p)$ is a categorical fibration.
But, for any $\infty$-operad $Z$, we clearly have
$Z^{J_d}=Z^{(J)}$ and $Z=Z^{(\{e\})}$. Hence, by virtue of
Corollary \ref{dendanalogofk4}, if $\tau i^*(p)$ is a categorical fibration,
then $p$ is a $J$-fibration. The converse is a direct consequence
of \cite[Corollary 1.6]{joyal}.
\end{proof}

\begin{cor}\label{charoperweakequiv}
The class of weak operadic equivalences is the smallest
class of maps of dendroidal sets $\Wpr$ which satisfies the
following three properties.
\begin{itemize}
\item[(a)] (`$2$ out $3$ property')
In any commutative triangle, if two maps are in $\Wpr$, then
so is the third.
\item[(b)] Any inner anodyne extension is in $\Wpr$.
\item[(c)] Any trivial fibration between $\infty$-operads
is in $\Wpr$.
\end{itemize}
\end{cor}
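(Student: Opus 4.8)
The plan is to prove that $\W$, the class of weak operadic equivalences (i.e.\ the weak equivalences of the model structure of Proposition~\ref{cmfdendbasic}), coincides with the class $\Wpr$ characterized by (a)--(c). The inclusion $\Wpr\subseteq\W$ is the easy half: one checks that $\W$ itself satisfies (a)--(c). Property (a) holds because $\W$ is the class of weak equivalences of a model category. For (c), the cofibrations of this model structure are exactly the normal monomorphisms, so a trivial fibration (a map with the right lifting property against normal monomorphisms) is an acyclic fibration, hence a weak equivalence. For (b), an inner anodyne extension is a normal monomorphism (Proposition~\ref{propertiesnormal}) having the left lifting property against every $J$-fibration -- since a $J$-fibration has the right lifting property against $J$-anodyne extensions and inner anodyne extensions are $J$-anodyne -- and by Theorem~\ref{characfibinftyopermain} the fibrations between fibrant objects are precisely the relevant $J$-fibrations; by the standard lemma that a cofibration with the left lifting property against all fibrations between fibrant objects is a trivial cofibration, inner anodyne extensions lie in $\W$.

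For the non-trivial inclusion $\W\subseteq\Wpr$, I would first reduce to maps between $\infty$-operads. Given $f\colon A\To B$ in $\W$, choose fibrant replacements $i_A\colon A\To\hat A$ and $i_B\colon B\To\hat B$ which are inner anodyne extensions (small object argument), so that $\hat A$ and $\hat B$ are $\infty$-operads; lifting $i_Bf$ through the inner anodyne $i_A$ against the inner Kan fibration $\hat B\To\ast$ produces $\hat f\colon\hat A\To\hat B$ with $\hat f i_A=i_Bf$. As $i_A,i_B$ are inner anodyne they lie in $\Wpr$ by (b) and in $\W$; two applications of the `$2$ out of $3$' property reduce the claim $f\in\Wpr$ to the claim that the weak equivalence $\hat f$ between $\infty$-operads lies in $\Wpr$. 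It thus suffices to treat a weak equivalence $g\colon X\To Y$ between $\infty$-operads.

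To handle such a $g$, the plan is to factor it through a path object and recognise both factors as generators of $\Wpr$. For an $\infty$-operad $Y$ the object $Y^{J_d}=\sHom(J_d,Y)$ is again an $\infty$-operad (Proposition~\ref{tensinneranod}, $J_d$ being normal), and the two endpoint evaluations $d^0,d^1\colon Y^{J_d}\To Y$ are \emph{trivial} fibrations: each inclusion $\{e\}\To J_d$ is a $J$-anodyne extension (it is the generating $J$-anodyne map attached to the tree $0$, where $\bord\Omega[0]=\varnothing$), so by Proposition~\ref{tensinneranod1bis} its pushout--product with any normal monomorphism is $J$-anodyne, whence $J$-fibrancy of $Y$ gives $d^e$ the right lifting property against all normal monomorphisms. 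Moreover $(d^0,d^1)\colon Y^{J_d}\To Y\times Y$ is a $J$-fibration: it is an inner Kan fibration between $\infty$-operads by the adjoint form of Proposition~\ref{tensinneranod}, and after $\tau i^*$ it becomes the endpoint functor $\tau\big((i^*Y)^J\big)\To\tau(i^*Y)\times\tau(i^*Y)$, an isofibration by Joyal's theory of the interval $J$, so Theorem~\ref{characfibinftyopermain} upgrades it to a $J$-fibration. Now form the mapping path space $P=X\times_{g,\,Y,\,d^0}Y^{J_d}$ and factor $g=\rho\iota$, where $\iota\colon X\To P$ sends $x\mapsto(x,s\,g(x))$ (with $s$ the constant-path section) and $\rho=d^1\pi_2$. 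The projection $\pi_1\colon P\To X$ is the pullback of the trivial fibration $d^0$, hence a trivial fibration between $\infty$-operads, so $\pi_1\in\Wpr$ by (c), and $\pi_1\iota=1_X$ forces $\iota\in\Wpr$. The map $(\pi_1,\rho)\colon P\To X\times Y$ is the pullback of the $J$-fibration $(d^0,d^1)$, so $\rho$ is a composite of fibrations, hence a fibration; since $g$ and $\iota$ are weak equivalences so is $\rho$, making $\rho$ an acyclic, i.e.\ trivial, fibration between $\infty$-operads, so $\rho\in\Wpr$ by (c). Therefore $g=\rho\iota\in\Wpr$, which closes the argument.

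The main obstacle is precisely the fibrancy of the path object: that $(d^0,d^1)\colon Y^{J_d}\To Y\times Y$ is a genuine $J$-fibration and not merely an inner Kan fibration. This is the isofibration condition on $\tau i^*$, and it is here that Joyal's results on the contractible groupoid interval $J$ must be invoked (the same input used for the converse in Theorem~\ref{characfibinftyopermain}). The favourable feature that \emph{both} endpoints $d^0$ and $d^1$ are trivial fibrations -- which is what makes the two factors of $g$ land in the generating families (b) and (c) rather than in some larger class -- is special to $J$ being a contractible groupoid, and is the structural reason the whole argument closes using only the `$2$ out of $3$' property, without appealing to a $2$-out-of-$6$ principle for $\Wpr$.
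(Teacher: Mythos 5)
Your proof is correct, and its skeleton coincides with the paper's: the reduction to a weak equivalence between $\infty$-operads via inner anodyne fibrant replacements obtained from the small object argument is exactly the paper's first step, and your mapping path space factorization $g=\rho\iota$ through $P=X\times_{Y}Y^{J_d}$ is precisely the proof of the dual Ken Brown Lemma, which the paper simply invokes by citation (\cite[Lemma 1.1.12]{Ho}) once it knows that the $\infty$-operads are the fibrant objects (Theorem \ref{characfibinftyopermain}) and that trivial fibrations between them lie in $\Wpr$ by hypothesis (c). The one genuine difference is that, by making the path object concrete, you take on an obligation the abstract argument avoids --- the factorization axiom hands Ken Brown a factorization of $(1_X,g)\colon X\To X\times Y$ for free --- and your justification of that obligation is the only weak point: the claim that $\tau i^*$ applied to $(d^0,d^1)\colon Y^{J_d}\To Y\times Y$ is an isofibration ``by Joyal's theory of the interval'' is not among the results of \cite{joyal} the paper permits itself (it is essentially a statement about the Joyal model structure, which the paper deliberately does not presuppose), so as written this step is under-justified. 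Fortunately it is also unnecessary: $(d^0,d^1)$ is a $J$-fibration directly by adjunction, since $\{0,1\}\To J_d$ is a normal monomorphism ($J_d$ being normal, via Corollary \ref{relativenormalmono}), so Proposition \ref{tensinneranod1bis} makes the pushout-product of any $J$-anodyne extension with $\{0,1\}\To J_d$ again $J$-anodyne, and $Y$ is $J$-fibrant by Theorem \ref{characfibinftyopermain}. With that repair (or with the citation of Ken Brown's Lemma replacing the explicit construction), the argument closes; your verification of the easy inclusion --- that the weak operadic equivalences themselves satisfy (a)--(c), including the accurate identification of $\{e\}\To J_d$ as the generating $J$-anodyne extension attached to $T=\eta$ --- is correct and makes explicit a point the paper's proof leaves implicit.
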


\begin{proof}
Consider a class of maps $\Wpr$ satisfying conditions
(a), (b) and (c) above. We want to prove that any
weak operadic equivalence is in $\Wpr$.

Let $f:A\To B$ be a morphism of dendroidal sets.
Using the small object argument applied to the set of
inner horns, we can see there exists a commutative
square
$$\xymatrix{
A\ar[r]^a\ar[d]_f&X\ar[d]^p\\
B\ar[r]^b&Y
}$$
in which the maps $a$ and $b$ are inner anodyne extensions,
and $X$ and $Y$ are $\infty$-operads.
It is clear that $f$ is a weak operadic equivalence
(resp. is in $\Wpr$) if and only $p$ has the
same property. Hence it is sufficient to prove that
any weak operadic equivalence between $\infty$-operads
is in $\Wpr$. As any trivial fibration
between $\infty$-operads is in $\Wpr$ by assumption,
and as $\infty$-operads are the fibrant objects
of a model category, this corollary follows from Ken~Brown's
Lemma \cite[Lemma 1.1.12]{Ho}.
\end{proof}

\begin{paragr}
We will write $C_n$ for the corolla with $n+1$ edges,
\begin{equation*}
\begin{split}
\xymatrix@R=10pt@C=12pt{
&&&&&&&\\
&*=0{}&&*=0{}\ar@{}[rrr]|{\cdots\cdots}&&&*=0{}&&\\
C_n=&&&*=0{\bullet}\ar@{-}[u]_{a_2}\ar@{-}[ull]^{a_1}\ar@{-}[urrr]_{a_n}&&&&.\\
&&&*=0{}\ar@{-}[u]^a&&&&
}
\end{split}\end{equation*}

Let $X$ be an $\infty$-operad.
Given an $(n+1)$-tuple of $0$-cells $(x_1,\ldots,x_n,x)$ in $X$,
the space of maps $X(x_1,\ldots,x_n;x)$ is obtained
by the pullback below, in which the map $p$ is
the map induced by the inclusion $\eta\amalg\cdots\amalg\eta\To\Omega[C_n]$
(with $n+1$ copies of $\eta$, corresponding to the $n+1$ objects
$(a_1,\ldots,a_n,a)$ of $C_n$).
$$\xymatrix{
X(x_1,\ldots,x_n;x)\ar[r]\ar[d]&\sHom(\Omega[C_n],X)\ar[d]^p\\
\eta\ar[r]_(.45){(x_1,\ldots,x_n,x)}&X^{n+1}
}$$
Using the identification $\sset=\dset/\eta$, we shall consider
$X(x_1,\ldots,x_n;x)$ as a simplicial set.
\end{paragr}

\begin{prop}
The simplicial set $X(x_1,\ldots,x_n;x)$
is a Kan complex.
\end{prop}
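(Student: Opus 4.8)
The plan is to recognise $X(x_1,\ldots,x_n;x)$ as the fibre of a Kan fibration between Kan complexes, and then to invoke stability of Kan complexes under pullback. Write $A=\Omega[C_n]$. Its set of objects is the set of $n+1$ edges $a_1,\ldots,a_n,a$, so $\mathrm{Ob}\,A=\coprod_{A_0}\eta$ is a coproduct of $n+1$ copies of $\eta$, and the map $p$ of the statement is exactly the one induced by the inclusion $\mathrm{Ob}\,A\To A$. Since $A$ is normal and $X$ is an $\infty$-operad, all the results of Section~\ref{section6} are available.

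First I would apply the functor $i^*$ to the defining pullback square. Being a right adjoint, $i^*$ preserves limits (and sends the coproduct $\mathrm{Ob}\,A$ to a product), so that $i^*(X^{n+1})=(i^*X)^{n+1}=\map(\mathrm{Ob}\,A,X)$, $i^*\sHom(A,X)=\map(A,X)$ and $i^*\eta=\Delta[0]$, while $i^*(p)$ becomes the restriction-to-objects map $\map(A,X)\To\map(\mathrm{Ob}\,A,X)$. Under the identification $\sset=\dset/\eta$ this presents the simplicial set in question as the fibre
$$X(x_1,\ldots,x_n;x)=\map(A,X)\times_{(i^*X)^{n+1}}\Delta[0]$$
over the vertex $(x_1,\ldots,x_n,x)$ of $(i^*X)^{n+1}$.

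Next I would exploit the fact that this vertex is a $0$-cell, and therefore factors through the maximal Kan subcomplex $k(\mathrm{Ob}\,A,X)=\prod_{A_0}k(i^*X)$. Recall from Remark~\ref{remtermwiseweakequiv} the pullback square exhibiting $k(A,X)$ as $\map(A,X)\times_{\map(\mathrm{Ob}\,A,X)}k(\mathrm{Ob}\,A,X)$. Pasting this square with the factorisation of the chosen vertex through $k(\mathrm{Ob}\,A,X)$, and using associativity of fibre products, I obtain
$$X(x_1,\ldots,x_n;x)=k(A,X)\times_{k(\mathrm{Ob}\,A,X)}\Delta[0]\, ,$$
so that $X(x_1,\ldots,x_n;x)$ is precisely the fibre of $k(A,X)\To k(\mathrm{Ob}\,A,X)$ over $(x_1,\ldots,x_n,x)$.

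Finally I would conclude by applying Proposition~\ref{dendanalogofk3} to the monomorphism of normal dendroidal sets $\mathrm{Ob}\,A\To A$, taking for target the terminal dendroidal set (whose structural map to the point is trivially sent to a categorical fibration by $\tau i^*$). The proposition then asserts both that $k(A,X)$ is a Kan complex and that $k(A,X)\To k(\mathrm{Ob}\,A,X)$ is a Kan fibration between Kan complexes. As Kan fibrations are stable under pullback along any map, its fibre over the vertex $(x_1,\ldots,x_n,x)$ is again a Kan complex, which is the assertion. I expect the only point requiring care to be the bookkeeping of the two pullback squares, namely verifying that the vertex genuinely lifts to $k(\mathrm{Ob}\,A,X)$ so that the pasting is legitimate; this is immediate, since every $0$-cell lies in the maximal Kan subcomplex, and no combinatorial input beyond Section~\ref{section6} is needed.
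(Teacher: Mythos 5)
Your proof is correct and takes essentially the same route as the paper's: both exhibit $X(x_1,\ldots,x_n;x)$ as the fibre over the vertex $(x_1,\ldots,x_n,x)$ of the map $k(\Omega[C_n],X)\To k(i^*X)^{n+1}$, which is a Kan fibration between Kan complexes by Proposition \ref{dendanalogofk3} applied to $A=\eta\amalg\cdots\amalg\eta$ and $B=\Omega[C_n]$, and conclude by stability of Kan fibrations under pullback. The only cosmetic difference is that you paste pullback squares directly with $k(A,X)$ as defined in Remark \ref{remtermwiseweakequiv}, whereas the paper first rewrites this object as $k(\map(\Omega[C_n],X))$ via Corollary \ref{dendanalogofk3etdemi}.
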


\begin{proof}
The first assertion of Corollary \ref{dendanalogofk3etdemi}
for $A=\Omega[C_n]$ can be reinterpreted by saying
we have the pullback square below (see Remark \ref{remtermwiseweakequiv}).
$$\xymatrix{
k(\map(\Omega[C_n],X))\ar[r]\ar[d]&\sHom(\Omega[C_n],X)\ar[d]\\
k(i^*X)^{n+1}\ar[r]&X^{n+1}
}$$
As the terminal simplicial set $\eta$ is certainly a Kan complex,
it thus follows from the construction of $X(x_1,\ldots,x_n;x)$
that we have a pullback square
$$\xymatrix{
X(x_1,\ldots,x_n;x)\ar[r]\ar[d]&k(\map(\Omega[C_n],X))\ar[d]\\
\eta\ar[r]_(.3){(x_1,\ldots,x_n,x)}&k(i^*X)^{n+1}
}$$
in which the right vertical map in this diagram is a Kan
fibration (by Proposition \ref{dendanalogofk3},
applied for $A=\eta\amalg\cdots\amalg\eta$ and
$B=\Omega[C_n]$). The stability of Kan
fibrations by pullback achieves the proof.
\end{proof}

\begin{prop}\label{pi0Hominftyoperads}
There is a canonical bijection
$$\pi_0(X(x_1,\ldots,x_n;x))\simeq
\tau_d(X)(x_1,\ldots,x_n;x)\, .$$
\end{prop}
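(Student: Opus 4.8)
The plan is to realise both sides as quotients of the same set and then to identify the two equivalence relations. From the pullback square defining $X(x_1,\ldots,x_n;x)$, together with the fact that $\eta$ is the monoidal unit (so that $\eta\otimes\Omega[C_n]=\Omega[C_n]$), a $0$-simplex of the Kan complex $X(x_1,\ldots,x_n;x)$ is exactly a dendrex $\alpha\colon\Omega[C_n]\To X$ whose restriction to the $n+1$ objects of $C_n$ equals $(x_1,\ldots,x_n,x)$; call $S$ this set of dendrices. On the other hand, the explicit description of $\tau_d(X)$ for an $\infty$-operad $X$ given in \cite[Proposition 6.10]{dend2} — the dendroidal analogue of the Boardman--Vogt description of $\tau(i^*X)$ recalled in \cite[Proposition 1.2]{joyal} — presents $\tau_d(X)(x_1,\ldots,x_n;x)$ as the quotient of $S$ by the relation of homotopy of operations. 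So everything comes down to showing that two elements of $S$ lie in the same connected component of $X(x_1,\ldots,x_n;x)$ if and only if they are homotopic in that sense.

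The key simplification is that a corolla has no inner edges, so that $\bord\Omega[C_n]$ is precisely the discrete dendroidal set $\mathrm{Ob}\,\Omega[C_n]$ formed by its $n+1$ objects. Consequently a $1$-simplex of $X(x_1,\ldots,x_n;x)$ is, by the pullback square, the same thing as a map $H\colon\Omega[C_n]\otimes\Delta[1]\To X$ whose restriction to $\bord\Omega[C_n]\otimes\Delta[1]$ is the family of degenerate edges at $x_1,\ldots,x_n,x$; its two ends are the dendrices $H|_{\{0\}}$ and $H|_{\{1\}}$ of $S$. Since $X(x_1,\ldots,x_n;x)$ is a Kan complex (by the preceding proposition), ``being joined by such an $H$'' is already an equivalence relation, so no generation or saturation is required. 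Moreover, this cylinder $\Omega[C_n]\otimes\Delta[1]$ is exactly the object studied in Section \ref{section5}: its terminal percolation scheme $T_N$ (see Remark \ref{remendfiltSotimes1}) is the corolla $C_n$ carrying a unary ``black'' vertex $r$ grafted at its root, which is precisely the shape of tree carrying a homotopy in the sense of \cite[Proposition 6.10]{dend2}.

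It then remains to pass between cylinder maps and homotopy dendrices. In one direction, given a homotopy witnessing $\alpha\simeq\alpha'$, I would build the required $H$ by first prescribing it on $A_0=\bord\Omega[C_n]\otimes\Delta[1]\cup\Omega[C_n]\otimes\{1\}$ (degenerate on objects, equal to $\alpha'$ on the end $\{1\}$), extending over the inner anodyne filtration $A_0\subset\cdots\subset A_{N-1}$ of Theorem \ref{thmsubcyl}~(i) using that $X$ is an $\infty$-operad, and performing the final step $A_{N-1}\To A_N$ as the filling of the horn $\Lambda^r[T_N]\To\Omega[T_N]$ provided by the homotopy; the weak invertibility needed at the root $r$ is what makes Theorem \ref{dendjoyal} applicable, exactly as in the proof of Theorem \ref{dendanalogofk2}. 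Conversely, restricting any such $H$ to the scheme $T_N$ extracts a homotopy dendrex, so that joined $0$-simplices are homotopic.

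I expect the main obstacle to be this last comparison: matching the boundary condition ``$H$ is constant on the objects'' with the condition in \cite[Proposition 6.10]{dend2} that the grafted root vertex $r$ be an identity, and checking that the two ends of $H$ are indeed the faces of $T_N$ that the homotopy relation compares. In effect this re-runs the cylinder analysis of Section \ref{section5} at the level of the single corolla $C_n$, reproving for $\infty$-operads the classical identification of the connected components of a mapping space with a hom-set in the homotopy category. Should the relation of \cite[Proposition 6.10]{dend2} be taken to be literally the one induced by the cylinder $\Omega[C_n]\otimes\Delta[1]$, this comparison is immediate, since by the reduction above the $1$-simplices of $X(x_1,\ldots,x_n;x)$ are exactly the maps out of that cylinder which are degenerate on the objects.
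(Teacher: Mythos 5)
Your reductions at the start are fine: since the corolla has no inner edges, $\bord\Omega[C_n]$ is indeed the discrete object $\mathrm{Ob}\,\Omega[C_n]$, so a $1$-simplex of $X(x_1,\ldots,x_n;x)$ is exactly a map $H:\Omega[C_n]\otimes\Delta[1]\To X$ which is degenerate on the objects, and the Kan property removes the need for saturation. The genuine gap is in your passage from a homotopy in the sense of \cite{dend2} to such an $H$. You cannot reserve the final horn $\Lambda^r[T_N]\To\Omega[T_N]$ of Theorem \ref{thmsubcyl} for the homotopy dendrex: the filtration forces you to extend over $A_0\subset\cdots\subset A_{N-1}$ \emph{first}, and the horn $\Lambda^r[T_N]$ --- in particular the inner face $\partial_{(e_S,0)}(T_N)$, which lies in $\Omega[T_{N-1}]\subset A_{N-1}$ --- is by then already filled by those arbitrary inner-anodyne choices, so the given homotopy dendrex will in general not restrict to it; if instead you fill by Theorem \ref{dendjoyal}, you obtain a $1$-simplex joining $\alpha'$ to an \emph{uncontrolled} dendrex at the end $\{0\}$ (which only appears as a face of $T_N$ at the very last step), not to $\alpha$. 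Nor can you glue the homotopy dendrex onto $\Omega[T_N]$ at the outset: inside the cylinder, the inner face of the last percolation scheme $T_N$ is the ``mixed'' dendrex with leaves at level $0$ and root at level $1$, not the end $\Omega[C_n]\otimes\{1\}$, so a homotopy dendrex, whose inner face is by definition $g$, would sit in the cylinder with $g$ at the wrong face. The same confusion undermines your converse direction: restricting $H$ to $T_N$ produces a homotopy from $H_0$ to this mixed face, not to $H_1$; already for $n\geq 1$ there is the other percolation scheme (black vertices on the leaves), which carries homotopies along the leaves that your argument ignores, so you would still need the results of \cite{dend2} comparing and composing the homotopy relations along the various edges. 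Your closing hedge does not apply: the relation of \cite[Lemma 6.4 and Proposition 6.6]{dend2} is the one-dendrex relation along an edge, not the relation induced by the cylinder $\Omega[C_n]\otimes\Delta[1]$, so the comparison is exactly what must be proved.

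The paper avoids this combinatorial construction altogether. Surjectivity comes from the unit $X\To\nerf_d\tau_d(X)$, whose hom-simplicial-sets are discrete. For injectivity, a homotopy in the sense of \cite{dend2} makes $f$ and $g$ isomorphic in $\tau\map(\Omega[C_n],X)$, hence joined by a path $h:\Delta[1]\To k(\map(\Omega[C_n],X))$ which is allowed to move the objects; then, since $k(\Omega[C_n],X)\To k(i^*X)^{n+1}$ is a Kan fibration between Kan complexes (Proposition \ref{dendanalogofk3}), a path-lifting argument deforms $h$ under $\partial\Delta[1]$ into the fiber $X(x_1,\ldots,x_n;x)$. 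In other words, where you try to build an objectwise-degenerate cylinder by horn-filling with \emph{both} ends prescribed --- which Theorem \ref{thmsubcyl} cannot deliver, as it only lets you prescribe $A_0$, i.e.\ one end plus the boundary --- the paper first lets the objects move and then corrects this using the fibration. If you want to salvage your direct route, the cylinder has to be produced by a map rather than by filling: for instance, the operad map $C_n\otimes_{BV}[1]\To T_N$ collapsing each leaf cylinder to an identity (one checks the interchange relation) induces $\Omega[C_n]\otimes\Delta[1]\To\Omega[T_N]$, and composing with the homotopy dendrex yields an objectwise-degenerate $1$-simplex from $f$ to $g$ directly; some such additional idea is indispensable, and as written your key step fails.
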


\begin{proof}
We will use the explicit description
of $\tau_d(X)$ given by \cite[Lemma 6.4 and Proposition 6.6]{dend2}.
The unit map $X\To\nerf_d\tau_d(X)$ induces a map
$$X(x_1,\ldots,x_n;x)\To
(\nerf_d\tau_d(X))(x_1,\ldots,x_n;x)\, .$$
It is easily seen that $(\nerf_d\tau_d(X))(x_1,\ldots,x_n;x)$
is the discrete simplicial set associated to
$\tau_d(X)(x_1,\ldots,x_n;x)$, so that we get a surjective map
$$\pi_0(X(x_1,\ldots,x_n;x))\To
\tau_d(X)(x_1,\ldots,x_n;x)\, .$$
Using the explicit description
of $\tau_d(X)$ given by \cite[Lemma 6.4 and Proposition 6.6]{dend2},
it is now sufficient to prove that, if $f$ and $g$
are two $0$-simplices of $X(x_1,\ldots,x_n;x)$
which are homotopic along the edge $0$
in the sense of \cite[Definition 6.2]{dend2}, then they
belong to the same connected component.
But then, $f$ and $g$ define two objects of
$\tau(\map(\Omega[C_n],X))$ which are isomorphic,
which can be expressed by the existence of a map
$$h:\Delta[1]\To k(\map(\Omega[C_n],X))$$
which connect $f$ and $g$. Using that
$k(\map(\Omega[C_n],X))\To i^*(X)^{n+1}$ is a Kan fibration
between Kan complexes,
we can see by a path lifting argument that such a map $h$
is homotopic under $\partial\Delta[1]$ to
a map $\Delta[1]\To X(x_1,\ldots,x_n;x)$ which connects $f$
and $g$.
\end{proof}

\begin{lemma}\label{trivfibinftyoperadsHom}
Let $X\To Y$ be a trivial fibration between $\infty$-operads.
Then, for any $(n+1)$-tuple of $0$-cells $(x_1,\ldots,x_n,x)$ in $X$,
the induced map
$$X(x_1,\ldots,x_n;x)\To Y(f(x_1),\ldots,f(x_n);f(x))$$
is a trivial fibration of simplicial sets.
\end{lemma}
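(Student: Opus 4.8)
The plan is to verify directly that the map
$X(x_1,\ldots,x_n;x)\To Y(f(x_1),\ldots,f(x_n);f(x))$
has the right lifting property with respect to the boundary inclusions
$\bord\Delta[m]\To\Delta[m]$, $m\geq 0$, which is exactly what it means to be a trivial fibration of simplicial sets. Write $f:X\To Y$ for the given trivial fibration and $\mathrm{Ob}\,C_n=\coprod_{n+1}\eta\To\Omega[C_n]$ for the inclusion of the objects of the corolla. By its defining pullback, the simplicial set $X(x_1,\ldots,x_n;x)$ is the fibre over the point $(x_1,\ldots,x_n,x)$ of the restriction map
$\map(\Omega[C_n],X)\To\map(\mathrm{Ob}\,C_n,X)=i^*(X)^{n+1}$
(recall that $i^*$ preserves pullbacks). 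Using the $i_!\dashv i^*$ adjunction together with the tensor-hom adjunction, I would translate a lifting problem
$$\xymatrix{
\bord\Delta[m]\ar[r]\ar[d]&X(x_1,\ldots,x_n;x)\ar[d]\\
\Delta[m]\ar[r]&Y(f(x_1),\ldots,f(x_n);f(x))
}$$
into a lifting problem of dendroidal sets
$$\xymatrix{
\mathrm{Ob}\,C_n\otimes\Omega[m]\cup\Omega[C_n]\otimes i_!(\bord\Delta[m])\ar[r]\ar[d]&X\ar[d]^f\\
\Omega[C_n]\otimes\Omega[m]\ar[r]&Y
}$$
in which the left-hand vertical map is the pushout-product of the two inclusions $\mathrm{Ob}\,C_n\To\Omega[C_n]$ and $i_!(\bord\Delta[m])\To i_!(\Delta[m])=\Omega[m]$: the component on $\mathrm{Ob}\,C_n\otimes\Omega[m]$ encodes the requirement that an $m$-simplex of the fibre be constant on objects, with values the prescribed $0$-cells, while the component on $\Omega[C_n]\otimes i_!(\bord\Delta[m])$ encodes the given boundary.

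Once this translation is in place the argument is immediate. Both $\mathrm{Ob}\,C_n\To\Omega[C_n]$ and $i_!(\bord\Delta[m])\To\Omega[m]$ are monomorphisms with normal target (the trees being representable, hence normal), so they are normal monomorphisms by Corollary \ref{relativenormalmono}. Therefore, by Proposition \ref{BVtensOK}, their pushout-product
$$\mathrm{Ob}\,C_n\otimes\Omega[m]\cup\Omega[C_n]\otimes i_!(\bord\Delta[m])\To\Omega[C_n]\otimes\Omega[m]$$
is again a normal monomorphism. Since $f$ is a trivial fibration, it has the right lifting property with respect to normal monomorphisms, so the translated square admits a diagonal filler $\Omega[C_n]\otimes\Omega[m]\To X$. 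By construction this filler restricts to the prescribed constant map on $\mathrm{Ob}\,C_n\otimes\Omega[m]$, hence corresponds to an $m$-simplex of $X(x_1,\ldots,x_n;x)$, and it solves the original lifting problem.

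I do not expect a genuine obstacle here; the essential content is the recognition of the map on operation-spaces as the pushout-product of two normal monomorphisms, to which Proposition \ref{BVtensOK} applies. The only thing requiring care is the bookkeeping in the first step: one must check that the boundary datum $\Omega[C_n]\otimes i_!(\bord\Delta[m])\To X$ and the constant-on-objects datum $\mathrm{Ob}\,C_n\otimes\Omega[m]\To X$ agree on the overlap $\mathrm{Ob}\,C_n\otimes i_!(\bord\Delta[m])$, so that they glue to a single map out of the pushout-product source, and that $f$ carries this glued map to the bottom arrow on that source. The latter uses only that the constant family of objects in $Y$ is the $f$-image of the corresponding family in $X$, and both verifications are routine consequences of the definitions of $X(x_1,\ldots,x_n;x)$ and of the restriction maps.
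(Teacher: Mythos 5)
Your proof is correct and is essentially the paper's argument with the adjunctions unwound: the paper applies Proposition \ref{BVtensOK} to the pushout-product of $\mathrm{Ob}\,C_n\To\Omega[C_n]$ with a normal monomorphism to conclude, by adjunction, that $\sHom(\Omega[C_n],X)\To\sHom(\Omega[C_n],Y)\times_{Y^{n+1}}X^{n+1}$ is a trivial fibration, and then obtains the map on operation-spaces as a pullback of it. Your explicit translation of each lifting square against $\bord\Delta[m]\To\Delta[m]$ into a lifting square of the pushout-product against $f$ is precisely the content of that ``by adjunction'' step, so the two proofs coincide in substance.
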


\begin{proof}
We know that the map
$$\sHom(\Omega[C_n],X)\To\sHom(\Omega[C_n],Y)
\times_{Y^{n+1}}X^{n+1}$$
is a trivial fibration (this follows from Proposition \ref{BVtensOK}
by adjunction).
As we have a pullback of shape
$$\xymatrix{
X(x_1,\ldots,x_n;x)\ar[r]\ar[d]&\sHom(\Omega[C_n],X)\ar[d]\\
Y(f(x_1),\ldots,f(x_n);f(x))\ar[r]&
\sHom(\Omega[C_n],Y)
\times_{Y^{n+1}}X^{n+1}
}$$
this proves the lemma.
\end{proof}

\begin{prop}\label{realdendoper}
The functor $\tau_d:\dset\To\oper$ sends weak
operadic equivalences to equivalences of operads.
\end{prop}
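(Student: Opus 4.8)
The plan is to deduce everything from the characterisation of weak operadic equivalences furnished by Corollary \ref{charoperweakequiv}. Write $\mathcal W'$ for the class of maps $f$ of dendroidal sets such that $\tau_d(f)$ is an equivalence of operads. Since $\Wpr$ is, by that corollary, the \emph{smallest} class satisfying properties (a), (b) and (c), it is enough to verify that $\mathcal W'$ itself satisfies these three properties; then $\Wpr\subseteq\mathcal W'$, which is exactly the assertion. Property (a) is immediate: $\tau_d$ is a functor, and equivalences of operads — being the weak equivalences of the naive model structure on $\oper$ — enjoy the $2$-out-of-$3$ property, so $\mathcal W'$ does too.

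For property (b) I would in fact prove the stronger statement that $\tau_d$ carries every inner anodyne extension to an \emph{isomorphism} of operads. By the adjunction $\tau_d\dashv\nerf_d$ and the Yoneda lemma, $\tau_d(u)$ is an isomorphism as soon as $u^*\colon\Hom_{\dset}(B,\nerf_d\mathcal P)\To\Hom_{\dset}(A,\nerf_d\mathcal P)$ is bijective for every operad $\mathcal P$. The key input is that the dendroidal nerve $\nerf_d\mathcal P$ of a strict operad has \emph{unique} fillers for inner horns (the characterisation of nerves of operads among inner Kan complexes; see \cite{dend1,dend2}), which gives bijectivity of $u^*$ when $u$ is an inner horn inclusion $\Lambda^e[T]\To\Omega[T]$. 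The class of maps $u$ for which $u^*$ is bijective for all $\mathcal P$ is then closed under pushouts (a pushout induces a pullback of bijections along the relevant square), transfinite compositions (inverse limits of bijections) and retracts, hence contains all inner anodyne extensions. Thus $\tau_d$ sends inner anodyne extensions into $\mathcal W'$.

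Property (c) is where the mapping-space computations already established do the work. Let $p\colon X\To Y$ be a trivial fibration between $\infty$-operads. First, for every tuple $(x_1,\dots,x_n,x)$ of $0$-cells of $X$, Lemma \ref{trivfibinftyoperadsHom} shows that the induced map $X(x_1,\dots,x_n;x)\To Y(p x_1,\dots,p x_n;p x)$ is a trivial fibration of simplicial sets, hence a weak homotopy equivalence between Kan complexes and therefore a bijection on $\pi_0$. By Proposition \ref{pi0Hominftyoperads} this bijection is precisely the map $\tau_d(X)(x_1,\dots,x_n;x)\To\tau_d(Y)(p x_1,\dots;p x)$, so $\tau_d(p)$ is fully faithful. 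For essential surjectivity I would observe that $p$, being a trivial fibration, has the right lifting property with respect to the boundary inclusion $\bord\Omega[0]=\varnothing\To\Omega[0]=\eta$, and is therefore surjective on $0$-cells; hence $\tau_d(p)$ is surjective on objects, a fortiori essentially surjective. Consequently $\tau_d(p)$ is an equivalence of operads and $p\in\mathcal W'$.

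The conceptual difficulties having all been isolated in the preceding lemmas and propositions, the remaining work is one of assembly, and the only genuinely non-formal point to keep in mind is step (c): one must recognise, via Proposition \ref{pi0Hominftyoperads}, that full faithfulness of $\tau_d(p)$ is \emph{exactly} the statement that $p$ induces $\pi_0$-bijections on all operation spaces, and then notice that essential surjectivity is supplied, almost for free, by the lifting property against $\varnothing\To\eta$. For step (b) the sole external ingredient is the uniqueness of inner-horn fillers in nerves of operads, which turns the lifting property of an inner Kan fibration into an actual bijection.
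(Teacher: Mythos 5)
Your proof is correct and follows essentially the same route as the paper: both reduce via Corollary \ref{charoperweakequiv} to checking that the class of maps inverted-up-to-equivalence by $\tau_d$ satisfies (a)--(c), both handle inner anodynes by the unique-inner-horn-filler property of $\nerf_d\mathcal{P}$ (the paper cites \cite[Theorem 6.1]{dend2} plus the Yoneda lemma, exactly your adjunction argument, with your hom-set closure argument being the dual of the paper's appeal to $\tau_d$ preserving colimits), and both handle trivial fibrations via Proposition \ref{pi0Hominftyoperads} and Lemma \ref{trivfibinftyoperadsHom} for full faithfulness together with surjectivity on $0$-cells for essential surjectivity. Your write-up is, if anything, slightly more explicit than the paper's about the minimality argument and the closure properties.
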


\begin{proof}
We know that $\tau_d$ sends inner horn inclusions
to isomorphisms of operads (this follows
from \cite[Theorem 6.1]{dend2} by the Yoneda Lemma). As $\tau_d$
preserves colimits, we deduce that $\tau_d$
sends inner anodyne extensions to isomorphisms
of operads. By virtue of Corollary \ref{charoperweakequiv},
it is thus sufficient to prove that $\tau_d$ sends
trivial fibrations between $\infty$-operads to
equivalences of operads. Let $f:X\To Y$ be a trivial fibration
between $\infty$-operads. By virtue of Proposition \ref{pi0Hominftyoperads}
and of Lemma \ref{trivfibinftyoperadsHom}, we see that $\tau_d(f)$ is fully faithful.
As $f$ is obviously surjective on
$0$-cells, $\tau_d(f)$ has to be an equivalence of operads.
\end{proof}

\begin{cor}\label{quillendendoper}
The adjunction $\tau_d:\dset\rightleftarrows\oper:\nerf_d$
is a Quillen pair. Moreover, the two functors $\tau_d$
and $\nerf_d$ both preserve weak equivalences.
In particular, a morphism of operads is an equivalence of
operads if and only if its dendroidal nerve is
a weak operadic equivalence.
\end{cor}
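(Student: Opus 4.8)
The plan is to prove that $\tau_d$ is a \emph{left} Quillen functor; all three assertions then follow formally. Recall that a left adjoint between model categories is left Quillen as soon as it preserves cofibrations and trivial cofibrations. Since Proposition \ref{realdendoper} already tells us that $\tau_d$ carries weak operadic equivalences to equivalences of operads, it suffices to check that $\tau_d$ preserves cofibrations: a cofibration which is moreover a weak equivalence will then be sent to a cofibration which is an equivalence, that is, to a trivial cofibration in the naive model structure on $\oper$. So the whole Quillen-pair statement reduces to the single claim that $\tau_d$ preserves cofibrations, and the preservation of all weak equivalences by $\tau_d$ is then simply Proposition \ref{realdendoper} restated.

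To see that $\tau_d$ preserves cofibrations I would argue at the level of colours. By construction of the nerve adjunction, the set of colours of $\tau_d(X)$ is the set $X(\eta)$ of $0$-cells of $X$, and $\tau_d$ acts on colours exactly through the induced map on $0$-cells. A normal monomorphism $A\To B$ is in particular a monomorphism of presheaves, hence injective on $\eta$, so $\tau_d(A)\To\tau_d(B)$ is injective on colours. In the naive model structure on $\oper$ the colourwise injective maps are cofibrations (they have the left lifting property against operadic trivial fibrations, exactly as the injective-on-objects functors do in the canonical model structure on $\cat$). Hence $\tau_d$ sends every normal monomorphism to a cofibration, and combined with the previous paragraph this shows that $(\tau_d,\nerf_d)$ is a Quillen pair and that $\tau_d$ preserves all weak equivalences.

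It remains to treat $\nerf_d$ and the final ``if and only if''. Every operad is fibrant for the naive model structure: the only isomorphism in the terminal operad is the identity, so the map from any operad to the terminal one is trivially an operadic fibration. By Ken Brown's Lemma \cite[Lemma 1.1.12]{Ho}, the right Quillen functor $\nerf_d$ therefore preserves weak equivalences between fibrant objects, which here means all weak equivalences in $\oper$. Finally, as $\nerf_d$ is fully faithful the counit yields a natural isomorphism $\tau_d\nerf_d\cong\mathrm{id}$; thus, for a morphism of operads $f$, if $\nerf_d(f)$ is a weak operadic equivalence then $f\cong\tau_d\nerf_d(f)$ is an equivalence of operads by Proposition \ref{realdendoper}, while the converse is precisely the preservation of weak equivalences by $\nerf_d$ just established. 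The one delicate point in this whole argument is the claim that $\tau_d$ preserves cofibrations, which rests on the identification of the cofibrations of the naive model structure on $\oper$ as the colourwise injective maps; this is the operadic analogue of the corresponding fact for $\cat$, and it is the only step that uses anything beyond the formal properties of the adjunction and Proposition \ref{realdendoper}.
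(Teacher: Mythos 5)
Your proof is correct and follows essentially the same route as the paper: the paper's own argument is exactly ``$\tau_d$ preserves cofibrations, so the Quillen pair follows from Proposition \ref{realdendoper}; every operad is fibrant, so $\nerf_d$ preserves weak equivalences; the final equivalence follows from full faithfulness of $\nerf_d$''. The only difference is that you spell out the step the paper leaves implicit --- that the cofibrations of the naive model structure on $\oper$ are the colourwise injective maps and that normal monomorphisms are injective on $\eta$ --- and your justification of this point is sound.
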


\begin{proof}
The functor $\tau_d$ preserves cofibrations, so that
this is a direct consequence of Proposition \ref{realdendoper}.
Note that any operad is fibrant, so that the dendroidal nerve
functor $\nerf_d$ preserves weak equivalences. Hence
the last assertion comes from the fact $\nerf_d$ is fully faithful
and $\tau_d$ preserves weak equivalences.
\end{proof}

\begin{rem}
Theorem \ref{characfibinftyopermain} also asserts that
the functor $\tau_d$ preserves fibrations between $\infty$-operads.
\end{rem}

\begin{paragr}
If $A$ is a normal dendroidal set, and if $X$ is an $\infty$-operad,
we have
\begin{equation}\label{htpyclassesmaps0}
\Hom_{\ho(\dset)}(A,X)=[A,X]\simeq \pi_0(k(\map(A,X)))\, .
\end{equation}
Indeed, $J\otimes A$ is a cylinder of $A$, and morphisms
$$J\otimes A\To X$$
correspond to morphisms
$$J\To k(\map(A,X))\, ,$$
so that this formula follows from the fact $X$ is $J$-fibrant.
The next statement is a reformulation of \eqref{htpyclassesmaps0}.
\end{paragr}

\begin{prop}\label{htpyclassesmaps}
Let $A$ be a normal dendroidal set, and $X$ an $\infty$-operad.
The set $[A,X]=\Hom_{\ho(\dset)}(A,X)$ can be canonically identified with the
set of isomorphism classes of objects in the category
$\tau\map(A,X)$ (which is also the category underlying
$\tau_d(\sHom(A,X))$).
\end{prop}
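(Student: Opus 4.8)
The plan is to exploit the identification \eqref{htpyclassesmaps0}, which already gives $[A,X]\simeq\pi_0(k(\map(A,X)))$, and to rewrite its right-hand side in terms of the homotopy category $\tau\map(A,X)$. Recall that $\map(A,X)$ is an $\infty$-category, since $A$ is normal and $X$ is an $\infty$-operad (by Proposition \ref{tensinneranod}), so that $k(\map(A,X))$ is the maximal Kan complex contained in it. Everything thus reduces to the following purely simplicial fact, which I would prove or cite: for any $\infty$-category $Q$, the set $\pi_0(k(Q))$ is canonically identified with the set of isomorphism classes of objects of $\tau(Q)$.

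To establish this fact, I would first note that the objects of $\tau(Q)$ are by definition the $0$-cells of $Q$, and that these coincide with the $0$-cells of $k(Q)$, since every degenerate edge is weakly invertible. The key point is then that two $0$-cells $a$ and $b$ of $Q$ are isomorphic in $\tau(Q)$ if and only if they are joined by a weakly invertible $1$-cell, i.e.\ by an edge of $k(Q)$; this is immediate from the very definition of weak invertibility, which declares an edge to be weakly invertible precisely when it represents an isomorphism of $\tau(Q)$. Since $k(Q)$ is a Kan complex, the relation ``being joined by an edge'' is already an equivalence relation on its $0$-cells and is exactly the one computing $\pi_0(k(Q))$. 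Hence the map sending a $0$-cell to its isomorphism class induces the desired canonical bijection, and combining it with \eqref{htpyclassesmaps0} for $Q=\map(A,X)$ yields the first assertion.

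It then remains to justify the parenthetical identification of categories. Since $\map(A,X)=i^*\sHom(A,X)$, and since $\sHom(A,X)$ is itself an $\infty$-operad (again by Proposition \ref{tensinneranod}), we have $\tau\map(A,X)=\tau\,i^*\sHom(A,X)$. Applying to $Z=\sHom(A,X)$ the canonical comparison, valid for every $\infty$-operad $Z$, between $\tau\,i^*(Z)$ and the category underlying $\tau_d(Z)$ (which follows from the Boardman--Vogt description \cite[Proposition 1.2]{joyal} and its dendroidal generalisation \cite[Proposition 6.10]{dend2}), we conclude that $\tau\map(A,X)$ is indeed the category underlying the operad $\tau_d(\sHom(A,X))$.

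The only substantive step is the simplicial lemma identifying $\pi_0(k(Q))$ with the isomorphism classes of $\tau(Q)$; this is standard in Joyal's theory (it is essentially the content of \cite[Corollary 1.5]{joyal}, the result used to define $k(Q)$), and the remaining steps are bookkeeping. I therefore expect no real obstacle: the proposition is, as the paper observes, merely a reformulation of \eqref{htpyclassesmaps0}.
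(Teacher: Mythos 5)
Your proof is correct, but it takes a somewhat different route from the paper's. The paper's own proof is a two-citation affair: it combines the explicit description of the operad $\tau_d(\sHom(A,X))$ from \cite[Proposition 6.6]{dend2} with Corollary \ref{dendanalogofk4}, whose role is to convert a weakly invertible $1$-cell, viewed as a map $A\To X^{(\Delta[1])}$, into a $J$-homotopy $A\To X^{(J)}$ by lifting along the trivial fibration $X^{(J)}\To X^{(\Delta[1])}$ induced by the anodyne extension $\Delta[1]\To J$; the matching of ``isomorphic'' with ``$J$-homotopic'' is thus performed at the dendroidal level, directly on top of Theorem \ref{dendanalogofk2}. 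You instead stay entirely inside the $\infty$-category $\map(A,X)$: taking \eqref{htpyclassesmaps0} as input (legitimate, since it is established before the proposition, using the $J$-fibrancy of $X$ from Theorem \ref{characfibinftyopermain}), you reduce everything to the purely simplicial fact that $\pi_0(k(Q))$ is the set of isomorphism classes of $\tau(Q)$ for any $\infty$-category $Q$, which follows from Joyal's \cite[Corollary 1.5]{joyal} ($k(Q)$ is a Kan complex whose edges are exactly the weakly invertible ones). What your route buys is economy: Corollary \ref{dendanalogofk4} is not re-invoked, and the key lemma is a standard piece of quasi-category theory; what the paper's route buys is that the identification is exhibited directly through the dendroidal function complexes already in play, in the same form used for Proposition \ref{pi0Hominftyoperads}. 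One small elision in your write-up: in the step ``isomorphic in $\tau(Q)$ iff joined by a weakly invertible edge'', the ``only if'' direction is not a consequence of the definition of weak invertibility alone — it requires that every morphism of $\tau(Q)$ be represented by an actual edge of $Q$, i.e. the Boardman--Vogt description \cite[Proposition 1.2]{joyal} of $\tau(Q)$, valid because $Q=\map(A,X)$ is an $\infty$-category (for a general simplicial set an isomorphism in $\tau(Q)$ need only be a zigzag composite of edges). Since you invoke exactly this description one paragraph later for the parenthetical identification with the category underlying $\tau_d(\sHom(A,X))$, the gap is cosmetic rather than substantive.
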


\begin{proof}
This proposition is a direct application
of the explicit description of the operad $\tau_d(\sHom(A,X))$
given by \cite[Proposition 6.6]{dend2} and of Corollary \ref{dendanalogofk4}.
\end{proof}

\begin{appendices}
\section{Grafting orders onto trees}\label{appA}

The main goal of the technical sections \ref{section4} and \ref{section5}
was to deduce Theorem \ref{characfibinftyopermain}, and from it,
Corollary \ref{dendanalogofk4}. There is an asymmetry in this approach, in that
Theorem \ref{characfibinftyopermain} was only proved for evaluation at one of the
end points, and the symmetry was established in Corollary \ref{dendanalogofk4}
by using the theory of left fibrations between simplicial sets.

In these two appendices, we will prove the analogs of
Theorems \ref{dendjoyal} and \ref{thmsubcyl}, from which one can deduce
directly the symmetric version of Theorem \ref{characfibinftyopermain}
(for evaluation at $0$). These two appendices can also be used as an alternative
approach to the results in Section \ref{section6}.
Moreover, they are of interest by themselves, as they form the basis of a theory
of right fibrations of dendroidal sets.

However, since the left-right duality for simplicial sets does not extend to
dendroidal sets, the results of these appendices cannot be deduced from their
analogs proved earlier.

We begin by studying the analog of
Theorem \ref{dendjoyal} (see Theorem \ref{analogthm42} below).

\begin{paragr}\label{defjoinordersonto}
Let $T$ be a tree endowed with an input edge (leaf) $e$.
\begin{equation}\label{defjoinordersonto1}
\begin{split}
\phantom{a}
\end{split}
T=
\begin{split}
\xymatrix@R=10pt@C=12pt{
&&&&*=0{}&\\
&*=0{}\ar@{--}[rrrr]&&&*=0{}\ar@{-}[u]^e&*=0{}\\
&&&*=0{\bullet}\ar@{}[u]|{\dots}
\ar@{-}[d]^{}\ar@{-}[urr]\ar@{-}[ull]&&\\
&&&*=0{}&&
}
\end{split}\end{equation}
Given an integer $n\geq 0$, we define the tree $n\star_e T$
as the tree obtained by joining the $n$-simplex
to the edge $e$ by a new vertex $v$.
\begin{equation}\label{defjoinordersont2}
\begin{split}
\phantom{a}
\end{split}
n\star_e T=
\begin{split}
\xymatrix@R=10pt@C=12pt{
&&&&*=0{}&\\
&&&&*=0{\bullet}\ar@{-}[u]^0&\\
&&&&*=0{}\ar@{-}[u]^1&\\
&&&&*=0{\bullet}\ar@{}[u]|(.75)\vdots&\\
&&&&*=0{\bullet}\ar@{-}[u]^n_(.1)v&\\
&*=0{}\ar@{--}[rrrr]&&&*=0{}\ar@{-}[u]^e&*=0{}\\
&&&*=0{\bullet}\ar@{}[u]|{\dots}
\ar@{-}[d]^{}\ar@{-}[urr]\ar@{-}[ull]&&\\
&&&*=0{}&&
}
\end{split}\end{equation}
This defines a unique functor
\begin{equation}\label{defjoinordersonto3}
\Delta\To\Omega \ , \quad
[n]\longmapsto n\star_e T
\end{equation}
such that the obvious inclusions $i[n]\To n\star_e T$
are functorial.
We thus get a functor
\begin{equation}\label{defjoinordersonto4}
(-)\star_e T:\Delta\To T/\dset
\end{equation}
(where $T/\dset$ denotes the category of
dendroidal sets under $\Omega[T]$).
By Kan extension, we obtain a colimit preserving functor
\begin{equation}\label{defjoinordersonto5}
(-)\star_e T:\sset\To T/\dset\, .
\end{equation}
We have $\Delta[n]\star_e T=\Omega[n\star_e T]$.
The functor \eqref{defjoinordersonto5} has a right
adjoint
\begin{equation}\label{defjoinordersonto6}
(-){/_e}T:T/\dset\To\sset\, .
\end{equation}
\end{paragr}

\begin{rem}[Functoriality in $T$]\label{functjoinoverinT}
We shall say that a face map $R\To T$ is \emph{$e$-admissible}
if it does not factor through the external face map which chops off $e$.
For such a face $R\To T$, $e$ is also a leaf of $R$, and there are natural maps
\begin{equation}\label{defjoinordersonto8}
n\star_e R\To n\star_e T\, .
\end{equation}
Thus, we obtain, for each simplicial set $K$, and each dendroidal
set $X$ under $T$ (i.e. under $\Omega[T]$), natural maps
\begin{equation}\label{defjoinordersonto9}
K\star_e R\To K\star_e T
\end{equation}
and
\begin{equation}\label{defjoinordersonto10}
X/_eT\To X/_eR\, .
\end{equation}
Similarly, the inclusions $\Omega[n]\To\Delta[n]\star_e T$
induce a projection
\begin{equation}\label{defjoinordersonto7}
X/_eT\To i^*(X)
\end{equation}
for any dendroidal set $X$ under $T$.
\end{rem}

\begin{paragr}\label{defkeadmiss}
Let $0<i\leq n$ be integers.
Let $\{R_1,\ldots,R_t\}$, $t\geq 1$, be a finite family of
$e$-admissible faces of $T$, and define
$$C\subset D\subset\Omega[n\join_e T]$$
by
$$C=\big(\bigcup^t_{s=1}\Lambda^i[n]\join_e R_s\big)\cup\Omega[n]
\quad\text{and}\quad
D=\bigcup^t_{s=1}\Delta[n]\join_e R_s\, ,$$
where $\Omega[n]$ is seen as a subcomplex of $\Omega[n\join_e T]$
through the canonical embedding.
\end{paragr}

\begin{lemma}\label{lemmakeadmiss}
Under the assumptions of \ref{defkeadmiss}, the map $C\To D$
is an inner anodyne extension.
\end{lemma}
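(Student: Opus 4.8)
The plan is to run the proof of Lemma~\ref{suffinneranod} essentially verbatim, transporting each step through the grafting-onto-a-leaf construction $n\join_e(-)$ in place of the grafting-onto-the-root construction $(-)\join n$. The only change in the indexing is that the inner edges of the grafted linear segment are now the ones labelled $1,\ldots,n$ rather than $0,\ldots,n-1$: the edge $0$ sits at the top as the new leaf and is therefore outer, whereas the edge lying just above the unary vertex $v$ that attaches the segment to $e$ is inner. This is exactly why the hypothesis here reads $0<i\leq n$.

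First I would filter $D$ by the size of the $e$-admissible faces of $T$. For an $e$-admissible face $R$ of $T$, write $\Omega[n\join_e R]=\Delta[n]\join_e R$ for the corresponding representable subobject of $\Omega[n\join_e T]$; such a subobject lies in $D$ precisely when $R$ is a face of one of the $R_s$. For each integer $p\geq 0$ let $\mathcal{F}_p$ be the set of faces $F=\Omega[n\join_e R]$ which belong to $D$ but not to $C$ and for which $R$ has exactly $p$ edges, and define a filtration $C=C_0\subset C_1\subset\cdots$ by $C_p=C_{p-1}\cup\bigcup_{F\in\mathcal{F}_p}F$. As $T$ has only finitely many faces, $D=C_p$ for $p$ large, so it suffices to prove that each inclusion $C_{p-1}\To C_p$ is inner anodyne.

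The heart of the matter is the computation of intersections, carried out exactly as in Lemma~\ref{suffinneranod}. For two distinct members $F,F'$ of $\mathcal{F}_p$ one checks that $F\cap F'$ already lies in $C_{p-1}$; and for a single $F=\Omega[n\join_e R]\in\mathcal{F}_p$ one identifies $F\cap C_{p-1}$ with the inner horn $\Lambda^i[n\join_e R]$. Indeed, every elementary face of $n\join_e R$ other than the inner face at the edge $i$ either involves only the grafted linear part, in which case it lies in $\Lambda^i[n]\join_e R\cup\Omega[n]\subset C$, or it passes to a strictly smaller $e$-admissible face $R'$ of $T$, in which case it lies in the earlier stage $C_{p-1}$ of the filtration. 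Granting these two facts, $C_{p-1}\To C_p$ is a finite composition of pushouts of the inner horn inclusions $F\cap C_{p-1}\To F$, hence inner anodyne, and composing over $p$ shows that $C\To D$ is inner anodyne. The one point demanding genuine care is the bookkeeping that singles out the $i$-th face as the unique omitted elementary face of $n\join_e R$; this is where the $e$-admissibility of the faces $R_s$ and the presence of $\Omega[n]$ in $C$ enter, precisely mirroring the roles played by the admissible subsets of edges and by $\Omega[n]$ in the proof of Lemma~\ref{suffinneranod}.
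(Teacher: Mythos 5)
Your argument is correct and is essentially the paper's own proof of Lemma \ref{lemmakeadmiss}, reproduced with the same filtration of $D$ by the number of edges of the $e$-admissible faces, the same intersection claims, and the same conclusion by finite compositions of pushouts of inner horns, your remark on the shifted index range $0<i\leq n$ correctly accounting for why these horns are inner in the leaf case. One shared subtlety, which the paper's terse assertion $F\cap C_{p-1}=\Lambda^i[n\join_e R]$ also passes over: in your dichotomy for the elementary faces of $n\join_e R$, the inner face contracting the grafting edge $e$ itself is of neither kind --- it retains every simplex vertex, so it does not lie in $\Lambda^i[n]\join_e R_s\cup\Omega[n]$, and it is not of the form $n\join_e R'$ for any smaller $e$-admissible $R'$, since in the leaf construction the unary vertex $v$ is always present --- so for $R$ with at least one vertex the intersection $F\cap C_{p-1}$ is strictly smaller than the horn, and the bookkeeping is repaired by refining the filtration, adjoining at each stage first the face $\Omega[(n\join_e R)/e]$ (itself attached along an inner horn at the edge $i$, all of its other elementary faces lying in $C$ or in earlier stages) and only then $\Omega[n\join_e R]$.
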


\begin{proof}
For $p\geq 1$, write $\mathcal{F}_p$ for the set
of faces $F$ of $\Omega[n\join_e T]$ which
belong to $D$ but not to $C$, and which are of the form
$F=\Omega[n\join_e R]$ for an $e$-admissible face $R$ of $T$
with exactly $p$ edges.
Define a filtration
$$C=C_0\subset C_1\subset\ldots\subset C_p\subset\ldots\subset D$$
by
$$C_p=C_{p-1}\cup\bigcup_{F\in\mathcal{F}_p} F\ , \quad p\geq 1\, .$$
We have $D=C_p$ for $p$ big enough, and it is sufficient to prove
that the inclusions $C_{p-1}\To C_p$ are inner anodyne for $p\geq 1$.
If $F$ and $F'$ are in $\mathcal{F}_p$, then $F\cap F'$
is in $C_{p-1}$. Moreover, if $F=\Omega[n\join_e R]$
for an $e$-admissible face $R$ of $T$, then we have
$$F\cap C_{p-1}=\Lambda^i[n\join_e R]\, ,$$
which is an inner horn.
Hence we can describe the inclusion $C_{p-1}\To C_p$
as a finite composition of pushouts by inner horn inclusions
of shape $F\cap C_{p-1}\To F$ for $F\in\mathcal{F}_p$.
\end{proof}

\begin{prop}\label{joineinnerKani0}
Let $0<i\leq n$ be integers.
The inclusion
$$(\Lambda^i[n]\star_e T)\cup\Omega[n]\To\Omega[n\star_e T]$$
is an inner anodyne extension.
\end{prop}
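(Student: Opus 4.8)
The plan is to recognize this statement as the special case of Lemma~\ref{lemmakeadmiss} in which the family of $e$-admissible faces reduces to a single member, exactly in the way Proposition~\ref{adjjoinleftfib0} was obtained from Lemma~\ref{suffinneranod} in Section~\ref{section4}. No new combinatorics should be needed.

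First I would take $t=1$ and $R_1=T$ in the setup of \ref{defkeadmiss}, noting that the identity face $T\To T$ is $e$-admissible since it certainly does not factor through the external face map chopping off the leaf $e$. With this choice, the subobject $C$ of \ref{defkeadmiss} becomes
$$C=(\Lambda^i[n]\star_e T)\cup\Omega[n]\, ,$$
while $D$ becomes $\Delta[n]\star_e T$. Using the identity $\Delta[n]\star_e T=\Omega[n\star_e T]$ recorded in \ref{defjoinordersonto}, this gives $D=\Omega[n\star_e T]$. Hence the inclusion $C\To D$ supplied by Lemma~\ref{lemmakeadmiss} is precisely the map
$$(\Lambda^i[n]\star_e T)\cup\Omega[n]\To\Omega[n\star_e T]$$
whose inner anodyne character we wish to establish, and the proposition follows at once.

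Since all of the real work has already been carried out in Lemma~\ref{lemmakeadmiss}---the filtration of $D$ by the number of edges of the $e$-admissible faces, and the identification of each successive attachment $C_{p-1}\To C_p$ as a composition of pushouts along inner horn inclusions $F\cap C_{p-1}\To F$---there is no genuine obstacle left here. The only points worth double-checking are the entirely routine ones: that the hypotheses $0<i\leq n$ of the proposition coincide verbatim with those of \ref{defkeadmiss}, and that the single-face family $\{T\}$ is admissible in the sense required. Both hold immediately, so the proof reduces to the one-line observation above.
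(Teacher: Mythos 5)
Your proposal is correct and coincides with the paper's own proof, which is literally ``Apply Lemma~\ref{lemmakeadmiss}'': taking $t=1$ and $R_1=T$ (the identity face being $e$-admissible) in the setup of \ref{defkeadmiss} yields exactly $C=(\Lambda^i[n]\star_e T)\cup\Omega[n]$ and $D=\Omega[n\star_e T]$. Your verification of the routine points ($e$-admissibility of $T$ and the matching hypotheses $0<i\leq n$) is accurate and nothing further is needed.
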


\begin{proof}
Apply Lemma \ref{lemmakeadmiss}.
\end{proof}

\begin{prop}\label{joineinnerKani}
For any inner Kan fibration $p:X\To Y$ under $T$,
the morphism $X/_eT\To Y/_e\times_{i^*(Y)}i^*(X)$
is a right fibration of simplicial sets.

In particular, for any $\infty$-operad $X$ under $T$, the map
$X/_eT\To i^*(X)$ is a right fibration between $\infty$-categories.
\end{prop}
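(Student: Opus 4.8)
The plan is to imitate the proof of Proposition \ref{adjjoinleftfib}, transposing the lifting problem across the adjunction \eqref{defjoinordersonto5}--\eqref{defjoinordersonto6} so that Proposition \ref{joineinnerKani0} applies directly. Since a right fibration is, by definition, a map with the right lifting property against the horn inclusions $\Lambda^i[n]\To\Delta[n]$ with $0<i\leq n$, it suffices to produce a diagonal filler in every solid square
$$\xymatrix{
\Lambda^i[n]\ar[r]\ar[d]&X/_eT\ar[d]\\
\Delta[n]\ar[r]&Y/_eT\times_{i^*(Y)}i^*(X)
}$$
for such $i,n$.

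First I would unfold the target. A map $\Delta[n]\To Y/_eT\times_{i^*(Y)}i^*(X)$ is a pair consisting of a map $\Delta[n]\To Y/_eT$ and a map $\Delta[n]\To i^*(X)$ that agree over $i^*(Y)$. Using the adjunction $(-)\star_e T\dashv(-)/_eT$, the equality $\Delta[n]\star_e T=\Omega[n\star_e T]$, and the identification of $n$-simplices of $i^*(X)$ with maps $\Omega[n]\To X$, together with the compatible datum already given on the horn $\Lambda^i[n]$, this square transposes to a commutative square in $\dset$ of the form
$$\xymatrix{
(\Lambda^i[n]\star_e T)\cup\Omega[n]\ar[r]\ar[d]&X\ar[d]^p\\
\Omega[n\star_e T]\ar[r]&Y
}$$
whose left vertical map is precisely the inclusion of Proposition \ref{joineinnerKani0}. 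A diagonal filler of this transposed square corresponds, again by adjunction, to a filler of the original one.

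Now Proposition \ref{joineinnerKani0} asserts that $(\Lambda^i[n]\star_e T)\cup\Omega[n]\To\Omega[n\star_e T]$ is an inner anodyne extension, and $p$, being an inner Kan fibration, has the right lifting property with respect to all inner anodyne extensions; hence the required filler exists, proving the first assertion. The only point that needs care---and it is entirely routine---is the bookkeeping in the transposition: one must check that the fibre-product datum on $\Delta[n]$ and the horn datum on $\Lambda^i[n]$ glue to a single map out of the union $(\Lambda^i[n]\star_e T)\cup\Omega[n]$. This is exactly the compatibility encoded by the projection \eqref{defjoinordersonto7} and the naturality of the join operation, the relevant intersection being $(\Lambda^i[n]\star_e T)\cap\Omega[n]=\Lambda^i[n]$ inside $\Omega[n\star_e T]$. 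I do not expect any genuine obstacle here, since the entire argument is formal once \ref{joineinnerKani0} is in hand.

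For the final statement I would specialize the first part to the map $p:X\To\ast$ to the terminal dendroidal set: then $Y/_eT$ is a point and the fibre product collapses to $i^*(X)$, so $X/_eT\To i^*(X)$ is a right fibration. Since $X$ is an $\infty$-operad, $i^*(X)$ is an $\infty$-category, and a right fibration is in particular an inner Kan fibration; thus $X/_eT$ is an $\infty$-category as well.
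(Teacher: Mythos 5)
Your proof is correct and is exactly the paper's argument: the paper disposes of this proposition in one line (``this follows from Proposition \ref{joineinnerKani0} by a standard adjunction argument''), and your transposition of the lifting problem across the adjunction $(-)\star_e T\dashv(-)/_eT$, including the gluing check along $(\Lambda^i[n]\star_e T)\cap\Omega[n]=\Lambda^i[n]$, is precisely that standard argument spelled out, with the second assertion obtained as in the paper by taking $Y$ terminal. No discrepancies to report.
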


\begin{proof}
This follows from Proposition \ref{joineinnerKani0}
by a standard adjunction argument.
\end{proof}

\begin{thm}\label{analogthm42}
Let $S$ be a tree with at least two vertices, let $v$ be a unary top vertex in $S$,
and let $p:X\To Y$ be an inner Kan fibration between $\infty$-operads.
Then any solid commutative square of the form
$$\xymatrix{
\Lambda^v[S]\ar[r]^\varphi\ar[d]&X\ar[d]^p\\
\Omega[S]\ar[r]^\psi\ar@{..>}[ur]^h&Y
}$$
in which $\varphi(v)$ is weakly invertible in $X$ has a diagonal
filling $h$.
\end{thm}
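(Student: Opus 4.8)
The plan is to follow the proof of Theorem \ref{dendjoyal} closely, exchanging the roles of roots and leaves throughout: left fibrations become right fibrations, the horn range $0\leq i<n$ becomes $0<i\leq n$, and evaluation at the vertex $1$ is replaced by evaluation at $0$. First I would record the leaf analogue of the remark after Theorem \ref{dendjoyal}: a tree $S$ with at least two vertices and a unary top vertex $v$ is exactly one of the form $S=1\star_e T$ for a tree $T$ with a chosen leaf $e$, the omitted face being $\partial_v(S)=0\star_e T$. As a warm-up and as the base case, I would dispose of $T=\eta$ separately (so $S=1\star_e\eta=i[2]$), in exact analogy with Lemma \ref{joyaldendemptyforest}: by the adjunction $(-)\star_e T\dashv(-)/_e T$ the lifting problem becomes a lifting problem for $\{0\}\To\Delta[1]$ against the projection $X/_e\eta\To Y/_e\eta\times_{i^*(Y)}i^*(X)$, which is a right fibration by Proposition \ref{joineinnerKani}; since right fibrations between $\infty$-categories are conservative and $\varphi(v)$ is weakly invertible, the edge to be lifted is weakly invertible, and one concludes with \cite[Propositions 2.4 and 2.7]{joyal}.

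For the general case I would assume $T\neq\eta$ and transpose the lifting problem under the same adjunction. Using the functoriality in $T$ of Remark \ref{functjoinoverinT}, a map $\Lambda^v[S]\To X$ amounts to a compatible family of maps $\{0\}\To X/_e R$ indexed by the $e$-admissible faces $R$ of $T$; separating the full face $R=T$ from the proper ones rewrites the square of the theorem as a simplicial square
$$\xymatrix{
\Delta[0]\ar[r]\ar[d]_{\partial_1}&P\ar[d]\\
\Delta[1]\ar[r]&Q
}$$
with $P=X/_e T$ and $Q=U\times_W V$, where $V=Y/_e T$ while $U$ and $W$ are the limits of $X/_e R$ and $Y/_e R$ over the proper $e$-admissible faces $R$ of $T$. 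As in the proof of Theorem \ref{dendjoyal}, it then suffices to prove: (i) the projection $P\To Q$ is a right fibration; (ii) $Q$ is an $\infty$-category; and (iii) if $\varphi(v)$ is weakly invertible in $X$, then the transposed $1$-cell $\tilde g$ is weakly invertible in $Q$. Since right fibrations are conservative and stable under pullback and composition, (ii) and (iii) follow once $V\To W$ and $U\To i^*(X)$ are known to be right fibrations, and $V\To W$ is the special case of (i) in which $p$ is replaced by the map from $Y$ to the terminal dendroidal set.

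The two combinatorial points, (i) and the right-fibration property of $U\To i^*(X)$, reduce to facts in this appendix. For (i), a lifting problem for $\Lambda^i[n]\To\Delta[n]$ with $0<i\leq n$ transposes, via the fibre-product description of $Q$, to a lifting problem for the inner horn $\Lambda^i[n\star_e T]\To\Omega[n\star_e T]$ against $p$ (the edge $i$ is inner in $n\star_e T$ precisely because $0<i\leq n$), which $p$ solves as an inner Kan fibration. For $U\To i^*(X)$, the analogous transposition turns the horn-filling problem into an inclusion $C\To D$ of the shape of \ref{defkeadmiss}, with $\{R_1,\ldots,R_t\}$ the proper $e$-admissible faces of $T$; this inclusion is inner anodyne by Lemma \ref{lemmakeadmiss}, so $X$, being an $\infty$-operad, extends along it. Granting (i)--(iii), the projection $P\To Q$ is a right fibration between $\infty$-categories, hence conservative, while $\tilde g$ is weakly invertible, so the lift $\Delta[1]\To P$ extending $\tilde f$ over the vertex $0$ exists by \cite[Propositions 2.4 and 2.7]{joyal}, exactly as in Lemma \ref{joyaldendemptyforest}.

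The main obstacle I anticipate is bookkeeping rather than conceptual. One must check that, under the adjunction, the fibre product $Q=U\times_W V$ unpacks exactly into the inner horn $\Lambda^i[n\star_e T]$ used in (i), and that the proper $e$-admissible faces of $T$ index precisely the inclusion $C\To D$ to which Lemma \ref{lemmakeadmiss} applies; equivalently, that separating the face $R=T$ from the proper faces matches the separation of the root edge in the proof of Theorem \ref{dendjoyal}. Once the orientation conventions (right versus left, vertex $0$ versus $1$) are fixed consistently, no geometric input beyond Propositions \ref{joineinnerKani0} and \ref{joineinnerKani} and Lemma \ref{lemmakeadmiss} should be required.
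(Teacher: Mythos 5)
Your proposal is correct and follows essentially the same route as the paper's own proof: the same adjunction between $(-)\star_e T$ and $(-)/_eT$, the same transposition of the horn problem to a lifting problem for $\{0\}\To\Delta[1]$ against $P=X/_eT\To Q=U\times_W V$, the same reduction to statements (i)--(v) settled by Proposition \ref{joineinnerKani0}, Lemma \ref{lemmakeadmiss}, and the conservativity and stability properties of right fibrations from \cite{joyal}. The only deviation is your separate base case $T=\eta$ (mirroring Lemma \ref{joyaldendemptyforest}), which the paper handles uniformly without comment; this extra step is harmless and in fact makes transparent what happens when the family of proper $e$-admissible faces indexing $U$ and $W$ is empty.
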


\begin{proof}
The tree $S$ has to be of shape $S=1\star_e T$
for a tree $T$ with a given leaf $e$.
Under this identification, we have
$\Lambda^v[S]=\Lambda^0[1\star_eT]$.
A lifting $h$ in the solid commutative square
$$\xymatrix{
\Lambda^0[1\star_eT]\ar[r]^\varphi\ar[d]&X\ar[d]^p\\
\Omega[1\star_eT]\ar[r]^\psi\ar@{..>}[ur]^h&Y
}$$
is thus equivalent to a lifting $k$
in the diagram
$$\xymatrix{
\{0\}\ar[r]^{\tilde{\varphi}}\ar[d]&P\ar[d]\\
\Delta[1]\ar[r]^{\tilde{\psi}}\ar@{..>}[ur]^k&Q
}$$
in which $P=X/_eT$ and $Q=U\times_WV$, with
$$
U=\varprojlim_R X/_eR\ , \quad
V=Y/_eT\ , \quad W=\varprojlim_R Y/_eR\ ,
$$
where $R$ ranges over all the proper $e$-admissible
faces of $T$. As in the proof of Theorem \ref{dendjoyal},
it is now sufficient to prove the three following
properties:
\begin{itemize}
\item[(i)] the map $P\To Q$ is a right fibration;
\item[(ii)] $Q$ is an $\infty$-category.
\item[(iii)] if $\varphi(x)$ is weakly invertible in $X$, then
so is the $1$-cell $\tilde{\psi}$ in $Q$.
\end{itemize}
Properties (ii) and (iii) will follow from the two assertions below:
\begin{itemize}
\item[(iv)] the map $V\To W$ is a right fibration;
\item[(v)] the map $U\To i^*(X)$ is a right fibration.
\end{itemize}
As (iv) is a particular case of (i), we are thus reduced to prove (i) and (v).
\begin{sslem}
Proof of \emph{(i)}.
\end{sslem}
A lifting problem of shape
\begin{equation*}\begin{split}
\xymatrix{
\Lambda^i[n]\ar[r]^{}\ar[d]&P\ar[d]\\
\Delta[n]\ar[r]^{}\ar@{..>}[ur]&Q
}\end{split}\quad\qquad 0<i\leq n
\end{equation*}
is equivalent to a lifting problem of shape
\begin{equation*}\begin{split}
\xymatrix{
C\ar[r]^{}\ar[d]&X\ar[d]^p\\
\Omega[n\star_e T]\ar[r]^{}\ar@{..>}[ur]&Y
}\end{split}
\end{equation*}
where $C$ is the union of $\Lambda^i[n]\star_e T$ with
the union of the faces of $\Omega[n\star_e T]$
which are of the form $n\star_e S\To n\star_e T$,
where $S$ ranges over the $e$-admissible elementary faces of $T$.
In other words, $C=\Lambda^i[n\star_e T]$ is an inner horn, so that
the required lifting exists, because $p$ is assumed to be an inner Kan
fibration.
\begin{sslem}
Proof of \emph{(v)}.
\end{sslem}
A lifting problem of shape
\begin{equation*}\begin{split}
\xymatrix{
\Lambda^i[n]\ar[r]^{}\ar[d]&U\ar[d]\\
\Delta[n]\ar[r]^{}\ar@{..>}[ur]&i^*(X)
}\end{split}\quad\qquad 0<i\leq n
\end{equation*}
is equivalent to a lifting problem of shape
\begin{equation*}\begin{split}
\xymatrix{
C\ar[r]^{}\ar[d]&X\\
D\ar@{..>}[ur]&
}\end{split}
\end{equation*}
in which the inclusion $C\To D$ can be described as follows:
$$C=\Omega[n]\cup\bigcup_R \Lambda^i[n]\star_e R
\subset D=\bigcup_R \Omega[n\star_e R]
\subset\Omega[n\star_e T]\, ,$$
where $R$ ranges over the $e$-admissible elementary faces of $T$.
It is easily seen that the inclusion $C\To D$ is an inner anodyne extension by Lemma \ref{lemmakeadmiss}.
\end{proof}

\section{Another subdivision of cylinders}\label{appB}

\begin{paragr}
We will refer to the horn
inclusions of shape $\Lambda^x[S]\To\Omega[S]$,
where $S$ is a tree with a unary top vertex $x$,
as \emph{end extensions}.
A composition of pushouts of end extensions will be
called an \emph{end anodyne} map.

The goal of this section is to prove a dual version
of Theorem \ref{thmsubcyl}, namely:
\end{paragr}

\begin{thm}\label{thmsubcyldual}
Let $T$ be a tree with at least one vertex, and consider
the subobject
$$B_0=\{0\}\otimes\Omega[T]\cup
\Delta[1]\otimes\bord\Omega[T]\subset\Delta[1]\otimes\Omega[T]\, .$$
There exists a filtration of $\Delta[1]\otimes\Omega[T]$ of the 
form
$$B_0\subset B_1\subset\ldots\subset B_{N-1}\subset B_N=\Delta[1]\otimes\Omega[T]$$
where, for each $i$, $0\leq i<N$, the map $B_i\To B_{i+1}$
is either inner anodyne or end anodyne.

Moreover, the end anodyne maps are all push outs of the form
$$\xymatrix{
\Lambda^v[S]\ar[r]\ar[d]&B_i\ar[d]\\
\Omega[S]\ar[r]&B_{i+1}
}$$
with the following properties:
\begin{itemize}
\item[(i)] the tree $S$ has at least two vertices, and $v$ is
a unary top vertex;
\item[(ii)] the map
$$\Delta[1]\To\Lambda^v[S]\To B_i\subset\Delta[1]\otimes\Omega[T]\, ,$$
corresponding to the vertex $v$ in $S$, coincides with an inclusion
of shape
$$\Delta[1]\otimes\{t\}\To\Delta[1]\otimes\Omega[T]$$
for some edge $t$ in $T$.
\end{itemize}
\end{thm}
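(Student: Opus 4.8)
The plan is to re-run the percolation-scheme argument behind Theorem \ref{thmsubcyl} (and \cite[Proposition 9.2]{dend2}), but oriented towards the opposite end of the cylinder. Since the Boardman--Vogt tensor product is symmetric we may regard $T$ as the white tree and $\Delta[1]$ as the black one and reorder the factors, so that $B_0$ becomes $\bord\Omega[T]\otimes\Delta[1]\cup\Omega[T]\otimes\{0\}$; this is exactly the subobject $A_0$ of Theorem \ref{thmsubcyl} (with $S$ replaced by $T$), except that the retained cross-section is the endpoint $\{0\}$ rather than $\{1\}$. I would therefore use the same decomposition $\Omega[T]\otimes\Delta[1]=\bigcup_i\Omega[T_i]$ into percolation schemes and the same partial order on them. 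The key point is that Theorem \ref{thmsubcyl} keeps the cross-section $\Omega[T]\otimes\{1\}$ and manufactures $\Omega[T]\otimes\{0\}$ last, through the single root horn at the terminal scheme whose black vertices have merged at the root; here the situation is reversed, $\Omega[T]\otimes\{0\}$ is given and the faces assembling $\Omega[T]\otimes\{1\}$ must instead be produced. Each such face is obtained by chopping off a $\Delta[1]$-vertex that has become a top vertex of its percolation scheme, and since there are in general several such configurations these give rise to several end extensions rather than to a single horn.

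Carrying this out needs the same combinatorial scaffolding as Section \ref{section5}, but, as already warned in the paragraph preceding Lemma \ref{thmsubcyl10}, with \emph{different} spines and characteristic edges, adapted to the retained endpoint $\{0\}$ in place of $\{1\}$, and with the order in which the schemes are adjoined adjusted accordingly. I would re-prove the analogues of Lemma \ref{thmsubcyl10}, Definition \ref{thmsubcyl11} and Lemmas \ref{thmsubcyl13}--\ref{thmsubcyl15} in this reflected form; the proofs proceed by the same method, the faces now automatically available being those that miss a $T$-colour, hence lying in $\Delta[1]\otimes\bord\Omega[T]\subset B_0$, together with the cross-section $\Omega[T]\otimes\{0\}$, which together take over the roles that the faces missing an $S$-colour and the cross-section $\Omega[S]\otimes\{1\}$ played in Section \ref{section5}.

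With that machinery in place the filtration is assembled by adjoining the schemes and their initial segments as before, and the two kinds of steps arise from the expected dichotomy. Whenever the characteristic edge adjacent to the black vertex under consideration is an \emph{inner} edge, equivalently that vertex still carries a white vertex on the side towards which we are building, the corresponding attachment is inner anodyne, by the reflection of Lemma \ref{thmsubcyl13}. Whenever instead a $\Delta[1]$-vertex has become a top vertex $v$ of its scheme, no inner edge is available there, and I would adjoin the relevant initial segment by a single pushout of the end extension $\Lambda^v[S]\To\Omega[S]$, after checking, exactly as in the identification $\Omega[T_N]\cap A_{N-1}=\Lambda^r[T_N]$ in the proof of parts (ii)--(iii) of Theorem \ref{thmsubcyl}, that the only missing face is the top face $\partial_v$. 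Since the two edges of $v$ lie over a single edge $t$ of $T$, the $1$-cell of $v$ is exactly the inclusion $\Delta[1]\otimes\{t\}$, so this delivers properties (i) and (ii) and identifies the trees $S$ and vertices $v$ occurring in the statement.

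The principal obstacle is that, as the introduction to this appendix stresses, none of this follows formally from Section \ref{section5} by left--right duality: the reflected spines and characteristic edges and the modified order of attachment must all be verified by hand. In particular one must check at every end-extension step that the omitted face is genuinely $\partial_v$ and that no earlier scheme or previously adjoined initial segment already contains the interior of $\Omega[S]$ --- the verification that rested, in Example \ref{thmsubcyl12}, on Lemma \ref{thmsubcyl10}. A further point of care, and the real structural departure from Theorem \ref{thmsubcyl}, is that a $\Delta[1]$-vertex may be a top vertex not only in the first scheme but in several intermediate ones, so the end extensions are not concentrated in a single terminal step but are interleaved with the inner anodyne steps along the filtration, precisely as the phrasing ``for each $i$, the map $B_i\To B_{i+1}$ is either inner anodyne or end anodyne'' permits.
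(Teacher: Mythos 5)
Your outline reproduces the paper's skeleton correctly: percolation schemes for the cylinder, a refinement of the coarse filtration $C_k=B_0\cup\Omega[T_1]\cup\dots\cup\Omega[T_k]$ indexed by a linear extension of the natural partial order, end-anodyne steps interleaved with inner anodyne ones and occurring exactly where a $\Delta[1]$-vertex sits above a leaf of $T$, and the identification of the $1$-cell of $v$ with $\Delta[1]\otimes\{t\}$, giving (i) and (ii). But the core of your plan --- that the spine/characteristic-edge/initial-segment machinery of Section \ref{section5} (Lemma \ref{thmsubcyl10}, Definition \ref{thmsubcyl11}, Lemmas \ref{thmsubcyl13}--\ref{thmsubcyl15}) transports ``by the same method'' after reflection --- is precisely where the proof lives, and as stated it breaks down. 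The dictionary you propose, with $\Omega[T]\otimes\{0\}$ taking over the role that $\Omega[S]\otimes\{1\}$ played, fails at intermediate stages: every initial segment of a percolation scheme contains the root edge, whose colour has second coordinate $1$, so no initial segment factors through the retained cross-section $\{0\}\otimes\Omega[T]$; dually, the segments containing no $\Delta[1]$-vertex (obtained by chopping off all of them and everything above) have all colours with second coordinate $1$, hence land in the cross-section at $\{1\}$ --- exactly the part of the cylinder still to be built. In Section \ref{section5} those blackless segments lay in $A_0$ and anchored the induction of Lemma \ref{thmsubcyl15}; here the base case of that induction has nothing to stand on. Moreover, Lemma \ref{thmsubcyl13} concludes that an attachment is \emph{inner} anodyne, so it can never produce the end-anodyne steps; one would need a new mixed statement, plus a way of serializing the \emph{several} end extensions arising inside one and the same percolation scheme --- not one per configuration, as your ``single pushout'' phrasing suggests.

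The paper's actual proof abandons spines and initial segments altogether. It interpolates between $C_k$ and $C_{k+1}$ by the faces $T^{(U)}_{k+1}$ obtained by contracting the doubled edges $(1,e)$ for $e\in E\setminus U$, and uses the Boardman--Vogt relation (the face of $T_{k+1}$ contracting all input edges of the black vertex $x$ through which the white vertex was pushed equals a face of the earlier scheme $T_j$) to recognize which of these are already present --- in particular all $T^{(U)}_{k+1}$ with $U\cap\mathit{input}(x)=\varnothing$. It then runs a two-level induction: over subsets $U\subset E$ with $U\cap\mathit{input}(x)\neq\varnothing$, ordered by cardinality, and within each $U=\{\alpha_1,\dots,\alpha_s\}$ over the subsets $A_q$ of $\{(0,\alpha_1),\dots,(0,\alpha_s)\}$ containing $(0,\alpha_1)$, ordered compatibly with inclusion. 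Each elementary step is a single pushout of the horn on the distinguished edge $(0,\alpha_1)$: inner anodyne if that edge is inner, an end extension if $\alpha_1$ is a leaf of $T$. So the already-available faces arise from \emph{inner} contractions of the $(1,e)$'s via the BV relation, not from outer top-choppings; this is the structural reason the reflected Section \ref{section5} strategy does not go through, and your proposal, which defers exactly this point to ``the same method, verified by hand'', has a genuine gap there.
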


\begin{paragr}
As in the proof of Theorem \ref{thmsubcyl}, we will follow the convention
of \cite{dend2}, and write
$$\Omega[S]\otimes\Omega[T]=\bigcup^m_{i=1}\Omega[T_i]\, ,$$
where the union ranges over the partially ordered set of percolation
schemes, starting with a  number of copies of $T$ grafted on top of $S$,
and ending with the reverse grafting. For
\begin{equation*}
\begin{split}
\phantom{a}
\end{split}
\text{$S=[1]=$}\quad
\begin{split}
\xymatrix@R=10pt@C=12pt{
*=0{}\\
*=0{\circ}\ar@{-}[u]\\
*=0{}\ar@{-}[u]}
\end{split}\quad\text{,}
\end{equation*}
the first tree is of shape
\begin{equation}
\begin{split}
\phantom{a}
\end{split}T_1=\quad
\begin{split}
\xymatrix@R=10pt@C=12pt{
*=0{}&&*=0{}&&*=0{}&\\
&&*=0{\bullet}\ar@{}[u]|{T}\ar@{-}[ull]^{}\ar@{-}[urr]_{}&&\\
&&*=0{\circ}\ar@{-}[u]&&\\
&&*=0{}\ar@{-}[u]&&
}
\end{split}\quad\text{,}\end{equation}
and the last one is
\begin{equation}
\begin{split}
\phantom{a}
\end{split}T_m=\quad
\begin{split}
\xymatrix@R=10pt@C=12pt{
*=0{}&*=0{}&*=0{}&*=0{}&*=0{}&\\
*=0{\circ}\ar@{-}[u]&*=0{\circ}\ar@{-}[u]
\ar@{}[rr]|{\dots}
&*=0{}&*=0{\circ}\ar@{-}[u]&*=0{\circ}\ar@{-}[u]&\\
*=0{}\ar@{-}[u]&*=0{}\ar@{-}[u]&*=0{}&*=0{}\ar@{-}[u]&*=0{}\ar@{-}[u]&\\
&&*=0{\bullet}\ar@{}[u]|{T}
\ar@{-}[ull]^{}
\ar@{..}[ul]^{}
\ar@{-}[urr]_{}
\ar@{..}[ur]^{}&&\\
&&*=0{}\ar@{-}[u]&&
}
\end{split}\quad\text{.}
\end{equation}

Let us fix a linear order on the percolation schemes for
$\Delta[1]\otimes\Omega[T]$ which extends the natural partial order.
Such a linear ordering induces a filtration on the tensor product
$\Delta[1]\otimes\Omega[T]$,
\begin{equation}
C_0\subset C_1\subset\ldots\subset C_{m-1}\subset C_m=\Delta[1]\otimes\Omega[T]
\end{equation}
by setting
\begin{equation}
C_0=B_0=\{0\}\otimes\Omega[T]\cup
\Delta[1]\otimes\bord\Omega[T]\ \text{and} \
C_i=B_0\cup\Omega[T_1]\cup\dots\cup\Omega[T_i]\, .
\end{equation}
the filtration of Theorem \ref{thmsubcyldual} will be a refinement of
this one.

Let us start by considering $T_1$. If the root edge of $T$ is called $r$,
then $T_1$ looks like
\begin{equation}\begin{split}\xymatrix@R=10pt@C=12pt{
*=0{}&*=0{}&*=0{}&*=0{}&*=0{}\\
*=0{}\ar@{}[u]|(.9)\vdots
&*=0{}\ar@{}[u]|(.9)\vdots
&*=0{}
&*=0{}\ar@{}[u]|(.9)\vdots
&*=0{}\ar@{}[u]|(.9)\vdots\\
&&*=0{\bullet}\ar@{}[u]|\dots\ar@{-}[ull]^{}\ar@{-}[urr]_{}
\ar@{-}[ul]^{}\ar@{-}[ur]_{}&&\\
&&*=0{\circ}\ar@{-}[u]_{(0,r)}&&\\
&&*=0{}\ar@{-}[u]_{(1,r)}&&
}\end{split}\end{equation}
With the exception of the faces $\partial_{(0,r)}(T_1)$ (which
contracts $(0,r)$) and $\partial_{(1,r)}(T_1)$ (which chops off
$(1,r)$ as well as the white vertex), any face $F$ of $T_1$ misses a colour of
$T$ (by this, we mean there is an edge $a$ in $T$ such that no edge in $F$
is named $(i,a)$). Hence, $\Omega[F]\subset\Delta[1]\otimes\bord\Omega[T]$
for these $F$. Moreover, $\partial_{(1,r)}(T_1)=\{0\}\otimes T_1$.
So $\Omega[T_1]\cap B_0=\Lambda^{(0,r)}[T_1]$, and
\begin{equation}\begin{split}\xymatrix{
\Lambda^{(0,r)}[T_1]\ar[r]\ar[d]&B_0\ar[d]\\
\Omega[T_1]\ar[r]&B_0\cup\Omega[T_1]
}\end{split}\end{equation}
is a pushout. So, if we let $B_1=C_1$, then $B_0\To B_1$
is obviously inner anodyne.

Suppose we have defined a filtration up to some $B_l$
\begin{equation}
B_0\subset B_1\subset\ldots\subset B_{l}\qquad l\geq 1\, ,
\end{equation}
so that $B_l=C_k$ for some $k$, $1\leq k\leq m$.
We will extend this filtration as $B_l\subset B_{l+1}\subset\ldots\subset B_{l'}$,
so that $B_{l'}=C_{k+1}$. The percolation scheme $T_{k+1}$ is obtained from
an earlier one $T_j$ by pushing a white vertex in $T_j$ one step up
through a black vertex $x$, as in
\begin{equation}
\begin{split}
\xymatrix@R=10pt@C=6pt{
&*=0{}&*=0{}&*=0{}&\\
&&*=0{\bullet}\ar@{}[u]|{\dots}\ar@{-}[ul]^{}\ar@{-}[ur]_{}&\\
&&*=0{\circ}\ar@{-}[u]^(.85){x}&\\
&&*=0{}\ar@{-}[u]&\\
&&\text{in $T_{j}$}&
}\end{split}
\Longrightarrow\quad
\begin{split}\xymatrix@R=10pt@C=6pt{
*=0{}&*=0{}&*=0{}&*=0{}&*=0{}&\\
&*=0{\circ}\ar@{-}[ul]&*=0{}&*=0{\circ}\ar@{-}[ur]&\\
&&*=0{\bullet}\ar@{-}[ul]\ar@{-}[ur]\ar@{}[u]|\dots &&\\
&&*=0{}\ar@{-}[u]^(.85)x&&\\
&&\text{in $T_{k+1}$}&&
}\end{split}
\end{equation}
(we have denoted by $x$ the black vertex in both trees,
although it would be more accurate to write $x$ for the relevant vertex of $T$,
and write $0\otimes x$ and $1\otimes x$ for the corresponding vertices in $T_j$
and $T_{k+1}$ repectively). The Boardman-Vogt relation states that, as subobjects
of $\Delta[1]\otimes\Omega[T]$, the face of $T_{k+1}$ obtained by contracting
all input edges of $x$ coincides with the face of $T_j$ obtained
by contracting the output edge of $x$ in $T_j$. In particular,
notice that if $x$ has no input edges at all (i.e. if $x$ is a `nullary
operation' in $T$), then $T_{k+1}$ is a face of $T_j$, so $C_{k+1}=C_k$,
and we let $B_{l'}=B_l$, and there is nothing prove.
Therefore, from now on, we will assume that the set of input edges
of $x$, denoted $\mathit{input}(x)$, is non-empty, and we proceed
as follows.

Let $E$ be the set of all colours (edges) $e$ in $T$ for which
\begin{equation}
\begin{split}
\xymatrix@R=10pt@C=12pt{
*=0{}\\
*=0{\circ}\ar@{-}[u]_{(0,e)}\\
*=0{}\ar@{-}[u]_{(1,e)}}
\end{split}
\end{equation}
occurs in $T_{k+1}$. For $U\subset E$, let
\begin{equation}
T^{(U)}_{k+1}\subset T_{k+1}
\end{equation}
be the face given by contracting all the edges $(1,e)$
for $e\in E$ but $e$ \emph{not} in $U$. Notice that if $U\cap \mathit{input}(x)=\varnothing$,
then $\Omega[T^{(U)}_{k+1}]\subset B_l$ by the Boardman-Vogt relation
just mentioned. Therefore, we will only consider $U$ with
$U\cap \mathit{input}(x)\neq\varnothing$.
We will successively adjoin $\Omega[T^{(U)}_{k+1}]$ to $B_l$
for larger and larger such $U$, until we reach the case where $U=E$
and $T^{(U)}_{k+1}=T_{k+1}$.

If $U=\{e\}$ is a singleton (with $e$ an input edge of $x$),
then the face $\partial_{(1,e)}\Omega[T^{(\{e\})}_{k+1}]$
is contained in $B_l$ as said, while the face
$\partial_{(0,e)}\Omega[T^{(\{e\})}_{k+1}]$ is not
(it cannot belong to an earlier $T_i$,
and is obviously not contained in $B_0=C_0$).
Any other face of $T^{(\{e\})}_{k+1}$ misses a colour
of $T$ and hence is contained in $B_0$. Thus,
\begin{equation}
\Omega[T^{(\{e\})}_{k+1}]\cap B_l\subset \Omega[T^{(\{e\})}_{k+1}]
\end{equation}
is either an inner horn (if $(0,e)$ is an inner edge
of $T_{k+1}$) or an end extension (if $(1,e)$ is an input edge of $T_{k+1}$).
In either case, we can adjoin $\Omega[T^{(\{e\})}_{k+1}]$
by forming the pushout below.
\begin{equation}\begin{split}\xymatrix{
\Omega[T^{(\{e\})}_{k+1}]\cap B_l\ar[r]\ar[d]&B_l\ar[d]\\
\Omega[T^{(\{e\})}_{k+1}]\ar[r]&\Omega[T^{(\{e\})}_{k+1}]\cup B_l
}\end{split}\end{equation}
We successively adjoin $\Omega[T^{(\{e\})}_{k+1}]$ to $B_l$
in this way for all $e$ in $E$ which are input edges of $x$ in $T$:
if these are $e_1,\ldots,e_p$, let
\begin{equation}
B_{l+r}=B_l\cup\Omega[T^{(\{e_1\})}_{k+1}]\cup\dots\cup\Omega[T^{(\{e_r\})}_{k+1}]\, .
\end{equation}
Then, for each $r<p$, the map $B_{l+1}\To B_{l+r+1}$ is inner anodyne
or end anodyne.

In general, we proceed by induction on $U$.
Choose $U\subset E$ with $U\cap \mathit{input}(x)\neq\varnothing$, and
assume we have adjoined $\Omega[T^{(U')}_{k+1}]$ already, for all $U'$
of smaller cardinality than $U$. We will write $B_{l''}$ for the last object in
the filtration constructed up to that point.
Fix an order on the set of elements of $U$, and write it as
\begin{equation}
U=\{\alpha_1,\ldots,\alpha_s\}\, .
\end{equation}
Consider $\Omega[T^{(U)}_{k+1}]$. The tree $T^{(U)}_{k+1}$
has edges $(0,c)$ or $(1,c)$ for $c$ not in $E$, and the corresponding
face misses the colour $c$ alltogether, hence
$\partial_{(i,c)}\Omega[T^{(U)}_{k+1}]$ is contained in $B_0$ for these $c$.
Next, the tree $T^{(U)}_{k+1}$ has edges coloured $(1,\alpha_i)$ for
$1\leq i\leq s$, and contracting any of these gives a face
\begin{equation}
\partial_{(1,\alpha_i)}\Omega[T^{(U)}_{k+1}]=\Omega[T^{(U-\{\alpha_i\})}_{k+1}]
\end{equation}
which is contained in $B_{l''}$ by the inductive assumption on $U$.
None of the faces given by contracting (if it is inner) or by
chopping off (if it is outer) an edge $(0,\alpha_i)$ in
$T^{(U)}_{k+1}$ can be contained in $B_{l''}$, however.

Let $A_1,\ldots,A_t$ be all the subsets of the set of these edges
$\{(0,\alpha_1),\ldots,(0,\alpha_s)\}$ of $T^{(U)}_{k+1}$
which contain $(0,\alpha_1)$, and order them by some linear order
extending the inclusion order. So there are $t=2^{s-1}$ such $A_i$, and
we could fix the order to be
$$\begin{aligned}
A_1&=\{(0,\alpha_1)\}\\
A_2&=\{(0,\alpha_1),(0,\alpha_2)\}\\
\vdots&\phantom{=}\qquad\vdots\\
A_s&=\{(0,\alpha_1),(0,\alpha_s)\}\\
A_{s+1}&=\{(0,\alpha_1),(0,\alpha_2),(0,\alpha_3)\}\\
\vdots&\phantom{=}\qquad\vdots\\
A_t&=\{(0,\alpha_1),\ldots,(0,\alpha_s)\}\, .
\end{aligned}$$
For $q=1,\ldots,t$, let $T^{(U,q)}_{k+1}$ be the
tree obtained from $T^{(U)}_{k+1}$ by contracting or chopping off
all the edges $(0,\alpha_i)$ not in $A_q$.
So
\begin{equation}
T^{(U,1)}_{k+1}=\partial_{(0,\alpha_{s})}\partial_{(0,\alpha_{s-1})}\dots
\partial_{(0,\alpha_{2})}T^{(U)}_{k+1}\, ,
\end{equation}
and
\begin{equation}
T^{(U,t)}_{k+1}=T^{(U)}_{k+1}\, .
\end{equation}
We will successively adjoin these $\Omega[T^{(U,q)}_{k+1}]$ to the filtration, to form
the part
\begin{equation}
B_{l''}\subset B_{l''+1}\subset\ldots\subset B_{l''+t}=B_{l''}\cup\Omega[T^{(U)}_{k+1}]
\end{equation}
of the filtration, as
\begin{equation}
B_{l''+q}=B_{l''}\cup\Omega[T^{(U,1)}_{k+1}]\cup\dots\Omega[T^{(U,q)}_{k+1}]\, .
\end{equation}
We start with $T^{(U,1)}_{k+1}$. The only face of $\Omega[T^{(U,1)}_{k+1}]$
not contained in $B_{l''}$ is the one given by the edge $(0,\alpha_1)$. Thus
\begin{equation}
\Omega[T^{(U,1)}_{k+1}]\cap B_{l''}\subset \Omega[T^{(U,1)}_{k+1}]
\end{equation}
is either an inner horn (if $(0,\alpha_1)$ is an inner edge) or an end extension
(if $(0,\alpha_1)$ is an input edge of $T^{(U)}_{k+1}$, i.e. $\alpha_1$
is an input edge of $T$). So the pushout $B_{l''}\To B_{l''+1}$
is either inner anodyne or end anodyne.

Suppose we have adjoined $\Omega[T^{(U,q')}_{k+1}]$ for all $1\leq q'<q$,
so have arrived at the stage $B_{l''+q-1}$ of the filtration.
Consider now $A_q$ and the corresponding dendroidal set $\Omega[T^{(U,q)}_{k+1}]$.
As before, its faces given by edges coloured by $(i,c)$ for $i=0,1$
with $c$ not in $E$ are contained in $B_0$, and its faces given by
edges coloured $(1,e)$ with $e\in U$ are contained in
$\Omega[T^{(U')}_{k+1}]$ for a smaller $U'=U-\{e\}$, hence are contained
in $B_{l''}$. Let us consider the remaining faces given by
the edges $(0,\alpha_1),\ldots,(0,\alpha_r)$ in $A_q$.
If $i\neq 1$, the face of $\Omega[T^{(U,q)}_{k+1}]$
given by $(0,\alpha_i)\in A_q$ is contained in
$\Omega[T^{(U,q')}_{k+1}]$ for some $q'<q$ with
$A_{q'}=A_q-\{(0,\alpha_i)\}$. So the only face that is missing is
the one given by $(0,\alpha_1)$, i.e.
\begin{equation}
\Omega[T^{(U,q)}_{k+1}]\cap B_{l''+q-1}=\Lambda^{(0,\alpha_1)}[T^{(U,q)}_{k+1}]\, .
\end{equation}
Therefore, the induced pushout $B_{l''+q-1}\To B_{l''+q}$
is either inner anodyne or end anodyne (depending on whether
$\alpha_1$ is an external edge of $T$ or not).
At the end, when $q=t$, we have adjoined all of $\Omega[T^{(U)}_{k+1}]$.

This completes the contruction of the segment of the filtration
for the subset $U\subset E$. As said, we continue this construction
until we reach the stage $U=E$, when $T^{(U)}_{k+1}=T_{k+1}$, which
completes the construction of the segment of the filtration from
$B_{l}$ until $B_{l'}$, interpolating between $C_k$ and $C_{k+1}$.
This completes the description of the filtration. From it,
the last part of the theorem is clear.
\end{paragr}

\end{appendices}

\section*{Erratum}


Proposition 1.9
is wrong as stated, and should be modified as we will explain below. This modification does not affect any of the main results of this paper and its two sequels \cite{dend4, dend5}: the existence of the model structure on dendroidal sets of the present paper,
the equivalent model structures for dendroidal complete Segal spaces and for Segal operads in \cite{dend4}, and the Quillen equivalence to the model category of simplicial  operads in \cite{dend5}. However, the error does affect all the statements concerning the monoidality of the model structures. 

Recall the inclusion $i: \Delta \rightarrow \Omega$, and the induced left Quillen functor
$i_!: \sset \rightarrow \dset$. Let us call a tree (an object of $\Omega$) \emph{linear} if it lies in the image of $i$, and a dendroidal set \emph{simplicial} if it lies in the image of
$i_!$. Also, let us call a tree \emph{open} if it has no vertices of valence zero (i.e. no vertices without input edges). If $S \rightarrow T$ is a morphism in $\Omega$ and $T$ is open, then $S$ is necessarily open as well. Therefore, the open trees define a subobject $\open$ of the terminal object in $\dset$. Let us call a dendroidal set \emph{open} if the unique map to the terminal object factors through $\open$. These open dendroidal sets form a full subcategory $\dset / \open$ of $\dset$. More generally, we call a normal monomorphism $X \rightarrow Y$ \emph{linear} if it is obtained by attaching linear trees (i.e. $X \rightarrow Y$ lies in the saturation of $\partial\Omega[T] \rightarrow \Omega[T]$ for $T$ linear) and \emph{open} if it is obtained by attaching open trees. 

The modified version of Proposition 1.9 should be:

\setcounter{section}{1}
\setcounter{thm}{8}
\begin{prop} \label{prop:prop1}
Let $A \rightarrow  B$ and $X \rightarrow Y$ be normal monomorphisms.  If one of them is linear or both are open, then the induced map  
	\begin{equation*}
A\otimes Y \cup_{A \otimes X} B\otimes  X \rightarrow B\otimes Y
	\end{equation*}
is again a normal monomorphism (and is open as well in the second case).
\end{prop}

By the usual induction, this follows from the following lemma
(whose proof is an elementary but tedious combinatorial argument,
and we refer the reader to \cite{CMn} for the details), which should
be added at the very end of the first section:

\setcounter{thm}{10}

\begin{lemma} Let $S$ and $T$ be trees. If one of them is linear or both are open, then the pushout-product map 
\begin{equation*}
\partial\Omega[S] \otimes \Omega[T]
\cup_{\partial\Omega[S] \otimes \partial\Omega[T]} \Omega[S] \otimes \partial\Omega[T]
\rightarrow \Omega[S] \otimes \Omega[T]
\end{equation*}
is a normal monomorphism (and is open as well in the second case).
\end{lemma}

The main result of \cite{dend2} (namely Proposition 9.2) is wrong as stated,
but its proof says the following: given two trees $S$ and $T$, as well as an inner edge $e$
of $S$, if $W$ denotes the the image of the map
$$\Lambda^e[S] \otimes \Omega[T]
\cup_{\Lambda^e[S] \otimes \partial\Omega[T]} \Omega[S] \otimes \partial\Omega[T]
\rightarrow \Omega[S] \otimes \Omega[T]\, ,$$
then the inclusion
$$W\subset \Omega[S] \otimes \Omega[T]$$
is inner anodyne. This means that Proposition 3.1
should be replaced by the following statement.

\setcounter{section}{3}
\setcounter{thm}{0}

\begin{prop} \label{prop:prop2}
Let $A \rightarrow  B$ and $X \rightarrow Y$ be normal monomorphisms.  If one of them is linear or both are open, and if one of them is inner anodyne,
then the induced map  
	\begin{equation*}
A\otimes Y \cup_{A \otimes X} B\otimes  X \rightarrow B\otimes Y
	\end{equation*}
is again an inner anodyne extension.
\end{prop}

Proposition 3.3 should be modified accordingly
(replacing, in Proposition \ref{prop:prop2} above, the
expression ``inner anodyne'' by ``$J$-anodyne'').

Let us call a model category $\mathcal{M}$ \emph{Joyal simplicial} if it satisfies the axioms for a simplicial model category, but with respect to the Joyal model structure on simplical sets instead of the classical Kan-Quillen structure. The model structure on $\dset$ established in
this paper is not monoidal. Instead, Proposition 3.17
should be replaced by the following result,
which follows immediately from the propositions above and from the arguments
explained in the original `proof' of Prop. 3.17:

\setcounter{thm}{16}

\begin{prop} The Boardman-Vogt tensor product turns the
model structure on $\dset$ into a Joyal simplicial model structure,
and induces a symmetric monoidal model structure on the category
$\dset / \open$ of open dendroidal sets. 
\end{prop}

In the proof of the existence of the model structure, it is only the ``linear half'' of  Propositions \ref{prop:prop1} and \ref{prop:prop2}
which are used, and this proof is unaffected. Indeed,
Propositions 3.1 and 3.3
are used only in the case where one of the maps is in the image of $i_!$.

\bigskip
 
We close this erratum with a list of places where the reference to
(consequences of) monoidality should be reformulated in accordance with the previous two propositions: In the present paper, these are point 1 in the introduction, Proposition 2.6(c), Corollary 2.8(b), Corollary 2.9(b); In the proof of Lemma 6.15, the first sentence
should begin as `We know that the image by the functor $i^*$ of the map$\ldots$';
Also the statement between brackets at the very end
of Proposition 6.20 should be skipped (and the proof of Prop.~6.20
should refer to Boardman and Vogt's
explicit description of the category associated to a quasi-category, instead
of its dendroidal analogue).
In \cite{dend4,dend5},  no use of the alleged monoidality of the model structure is made,
and the necessary changes all concern inessential references to the monoidality:
in paper \cite{dend4}, the places where we recall the monoidality from the present paper,
are in the abstract and the introduction, in Theorem 1.1, Remarks 6.14 and  8.16,
and finally in the proof of Propposition 6.11, where
it is Proposition 3.1 above which should be used rather than the monoidality;
and in paper \cite{dend5}, the mentioning of monoidality is in the introduction as well as in Proposition 2.8 and Theorem 5.7.

We repeat that this error only affects the monoidality of the model structures, and none of the main results about the existence of the model structure on $\dset$ and the Quillen equivalent model categories presented in \cite{dend4,dend5}.

\nocite{dend1,dend2,Ci3,joyal,joytier4,lurie,GZ}
\bibliography{bibdend}
\bibliographystyle{amsalpha}
\end{document}